\newcommand{\e}{\mathbbm{1}}
\numberwithin{equation}{section}
\newcommand{\rar}[1]{\stackrel{#1}{\longrightarrow}}
\newcommand{\xrar}[1]{\xrightarrow{#1}}
\newcommand{\limto}{{\displaystyle\lim_{\longrightarrow}}}
\newcommand{\iso}{\buildrel{\sim}\over{\longrightarrow}}
\newcommand{\into}{\hookrightarrow}
\newcommand{\onto}{\twoheadrightarrow}
\newcommand{\al}{\alpha}
\newcommand{\be}{\beta}
\newcommand{\ga}{\gamma}
\newcommand{\Ga}{\Gamma}
\newcommand{\de}{\delta}
\newcommand{\De}{\Delta}
\newcommand{\la}{\lambda}
\newcommand{\eps}{\epsilon}
\newcommand{\sg}{\sigma}
\newcommand{\te}{\theta}
\newcommand{\om}{\omega}
\newcommand{\vp}{\varphi}
\newcommand{\bC}{{\mathbb C}}
\newcommand{\bD}{{\mathbb D}}
\newcommand{\bF}{{\mathbb F}}
\newcommand{\bG}{{\mathbb G}}
\newcommand{\bK}{{\mathbb K}}
\newcommand{\bL}{{\mathbb L}}
\newcommand{\bN}{{\mathbb N}}
\newcommand{\bQ}{{\mathbb Q}}
\newcommand{\bZ}{{\mathbb Z}}
\newcommand{\cA}{{\mathcal A}}
\newcommand{\cB}{{\mathcal B}}
\newcommand{\cE}{{\mathcal E}}
\newcommand{\cF}{{\mathcal F}}
\newcommand{\cH}{{\mathcal H}}
\newcommand{\cL}{{\mathcal L}}
\newcommand{\cM}{{\mathcal M}}
\newcommand{\cN}{{\mathcal N}}
\newcommand{\cO}{{\mathcal O}}
\newcommand{\cP}{{\mathcal P}}
\newcommand{\cU}{{\mathcal U}}
\newcommand{\sA}{{\mathscr A}}
\newcommand{\sC}{{\mathscr C}}
\newcommand{\sD}{{\mathscr D}}
\newcommand{\sG}{{\mathscr G}}
\newcommand{\sH}{{\mathscr H}}
\newcommand{\sI}{{\mathscr I}}
\newcommand{\sO}{{\mathscr O}}
\newcommand{\sP}{{\mathscr P}}
\newcommand{\fc}{{\mathfrak c}}
\newcommand{\fm}{{\mathfrak m}}
\newcommand{\fp}{{\mathfrak p}}
\newcommand{\fq}{{\mathfrak q}}
\newcommand{\Gb}{\overline{G}}
\newcommand{\Mb}{\overline{M}}
\newcommand{\Nb}{\overline{N}}
\newcommand{\kbar}{{\overline{k}}}
\newcommand{\abs}[1]{\lvert #1\rvert}
\newcommand{\Ker}{\operatorname{Ker}}
\newcommand{\End}{\operatorname{End}}
\newcommand{\Hom}{\operatorname{Hom}}
\newcommand{\Ext}{\operatorname{Ext}}
\newcommand{\Aut}{\operatorname{Aut}}
\newcommand{\Spec}{\operatorname{Spec}}
\newcommand{\id}{\operatorname{id}}
\newcommand{\pr}{\mathrm{pr}}
\newcommand{\rk}{\operatorname{rk}}
\newcommand{\Ad}{\operatorname{Ad}}
\newcommand{\Ind}{\operatorname{Ind}}
\newcommand{\ind}{\operatorname{ind}}
\newcommand{\tr}{\operatorname{tr}}
\newcommand{\tens}{\otimes}
\newcommand{\st}{\,\big\vert\,}
\newcommand{\sbr}{\smallbreak}
\newcommand{\mbr}{\medbreak}
\newcommand{\bbr}{\bigbreak}
\newcommand{\Ann}{\operatorname{Ann}}
\newcommand{\matr}[4]{\left(\begin{array}{cc} \! #1 & \! #2 \! \\ \! #3 & \! #4 \! \end{array}\right)}
\newtheorem{thm}{Theorem}[section]
\newtheorem{cor}[thm]{Corollary}
\newtheorem{lem}[thm]{Lemma}
\newtheorem{prop}[thm]{Proposition}
\newtheorem{rem}[thm]{Remark}
\newtheorem{rems}[thm]{Remarks}
\newtheorem{defin}[thm]{Definition}
\newcommand{\qed}{$\square$}
\newenvironment{proof}[1]{\medbreak\noindent\emph{#1}.}{\hfill\qed\bigbreak}
\newcommand{\Fun}{\operatorname{Fun}}
\newcommand{\Fr}{\operatorname{Fr}}
\newcommand{\Gal}{\operatorname{Gal}}
\newcommand{\ql}{\overline{\bQ}_\ell}
\renewcommand{\hom}{\sH\!om}
\newcommand{\ig}{\ind_{G'}^G}
\newcommand{\Ig}{\Ind_{G'}^G}
\newcommand{\iga}{\ind_{\Ga'}^{\Ga}}
\newcommand{\com}{\operatorname{com}}
\newcommand{\cpu}{\operatorname{\mathfrak{cpu}}}
\newcommand{\cpuc}{\cpu^\circ}
\newcommand{\gap}{\mathbb{G}_{a,\,per\!f}}
\newcommand{\qzp}{\bQ_p/\bZ_p}
\newcommand{\biext}{\underline{\operatorname{Bi-ext}}}
\newcommand{\et}{\widetilde{e}}
\newcommand{\mt}{\widetilde{\mu}}
\newcommand{\Mt}{\widetilde{M}}
\newcommand{\Nt}{\widetilde{N}}
\newcommand{\Xt}{\widetilde{X}}
\newcommand{\Gt}{\widetilde{G}}
\newcommand{\Ht}{\widetilde{H}}
\begin{document}

\title{Characters of unipotent groups over finite fields}

\author{Mitya Boyarchenko}

\maketitle

\begin{abstract}
Let $G$ be a connected unipotent group over a finite field $\bF_q$. In this article we propose a definition of $\bL$-packets of complex irreducible representations of the finite group $G(\bF_q)$ and give an explicit description of $\bL$-packets in terms of the so-called ``admissible pairs'' for $G$. We then apply our results to show that if the centralizer of every geometric point of $G$ is connected, then the dimension of every complex irreducible representation of $G(\bF_q)$ is a power of $q$, confirming a conjecture of V.~Drinfeld. This paper is the first in a series of three papers exploring the relationship between representations of a group of the form $G(\bF_q)$ (where $G$ is a unipotent algebraic group over $\bF_q$), the geometry of $G$, and the theory of character sheaves.
\end{abstract}


\section{Introduction}

In 1960, G.~Higman asked\footnote{I am grateful to Jon Alperin for providing this reference.} \cite[p.~29]{higman} whether if $q$ is a prime power and
$n\in\bN$, then the dimension of every complex irreducible
representation of $UL_n(\bF_q)$ is a power of $q$. Here, $\bF_q$ is
a finite field with $q$ elements and $UL_n(\bF_q)$ denotes the group
of unipotent upper-triangular matrices of size $n$ over $\bF_q$. This question was later advertised and popularized by J.~Thompson and A.~Kirillov, among others. The answer is affirmative, and a
generalization of this fact (to the so-called ``algebra
groups'' over finite fields) was proved by I.M.~Isaacs in
\cite{isaacs}. It is natural to ask whether Isaacs's result can be
further generalized to groups of the form $G(\bF_q)$, where $G$ is a
connected unipotent group over $\bF_q$.

\mbr

If $G$ is as above, it is not always the case that the dimension of
every complex irreducible representation of $G(\bF_q)$ is a power of
$q$. This interesting phenomenon was first observed by G.~Lusztig,
who showed in \cite{lusztig} that if $U$ is a maximal unipotent
subgroup of the symplectic group $Sp_4$ over a finite field
$\bF_{2^r}$, where $r\in\bN$, then $U(\bF_{2^r})$ always has
irreducible representations of dimension $2^{r-1}$. The fake Heisenberg groups introduced in \cite{intro} (see also \S\ref{ss:fake-Heis} below) provide similar counterexamples in every characteristic $p>2$.

\mbr

In 2005, V.~Drinfeld conjectured that if a unipotent group $G$ has the property that
every geometric point of $G$ is contained in the neutral connected
component of its centralizer, then the dimension of every
irreducible representation of $G(\bF_q)$ is a power of $q$. One of
the main goals of our paper is to prove this conjecture (see Theorem
\ref{t:dim-reps-easy}). From the viewpoint of character theory
for finite groups, this is the most appealing result of the paper. However, we must point out that
the proof we present (which is the only one known to us) heavily
relies on geometric techniques, and may appear to be somewhat
indirect. In particular, it is based on the notion of an
\emph{$\bL$-packet}\footnote{In this article, we use ``$\bL$-packet'' as an abbreviation of ``Lusztig packet,'' and the letter ``$\bL$'' is used in place of ``$L$'' in order to distinguish Lusztig packets from Langlands' $L$-packets.} of irreducible representations of $G(\bF_q)$,
which we introduce for an \emph{arbitrary} connected unipotent group
$G$ over $\bF_q$, and on our second main result, which provides a
description of $\bL$-packets in more concrete terms.

\mbr

The idea of using geometry to study representations of groups of the
form $G(\bF_q)$, where $G$ is an algebraic group over $\bF_q$, is
not new. For \emph{reductive} $G$, one has the
theory of Deligne and Lusztig, which constructs many virtual
representations of $G(\bF_q)$ in the $\ell$-adic cohomology of
certain varieties $X$ over $\bF_q$ with a $G$-action, as well as Lusztig's theory of character sheaves, which expresses
the irreducible characters of $G(\bF_q)$ over $\ql$ as linear
combinations of the ``trace of Frobenius functions'' of certain
irreducible perverse $\ell$-adic sheaves on $G$.

\mbr

The case of \emph{unipotent} $G$ was also originally considered by
Lusztig. In \cite{lusztig} he predicted the existence of an
interesting theory of character sheaves for unipotent groups in
positive characteristic\footnote{In characteristic zero, a unipotent
group is ``the same'' as a finite dimensional nilpotent Lie algebra,
and in this case the theory of character sheaves is essentially
equivalent to Kirillov's orbit method \cite{kirillov}. In
particular, character sheaves themselves are simply the Fourier
transforms of the constant rank $1$ local systems on the coadjoint
orbits for the group.} and defined the character sheaves in an
\emph{ad hoc} manner for the maximal unipotent subgroup\footnote{This is
the first interesting example where the orbit method does not apply,
cf.~\cite{intro}.} $U$ of the
symplectic group $Sp_4$ over a field of characteristic $2$. Lusztig proved, moreover, that if the ground
field is a finite field $\bF_{2^r}$, then the trace functions
associated to the character sheaves on $U$ form a basis of the space
of class functions on $U(\bF_{2^r})$, and the relationship between
these functions and the irreducible characters of $U(\bF_{2^r})$ is
similar to the one that exists in the theory of character sheaves
for reductive groups over finite fields.

\mbr

Lusztig's work led Drinfeld to formulate a series of definitions and
conjectures that should form a basis of a general theory of
character sheaves for unipotent groups in positive characteristic.
We refer the reader to \cite{intro} for an overview. At present many of these conjectures are already known \cite{foundations,charsheaves}.

\mbr

The approach taken in the present article is somewhat different,
although the methods we use are closely related to those proposed
by Drinfeld and Lusztig, and they form a basis for \cite{foundations,charsheaves}. Character sheaves do not appear in this
paper, but our goal is still to study irreducible representations of
$G(\bF_q)$ by relating them to constructible $\ell$-adic complexes
on $G$, or, more precisely, to objects of the equivariant derived
category $\sD_G(G)$.

\mbr

Our work can be thought of as an attempt to geometrize two classical
and well known results of character theory for finite groups, which
we now state. If $\Ga$ is a finite group, let $\Fun(\Ga)^\Ga$ denote
the space of conjugation-invariant functions $\Ga\rar{}\bC$. It is a
commutative algebra under convolution of functions. The first result
is that there is a natural bijection between complex irreducible
characters of $\Ga$ and the minimal (in other terminology:
``indecomposable'' or ``primitive'') idempotents in $\Fun(\Ga)^\Ga$,
given by $\chi\longleftrightarrow \abs{\Ga}^{-1}\chi(1)\cdot\chi$
(see, e.g., \cite{intro}). The second
result is that if $\Ga$ is nilpotent, then every complex irreducible
representation of $\Ga$ is induced from a $1$-dimensional
representation of a subgroup of $\Ga$.

\mbr

In this paper, the word ``geometrization'' refers to replacing
finite groups with algebraic groups $G$ over finite fields $\bF_q$
and studying representations of groups of the form $G(\bF_q)$ by
relating them to the geometry of $G$. The ground field for the
representations is taken to be $\ql$ rather than $\bC$.
Geometrization also involves replacing functions on finite groups
with (complexes of) constructible $\ell$-adic sheaves on algebraic
groups and using Grothendieck's sheaves-to-functions correspondence.

 \mbr

The geometric analogue of the bijection between irreducible
characters of a finite group $\Ga$ and minimal idempotents in
$\Fun(\Ga)^{\Ga}$ is not a result at all, but rather a definition.
More precisely, for a connected unipotent group $G$ over $\bF_q$, we
propose a definition of $\bL$-packets of irreducible representations
of $G(\bF_q)$ based on the notion of a ``weak idempotent'' in the
equivariant derived category $\sD_G(G)$ (Definition
\ref{d:L-packets}). For groups $G$ of this type, the result on
representations of finite nilpotent groups mentioned above has a
geometric analogue, which is more subtle: it becomes an
\emph{explicit description of $\bL$-packets} in terms of the so-called
``admissible pairs'' for $G$ (Theorem \ref{t:descr-L-packets}). This
is the second main result of our work.

\mbr

We tried to keep the amount of geometry involved in our proofs to a
minimum. In particular, not all the structures present on $\sD_G(G)$
have been explored. Notably, we avoided using the \emph{braided}
monoidal structure on this category: only the \emph{square} of the
braiding appears in the proof of Theorem \ref{t:dim-reps-easy}, and
only does so implicitly.

\subsection*{Acknowledgements} This paper is partially based on
my PhD dissertation. However, the results on easy
unipotent groups are new. I am deeply grateful to my PhD advisor,
Vladimir Drinfeld, who conjectured the main results of this article.
In many instances, the key ideas of proofs were due to him as well. In particular, he kindly allowed me to include Appendix B, which contains an argument of his that strengthens the first main result of the present paper.
I also thank the Clay Mathematics Institute and the National Science
Foundation for providing financial support during the period of time
when this paper was written. Last but not least, I am grateful to the anonymous referee for suggesting multiple improvements and corrections.



\section{Main definitions and results}\label{s:results}

In this section we state the two main results of our work (Theorems
\ref{t:dim-reps-easy} and \ref{t:descr-L-packets}), explaining most
of the relevant definitions (although some technical details are
postponed until later sections) and giving some historical
background. In \S\ref{ss:fake-Heis} we illustrate our theory by
describing the $\bL$-packets of irreducible representations of
$G(\bF_q)$, where $G$ is a so-called ``fake Heisenberg group'' over
$\bF_q$. The strategy we use to prove our main results is outlined
in Section \ref{s:structure} below.

\subsection{Conventions}\label{ss:conventions} If $k$ is any field, an
\emph{algebraic group} over $k$ is defined as a smooth group scheme
of finite type over $k$. We recall that ``smooth'' is equivalent to
``geometrically reduced'' in this situation, and if $k$ is perfect,
the word ``geometrically'' can be omitted. By a \emph{unipotent
group over $k$} we will mean a unipotent algebraic group (in
particular, smooth) over $k$. We denote by $\ql$ a fixed algebraic
closure of the field $\bQ_\ell$ of $\ell$-adic numbers, and whenever
the notation $\ql$ is used, we invariably assume that $\ell$ is a
prime different from the characteristic of the base field $k$.

\mbr

Our conventions regarding finite fields are as follows. Let $p$ be a
prime number, fixed once and for all, and let $\bF$ be a fixed
algebraic closure of the finite field $\bF_p$ with $p$ elements. If
$q=p^r$ for some $r\in\bN$, we write $\bF_q$ for the unique subfield
of $\bF$ consisting of $q$ elements. All representations of finite
groups that we consider are assumed to be defined over $\ql$, where
$\ell\neq p$. (This restriction only becomes relevant when we use
geometric methods coming from $\ell$-adic cohomology \cite{sga5}.
Much of our theory can be developed over an arbitrary algebraically
closed field of characteristic $0$, but for consistency we will work
over $\ql$ throughout this article.)

\subsection{Easy unipotent groups} Let us recall a
definition from \cite{intro}.

\begin{defin}\label{d:easy}
Let $k$ be a field and $\overline{k}$ an algebraic closure of $k$.
An algebraic group $G$ over $k$ is said to be \emph{easy} if every
$g\in G(\overline{k})$ is contained in the neutral connected
component, $Z(g)^\circ$, of its centralizer, $Z(g)$, in
$G\tens_k\overline{k}$.
\end{defin}

It is clear that an easy algebraic group $G$ over $k$ has to be
connected (if not, then applying the definition to any element $g\in
G(\overline{k})$ that does not belong to the neutral connected
component $(G\tens_k\overline{k})^\circ$ leads to a
contradiction).

\mbr

The group $GL_n$ is easy. A connected reductive group in
characteristic $0$ is easy if and only if its derived group is
simply connected and its center is connected. From this point on,
all easy groups discussed in this article will be unipotent.

\begin{rem} We know of no examples of easy unipotent groups $G$
that do not satisfy the stronger condition that the centralizer of
every geometric point of $G$ is \emph{connected}. It appears plausible that there are no such examples.
\end{rem}

We observe that if $k$ has characteristic zero, then every unipotent
group over $k$ is connected, and since closed subgroups of unipotent
groups are unipotent, it follows that every unipotent group over $k$
is easy. Therefore, from now on we will only be interested in the
case $\operatorname{char}k>0$.

\mbr

The first obvious example of an easy unipotent group in positive
characteristic is provided by $UL_n$, the so-called \emph{unipotent
linear group}, defined as the group of unipotent upper-triangular
matrices of size $n$. More generally, if $G$ is any reductive group
over $k$ and $U$ is a maximal connected unipotent subgroup of $G$,
then $U$ is easy provided the characteristic of $k$ is large enough
(depending on the types of the simple constituents of
$G\tens_k\overline{k}$). For instance, if $G$ is the symplectic
group $Sp_{2n}$, where $n\geq 2$, then $U$ is easy if and only if
$\operatorname{char}k>2$.

\mbr

Another type of generalizations of the group $UL_n$ comes from
the so-called ``algebra groups''. If $A$ is a finite
dimensional associative unital $k$-algebra, let $J$ be the Jacobson
radical\footnote{Since $A$ is clearly Artinian as a ring, $J$ is
also the maximal two-sided nilpotent ideal of $A$.} of $A$, and let
$G(A)$ denote the algebraic group over $k$ defined as follows. For
any commutative $k$-algebra $R$, we let $G(A)(R)$ denote the
multiplicative group of all elements of $R\tens_k A$ of the form
$1+x$, where $x\in R\tens_k J$. Then $G(A)$ is an
easy unipotent group over $k$ because the centralizers of geometric
points of $G(A)$ can be identified with linear subspaces of
$\overline{k}\tens_k J$. We call $G(A)$ the \emph{unipotent algebra
group} associated to $A$. Observe that if $A$ is the algebra of all
upper-triangular matrices of size $n$ over $k$, then $G(A)\cong
UL_n$.

\mbr

The example of a maximal unipotent subgroup $U$ of $Sp_4$ over the
finite field $\bF_{2^r}$, where $r\in\bN$, was originally considered
by Lusztig. This group is not easy. Lusztig computed the character
table of $U(\bF_{2^r})$ in \S7 of \cite{lusztig} and found that this
group has irreducible representations of dimension $2^{r-1}$. The
fake Heisenberg groups defined in \cite{intro} (see also \S\ref{ss:fake-Heis} below) are also not easy
(Lemma \ref{l:conn-sbgrps-2-dim}).

\subsection{Representations of algebra groups over finite fields}
It is known that the dimension of
every irreducible representation of $UL_n(\bF_q)$ is a power of $q$, which yields an affirmative answer to a question of Higman \cite{higman}.
A stronger and more general result is provided by

\begin{thm}[Halasi]\label{t:gut-is-hal}
If $A$ is a finite dimensional algebra over $\bF_q$, then every
irreducible representation of $G(A)(\bF_q)$ is induced from a
$1$-dimensional representation of a subgroup of the form
$G(B)(\bF_q)$, where $B\subset A$ is an $\bF_q$-subalgebra.
\end{thm}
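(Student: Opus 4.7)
The plan is to induct on $\dim_{\bF_q} A$, the case $\dim A \le 1$ (so $G(A)(\bF_q)$ is trivial) being vacuous. Let $\rho$ be an irreducible representation of $G(A)(\bF_q)$. If $\dim\rho = 1$, take $B = A$ and we are done. Otherwise the goal is to produce a proper unital subalgebra $B \subsetneq A$ and an irreducible $\sigma$ of $G(B)(\bF_q)$ with $\rho \cong \Ind_{G(B)(\bF_q)}^{G(A)(\bF_q)}\sigma$; the induction hypothesis applied to $\sigma$ together with transitivity of induction then produces the required linear character on an algebra subgroup of $A$.

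First I would set up the relevant Clifford theory. Let $J = \mathrm{rad}(A)$ and fix a Wedderburn--Malcev decomposition $A = S \oplus J$ with $S$ semisimple. For any two-sided ideal $L \subseteq J$ of $A$, the set $B_L := S + L$ is a unital subalgebra of $A$, the subgroup $G(B_L) = 1 + L$ is normal in $G(A)$, and the quotient $G(A)(\bF_q)/G(B_L)(\bF_q)$ is canonically $G(A/L)(\bF_q)$. The crucial structural fact here is that for every irreducible constituent $\sigma$ of $\rho|_{G(B_L)(\bF_q)}$, the inertia subgroup of $\sigma$ in $G(A)(\bF_q)$ is again of the form $G(B')(\bF_q)$ for some unital subalgebra $B_L \subseteq B' \subseteq A$: this is because the conjugation action of $G(A)$ on $G(B_L)$, and hence on characters of $G(B_L)(\bF_q)$, is given by $\bF_q$-polynomial formulas, so the stabilizer of any character is cut out by linear equations in $A$.

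Granting this, Clifford's theorem expresses $\rho = \Ind_{G(B')(\bF_q)}^{G(A)(\bF_q)} \tilde\sigma$ for some extension $\tilde\sigma$ of $\sigma$ to $G(B')(\bF_q)$. If the ideal $L$ can be chosen so that for at least one irreducible constituent $\sigma$ of $\rho|_{G(B_L)(\bF_q)}$ the inertia subalgebra $B'$ is strictly smaller than $A$, then the induction hypothesis applies to $\tilde\sigma$ and we are finished.

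The main obstacle, and the technical core of the argument, is precisely the existence of such an $L$. In the special case $A/J \cong \bF_q$ originally treated by Isaacs, $J/J^2$ is just an $\bF_q$-vector space and \emph{every} hyperplane containing $J^2$ gives an ideal $L$ of codimension $1$; the non-linearity of $\rho$ then forces some such $L$ to produce a non-trivial inertia, essentially by pigeonhole over the hyperplanes of $J/J^2$. For general $A/J$, codimension-$1$ two-sided ideals containing $J^2$ need not exist at all, since $J/J^2$ as an $(A/J)$-bimodule may have no one-dimensional quotients. Halasi's idea is to circumvent this by a preliminary reduction: one first passes to a carefully chosen subalgebra $A_0 \subseteq A$ for which the restriction $\rho|_{G(A_0)(\bF_q)}$ contains a useful irreducible constituent, and on which the bimodule $J_0/J_0^2$ does admit the required sub-bimodule of codimension $1$. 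Identifying the right $A_0$ and controlling what happens to $\rho$ under Mackey restriction is the subtle step that goes genuinely beyond Isaacs's proof, and is where I would expect the real work to lie; once the appropriate $L$ (inside the appropriate $A_0$) is in hand, the rest of the induction is mechanical.
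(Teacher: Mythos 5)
The paper does not prove Theorem \ref{t:gut-is-hal}; it is quoted from Halasi's article \cite{halasi} (see the discussion immediately following the statement), so there is no in-paper argument to compare against. Evaluating your sketch on its own terms: the overall shape --- induction on $\dim_{\bF_q} A$ combined with Clifford theory for normal algebra subgroups of the form $1+L$ --- is in the right spirit, but the key step is justified incorrectly and the decisive technical lemma is deferred rather than supplied.

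The incorrect justification is in your ``crucial structural fact'': you claim the inertia subgroup in $G(A)(\bF_q)$ of an irreducible constituent $\sigma$ of $\rho|_{G(B_L)(\bF_q)}$ is always of algebra-group form because the conjugation action ``is given by $\bF_q$-polynomial formulas, so the stabilizer of any character is cut out by linear equations in $A$.'' This inference does not hold. Writing $g=1+x$, the conjugate of $1+l$ is $1 + (1+x)l(1+x)^{-1}$, and the deviation $(1+x)l(1+x)^{-1}-l=(xl-lx)(1+x)^{-1}$ is \emph{not} linear in $x$: expanding $(1+x)^{-1}$ produces terms of every degree, and the stabilizer of a character is not a priori a linear subvariety of $J$. (In general, orbit stabilizers for polynomial group actions are not linear.) That inertia groups of linear characters of suitable normal algebra subgroups $1+L$ are again algebra subgroups is in fact a nontrivial theorem of the subject, proved by first reducing to ideals $L$ with an annihilation property such as $L\cdot J = 0$ so that the action genuinely becomes linear, and it is certainly not known for arbitrary irreducible constituents $\sigma$ of arbitrary dimension. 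The second gap is one you acknowledge: you never specify how to choose $L$ (or the intermediate subalgebra $A_0$) so that the inertia subgroup is proper, which is where essentially all of Halasi's work lies. Note also that the paper records that Halasi's argument uses Corollary \ref{c:isaacs} ``in an essential way,'' a numerical input your sketch never explicitly invokes; and that Clifford theory supplies an irreducible $\theta$ of the inertia group with $\theta|_N = e\cdot\sigma$ rather than an extension $\tilde\sigma$ of $\sigma$ --- harmless here, but worth stating correctly.
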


\begin{cor}[Isaacs]\label{c:isaacs}
In the situation of Theorem \ref{t:gut-is-hal}, the dimension of
every irreducible representation of $G(A)(\bF_q)$ is a power of $q$.
\end{cor}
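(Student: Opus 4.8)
The plan is to read off the corollary directly from Theorem~\ref{t:gut-is-hal}, the only extra input being the elementary fact that the group of $\bF_q$-points of a unipotent algebra group has order a power of $q$. First I would fix an irreducible representation $\rho$ of $\Ga:=G(A)(\bF_q)$; by Theorem~\ref{t:gut-is-hal} I may write $\rho\cong\Ind_\De^\Ga\chi$ for some $\bF_q$-subalgebra $B\subseteq A$, where $\De:=G(B)(\bF_q)$ is the corresponding subgroup of $\Ga$ and $\chi$ is a $1$-dimensional representation of $\De$. Since $\dim\Ind_\De^\Ga\chi=[\Ga:\De]\cdot\dim\chi=[\Ga:\De]$, it then suffices to show that the index $[\Ga:\De]$ is a power of $q$.

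Next I would compute the two orders. By the description of $G(A)$ recalled above, $G(A)(\bF_q)=\{\,1+x\ :\ x\in J(A)\,\}$ (using $\bF_q\tens_{\bF_q}J(A)=J(A)$, where $J(A)$ denotes the Jacobson radical of $A$), and the assignment $1+x\mapsto x$ identifies this set with the finite-dimensional $\bF_q$-vector space $J(A)$; hence $\abs{G(A)(\bF_q)}=q^{\dim_{\bF_q}J(A)}$. Because the subgroup furnished by Theorem~\ref{t:gut-is-hal} is again of the form $G(B)(\bF_q)$ for an $\bF_q$-algebra $B$, the same computation gives $\abs{\De}=q^{\dim_{\bF_q}J(B)}$. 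Therefore
\[
\dim\rho\ =\ [\Ga:\De]\ =\ \frac{\abs{\Ga}}{\abs{\De}}\ =\ q^{\,\dim_{\bF_q}J(A)-\dim_{\bF_q}J(B)},
\]
which is a power of $q$; the exponent is automatically nonnegative since the left-hand side is a positive integer.

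I do not expect any genuine obstacle here: the entire content of the corollary is concentrated in Theorem~\ref{t:gut-is-hal}. The one point worth flagging is that a crude divisibility argument — observing only that $\dim\rho$ divides $\abs{\Ga}=q^{\dim_{\bF_q}J(A)}$ — would merely force $\dim\rho$ to be a power of the prime $p$, not of $q$. The stronger conclusion really uses the precise shape of the inducing subgroup in Theorem~\ref{t:gut-is-hal}: since $\De$ too is the group of $\bF_q$-points of a unipotent algebra group, both $\abs{\Ga}$ and $\abs{\De}$ are powers of $q$, and hence so is their ratio.
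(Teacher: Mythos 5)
Your argument is correct and is exactly the straightforward deduction that the paper leaves implicit by labelling the statement a corollary of Theorem~\ref{t:gut-is-hal}: since $\dim\rho=[\Ga:\De]$ and both $\abs{\Ga}=q^{\dim_{\bF_q}J(A)}$ and $\abs{\De}=q^{\dim_{\bF_q}J(B)}$ are powers of $q$, so is their ratio. You also correctly flag the one subtlety (a bare Lagrange divisibility argument would only give a power of $p$); the paper makes no further comment, and none is needed.
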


Theorem \ref{t:gut-is-hal} was first stated by
E.~Gutkin in \cite{gutkin}; however, Gutkin's proof of it was
incomplete. I.M.~Isaacs proved Corollary \ref{c:isaacs} in
\cite{isaacs}. Later, Z.~Halasi proved Theorem \ref{t:gut-is-hal} in
\cite{halasi}; it is worth noting that his proof uses Corollary
\ref{c:isaacs} in an essential way. A more direct proof of Theorem
\ref{t:gut-is-hal}, based on Halasi's methods, was given in
\cite{base-change}, and an improved version later appeared in \cite{gutkin-proof}.

\mbr

One of the main goals of this article is to extend Corollary
\ref{c:isaacs} to all easy unipotent groups over finite fields. The
result (Theorem \ref{t:dim-reps-easy}) is stated below.

\subsection{Character degrees of easy unipotent groups}
One of the main results of this paper is the following theorem,
proved in \S\ref{ss:proof-t:dim-reps-easy}.

\begin{thm}[Main Theorem 1]\label{t:dim-reps-easy}
If $G$ is an easy unipotent group over $\bF_q$, the dimension of
every irreducible representation of $G(\bF_q)$ is a power of $q$.
\end{thm}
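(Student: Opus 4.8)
The plan is to deduce Main Theorem 1 from the other main result of the paper (the description of $\bL$-packets in terms of admissible pairs, Theorem \ref{t:descr-L-packets}), together with the geometric properties that ``easiness'' of $G$ forces on the equivariant derived category $\sD_G(G)$. The key observation should be that for a general connected unipotent $G$ over $\bF_q$, the irreducible representations of $G(\bF_q)$ are partitioned into $\bL$-packets, and within each $\bL$-packet the dimensions are controlled by the geometry of a single ``weak idempotent'' $e\in\sD_G(G)$ — roughly, by the behavior of the multiplication map and by a certain cohomological/perverse amplitude invariant (the quantity that plays the role of the ``formal dimension'' $\fdim$) attached to $e$. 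First I would recall, from the $\bL$-packet formalism, that the ratio of $\dim\rho$ to the ``generic'' dimension in its packet is measured by an integer power of $q$ coming from an $\ell$-adic cohomology group that records the failure of the relevant local system on $G$ to be multiplicative; this is exactly the point where only the \emph{square} of the braiding enters, as promised in the introduction.

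The next step is to show that easiness kills the obstruction to all dimensions in a packet being equal — i.e., that for an easy $G$ every $\bL$-packet is ``homogeneous'' of size a power of $q$ with all constituents of equal dimension. Here I would use the admissible-pairs description: an admissible pair for $G$ involves a connected subgroup $H\subseteq G$ (defined over $\bF_q$, or over $\bF$ and Frobenius-stable) and a multiplicative local system (a character sheaf of rank one) on $H$, and the dimensions of the representations in the corresponding packet are governed by indices like $[G(\bF_q):H(\bF_q)]$ and by the dimension of the support of an induced object — all of which are manifestly powers of $q$ — \emph{provided} there is no extra ``defect'' contribution. The easiness hypothesis, via the connectedness of centralizers $Z(g)^\circ = Z(g)$, is precisely what guarantees that the relevant geometric quotients and centralizer-type stabilizers are connected, so that the would-be defect $\ell$-adic cohomology groups (the ones responsible for Lusztig's $2^{r-1}$ and for the fake Heisenberg counterexamples) vanish. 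In more down-to-earth terms: in the non-easy case the extra factor is a power of $q^{1/2}$ coming from a symplectic form on a component group, and connectedness of all centralizers removes that component group entirely.

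Concretely, I would organize the argument as follows. (i) Fix an irreducible $\rho$ of $G(\bF_q)$ and let $e$ be the weak idempotent of its $\bL$-packet; by Theorem \ref{t:descr-L-packets} choose an admissible pair $(H,\sL)$ for $G$ realizing $e$. (ii) Reduce to computing $\dim\rho$ in terms of the pair: $\dim\rho = q^{a}\cdot(\text{multiplicity-type factor})$ where $a=\dim G-\dim H$ or a similar explicit integer, and the second factor is itself the dimension of an irreducible representation attached to a \emph{smaller} group, built from the stabilizer data of $(H,\sL)$. (iii) Run an induction on $\dim G$: one checks that easiness is inherited by the auxiliary group produced in step (ii) — this is where I expect to lean on the structural results about easy groups (subquotients, centralizers) — so the inductive hypothesis applies and the residual factor is again a power of $q$. (iv) Conclude $\dim\rho$ is a power of $q$. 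The base case ($G$ a vector group, or $G(\bF_q)$ abelian) is trivial.

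The main obstacle, I expect, is step (ii)–(iii): making precise the claim that the residual/defect factor attached to an admissible pair is trivial under the easiness hypothesis, and that the auxiliary group inherits easiness. In effect this requires unwinding the $\bL$-packet description far enough to see exactly which geometric invariant the non-power-of-$q$ phenomenon lives in, and then proving a lemma of the shape ``if every geometric centralizer in $G$ is contained in its identity component, then the component group carrying the obstructing symplectic pairing is trivial.'' Everything else — the indices are powers of $q$, the cohomological amplitude of multiplicative local systems on connected unipotent groups is as expected, the induction bookkeeping — should be comparatively routine once the key geometric lemma is in place.
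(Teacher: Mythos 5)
Your overall intuition about where the obstruction lives — a symplectic-type pairing on a group attached to an admissible pair $(H,\cL)$, with easiness forcing a defect term to vanish — is sound, and your formula in step (ii), roughly $\dim\rho = q^{a}\cdot(\text{residual factor})$ with $a=\dim G-\dim G'$, matches the paper's final computation $\dim\rho = q^{\dim G-\dim G'}\cdot[G'(\bF_q):H(\bF_q)]^{1/2}$. But the scheme you propose for controlling the residual factor — pass to a smaller ``auxiliary group'' built from the stabilizer data, check that it inherits easiness, and induct on dimension — is precisely the route the paper explicitly says it cannot take. In the discussion ``From easy to arbitrary connected unipotent groups'' the authors write that they do not know how to produce enough \emph{easy} subgroups of an easy group to run such an induction, which is why the bulk of the paper is about arbitrary connected unipotent groups. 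Your step (iii) therefore rests on a claim (``easiness is inherited by the auxiliary group'') that is not established, is not used anywhere in the paper, and that the authors signal as the main obstacle to a naive inductive proof. Moreover the auxiliary group in question is the normalizer $G'$ of the admissible pair, and the residual factor is not the dimension of an irreducible of an easy group; it is the dimension $[G'(\bF_q):H(\bF_q)]^{1/2}$ of a Heisenberg representation of $G'(\bF_q)$, and showing it is a power of $q$ requires only that $\dim(G'/H)$ be even, not easiness of anything.

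The paper's actual mechanism is different and avoids the induction entirely. Easiness of $G$ forces the twist automorphism $\te$ to be trivial on all of $\sD_G(G)$ (Lemma \ref{l:triv-twists-easy}); this is the one place where the square of the braiding enters, via the definition of $\te$. The compatibility of $\ig$ with twists (Proposition \ref{p:induction-twists}), together with the faithfulness of $\ig$ on the Hecke subcategory $e'_\cL\sD_{G'}(G')$ (Theorem \ref{t:induction-Hecke}(b) plus Lemma \ref{l:closed-idemp}), transports this triviality back to the Hecke subcategory of $\sD_{G'}(G')$. By Proposition \ref{p:heis-idemp}(c), triviality of the twist there forces $G'$ to be \emph{connected} and the map $\vp_\cL:(G'/H)_{perf}\rar{}(G'/H)_{perf}^*$ to be an \emph{isomorphism} (not merely an isogeny). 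Proposition \ref{p:existence-lagr}(b), on Lagrangian subgroups for nondegenerate skewsymmetric bi-extensions, then gives that $\dim(G'/H)$ is even, so $[G'(\bF_q):H(\bF_q)]^{1/2}$ is a power of $q$. Connectedness of $G'$ gives $[G(\bF_q):G'(\bF_q)]=q^{\dim G-\dim G'}$, and Mackey's criterion plus Frobenius reciprocity identify $\rho$ with $\Ind_{G'(\bF_q)}^{G(\bF_q)}\rho'$. None of this requires showing any subgroup or subquotient of $G$ is easy. So the gap in your proposal is concrete: you rely on an inheritance property of easiness that is unavailable, whereas the paper extracts exactly two facts from easiness (connectedness of $G'$, even-dimensionality of $G'/H$) via the twist, and the rest is finite Heisenberg representation theory.
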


This result was conjectured by V.~Drinfeld in 2005. In our opinion,
it explains the ``geometry behind [the positive answer to] Higman's question.''

\mbr

After the first version of this article was written, V.~Drinfeld informed us that the following extension of Theorem \ref{t:gut-is-hal} (which gives a weaker result when applied to unipotent algebra groups) can be proved.

\begin{thm}\label{t:strong-higman}
If $G$ is an easy unipotent group over $\bF_q$,
then every irreducible representation of $G(\bF_q)$ is induced from a $1$-dimensional representation of a subgroup of the form $P(\bF_q)$, where $P\subset G$ is a closed \emph{connected}
subgroup.
\end{thm}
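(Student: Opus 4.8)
The plan is to deduce Theorem~\ref{t:strong-higman} from Theorem~\ref{t:dim-reps-easy} (character degrees are powers of $q$) together with the explicit description of $\bL$-packets in terms of admissible pairs (Theorem~\ref{t:descr-L-packets}). Let $G$ be an easy unipotent group over $\bF_q$ and $\rho$ an irreducible representation of $G(\bF_q)$. By Theorem~\ref{t:dim-reps-easy}, $\dim\rho = q^n$ for some $n\geq 0$. The case $n=0$ is trivial ($P=G$ works only if $\rho$ is itself $1$-dimensional, but in general we simply take $P$ to be a point-group; more precisely a trivial representation is induced from itself), so we may assume $n>0$ and argue by induction on $\dim G$, or more precisely on $\dim\rho$.

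The first step is to use the admissible-pair description: $\rho$ lies in the $\bL$-packet attached to an admissible pair $(H,\theta)$, where $H\subseteq G$ is a connected subgroup and $\theta$ is a multiplicative (rank~$1$) local system on $H$ (together with the relevant nondegeneracy and equivariance data). The geometry of admissible pairs should produce a closed \emph{connected} subgroup $H' \subseteq G$ containing $H$, together with a character $\psi$ of $H'(\bF_q)$ extending the character $\chi_\theta$ of $H(\bF_q)$ associated to $\theta$, such that $\rho$ is a summand of $\Ind_{H'(\bF_q)}^{G(\bF_q)}\psi$ and in fact $\dim\rho = [G(\bF_q):H'(\bF_q)]\cdot\dim(\text{something smaller})$. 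The key numerical input is that, because $G$ is easy, the function-sheaf dictionary forces all the relevant ``multiplicity'' and ``dimension'' contributions to be powers of $q$ with no genuine cohomological cancellation (this is exactly the phenomenon underlying the proof of Theorem~\ref{t:dim-reps-easy}, where only the square of the braiding enters): the intermediate object one induces from is again a representation living in an $\bL$-packet of the easy unipotent group $H'$, of strictly smaller dimension. Applying the inductive hypothesis to $H'$ (which is easy, being a closed subgroup of an easy unipotent group — here one uses that closed subgroups of unipotent groups are unipotent, and Remark preceding the statement, together with the fact that easiness of subgroups is what makes the argument close up), we obtain a closed connected $P\subseteq H'\subseteq G$ and a $1$-dimensional character of $P(\bF_q)$ inducing $\rho$.

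More carefully, the induction should be set up so that at each stage one peels off a $1$-dimensional ``twist'': one finds a closed connected normal subgroup $N\subseteq G$ and an irreducible $\rho'$ of $(G/N)(\bF_q)$ (or of a connected subgroup thereof) with $\rho = \Ind$ of a character-twisted pullback of $\rho'$, reducing $\dim\rho$. The mechanism for producing $N$ is the standard one for nilpotent groups: pick a central subgroup, examine the central character of $\rho$; if it is trivial on a nontrivial connected central subgroup $N$, descend to $G/N$; otherwise the center acts by a nontrivial character and one uses a ``little group'' / Mackey argument to induce from the stabilizer of that character, which is a \emph{proper} closed connected subgroup because $\dim\rho>1$. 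Easiness is used to guarantee that the stabilizer is connected and that no $q$-power dimension count degenerates — this is where Theorem~\ref{t:dim-reps-easy} is indispensable, since for non-easy groups (e.g.\ the fake Heisenberg groups) the stabilizer subgroup one would want to induce from can fail to be connected.

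The main obstacle I expect is precisely the connectedness of the intermediate subgroup $H'$ (equivalently, of the stabilizer of a central character): for a general unipotent group the stabilizer of a character of a central subgroup need not be connected, and it is exactly the easy hypothesis — via the input of Theorem~\ref{t:dim-reps-easy} and the admissible-pair formalism of Theorem~\ref{t:descr-L-packets} — that must be leveraged to force connectedness at every step. Making this leverage precise, i.e.\ extracting from the ``weak idempotent'' / $\bL$-packet machinery a genuinely \emph{connected} subgroup at each inductive stage rather than merely a subgroup whose group of $\bF_q$-points does the job, is the crux; once that is in place the rest is the classical Clifford-theory induction for nilpotent groups, carried out compatibly with the algebraic-group structure.
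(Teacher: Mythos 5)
Your proposal correctly identifies the crux — extracting a genuinely \emph{connected} subgroup rather than merely a subgroup of $\bF_q$-points — but it does not resolve it, and the inductive framework you set up around it has a fatal gap.

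First, the fatal gap: you assert parenthetically that the intermediate group $H'$ is easy ``being a closed subgroup of an easy unipotent group.'' This is false. Closed subgroups of easy unipotent groups need not be easy, and the paper says so explicitly: the impossibility of producing enough easy subgroups of an easy group is precisely the reason an inductive proof of Theorem~\ref{t:dim-reps-easy} along these lines is not known, and is the reason the whole machinery of $\bL$-packets for \emph{arbitrary} connected unipotent groups was developed. So you cannot invoke your inductive hypothesis for $H'$. (The ``Remark preceding the statement'' says something entirely different: that no easy group is known whose geometric centralizers fail to be connected.)

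Second, and independently, you never actually produce the connected subgroup. The paper's proof (Appendix B) is not an induction at all: the proof of Theorem~\ref{t:dim-reps-easy} already shows that $\rho\cong\Ind_{\widetilde L}^{G(\bF_q)}\chi$, where $\widetilde L$ is the preimage in $G'(\bF_q)$ of a Lagrangian subgroup $L$ of the finite symplectic group $(G'/H)(\bF_q)$, and $G'$ is \emph{connected}. The only missing ingredient is to show that $L$ can be chosen to be of the form $\underline{L}(\bF_q)$ for a connected algebraic subgroup $\underline{L}\subset G'/H$. That is Proposition~\ref{p:auxiliary}, and after reducing to the $p$-torsion case ($A\cong\bG_a^n$) it comes down to the elementary fact (Lemma~\ref{l:many-subgroups}) that \emph{any} subgroup of $\bG_a^n(\bF_q)$ of $q$-power order is the $\bF_q$-points of a connected algebraic subgroup; this in turn follows from the surjectivity of $\End(\bG_a)\rar{}\End_{\bF_p}(\bF_q)$, a consequence of the Galois-theoretic statement that $L\#\Gal(L/K)\cong\End_K(L)$. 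None of this algebra appears in your sketch; the ``function--sheaf dictionary forces dimensions to be $q$-powers with no cancellation'' heuristic does not by itself tell you the Lagrangian has the right algebraic shape. Your instinct that easiness gives connectedness of the stabilizer $G'$ is correct (it is fact (2) in the appendix), but that alone leaves $\widetilde L$ as an arbitrary finite subgroup of $G'(\bF_q)$ with no evident algebraic origin.
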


We note that this result implies Theorem \ref{t:dim-reps-easy}, because the index of $P(\bF_q)$ in $G(\bF_q)$ equals $q^{\dim G-\dim P}$. However, the proof of Theorem \ref{t:strong-higman} relies on many of the key ingredients needed for our proof of Theorem \ref{t:dim-reps-easy}. With his kind permission, we reproduce Drinfeld's proof of Theorem \ref{t:strong-higman} in Appendix B.

\subsection{From easy to arbitrary connected unipotent groups}
It turns out that in order to prove Theorem \ref{t:dim-reps-easy}
one has to formulate and prove a more general statement about
irreducible characters of $G(\bF_q)$ for an \emph{arbitrary}
connected unipotent group $G$ over $\bF_q$. The reason is that all
approaches to representation theory for unipotent groups known to us
are based on induction on $\dim G$ in one way or another, reducing
the questions one is interested in to similar questions for
subgroups of $G$ of smaller dimension. For instance, the proof of
Theorem \ref{t:gut-is-hal} ultimately relies on the possibility of
constructing many nontrivial multiplicatively closed subspaces
inside the Jacobson radical $J(A)$ of a finite dimensional algebra
$A$. However, if $G$ is an arbitrary easy unipotent group over
$\bF_q$, it is not known to us how to construct sufficiently many
\emph{easy} subgroups of $G$ to make it possible to give an
inductive proof of Theorem \ref{t:dim-reps-easy}. On the other hand,
$G$ has lots of \emph{connected} closed subgroups, and most of our
paper is devoted to the study of arbitrary connected unipotent
groups over finite fields.

\subsection{Definition of $\bL$-indistinguishability}\label{ss:def-L-indist}
In the remainder of this section we will freely use the language of
$\ell$-adic cohomology \cite{sga4,sga4.5,sga5}. A brief review of
the terminology appears in the first half of Section
\ref{s:equivariant-derived}.

\mbr

Let $G$ be a connected unipotent group over $\bF_q$, and let
$\mu:G\times G\rar{}G$ be the multiplication morphism. The
definition of the equivariant derived category $\sD_G(G)$, together
with the functor of convolution with compact
supports,
\[
\sD_G(G)\times\sD_G(G)\rar{}\sD_G(G), \qquad (M,N)\longmapsto
M*N=R\mu_!(M\boxtimes N),
\]
is recalled in \S\ref{ss:der-equiv-der-convolution}. An object
$e\in\sD_G(G)$ is said to be a \emph{weak idempotent} if $e*e\cong
e$. If this holds, it is clear that the associated trace function
$t_e:G(\bF_q)\rar{}\ql$ is a central idempotent with respect to the
usual convolution on the space of $\ql$-valued functions on
$G(\bF_q)$. In particular, $t_e$ acts either as zero or as the
identity in every irreducible representation of $G(\bF_q)$.

\begin{defin}\label{d:L-packets}
Two irreducible representations, $\rho_1$ and $\rho_2$, of
$G(\bF_q)$ are said to be \emph{$\bL$-indistinguishable} if
for every weak idempotent $e\in\sD_G(G)$, the function $t_e$ acts in
the same way in $\rho_1$ and in $\rho_2$. The equivalence classes
with respect to the relation of $\bL$-indistinguishability are called \emph{Lusztig packets} of irreducible representations.
\end{defin}

From here on, for brevity, we write ``$\bL$-packet'' in place of ``Lusztig packet.''

\begin{rem}\label{r:langlands}
The conjectural notion of an $L$-packet in representation theory
of reductive groups over \emph{local} fields was introduced by
R.P.~Langlands in \cite{langlands}. It is hard to compare it with
the notion of an $\bL$-packet because technically the two definitions
are given in quite different terms. However, philosophically the two notions
are closely related. Namely, as explained by R.~Bezrukavnikov, $\bL$-packets are philosophically similar to \emph{geometric} $L$-packets, which are, in general, larger than the $L$-packets defined by Langlands\footnote{Conjecturally, $L$-packets bijectively correspond to ``Langlands parameters.''  Geometric $L$-packets should correspond to Frobenius-invariant ``geometric Langlands parameters'' (one obtains geometric Langlands parameters from the usual ones by replacing the Weil-Deligne group $W'_K$ with $\Ker (W'_K\twoheadrightarrow\bZ)$). Thus each geometric $L$-packet is a union of several ordinary $L$-packets.}.
\end{rem}

\subsection{Multiplicative local
systems}\label{ss:intro-mult-loc-sys} If $G$ is an arbitrary
connected unipotent group over $\bF_q$, it is not at all clear how
to describe all weak idempotents in the category $\sD_G(G)$. For
instance, it is not even obvious that there are any apart from the
zero object and the unit object $\e$. In \S\ref{ss:descr-L-packets}
we will state our second main result (Theorem
\ref{t:descr-L-packets}), which yields a description of $\bL$-packets
of irreducible representations of $G(\bF_q)$ in terms of more
concrete objects, the so-called ``admissible pairs''
(\S\ref{ss:intro-adm-pairs}) for $G$. This description is one of the
key ingredients in our proof of Theorem \ref{t:dim-reps-easy}. In
\S\ref{ss:fake-Heis} below, we show how it can be used to describe
all $\bL$-packets of irreducible representations of $G(\bF_q)$ when
$G$ is a fake Heisenberg group over $\bF_q$. We begin by introducing

\begin{defin}\label{d:multiplicative}
If $k$ is a field and $\ell$ is a prime different from
$\operatorname{char}k$, a \emph{nonzero} $\ql$-local system $\cL$ on
a connected algebraic group $H$ over $k$ is said to be
\emph{multiplicative} if $\mu^*(\cL)\cong\cL\boxtimes\cL$, where
$\mu:H\times_k H\rar{}H$ denotes the multiplication morphism.
\end{defin}

\begin{rem}\label{r:character-mult-loc-sys}
If $k=\bF_q$ and $\cL$ is a multiplicative $\ql$-local system on
$H$, it is clear that the ``trace function''
(\S\ref{ss:sheaves-to-functions}) $t_{\cL}$ defined by $\cL$ is a
homomorphism $H(\bF_q)\rar{}\ql^\times$. Moreover, $\cL$ can be
recovered from $t_{\cL}$ up to isomorphism. If $H$ is
\emph{commutative}, every homomorphism $H(\bF_q)\rar{}\ql^\times$
arises in this way (cf.~Proposition \ref{p:Serre-Pontryagin}). For
noncommutative $H$, this statement fails in general, even if $H$ is
unipotent (cf. the example of the fake Heisenberg groups discussed
in \cite{intro} and in \S\ref{ss:fake-Heis}).
\end{rem}

\subsection{Admissible pairs}\label{ss:intro-adm-pairs}
Let $(H,\cL)$ denote a pair consisting of a closed connected
subgroup $H\subset G$ and a multiplicative $\ql$-local system $\cL$
on $H$. The notion of what it means for this pair to be
\emph{admissible} is introduced in \S\ref{ss:def-admissible}. The
precise definition is somewhat technical, so here we will only
remark that admissibility is a certain geometric nondegeneracy
condition. (In this context, the word ``geometric'' refers to the
fact that this property depends only on the triple
$(G\tens_{\bF_q}\bF,H\tens_{\bF_q}\bF,\cL\tens_{\bF_q}\bF)$ obtained
from $(G,H,\cL)$ by base change to $\bF$, an algebraic closure of
$\bF_q$.) It should be thought of as a geometrization of the
following purely algebraic version.

\begin{defin}
Let $\Ga$ be a finite group, and consider a pair $(H,\chi)$
consisting of a subgroup $H\subset\Ga$ and a homomorphism
$\chi:H\rar{}\ql^\times$. Let $\Ga'$ be the stabilizer of the pair
$(H,\chi)$ for the conjugation action of $\Ga$. We say that the pair
$(H,\chi)$ is admissible if the following three conditions are
satisfied:
\begin{enumerate}[$($1$)$]
\item $\Ga'/H$ is commutative;
\item the map $B_\chi:(\Ga'/H)\times(\Ga'/H)\rar{}\ql^\times$
induced by
$(\ga_1,\ga_2)\longmapsto\chi(\ga_1\ga_2\ga_1^{-1}\ga_2^{-1})$
(which by (1) is well defined and bi-additive) is a perfect
pairing of finite abelian groups, i.e., induces an isomorphism
$\Ga'/H\rar{\simeq}\Hom(\Ga'/H,\ql^\times)$; and
\item for every $g\in\Ga$, $g\not\in\Ga'$, we have
$\chi\big\lvert_{H\cap H^g}\neq\chi^g\bigl\lvert_{H\cap H^g}$, where
$H^g=g^{-1}Hg$ and $\chi^g:H^g\rar{}\ql^\times$ is obtained from
$\chi$ by transport of structure.
\end{enumerate}
\end{defin}

\begin{rem}
Conditions (1) and (2) in the algebraic definition of admissibility
imply that the group $\Ga'$ has a unique irreducible representation
$\pi_\chi$ over $\ql$ which acts on $H$ by the homomorphism $\chi$.
Condition (3) further implies that the induced representation
$\Ind_{\Ga'}^\Ga\pi_\chi$ is irreducible $($in view of Mackey's
irreducibility criterion$)$. The geometric notion of admissibility
serves a somewhat similar purpose.
\end{rem}

\subsection{Explicit description of
$\bL$-packets}\label{ss:descr-L-packets} We now return to the
geometric setting. Let $G$ be a connected unipotent group over
$\bF_q$, and $(H_1,\cL_1)$, $(H_2,\cL_2)$ two pairs consisting of
closed connected subgroups $H_1,H_2\subset G$ and multiplicative
local systems $\cL_j$ on $H_j$ ($j=1,2$). We say that these pairs
are \emph{geometrically conjugate} if there exists $g\in G(\bF)$
which conjugates one of them into the other. Note that, in general,
geometric conjugacy is weaker than conjugacy by an element of
$G(\bF_q)$.

\begin{defin}\label{d:adm-pair-L-packet}
Let $\sC$ be a geometric conjugacy class of admissible pairs
$(H,\cL)$ as above for $G$. We define a set $L(\sC)$ of (isomorphism
classes of) irreducible representation of $G(\bF_q)$ over $\ql$ as
follows. We say that $\rho\in L(\sC)$ if there exists
$(H,\cL)\in\sC$ such that $\rho$ is an irreducible summand of
$\Ind_{H(\bF_q)}^{G(\bF_q)} t_\cL$.
\end{defin}

It is immediate that each of the sets $L(\sC)$ is nonempty. The
second main result of our paper claims that the sets $L(\sC)$ are
precisely the $\bL$-packets of irreducible representations of
$G(\bF_q)$. We prove it in \S\ref{ss:proof-t:descr-L-packets}. This
result, along with the definition of admissible pairs, was also
formulated by V.~Drinfeld.

\begin{thm}[Main Theorem 2]\label{t:descr-L-packets}
Let $G$ be a connected unipotent group over $\bF_q$. For every
geometric conjugacy class $\sC$ of admissible pairs for $G$, the set
$L(\sC)$ is an $\bL$-packet. Conversely, every $\bL$-packets of
irreducible representations of $G(\bF_q)$ is of the form $L(\sC)$
for some geometric conjugacy class $\sC$ of admissible pairs.
\end{thm}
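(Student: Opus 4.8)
The plan is to deduce the asserted coincidence of partitions from four facts about the sets $L(\sC)$, where $\sC$ ranges over geometric conjugacy classes of admissible pairs: (A) the $L(\sC)$ cover the set $\operatorname{Irr}\pare{G(\bF_q)}$ of (isomorphism classes of) irreducible representations of $G(\bF_q)$; (B) they are pairwise disjoint; (C) each $L(\sC)$ is contained in a single $\bL$-packet; and, finally, that no two distinct $L(\sC)$, $L(\sC')$ meet the same $\bL$-packet. Since $\bL$-indistinguishability is an equivalence relation, the $\bL$-packets already partition $\operatorname{Irr}\pare{G(\bF_q)}$; granting (A)--(C) the sets $L(\sC)$ then form a partition refining the partition into $\bL$-packets, and the last fact shows this refinement is not strict, so the two partitions agree --- which is exactly Theorem \ref{t:descr-L-packets}.

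The technical core that I would establish first is the construction, for each geometric conjugacy class $\sC$ of admissible pairs, of a weak idempotent $e_\sC\in\sD_G(G)$ whose trace function $t_{e_\sC}$ is the central idempotent of $\Fun\pare{G(\bF_q)}^{G(\bF_q)}$ projecting onto the span of $L(\sC)$. Choosing $(H,\cL)\in\sC$ defined over $\bF_q$, one forms the convolution-induction of a suitably shifted and twisted version of $\cL$ first from $H$ to the stabilizer $G'$ of the pair $(H,\cL)$, and then from $G'$ to $G$. Admissibility is precisely what makes this an idempotent: condition $(1)$ makes $G'/H$ commutative, condition $(2)$ (the perfect-pairing condition) turns the intermediate object into a minimal idempotent of the relevant equivariant derived category of $G'$, and condition $(3)$ plays the role of Mackey's irreducibility criterion, ensuring that induction to $G$ preserves idempotency; independence of the choice of $(H,\cL)$ within $\sC$ follows because two such choices are $G(\bF)$-conjugate. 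With $e_\sC$ in hand the ``not strictly finer'' statement is immediate, since $t_{e_\sC}$ acts by $1$ on $L(\sC)$ and, by (B), by $0$ on any $L(\sC')$ with $\sC'\neq\sC$.

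For (A) I would argue by induction on $\dim G$, in the spirit of the proof of Theorem \ref{t:strong-higman}: starting from the classical fact that every irreducible representation $\rho$ of the nilpotent group $G(\bF_q)$ is monomial, one applies Clifford theory relative to a suitable normal connected subgroup on which $\rho$ restricts through the trace function of a multiplicative local system, passes to its stabilizer, and inductively produces an admissible pair $(H,\cL)$ such that $\rho$ appears in $\Ind_{H(\bF_q)}^{G(\bF_q)}t_\cL$. For (B), if $\rho\in L(\sC)\cap L(\sC')$, Frobenius reciprocity and the Mackey formula yield an element $g\in G(\bF_q)$ along which two admissible local systems agree on an intersection of conjugates of their supports, and condition $(3)$ in its geometric form forces $g$ to conjugate one pair into the other, so $\sC=\sC'$. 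For (C), given an arbitrary weak idempotent $e$, one shows that $e*e_\sC$ is again a weak idempotent --- the one place where a vestige of the braided structure (the square of the braiding) is used, to identify $e*e_\sC$ with $e_\sC*e$ --- with trace function $t_e*t_{e_\sC}$, which expands in the minimal idempotents $e_\rho$ as the projector onto those $\rho\in L(\sC)$ on which $t_e$ acts by $1$; it then remains to see that $e_\sC$ is \emph{minimal} among weak idempotents, i.e.\ that $e*e_\sC$ has the same trace function as $0$ or as $e_\sC$. This last point I would prove by base change to $\bF$, where $e_{\sC,\bF}$ is a minimal idempotent of $\sD_{G_\bF}(G_\bF)$ by the construction together with the geometric theory of minimal idempotents developed in this series, followed by descent to $\bF_q$ --- the only ambiguity being a geometrically trivial twist, which does not change the associated central idempotent.

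The step I expect to be the main obstacle is the systematic passage between the \emph{geometric} notions (geometric conjugacy, and admissibility, which is a condition over $\bF$) and the \emph{rational} representation theory of the finite group $G(\bF_q)$: the stabilizer of $(H,\cL)$ in $G(\bF_q)$ need not be the group of $\bF_q$-points of the geometric stabilizer, $G(\bF)$-double cosets need not refine into $G(\bF_q)$-double cosets, and condition $(3)$ --- which drives both the disjointness in (B) and the Mackey-type irreducibility behind the construction of $e_\sC$ --- has to be made to do work at the level of $G(\bF_q)$. Controlling this requires Lang's theorem and careful bookkeeping with the relevant (often non-abelian) $H^1$'s, exactly as in Deligne--Lusztig theory; the same circle of ideas is what makes the descent step in the minimality argument for $e_\sC$ delicate.
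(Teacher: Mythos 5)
Your proposal follows essentially the same route as the paper. The heart of the argument in both cases is the construction, for each admissible pair $(H,\cL)$ with normalizer $G'$, of the Heisenberg idempotent $e'_\cL$ on $G'$ (from $\cL\tens\bK_H$ extended by zero to $G'$), the proof that $e_{H,\cL}=\ind_{G'}^G e'_\cL$ is a geometrically minimal weak idempotent in $\sD_G(G)$ via the Mackey-type criterion for induction of weak idempotents, the computation of $t_{e_{H,\cL}}$ using the compatibility of $\ind_{G'}^G$ with induction of class functions (the sum over $\al\in H^1(\bF_q,G')$ that you correctly anticipate when flagging the geometric-vs.-rational subtleties), and the coverage statement (Theorem \ref{t:adm-pair-compatible}). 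These are exactly the ingredients combined in Proposition \ref{p:key}.

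Where you differ is only in the bookkeeping, and in a way that adds redundancy. The paper proves directly that each $L(\sC)=L(e_{H,\cL})$ \emph{is} an $\bL$-packet (Proposition \ref{p:key} plus Remark \ref{r:idempotents-L-packets}(1)); the converse direction and the ``equal or disjoint'' statement then fall out as immediate consequences, because any $\bL$-packet contains some $\rho$, that $\rho$ lies in some $L(\sC)$, and two $\bL$-packets that meet coincide. Your (A)--(D) framework proves the same theorem, but step (B) --- pairwise disjointness of the $L(\sC)$'s established by a direct Mackey double-coset analysis --- is both more than is needed (it would give injectivity of $\sC\mapsto L(\sC)$ on nonempty values, which the paper never asserts or uses) and delicate for precisely the geometric/rational reason you raise at the end. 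Two small corrections of detail: the passage from $H$ to $G'$ in the construction of $e_{H,\cL}$ is extension by zero along the closed embedding $H\into G'$, not an application of an induction functor; and the isomorphism $e*e_\sC\cong e_\sC*e$ that you invoke uses only the elementary functorial isomorphism of Lemma \ref{l:central-equivariant-derived}, not the square of the braiding --- the twist (which is the square of the braiding) enters only in the proof of Theorem \ref{t:dim-reps-easy} via Lemma \ref{l:triv-twists-easy} and Proposition \ref{p:heis-idemp}(c), not in the proof of the present theorem.
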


\begin{cor} With the notation above,
\begin{enumerate}[$($a$)$]
\item every irreducible representation of $G(\bF_q)$ over $\ql$ lies
in $L(\sC)$ for some geometric conjugacy class $\sC$ of admissible
pairs for $G$; and
\item if $\sC_1$ and $\sC_2$ are two geometric conjugacy classes of admissible pairs for $G$, the sets
$L(\sC_1)$ and $L(\sC_2)$ are either equal or disjoint.
\end{enumerate}
\end{cor}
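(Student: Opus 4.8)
The plan is to deduce the corollary formally from \theor{t:descr-L-packets}. The first step is to observe that $\bL$-indistinguishability, as defined in Definition~\ref{d:L-packets}, is an equivalence relation on the set of isomorphism classes of irreducible representations of $G(\bF_q)$ over $\ql$. Indeed, to each such $\rho$ one can attach the function that sends a weak idempotent $e\in\sD_G(G)$ to the scalar ($0$ or $1$, as noted just after Definition~\ref{d:L-packets}) by which $t_e$ acts in $\rho$; then $\rho_1$ and $\rho_2$ are $\bL$-indistinguishable precisely when these two functions coincide, and ``having the same associated function'' is visibly reflexive, symmetric and transitive. Consequently the $\bL$-packets, being the equivalence classes of this relation, form a partition of the set of irreducible representations of $G(\bF_q)$: every such representation lies in exactly one $\bL$-packet, and two $\bL$-packets are either equal or disjoint.

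The second step is simply to invoke \theor{t:descr-L-packets}, which identifies the sets $L(\sC)$ --- as $\sC$ runs over the geometric conjugacy classes of admissible pairs for $G$ --- with the $\bL$-packets of irreducible representations of $G(\bF_q)$. Statement (a) then follows because the $\bL$-packets cover the whole set of irreducible representations, so every $\rho$ lies in some $L(\sC)$; and statement (b) follows because $L(\sC_1)$ and $L(\sC_2)$ are both $\bL$-packets, hence equivalence classes of one and the same equivalence relation, and therefore either equal or disjoint.

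There is no genuine obstacle here: all of the content is in \theor{t:descr-L-packets}, and the corollary is a purely formal consequence. The only subtlety worth flagging in the write-up is that the assignment $\sC\mapsto L(\sC)$ need not be injective --- distinct geometric conjugacy classes of admissible pairs may give rise to the same $\bL$-packet --- which is exactly why part (b) must be phrased as ``equal or disjoint'' rather than as plain disjointness, and why \theor{t:descr-L-packets} asserts only that $\sC\mapsto L(\sC)$ is surjective onto the set of $\bL$-packets.
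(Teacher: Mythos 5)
Your argument is correct and is exactly the derivation the paper intends: the corollary is stated without a separate proof, being a purely formal consequence of Theorem~\ref{t:descr-L-packets} together with the observation that $\bL$-packets, as equivalence classes of $\bL$-indistinguishability, partition the set of irreducible representations. Your remark that $\sC\mapsto L(\sC)$ need not be injective is also well taken and explains why (b) must allow for equality as well as disjointness.
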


Note that, \emph{a priori}, neither of the statements of this
corollary is obvious.

\subsection{Example: $\bL$-packets for the fake Heisenberg
groups}\label{ss:fake-Heis} We conclude this overview with an
example which to some extent motivated the notion of an admissible
pair. If $k$ is a field of characteristic $p>2$, we define a
\emph{fake Heisenberg group} over $k$ to be a connected
noncommutative unipotent algebraic group $G$ over $k$ of exponent
$p$ and dimension $2$ (hence the word ``fake''). The reason for
imposing the restriction $p>2$ is that every $2$-dimensional
unipotent group in characteristic $0$ is commutative (which follows
from the corresponding statement for Lie algebras), and that every
group of exponent $2$ is commutative\footnote{In characteristic $2$,
there also exist connected noncommutative $2$-dimensional unipotent
groups, but they all have exponent $4$.}.

\mbr

However, if $p>2$, there are plenty of examples of fake Heisenberg
groups over $\bF_q$: see \cite{intro}.
Here we will describe the $\bL$-packets of irreducible representations
for such groups. We begin with a simple auxiliary result.

\begin{lem}\label{l:conn-sbgrps-2-dim}
If $G$ is a noncommutative connected unipotent group of dimension
$2$ over a field $k$, then the only nontrivial proper closed
connected subgroup of $G$ is its commutator, $[G,G]$. Moreover, such
a group $G$ is not easy $($Definition \ref{d:easy}$)$.
\end{lem}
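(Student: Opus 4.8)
The plan is to prove the two assertions of Lemma \ref{l:conn-sbgrps-2-dim} in turn, working with $G_{\bF}=G\tens_k\bF$ (or over any algebraically closed field, since both statements are geometric). First I would recall that $G$ is a connected unipotent group of dimension $2$, so its commutator subgroup $[G,G]$ is a closed connected subgroup which is nontrivial (as $G$ is noncommutative) and proper (since $G/[G,G]$, the abelianization, is nontrivial — a noncommutative nilpotent group cannot equal its own commutator). Since $G$ has dimension $2$ and $[G,G]$ is a nontrivial proper connected subgroup, it has dimension $1$; in particular $[G,G]$ is central (as $G/[G,G]$ acts on the $1$-dimensional $[G,G]$, and $\Aut$ of a $1$-dimensional connected unipotent group over a field of characteristic $p$ fixing it pointwise... more carefully: the conjugation action of $G$ on $[G,G]$ factors through the abelian quotient $G/[G,G]$, and since the action is by unipotent automorphisms on $\bG_a$ and $G/[G,G]$ is unipotent, and more to the point $[[G,G],[G,G]]$ lies deeper in the lower central series which must terminate, $G$ is nilpotent of class $2$, so $[G,G]$ is central).

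For the classification of connected closed subgroups: let $P\subset G_{\bF}$ be a nontrivial proper closed connected subgroup, so $\dim P=1$. I claim $P=[G,G]$. Indeed, if $P\neq[G,G]$, then $P\cdot[G,G]$ is a closed connected subgroup (product of a subgroup with a central — hence normal — subgroup) of dimension $2$, so $P\cdot[G,G]=G$; but then $G/[G,G]$ is a quotient of $P\cong P/(P\cap[G,G])$, which is commutative, forcing $G$ to be... wait, that only gives the abelianization is a quotient of $P$, which is automatic. The better argument: $P$ is central. To see this, note $P$ is a $1$-dimensional connected unipotent group normalized by... hmm, $P$ need not be normal a priori. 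Let me instead argue: consider the conjugation action of $G$ on the set of $1$-dimensional connected subgroups. Actually the cleanest route is to use that $G$, being connected unipotent of dimension $2$ in characteristic $p>0$, sits in a short exact sequence; one knows the only $1$-dimensional connected closed subgroups are the kernels of surjections $G\onto \bG_a$ together with $[G,G]$, but by Remark-style reasoning, since $G/[G,G]\cong\bG_a^{\,?}$ — here I should use the exponent-$p$ hypothesis if needed, but the Lemma as stated does not assume exponent $p$, only for the ``fake Heisenberg'' application — I'd show any $1$-dimensional $P$ with $P\neq [G,G]$ maps isomorphically onto a $1$-dimensional subgroup of the abelianization under $G\to G/[G,G]$; if $\dim G/[G,G]=1$ this would force $P\cap [G,G]$ finite hence trivial (both connected), so $P\times[G,G]\to G$ would be an isogeny, making $G$ commutative — contradiction. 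So every $1$-dimensional connected $P$ is contained in $[G,G]$, hence equals it. (I should double-check that $\dim G/[G,G]=1$: since $[G,G]$ has dimension $1$ and $G$ has dimension $2$, yes.)

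For the ``not easy'' assertion: I would exhibit a geometric point $g\in G(\bF)$ with $g\notin Z(g)^\circ$. Since $G$ is noncommutative, pick $g\in G(\bF)\setminus Z(G)(\bF)$; equivalently pick $g$ not in $[G,G]=Z(G)^\circ$ — more precisely $Z(G)\supseteq[G,G]$ and, as $G$ is noncommutative of dimension $2$ with $[G,G]$ of dimension $1$, we have $Z(G)^\circ=[G,G]$ (the center cannot be all of $G$, and if $\dim Z(G)=2$ then $G=Z(G)^\circ$ is commutative). Now take any $g\notin[G,G]$. Its centralizer $Z(g)$ contains $g$ and contains $Z(G)\supseteq[G,G]$, so $\dim Z(g)\geq 1$; but $Z(g)\neq G$ since $g$ is not central, so $\dim Z(g)=1$, whence $Z(g)^\circ$ is a $1$-dimensional connected closed subgroup, hence by the first part $Z(g)^\circ=[G,G]$. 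Since $g\notin[G,G]=Z(g)^\circ$, the group $G$ is not easy. I expect the main obstacle to be the careful verification that $[G,G]$ is central of dimension exactly $1$ and that it is the unique $1$-dimensional connected subgroup — i.e., ruling out a ``transverse'' $1$-dimensional subgroup — which hinges on the fact that the abelianization $G/[G,G]$ is $1$-dimensional and on connectedness to upgrade a finite intersection to a trivial one.
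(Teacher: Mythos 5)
Your proof follows essentially the same strategy as the paper's: establish that $[G,G]$ is $1$-dimensional and central, derive a contradiction from the existence of a transverse $1$-dimensional connected subgroup by forcing $G$ to be commutative, and for the second assertion take a non-central geometric point $g$, observe $[G,G]\subset Z(g)\neq G$, so $Z(g)^\circ=[G,G]\not\ni g$.

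One wrinkle is worth flagging. You assert that $P\cap[G,G]$ is ``finite hence trivial (both connected),'' and at the end you say the argument ``hinges on\dots connectedness to upgrade a finite intersection to a trivial one.'' That inference is false: connectedness of two subgroups does not make a finite intersection trivial. In characteristic $p$, a copy of $\bG_a$ contains many nontrivial finite subgroup schemes ($\bZ/p\bZ$, $\alpha_p$, \dots), so two connected $1$-dimensional subgroups of a $2$-dimensional group can perfectly well meet in a nontrivial finite group scheme. Your proof survives anyway, because you only actually use finiteness: the map $P\times[G,G]\to G$ is a surjective homomorphism (it is a homomorphism because $[G,G]$ is central), its source is commutative, and a quotient of a commutative group is commutative — triviality of the kernel is not needed. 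The paper sidesteps the issue with a slightly cleaner argument: writing $H$ for the transverse subgroup, $[H,H]\subset H\cap[G,G]$ is the commutator of a connected group, hence connected, and $0$-dimensional, hence trivial; so $H$ is commutative and $G=H\cdot[G,G]$ is commutative, a contradiction.
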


\begin{proof}{Proof}
The assumptions imply that $[G,G]$ is connected and $\dim[G,G]=1$,
so that $\dim G^{ab}=1$ as well, where $G^{ab}=G/[G,G]$ is the
abelianization of $G$. Moreover, $[G,G]$ is contained in the center
of $G$. Now let $H\subset G$ be a proper closed connected subgroup,
and suppose $H\not\subset [G,G]$. Then $H$ projects epimorphically
onto $G^{ab}$, which implies that $G=H\cdot[G,G]$. Hence
$[G,G]\not\subset H$ as well. Therefore $[H,H]\subset H\cap[G,G]$ is
connected and $0$-dimensional, whence trivial. Thus $H$ is
commutative. This implies that $G$ is commutative, which is a
contradiction.

\mbr

For the second claim, note that if $g$ is a geometric point of $G$
which does not lie in the center of $G$, then $Z(g)\neq
G\tens_k\overline{k}$, whereas $[G,G]\tens_k\overline{k}\subset
Z(g)$, whence $Z(g)^\circ=[G,G]\tens_k \overline{k}$, which implies
that $g\not\in Z(g)^\circ(\overline{k})$.
\end{proof}

Let $G$ be a fake Heisenberg group over $\bF_q$, and consider a pair
$(H,\cL)$ consisting of a closed connected subgroup $H\subset G$ and
a multiplicative $\ql$-local system $\cL$ on $H$. If $H=G$, this
pair is trivially admissible; its geometric conjugacy class $\sC$
reduces to the single pair $(H,\cL)$; and the corresponding
$\bL$-packet $L(\sC)$ consists of the single $1$-dimensional
representation $t_{\cL}:G(\bF_q)\rar{}\ql^\times$.

\mbr

Usually, however, not every $\bL$-packet of irreducible
representations of $G(\bF_q)$ is of this form. For instance, if
$G(\bF_q)$ is noncommutative, it has irreducible representations of
dimension $>1$. On the other hand, in many cases where $G(\bF_q)$
\emph{is} commutative, not every $1$-dimensional representation of
$G(\bF_q)$ comes from a multiplicative local system on $G$.

\mbr

To find other $\bL$-packets of irreducible representations of
$G(\bF_q)$, we must allow $H\neq G$. It is clear that $H$ cannot be
trivial, so by Lemma \ref{l:conn-sbgrps-2-dim}, the only remaining
possibility is $H=[G,G]$. In this case $H$ is central in $G$, so
every $\ql$-local system on $H$ is automatically $G$-invariant. It
is easy to check that if $\cL$ is a multiplicative $\ql$-local
system on $H$, the pair $(H,\cL)$ is admissible for $G$ if and only
if $\cL$ is nontrivial. Moreover, in this case, the geometric
conjugacy class $\sC$ of $(H,\cL)$ also reduces to the single pair
$(H,\cL)$, and the corresponding $\bL$-packet $L(\sC)$ consists of all
irreducible representations of $G(\bF_q)$ that act by the scalar
$t_\cL$ on $[G,G](\bF_q)$.



\section{The structure of the proofs}\label{s:structure}

In this section we describe the methods we used to prove Theorems
\ref{t:dim-reps-easy} and \ref{t:descr-L-packets}, stating several
other results that are interesting in their own right along the way.

\mbr

The main technical tools used in our proofs are:
\begin{itemize}
\item the equivariant derived category $\sD_G(G)$ for a unipotent
group $G$, along with the bifunctor $(M,N)\longmapsto M*N$ of
convolution with compact supports and the collection of ``twists''
$\te_M:M\rar{\simeq}M$ defined for all $M\in\sD_G(G)$;
\item the functor of induction with compact supports
$\ind_{G'}^G:\sD_{G'}(G')\rar{}\sD_G(G)$, defined for any closed
subgroup $G'\subset G$; and
\item the notion of an admissible pair for $G$, along with an extension of Serre
duality \cite{serre} to noncommutative connected unipotent groups.
\end{itemize}
The first two of these are introduced in Sections
\ref{s:equivariant-derived} and \ref{s:induction}, respectively,
where we also establish some auxiliary results involving these
technical tools. The extension of Serre duality to the
noncommutative setting, along with some new results on the classical
Serre duality and bi-extensions of connected commutative unipotent
groups by $\qzp$, appears in the (rather extensive) Appendix.
Admissible pairs are defined in \S\ref{ss:def-admissible}. The
proofs of Theorems \ref{t:dim-reps-easy} and \ref{t:descr-L-packets}
occupy Sections \ref{s:reduction}--\ref{s:proofs}; together, they
can be split into the following sequence of steps.

\subsection{Step 1}\label{ss:step-1} Let $G$ be a connected
unipotent group over $\bF_q$. We begin by proving the one result
which explicitly relates representations of $G(\bF_q)$ with the
geometry of $G$: namely, that for every irreducible representation
$\rho$ of $G(\bF_q)$ over $\ql$, there exists a geometric conjugacy
class $\sC$ of admissible pairs for $G$ such that $\rho$ lies in
$L(\sC)$ (cf.~Definition \ref{d:adm-pair-L-packet}). Theorem
\ref{t:adm-pair-compatible} gives a slightly more precise statement.

\mbr

One of the ingredients in this step should be useful in other
situations. Namely, in Proposition \ref{p:ext-loc-sys} we formulate
a condition under which a multiplicative $\ql$-local system on a
closed connected subgroup $H$ of a connected unipotent group $G$ can
be extended to a multiplicative $\ql$-local system on all of $G$.

\subsection{Step 2}\label{ss:step-2} Next we relate admissible
pairs to $\bL$-packets. If $(H,\cL)$ is an admissible pair for a
unipotent group $G$ over an arbitrary field $k$, we consider the
object $e_{\cL}=\bK_H\tens\cL\in\sD_H(H)$, where $\bK_H\in\sD_H(H)$
is the dualizing complex of $H$. One checks easily that
$e_{\cL}*e_{\cL}\cong e_{\cL}$, i.e., $e_{\cL}$ is a weak
idempotent. If $G'$ is the stabilizer of $(H,\cL)$ for the
conjugation action of $G$, it is easy to see that the extension of
$e_{\cL}$ by zero to all of $G'$ defines an object
$e'_{\cL}\in\sD_{G'}(G')$. Of course, $e'_{\cL}$ is also a weak
idempotent. Finally, we apply the functor of induction with compact
supports, and we show that $e_{H,\cL}:=\ig e'_{\cL}$ is a
\emph{minimal weak idempotent} in $\sD_G(G)$, i.e., a nonzero weak
idempotent such that if $e\in\sD_G(G)$ is any weak idempotent, then
either $e_{H,\cL}*e=0$, or $e_{H,\cL}*e\cong e_{H,\cL}$. All these
results are proved in Sections \ref{s:heis-idemp} and
\ref{s:proofs}.

\mbr

One of the ingredients here is a more general result,
proved in \S\ref{ss:ind-weak-idemp}, which gives a condition on a given weak idempotent $e\in\sD_{G'}(G')$ under which $f=\ig(e)$
is a weak idempotent in $\sD_G(G)$ and the functor $\ig$ restricts
to an \emph{equivalence of semigroupal categories}
$e*\sD_{G'}(G')\rar{\sim}f*\sD_G(G)$. The condition is reminiscent
of Mackey's criterion for the irreducibility of an induced
representation.

\subsection{Step 3}\label{ss:step-3} We explore the relationship
between the functor $\ig$ and the operation of induction of class
functions studied in \S\ref{ss:induction-sheaves-functions} to prove
that in the situation of Step 2, if $G$ is connected, $k=\bF_q$, and
$\sC$ is the geometric conjugacy class of the admissible pair
$(H,\cL)$, then the set $L(\sC)$ of irreducible
$\ql$-representations of $G(\bF_q)$ introduced in Definition
\ref{d:adm-pair-L-packet} coincides with the set of irreducible
$\ql$-representations of $G(\bF_q)$ on which the trace function
$t_{e_{H,\cL}}:G(\bF_q)\rar{}\ql$ acts as the identity.
\begin{rem}
The functor $\ig$ is often not compatible with induction of class
functions on the nose (unless $G'$ is connected), which is why we
must work with geometric conjugacy classes of admissible pairs,
rather than $G(\bF_q)$-conjugacy classes.
\end{rem}

\mbr

After we put the previous steps together, proving Theorem
\ref{t:descr-L-packets} becomes very easy. Namely, let
$e\in\sD_G(G)$ be any weak idempotent. If $t_e\equiv 0$, then we can
discard $e$ while trying to describe $\bL$-packets. Otherwise there
exists an irreducible $\ql$-representation $\rho$ of $G(\bF_q)$ on
which $t_e$ acts nontrivially. By Step 1, there exists a geometric
conjugacy class $\sC$ of admissible pairs for $G$ such that $\rho\in
L(\sC)$. By Step 3, if $(H,\cL)\in\sC$, then $t_{e_{H,\cL}}$ acts
nontrivially on $\rho$. This implies that
$t_e*t_{e_{H,\cL}}\not\equiv 0$. A fortiori, $e*e_{H,\cL}\neq 0$ (as
convolution of functions is clearly compatible with the convolution
with compact supports of $\ell$-adic complexes). By Step 2, this
implies that $e*e_{H,\cL}\cong e_{H,\cL}$, and therefore, applying
Step 3 again, we see that $e$ acts as the identity on every
irreducible representation of $G(\bF_q)$ appearing in $L(\sC)$. This
result, together with the statement proved in Step 1, implies
Theorem \ref{t:descr-L-packets}.

\subsection{Step 4} Now let $G$ be an easy unipotent group over
$\bF_q$. In the proof of Theorem \ref{t:dim-reps-easy} we use the
result of Step 1 above (but not the results of Steps 2 and 3). Thus
let $\rho$ be an irreducible $\ql$-representation of $G(\bF_q)$, and
choose an admissible pair $(H,\cL)$ for $G$ such that $\rho$ is an
irreducible summand of $\Ind_{H(\bF_q)}^{G(\bF_q)}t_{\cL}$.

\mbr

We employ the compatibility of the functor $\ig$ with twists
(Proposition \ref{p:induction-twists}) and the triviality of twists
in $\sD_G(G)$ (Lemma \ref{l:triv-twists-easy}) to prove that if
$G'$ is the stabilizer of $(H,\cL)$ for the conjugation action of
$G$, then $G'$ is necessarily connected and the homomorphism
$(G'/H)_{per\!f}\rar{}(G'/H)_{per\!f}^*$ appearing in the definition
of an admissible pair (see \S\ref{ss:def-admissible}) is an
isomorphism (not merely an isogeny). Here, $(G'/H)_{per\!f}$ denotes
the \emph{perfectization} of the group $G'/H$ (see
\S\ref{aa:perfect}), and $(G'/H)_{per\!f}^*$ is its \emph{Serre
dual} (see \S\ref{ss:Serre-comment} and \S\ref{aa:Serre-duality}).

\mbr

From this we deduce that $G'(\bF_q)$ has a unique irreducible
$\ql$-representation $\rho'$ which acts by the scalar $t_{\cL}$ on
$H(\bF_q)$. Mackey's criterion implies that
$\Ind_{G'(\bF_q)}^{G(\bF_q)}\rho'$ is irreducible, and Frobenius
reciprocity forces $\rho\cong\Ind_{G'(\bF_q)}^{G(\bF_q)}\rho'$. In
particular, $\dim\rho=q^{\dim G-\dim G'}\cdot\dim\rho'$ (because
$G'$ is connected).

\subsection{Step 5} To complete the proof of Theorem
\ref{t:dim-reps-easy} we must demonstrate that, in the situation of
the previous step, the dimension of $\rho'$ is a power of $q$. Since
$\dim\rho'=[G'(\bF_q):H(\bF_q)]^{1/2}$, this is the same as showing
that $\dim(G'/H)$ is even. In view of the fact that the canonical
map $(G'/H)_{per\!f}\rar{}(G'/H)^*_{per\!f}$ is an isomorphism, this
follows from a more general result, Proposition
\ref{p:existence-lagr}, proved in \S\ref{aa:lagrangian}.

\begin{rem}
As we already mentioned in the Introduction, in this paper we
do not define or use the braided monoidal structure on the category
$\sD_G(G)$, without which the significance of the ``twists'' in
$\sD_G(G)$ cannot be fully appreciated (see \cite{intro}). However,
we believe that the full power of the geometric techniques should be
reserved for the theory of character sheaves.
\end{rem}

\bbr

The reader who is only interested in understanding the general ideas
behind our arguments does not have to read any further. The missing
details of the proofs sketched above are filled in the remaining
sections, which are more technical.

\subsection{On Serre duality}\label{ss:Serre-comment} We end
with a comment on the notion of a multiplicative $\ql$-local system
used in the main body of the paper and the Serre duality studied in
the Appendix. In the proofs of our main results, Serre duality
serves mostly as a tool, and if $G$ is a connected unipotent group
over a perfect field $k$ of characteristic $p>0$, we think of the
Serre dual $G^*$ of $G$ as the ``moduli space of multiplicative
$\ql$-local systems on $G$''. However, if one wishes to prove
foundational results about Serre duality, the most natural framework
(which, in particular, is independent of $\ell$) is that of
\emph{central extensions} by the discrete group $\qzp$. It would
have been inconvenient for us to choose one of these viewpoints once
and for all, and to completely discard the other one. The
relationship between them is described in
\S\ref{ss:Serre-duality-two-approaches}.



\section{The category $\sD_G(G)$ and $\bL$-packets}\label{s:equivariant-derived}

\subsection{Derived categories of constructible
$\ell$-adic complexes}\label{ss:der-cat-constr-sheaves} Fix an
arbitrary field $k$ and a prime $\ell\neq\operatorname{char}k$ (in
\S\ref{ss:sheaves-to-functions} and \S\ref{ss:idemp-L-packets}, we
take $k$ to be finite). Throughout this section we will work with
schemes of finite type over $k$. If $X$ is such a scheme, one defines the bounded derived category $D^b_c(X,\ql)$ of
constructible complexes of $\ql$-sheaves on $X$. We will
denote this category simply by $\sD(X)$, with the understanding that
$\ell$ is fixed once and for all. It is a triangulated $\ql$-linear
category. For perfect $k$ the definition of $\sD(X)$ appears in \cite{ekedahl}, and in general we define $\sD(X)=\sD(X\tens_k k^{perf})$, where $k^{perf}$ is the perfect closure of $k$.

\begin{rem}
For the purposes of this work, it would be enough to consider the case where $k$ is finite or algebraically closed. Here the definition of $\sD(X)$ is more classical \cite{deligne-weil-2,sga4,sga4.5}. However, with future applications in mind, we consider the more general case in this section.
\end{rem}

We will often use Grothendieck's ``formalism of the six
functors'' for the categories $\sD(X)$ (as well as their equivariant versions, defined in \S\ref{ss:equivariant-derived} below). For a morphism $f:X\rar{}Y$ of $k$-schemes of finite type
one has the pullback functor $f^*:\sD(Y)\rar{}\sD(X)$,
the pushforward functor $f_*:\sD(X)\rar{}\sD(Y)$,
the functor $f_!:\sD(X)\rar{}\sD(Y)$ (pushforward with compact supports),
and the functor $f^!:\sD(Y)\rar{}\sD(X)$. \emph{We always omit the letters
``$L$'' and ``$R$'' from our notation for the six functors; thus,
$f_!$ stands for $Rf_!$ and $\tens$ stands for
$\overset{L}{\tens}_{\ql}$, etc.}

\begin{rem}
In \cite{sga4.5} the functor $f_!$ is defined for separated morphisms $f$ when $k$ is finite or algebraically closed. This case would suffice for the purposes of the present work. However, the formalism we need was extended to arbitrary fields $k$ in \cite{ekedahl}, and the assumption that $f$ is separated is unnecessary \cite{Las-Ols06}.
\end{rem}

The most important result we will need is the proper base change theorem;
see Exp. XII and XVII in \cite{sga4} and Exp.~IV in
\cite{sga4.5} for the case where $k$ is finite or algebraically closed; and Theorem 6.3(iii) in \cite{ekedahl} for the general case.

\begin{thm}[Proper base change]\label{t:proper-base-change} Consider a cartesian
square
\[
\xymatrix{
  X' \ar[rr]^{g'} \ar[d]_{f'} & & X \ar[d]^f \\
  Y' \ar[rr]^g & & Y
   }
\]
of $k$-schemes of finite type. There is a natural
isomorphism of functors
\begin{equation}\label{e:PBC}
(g^*\circ f_!) \cong (f'_!\circ g^{\prime\ast}) :
\sD(X)\rar{}\sD(Y').
\end{equation}
\end{thm}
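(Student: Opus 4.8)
The plan is to treat this as a reduction to a result already in the literature rather than to reprove it from scratch. First I would observe that the statement over a general field $k$ follows formally from the case of a perfect field: by definition we have set $\sD(X)=\sD(X\tens_k k^{perf})$, and each of the six functors on these categories is by construction the corresponding functor for the schemes over $k^{perf}$ obtained by the base change $-\tens_k k^{perf}$. Since $-\tens_k k^{perf}$ is right adjoint to the forgetful functor on schemes and hence commutes with fibre products, applying it to the given cartesian square of $k$-schemes of finite type produces a cartesian square of $k^{perf}$-schemes of finite type, and the isomorphism \eqref{e:PBC} to be established is, after unwinding the definitions, exactly proper base change for that square. So it suffices to assume $k$ perfect.

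Next, over a perfect field, I would recall the construction of the base change morphism $g^*\circ f_!\rar{}f'_!\circ g^{\prime\ast}$ and then verify that it is an isomorphism. The cleanest route is via a compactification: using Deligne's version of Nagata's theorem, factor $f=\bar f\circ j$ with $j$ an open immersion and $\bar f$ proper, so that $f_!\cong\bar f_*\circ j_!$; pulling the square back along $j$ and along $\bar f$ in turn reduces the claim to two cases — $f$ an open immersion, where $j_!$ is extension by zero and commutes with an arbitrary pullback by a direct look at stalks, and $f$ proper, where $f_!=f_*$. One then checks, which is routine, that these two reductions are compatible with the base change morphism, so that \eqref{e:PBC} is an isomorphism in general.

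The hard part — and the only real mathematical content — is the proper case: proper base change for a proper morphism. I would not reprove it; it is the classical theorem, available over a perfect field as Theorem 6.3(iii) of \cite{ekedahl}, which itself rests on Exp.~XII and XVII of \cite{sga4} (together with Exp.~IV of \cite{sga4.5}). A minor point to address along the way is that $f$ is not assumed separated, so that $f_!$ must be understood in the generality of \cite{ekedahl} and \cite{Las-Ols06}; but since $f_!$ is still computed by $\bar f_*\circ j_!$ for a suitable compactification, the reduction above is unaffected.
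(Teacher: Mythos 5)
The paper does not prove this theorem: it states it as a recollection and cites the literature (SGA~4 Exp.~XII and~XVII and SGA~$4\frac{1}{2}$ Exp.~IV for $k$ finite or algebraically closed, and Ekedahl's Theorem~6.3(iii) for perfect $k$ in general). Your write-up ultimately lands on the same citation, but with two extra layers.

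The first layer, reduction from a general $k$ to its perfect closure via $\sD(X)=\sD(X\tens_k k^{perf})$, is the right observation and does match the framework the paper sets up a few lines earlier; the paper leaves this implicit, and making it explicit (using that $-\tens_k k^{perf}$ is right adjoint to the forgetful functor, hence preserves cartesian squares) is a genuine small improvement. The second layer — the Nagata/Deligne compactification argument splitting $f=\bar f\circ j$ — is logically fine but redundant as presented: you describe Ekedahl's Theorem~6.3(iii) as only handling the proper case, when it is in fact the full base change isomorphism \eqref{e:PBC} for $f_!$ (with arbitrary, not necessarily proper, $f$), which is exactly the statement to be proved. So the compactification is already internal to the proof of the cited result, and you would simply cite it directly, as the paper does, rather than rederive the reduction and then cite the endpoint. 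There is no error, only a duplication, but it slightly obscures that the only content here beyond the paper is the perfect-closure reduction.
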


\subsection{Reminder on the sheaves-to-functions
correspondence}\label{ss:sheaves-to-functions} In this subsection we
assume that the base field is finite: $k=\bF_q$. Let $X$ be a scheme
of finite type over $\bF_q$. Given an object $M\in\sD(X)$, one can
define the corresponding function $t_M:X(\bF_q)\rar{}\ql$. Namely, a
point $x\in X(\bF_q)$ can be thought of as an $\bF_q$-morphism
$x:\Spec\bF_q\rar{}X$. Then $x^*M\in\sD(\Spec\bF_q)$, and the
cohomology sheaves $\cH^i(x^* M)\cong x^*\cH^i(M)$ are constructible
$\ell$-adic sheaves on $\Spec\bF_q$, i.e., continuous finite
dimensional representations of the absolute Galois group
$\Gal(\bF/\bF_q)$ over $\ql$. Let $F_q\in\Gal(\bF/\bF_q)$ be the
geometric Frobenius, defined as the inverse of the Frobenius
substitution $a\longmapsto a^q$. Then one defines
\[
t_M(x) = \sum_{i\in\bZ} (-1)^i\cdot \tr\bigl( F_q; \cH^i(x^*M)
\bigr).
\]
The main properties of the map $M\longmapsto t_M$ are summarized in

\begin{lem}\label{l:sheaves-functions-properties}
Let $X$ and $Y$ be schemes of finite type over $\bF_q$, and let
$f:X\rar{}Y$ be an $\bF_q$-morphism.
\begin{enumerate}[$(1)$]
\item If $N\in\sD(Y)$, then $t_{f^* N}=f^* t_N \,\overset{\text{def}}{:=}\, t_N\circ
f$. \sbr
\item If $M,K\in\sD(X)$, then $t_{M\tens K}=t_M\cdot t_K$ $($pointwise
product$)$. \sbr
\item Assume that $f$ is separated. If $M\in\sD(X)$, then $t_{f_! M}=f_! t_M$, where, by abuse of
notation, we also write $f$ for the induced map of sets
$X(\bF_q)\rar{}Y(\bF_q)$, and $(f_! t_M)(y) = \sum_{x\in f^{-1}(y)}
t_M(x)$.
\end{enumerate}
\end{lem}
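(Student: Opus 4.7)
The plan is to treat the three statements in order of increasing depth, since (1) is essentially tautological, (2) reduces to a Künneth argument on a point, and (3) is the Grothendieck--Lefschetz trace formula in disguise. Throughout, I will use the viewpoint that a point $x \in X(\bF_q)$ is the same as an $\bF_q$-morphism $x : \Spec \bF_q \to X$, so that $x^* M$ is an object of $\sD(\Spec\bF_q)$ whose cohomology sheaves are continuous finite-dimensional $\Gal(\bF/\bF_q)$-representations, and $t_M(x)$ is the alternating sum of traces of $F_q$ on these.

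For (1), I would simply observe that if $y = f \circ x : \Spec\bF_q \to Y$ is the image point, then $x^*(f^* N) \cong (f\circ x)^* N = y^* N$ canonically, so $t_{f^*N}(x) = t_N(y) = (t_N\circ f)(x)$ by the very definition of the trace function. For (2), the key input is that $x^*$ commutes with derived tensor product, giving $x^*(M\tens K) \cong x^* M \tens x^* K$ in $\sD(\Spec\bF_q)$. Since $\Spec\bF_q$ is a point, the Künneth formula (or equivalently, the fact that tensor products over a field are exact) identifies the cohomology of the tensor product with the graded tensor product of cohomologies. Multiplicativity of the trace on tensor products of Frobenius representations, combined with the standard identity $\sum_i (-1)^i \tr(F_q; V^i\tens W^{\bullet-i}) = \bigl(\sum_i (-1)^i \tr(F_q;V^i)\bigr)\bigl(\sum_j (-1)^j \tr(F_q;W^j)\bigr)$, yields $t_{M\tens K}(x) = t_M(x)\cdot t_K(x)$.

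Part (3) is the substantive one and will rely on the combination of proper base change (Theorem \ref{t:proper-base-change}) and the Grothendieck--Lefschetz trace formula. For a point $y \in Y(\bF_q)$, form the cartesian square with upper-left corner the fiber $X_y = X\times_Y \Spec\bF_q$, mapping to $\Spec\bF_q$ via $f_y$ and to $X$ via $g_y$. Proper base change gives $y^*(f_! M) \cong (f_y)_!\, g_y^* M$, so that
\[
t_{f_! M}(y) \;=\; \sum_{i} (-1)^i \tr\bigl(F_q;\, H^i_c(X_y\otimes_{\bF_q}\bF,\, g_y^* M)\bigr).
\]
The Grothendieck--Lefschetz trace formula applied to the $\bF_q$-scheme $X_y$ of finite type (separated because $f$ is, and possibly empty, in which case both sides are $0$) rewrites the right-hand side as $\sum_{x\in X_y(\bF_q)} t_{g_y^* M}(x)$. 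Finally, $X_y(\bF_q)$ is exactly the set of $\bF_q$-points $x$ of $X$ with $f(x)=y$, and by (1) applied to $g_y$ we have $t_{g_y^* M}(x) = t_M(x)$, yielding the desired identity.

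The main obstacle is really just invoking the correct form of the Grothendieck--Lefschetz trace formula in (3); everything else is bookkeeping with base change. The hypothesis that $f$ is separated enters precisely so that the fibers $X_y$ are separated $\bF_q$-schemes of finite type, which is the setting in which the trace formula is classically stated. I anticipate no surprises, and the proof should take only a few lines once the Künneth and trace-formula references are cited.
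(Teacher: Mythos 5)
Your proposal is correct and follows precisely the route the paper itself sketches: (1) and (2) are definitional/K\"unneth observations, and (3) is reduced via proper base change to the case $Y=\Spec\bF_q$, where it becomes the Grothendieck--Lefschetz trace formula. No substantive difference.
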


Of these, (1) and (2) follow rather easily from the definitions,
while (3) is more subtle. It follows from the proper base change
theorem and the special case of (3) where $Y=\Spec\bF_q$, which is
known as the \emph{Lefschetz-Grothendieck trace formula}; see
Theorem 3.2  of ``Rapport sur la formule des traces'' in
\cite{sga4.5}.

\subsection{Equivariant derived
categories}\label{ss:equivariant-derived} We return to the situation
where the base field $k$ is arbitrary. Let $G$ be an algebraic group
over $k$, let $X$ be a scheme of finite type over $k$, and suppose
that we are given a regular left action of $G$ on $X$. We would like
to define the ``equivariant derived category'' $\sD_G(X)$.

\mbr

In general, to get the correct definition one must either adopt the
approach of Bernstein and Lunts \cite{ber-lunts} (when $G$ is
affine), or use the definition of $\ell$-adic derived categories for
Artin stacks due to Laszlo and Olsson \cite{Las-Ols06} and define
$\sD_G(X)=\sD(G\bigl\backslash X)$, where $G\bigl\backslash X$ is
the quotient stack of $X$ by $G$.

\mbr

From now on we assume that $G$ is unipotent. In this case one knows
that the naive definition of $\sD_G(X)$ (taken from \cite{intro}),
given below, already gives the correct answer. Roughly speaking,
this definition amounts to looking at the ``category of
$G$-equivariant objects in $\sD(X)$''.

\mbr

Let us write $\al:G\times X\to X$ for the action morphism and
$\pi:G\times X\to X$ for the projection. Let $\mu:G\times G\to G$ be
the product in $G$. Let $\pi_{23}:G\times G\times X\to G\times X$ be
the projection along the first factor $G$. The category $\sD_G(X)$
is defined as follows.

\begin{defin}\label{d:equiv-derived}
An {\em object} of the category $\sD_G(X)$ is a pair $(M,\phi)$,
where $M\in\sD(X)$ and $\phi:\al^*M\rar{\simeq}\pi^*M$ is an
isomorphism in $\sD(G\times_k X)$ such that
\begin{equation}\label{e:compatibility}
\pi_{23}^*(\phi)\circ(\id_G\times\al)^*(\phi) =
(\mu\times\id_X)^*(\phi),
\end{equation}
i.e., the composition of the natural isomorphisms
\[
(\id_G\times\al)^*\al^* M \cong (\mu\times\id_X)^*\al^* M \xrar{\
(\mu\times\id_X)^*(\phi)\ } (\mu\times\id_X)^*\pi^* M \cong
\pi_{23}^*\pi^* M
\]
equals the composition
\[
(\id_G\times\al)^*\al^* M \xrar{\ (\id_G\times\al)^*(\phi)\ }
(\id_G\times\al)^*\pi^* M \cong \pi_{23}^*\al^* M \xrar{\
\pi_{23}^*(\phi)\ } \pi_{23}^*\pi^* M.
\]
A {\em morphism} $(M,\phi)\rar{}(N,\psi)$ in $\sD_G(X)$ is a
morphism $\nu:M\rar{}N$ in $\sD(X)$ satisfying
$\psi\circ\al^*(\nu)=\pi^*(\nu)\circ\phi$. The {\em composition} of
morphisms in $\sD_G(X)$ is defined to be equal to their composition
in $\sD(X)$.
\end{defin}

\begin{rem}
If $G$ is a connected unipotent group, the forgetful functor $\sD_G(X)\rar{}\sD(X)$ is fully faithful.
\end{rem}

\subsection{Functors between equivariant derived
categories}\label{ss:funct-equiv-derived} In the situation of
\S\ref{ss:equivariant-derived}, let us assume that $H$ is another
unipotent group over $k$ acting on a scheme $Y$ of finite type over
$k$. Suppose we are given a homomorphism $i:G\rar{}H$ of $k$-groups
and a morphism $f:X\rar{}Y$ of $k$-schemes which is $G$-equivariant
with respect to the $G$-action on $Y$ induced by $i$. Then the functor $f^*:\sD(Y)\rar{}\sD(X)$
naturally lifts to a functor $f^*:\sD_H(Y)\rar{}\sD_G(X)$.

\mbr

In the special case where $H=G$ and $i$ is the
identity, we can also define a functor $f_!:\sD_G(X)\rar{}\sD_G(Y)$.
Indeed, we have cartesian diagrams
\[
\xymatrix{
  G\times X \ar[rr]^{\id_G\times f} \ar[d]_{\al_X} & & G\times Y \ar[d]^{\al_Y} \\
  X \ar[rr]^f & & Y
   }
 \qquad\text{and}\qquad
\xymatrix{
  G\times X \ar[rr]^{\id_G\times f} \ar[d]_{\pi_X} & & G\times Y \ar[d]^{\pi_Y} \\
  X \ar[rr]^f & & Y
   }
\]
where $\al_X$, $\al_Y$ are the action morphisms and $\pi_X$, $\pi_Y$
are the projections, so Theorem \ref{t:proper-base-change} implies
that $f_!:\sD(X)\rar{}\sD(Y)$ lifts to a functor
$f_!:\sD_G(X)\rar{}\sD_G(Y)$.

\mbr

From now on we assume that if $f$ is an equivariant morphism between
$k$-schemes of finite type equipped with a $G$-action, then $f^*$
and $f_!$ are understood as functors between the corresponding
equivariant derived categories.

\subsection{Convolution in $\sD(G)$ and
$\sD_G(G)$}\label{ss:der-equiv-der-convolution} Let $G$ be an
algebraic group over an arbitrary field $k$, and $\mu:G\times_k
G\rar{}G$ the multiplication morphism. The bifunctor
\begin{equation}\label{e:convol-derived}
\sD(G) \times \sD(G) \rar{} \sD(G), \qquad (M,N)\longmapsto
M*N=\mu_!(M\boxtimes N),
\end{equation}
is called the \emph{convolution with compact supports}. Replacing
$\mu_!$ with $\mu_*$ in the above definition would yield the
``usual'' convolution bifunctor; however, convolution with compact
supports is the only one that will be used in this article, and will
be referred to simply as ``convolution'' of constructible
$\ell$-adic complexes on $G$.

\mbr

It is easy to construct an associativity constraint for the
bifunctor $*$, and check that it makes $\sD(G)$ a monoidal category,
where the unit object $\e$ is the delta-sheaf at the identity
element of $G$, i.e., $\e=1_*\ql=1_!\ql$.

\mbr

Lemma \ref{l:sheaves-functions-properties} implies that the bifunctor $*$ is compatible with convolution of
functions via the sheaves-to-functions correspondence. Namely, for a
finite group $\Ga$, let us define the convolution of two functions
$f_1,f_2:\Ga\rar{}\ql$ by the formula
$(f_1*f_2)(g)=\sum_{\ga\in\Ga}f_1(\ga)f_2(\ga^{-1}g)$. Then, for any
algebraic group $G$ over $\bF_q$ and any $M,N\in\sD(G)$, we have
$t_{M*N}=t_M*t_N$ as functions on $G(\bF_q)$.

\mbr

Next, suppose that $G$ is a unipotent algebraic group over $k$.
Unless otherwise explicitly stated, whenever we consider a
$G$-action on itself, we will always mean the \emph{conjugation}
action. We also have the induced action of $G$ on $G\times_k G$ (by
simultaneous conjugation), and the multiplication morphism
$\mu:G\times_k G\rar{}G$ is $G$-equivariant. It follows (see
\S\ref{ss:funct-equiv-derived}) that \eqref{e:convol-derived} can be
upgraded to a bifunctor
\begin{equation}\label{e:convol-equiv-derived}
\sD_G(G) \times \sD_G(G) \rar{} \sD_G(G), \qquad (M,N)\longmapsto
M*N=\mu_!(M\boxtimes N),
\end{equation}
which we also call convolution with compact supports.

\mbr

Just as in the non-equivariant case,
\eqref{e:convol-equiv-derived} can be upgraded to a monoidal
structure on the category $\sD_G(G)$. Moreover, this category has a
natural braiding, defined explicitly in \cite{intro}. We will only need a weaker assertion:

\begin{lem}\label{l:central-equivariant-derived}
There exist functorial isomorphisms
$\be_{M,N} : M*N\rar{\simeq}N*M$ for all $M,N\in\sD_G(G)$.
\end{lem}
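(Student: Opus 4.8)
The plan is to prove the existence of functorial isomorphisms $\be_{M,N}:M*N\rar{\simeq}N*M$ by exhibiting them at the level of the underlying complexes in $\sD(G)$ and then checking compatibility with the equivariant structures. First I would work non-equivariantly: let $\sigma:G\times_k G\rar{}G\times_k G$ be the swap morphism $(g,h)\longmapsto(h,g)$, and let $\iota:G\rar{}G$ be the inversion. The key observation is that we do \emph{not} need $G$ to be commutative; instead we use that, although $\mu\circ\sigma\neq\mu$ in general, the two morphisms $G\times G\rar{}G$ given by $(g,h)\longmapsto gh$ and $(g,h)\longmapsto hg$ become equal after conjugating: $hg=g^{-1}(gh)g$. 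Since we are working in the \emph{equivariant} category with respect to the conjugation action, this conjugation is ``invisible,'' and one expects $M*N$ and $N*M$ to agree. More precisely, I would use the automorphism $c$ of $G\times_k G$ given by $(g,h)\longmapsto(h, h^{-1}gh)$ (or a suitable variant), which satisfies $\mu\circ c=\mu\circ\sigma'$ for an appropriate reshuffling, and which intertwines the relevant projections, so that $c^*(M\boxtimes N)\cong N\boxtimes M'$ up to the conjugation twist that is trivialized by the equivariant structure.

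The cleanest route is probably this: recall that for a \emph{connected} unipotent $G$ the forgetful functor $\sD_G(G)\rar{}\sD(G)$ is fully faithful (noted in the Remark after Definition \ref{d:equiv-derived}), and more to the point, an object of $\sD_G(G)$ is conjugation-equivariant, so for any $g\in G$ the conjugation automorphism $\ad(g)$ acts trivially (up to canonical isomorphism) on objects. I would first establish, purely in $\sD(G)$, a canonical isomorphism $\mu_!(M\boxtimes N)\cong\mu_!\bigl(\sigma^*(M\boxtimes N)\otimes(\text{conj.\ twist})\bigr)$, and then observe that $\sigma^*(M\boxtimes N)=N\boxtimes M$, while the conjugation twist is killed once $M,N$ carry their equivariant structures. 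Concretely: consider the morphism $\theta:G\times G\rar{}G\times G$, $\theta(g,h)=(h,g)$; then $\mu\circ\theta$ sends $(g,h)$ to $hg$. Now $hg=g^{-1}(gh)g=\ad(g)^{-1}(gh)$. So if $a:G\times(G\times G)\rar{}G\times G$ denotes the conjugation action on the pair and $p$ the projection, one has a commuting diagram expressing $\mu\circ\theta$ as the composite of $(g,h)\mapsto(g,(g,h))$, then $\mu\times\id$ on the inner pair in the first slot, and then the action morphism $\al$ of $G$ on $G$. Applying $R(\cdot)_!$ and using proper base change (Theorem \ref{t:proper-base-change}) together with the isomorphism $\al^*e\cong\pi^*e$ from the equivariant datum $\phi$ on $e=M\boxtimes N\in\sD_G(G\times G)$, one gets $N*M\cong M*N$.

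Functoriality of $\be_{M,N}$ in both arguments should follow immediately because every step is built from base-change isomorphisms for the fixed morphisms $\mu$, $\theta$, $\al$, $\pi$, and from the structure isomorphisms $\phi_M$, $\phi_N$, all of which are natural in $M$ and $N$; I would spell this out by noting that $\be$ is a composite of natural transformations of functors $\sD_G(G)\times\sD_G(G)\rar{}\sD_G(G)$. One small point to verify is that the constructed isomorphism lives in $\sD_G(G)$, not merely in $\sD(G)$ — i.e., that it is compatible with the equivariant structures on $M*N$ and $N*M$; this is a diagram chase using \eqref{e:compatibility}, and for connected unipotent $G$ it is harmless by full faithfulness of the forgetful functor. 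The main obstacle I anticipate is purely bookkeeping: writing down the chain of canonical isomorphisms arising from proper base change across the several cartesian squares relating $\mu$, $\theta$, the action morphism, and the two projections, and organizing them so that the conjugation action morphism $\al$ appears in exactly the spot where $\phi_M\boxtimes\phi_N$ (the external product of the equivariance isomorphisms) can be applied to collapse it. There is no genuine mathematical difficulty — braided or even symmetric monoidal structure is expected — but the indices on the projections $\pi_{23}$ and on the various factors of $G\times G\times G$ need to be tracked carefully; since the paper only claims the weak assertion (mere existence of $\be_{M,N}$, not the braid or hexagon axioms), I would stop once the natural isomorphism is in hand and not pursue coherence.
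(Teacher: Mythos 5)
Your proposal is correct and is essentially the paper's argument: both rest on the observation that $hg$ and $gh$ differ by conjugation, which is absorbed by the $G$-equivariant structure. The paper packages this slightly more tightly — it introduces the single automorphism $\xi(g,h)=(g,g^{-1}hg)$ of $G\times G$, observes $\mu\circ\xi=\mu\circ\tau$, and obtains the isomorphism $\xi_!(M\boxtimes N)\cong M\boxtimes N$ directly from the equivariance of $N$ alone, thereby avoiding the extra bookkeeping with the action morphism $\al$ and the equivariant datum on $M\boxtimes N$ that you anticipate.
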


\begin{proof}{Proof}
Consider the commutative diagram
\[
\xymatrix{
  G\times G \ar[d]_{\tau} \ar[r]^{\xi} &  G\times G \ar[d]^{\mu} \\
  G\times G \ar[r]^{\mu} & G,
   }
\]
where $\tau(g,h):=(h,g)$ and $\xi(g,h):=(g,g^{-1}hg)$. We have
$M*N=\mu_!(M\boxtimes N)$, and the above diagram shows that
$N*M=(\mu\tau )_!(M\boxtimes N)=\mu_!\xi_!(M\boxtimes N)$. We define
$\be_{M,N}:\mu_!(M\boxtimes N)\iso\mu_!\xi_!(M\boxtimes N)$ by
$\be_{M,N}:=\mu_!(f)$, where $f:M\boxtimes N\iso\xi_!(M\boxtimes N)$
comes from the $G$-equivariant structure on $N$.
\end{proof}

\subsection{Semigroupal categories}\label{ss:semigroupal}
The notion of a \emph{semigroupal} category is obtained from the
notion of a monoidal category by discarding all the axioms that
involve the unit object. Thus a semigroupal category is a triple
$(\cM,\tens,\al)$, where $\cM$ is a category,
$\tens:\cM\times\cM\rar{}\cM$ is a bifunctor, and $\al$ is an
associativity constraint for $\tens$, i.e., a collection of
trifunctorial isomorphisms $\al_{X,Y,Z}:(X\tens Y)\tens
Z\rar{\simeq}X\tens(Y\tens Z)$ for all triples of objects
$X,Y,Z\in\cM$ satisfying a standard coherence condition.

\mbr

The reason we need this notion is that even though the categories
$\sD(G)$ and $\sD_G(G)$ introduced in
\S\ref{ss:der-equiv-der-convolution} are monoidal, we will have the
occasion to consider certain semigroupal subcategories of $\sD_G(G)$
that, at least \emph{a priori}, may not be monoidal (cf.~Remark \ref{r:why-semigroupal}).

\mbr

The notion of a (weak, strong or strict) \emph{semigroupal functor}
between semigroupal categories is also obtained from the notion of a
monoidal functor in the obvious way. Thus if $(\cM,\tens,\al)$ and
$(\cN,\tens',\al')$ are semigroupal categories, a functor
$F:\cM\rar{}\cN$ is said to be \emph{strict semigroupal} if $F$
commutes with the semigroupal structures ``on the nose,'' i.e.,
$F(X\tens Y)=F(X)\tens' F(Y)$ for every pair of objects $X$, $Y$ of
$\cM$; $F(f\tens g)=F(f)\tens' F(g)$ for every pair of morphisms
$f$, $g$ in $\cM$; and $F(\al_{X,Y,Z})=\al'_{F(X),F(Y),F(Z)}$ for
every triple of objects $X,Y,Z$ of $\cM$.

\mbr

On the other hand, a \emph{weak semigroupal structure} on a functor
$\Phi:\cM\rar{}\cN$ is a collection of bifunctorial morphisms
$\vp_{X,Y}:\Phi(X)\tens'\Phi(Y)\rar{}\Phi(X\tens Y)$ for all
$X,Y\in\cM$, which are compatible with the associativity constraints
in the obvious sense. The structure is said to be \emph{strong} if
every $\vp_{X,Y}$ is an isomorphism.

\mbr

An \emph{additive semigroupal category} is a semigroupal category
$(\cM,\tens,\al)$ such that $\cM$ is an additive category, and the
bifunctor $\tens$ is bi-additive.

\subsection{Weak idempotents}\label{ss:weak-idemp}
Let $\cM=(\cM,\tens,\al)$ be a semigroupal category. An object
$e\in\cM$ is said to be a \emph{weak idempotent} if $e\tens e\cong
e$. Observe that this notion depends only on the bifunctor $\tens$
and not on the associativity constraint $\al$.

\mbr

If $e\in\cM$ is a weak idempotent, the corresponding \emph{Hecke
subcategory} is defined as the full subcategory $e\cM e\subset\cM$
consisting of all objects $N\in\cM$ such that $N\cong e\tens N\tens
e$. Equivalently, $e\cM e$ can be described as the essential image
of the functor $\cM\rar{}\cM$ given by $M\longmapsto (e\tens M)\tens
e$, which explains the notation.

\mbr

The Hecke subcategory $e\cM e\subset\cM$ is stable under
$\tens$, so it becomes a semigroupal category in its own right. If
$\cM$ is additive, so is $e\cM e$.
\begin{rem}\label{r:why-semigroupal}
Even if $\cM$ is a \emph{monoidal} category, one cannot expect $e\cM
e$ to be a monoidal category in general. Indeed, even though $e\tens
N\cong N\cong N\tens e$ for all $N\in e\cM e$, there is no guarantee
that the functor $N\longmapsto e\tens N$ is an auto-equivalence of
$e\cM e$; if it is not, then $e\cM e$ cannot have a unit
object\footnote{This is one of the reasons why we chose the term
``\emph{weak} idempotent''. The notion of a \emph{closed idempotent}
in a monoidal category, defined in \cite{intro}, is much more rigid;
in particular, if $e$ is any closed idempotent in a monoidal
category $\cM$, then $e\cM e$ is monoidal as
well.}.
\end{rem}

\begin{defin}
A semigroupal category $\cM$ is \emph{weakly symmetric} if $M\tens N\cong
N\tens M$ for all pairs of objects $M,N\in\cM$.
\end{defin}

For example, if a semigroupal category admits a braiding, then it is weakly symmetric. The category $(\sD_G(G),*)$ is weakly symmetric by virtue of Lemma \ref{l:central-equivariant-derived}. Observe that if $\cM$ is a weakly symmetric semigroupal category, then for any weak
idempotent $e\in\cM$, we have $e\cM e=e\cM=\cM e$.

\begin{defin}\label{d:minimal-idempotent}
If $\cM$ is an additive weakly symmetric semigroupal category, a
weak idempotent $e\in\cM$ is said to be \emph{minimal} if $e\neq 0$
and, for any weak idempotent $e'\in\cM$, we have either $e\tens
e'=0$, or $e\tens e'\cong e$.
\end{defin}

\begin{rem}
If $\cM$ is an additive weakly symmetric semigroupal category, a weak idempotent $e\in\cM$ is minimal if and only if the Hecke subcategory $e\cM$ contains exactly two weak idempotents $($up to isomorphism$)$, namely, $0$ and $e$.
\end{rem}

\subsection{Idempotents and $\bL$-packets}\label{ss:idemp-L-packets}
In this subsection we again assume that the base field is finite:
$k=\bF_q$. Let us fix a connected unipotent group $G$ over $\bF_q$.
Recall from Definition \ref{d:L-packets} that two irreducible
representations, $\rho_1$ and $\rho_2$, of $G(\bF_q)$ over $\ql$,
are said to be \emph{$\bL$-indistinguishable} if for every weak
idempotent $e\in\sD_G(G)$, the function $t_e$ acts in the same way
in $\rho_1$ and $\rho_2$. In this subsection we will explain that in
this definition one can restrict attention to a special class of
weak idempotents.

\begin{defin}\label{d:geom-min-idemp}
If $k$ is any field and $U$ is a unipotent group over $k$, a weak
idempotent $e\in\sD_U(U)$ is said to be \emph{geometrically minimal}
if for every algebraic extension $k'$ of $k$, the induced weak
idempotent $e'=e\tens_k k'$ in $\sD_{U'}(U')$, where $U'=U\tens_k
k'$, is minimal in the sense of Definition
\ref{d:minimal-idempotent}.
\end{defin}

Every geometrically minimal weak idempotent in $\sD_U(U)$
is minimal, but the converse need not be true. The next result is
proved in \S\ref{ss:proof-p:L-packets-min-idemp}.

\begin{prop}\label{p:L-packets-min-idemp}
Let $G$ be a connected unipotent group over $\bF_q$, and let
$\rho_1$, $\rho_2$ be two irreducible representations of
$G(\bF_q)$ over $\ql$. The following are equivalent.
\begin{enumerate}[$($i$)$]
\item The representations $\rho_1$ and $\rho_2$ are
$\bL$-indistinguishable.
\item For every minimal weak idempotent $e\in\sD_G(G)$, the function $t_e$
acts in the same way in $\rho_1$ and in $\rho_2$.
\item For every geometrically minimal weak idempotent $e\in\sD_G(G)$, the function $t_e$
acts in the same way in $\rho_1$ and in $\rho_2$.
\end{enumerate}
\end{prop}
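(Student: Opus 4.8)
The implications (i)$\Rightarrow$(ii)$\Rightarrow$(iii) are trivial, since geometrically minimal weak idempotents are in particular minimal, and minimal weak idempotents are in particular weak idempotents. Thus the content is the implication (iii)$\Rightarrow$(i), which I would prove by showing that the action of an \emph{arbitrary} weak idempotent $e \in \sD_G(G)$ on irreducible representations is already determined by the collection of geometrically minimal weak idempotents. The basic strategy is to decompose an arbitrary weak idempotent into (a sum or Hecke-theoretic combination of) geometrically minimal ones, at least as far as trace functions are concerned.

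The plan is to proceed as follows. First I would fix an arbitrary weak idempotent $e$ with $t_e \not\equiv 0$ (if $t_e \equiv 0$ it acts as zero in every irreducible representation, hence imposes no constraint, so we may discard it). Given an irreducible representation $\rho$ on which $t_e$ acts as the identity, the goal is to produce a \emph{geometrically minimal} weak idempotent $f$ such that $t_e * t_f = t_f$ (equivalently $t_f$ acts nontrivially on exactly the constituents of the ``$L$-packet'' of $\rho$ picked out by $f$) and $t_f$ acts as the identity on $\rho$; then knowing how geometrically minimal idempotents act determines how $e$ acts. Concretely: by Step 1 of Section \ref{s:structure} (i.e.\ the result that every irreducible $\rho$ lies in $L(\sC)$ for some geometric conjugacy class $\sC$ of admissible pairs), choose such a $\sC$ and pick $(H,\cL) \in \sC$; by Step 2, the object $e_{H,\cL} = \ig e'_{\cL}$ is a minimal weak idempotent in $\sD_G(G)$, and in fact its construction is insensitive to base change, so it is geometrically minimal. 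By Step 3, $t_{e_{H,\cL}}$ acts as the identity on $\rho$. Since $t_e$ also acts as the identity on $\rho$, the convolution $t_e * t_{e_{H,\cL}}$ is nonzero, hence $e * e_{H,\cL} \neq 0$; by minimality of $e_{H,\cL}$ (applied after base change, using weak symmetry of $(\sD_G(G),*)$ from Lemma \ref{l:central-equivariant-derived}, so that $e * e_{H,\cL} \cong e_{H,\cL} * e$), we get $e * e_{H,\cL} \cong e_{H,\cL}$, whence $t_e * t_{e_{H,\cL}} = t_{e_{H,\cL}}$, i.e.\ $t_e$ acts as the identity on the entire set $L(\sC)$. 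Running the same argument for every geometric conjugacy class of admissible pairs, I conclude that the set of irreducibles on which $t_e$ acts as the identity is a union of sets of the form $L(\sC)$, each of which is cut out by a geometrically minimal weak idempotent. Therefore, if $t_{e'}$ acts the same way in $\rho_1$ and $\rho_2$ for every geometrically minimal $e'$, then in particular $\rho_1$ and $\rho_2$ lie in the same sets $L(\sC)$, and hence $t_e$ acts the same way in $\rho_1$ and $\rho_2$ for the arbitrary weak idempotent $e$ as well; this is exactly $\bL$-indistinguishability.

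The main obstacle is the interface with Steps 1--3 of Section \ref{s:structure}: I am relying on the facts that (a) every irreducible representation is covered by some $L(\sC)$, (b) $e_{H,\cL}$ is not merely minimal but \emph{geometrically} minimal, and (c) $L(\sC)$ coincides with the set of irreducibles on which $t_{e_{H,\cL}}$ acts as the identity. Point (b) deserves care: one must check that the construction $e_{H,\cL} = \ig(e'_{\cL})$ with $e_{\cL} = \bK_H \tens \cL$ commutes with base field extension — that $\bK_{H}$, the formation of the stabilizer $G'$, the extension by zero to $G'$, and the functor $\ig$ of induction with compact supports are all compatible with $-\tens_k k'$ — and that minimality of $e_{H',\cL'}$ over every extension $k'$ follows from the admissibility of $(H,\cL)$, which is itself a geometric condition depending only on the base change to $\bF$. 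Modulo invoking these results (all of which are asserted, or will be proved, earlier in the paper), the argument above is essentially a bookkeeping exercise: the real work has been front-loaded into the construction of $e_{H,\cL}$ and into Steps 1 and 3, and this proposition is the clean packaging of that work into the statement that geometrically minimal idempotents suffice to detect $\bL$-indistinguishability.
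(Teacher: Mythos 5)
Your proposal is correct and takes essentially the same route as the paper: the paper's proof also reduces (iii)$\Rightarrow$(i) to finding, via Theorem \ref{t:adm-pair-compatible} and Proposition \ref{p:key}, a geometrically minimal idempotent $e_{H,\cL}$ whose $L(e_{H,\cL})$ is an $\bL$-packet containing $\rho_1$ and (by hypothesis (iii)) also $\rho_2$. The paper simply packages the ``$t_e$ acts the same on all of $L(e_{H,\cL})$'' step by citing Remark \ref{r:idempotents-L-packets}(1), which you have unwound into the explicit minimality-plus-weak-symmetry argument.
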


\begin{rems}\label{r:idempotents-L-packets}
\begin{enumerate}[$($1$)$]
\item Note that if $e_1,e_2\in\sD_G(G)$ are non-isomorphic minimal
weak idempotents, then $e_1*e_2=0$, so $t_{e_1}*t_{e_2}=0$. Now if
$e\in\sD_G(G)$ is a geometrically minimal weak idempotent such that
$t_e\not\equiv 0$, we can define $L(e)$ as the set of irreducible
representations of $G(\bF_q)$ on which $t_e$ acts as the identity,
and it follows (from Definition \ref{d:L-packets} and the last
observation) that $L(e)$ is an $\bL$-packet. Proposition
\ref{p:L-packets-min-idemp} implies that, conversely, \emph{every}
$\bL$-packet of irreducible representations of $G(\bF_q)$ is of this
form. \sbr
\item It is shown in \cite{charsheaves} that if $e\in\sD_G(G)$ is any
geometrically minimal weak idempotent, then $t_e\not\equiv 0$. In particular, one obtains a bijection between
$\bL$-packets of irreducible representations of $G(\bF_q)$ and
isomorphism classes of geometrically minimal weak idempotents in
$\sD_G(G)$. However, the proofs of these facts use some of the methods developed in \cite{foundations} $($as well as additional techniques$)$, and are beyond the scope of the present article. \sbr
\item On the other hand, Proposition \ref{p:L-packets-min-idemp}
easily implies that if $e\in\sD_G(G)$ is a minimal weak idempotent
which is \emph{not geometrically minimal}, then $t_e\equiv 0$.
\end{enumerate}
\end{rems}

\subsection{Twists in the category
$\sD_G(G)$}\label{ss:twists-groups} A structure on the equivariant derived category $\sD_G(G)$ that plays an important role
in the proof of Theorem \ref{t:dim-reps-easy} is a canonical
automorphism of the identity functor, whose construction we now recall. Fix a unipotent group $G$
over $k$,
let $c:G\times G\to G$ be the conjugation action morphism
$c(g,h)=ghg^{-1}$, let $p_2:G\times G\to G$ denote the second
projection, and write $\De:G\to G\times G$ for the diagonal. Then
$c\circ\De=\id_G=p_2\circ\De$. For each $M\in\sD_G(G)$, the
$G$-equivariant structure on $M$ yields an isomorphism $c^*M\rar{\simeq} p_2^*M$. Pulling it back by $\De$, we obtain an isomorphism $\te_M:M=\De^*c^*M\rar{\simeq}\De^*p_2^*M=M$.

\begin{defin}\label{d:twists-group}
One calls $\te_M$ the \emph{twist automorphism} of $M$, or
the \emph{balancing isomorphism}. The collection
$\bigl\{\te_M \st M\in\sD_G(G)\bigr\}$ defines an automorphism of the
identity functor on $\sD_G(G)$, which we simply denote by $\te$
if no confusion can arise.
\end{defin}

The following fact will be used in our proof of Theorem
\ref{t:dim-reps-easy}.

\begin{lem}\label{l:triv-twists-easy}
Let $G$ be an easy unipotent group over a field $k$ of
characteristic $p>0$. For every object $M\in\sD_G(G)$, the twist
automorphism $\te_M$ of $M$ is trivial.
\end{lem}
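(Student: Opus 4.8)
The plan is to unwind the definition of the twist $\te_M$ and reduce the claim to a purely geometric statement about the conjugation action morphism $c:G\times G\to G$ restricted over the diagonal, which the easiness hypothesis will then render trivial. Recall that $\te_M$ is obtained from the $G$-equivariant structure isomorphism $\phi:c^*M\iso p_2^*M$ (writing $c$ here for the action morphism in the sense of $\S\ref{ss:equivariant-derived}$, i.e. $(g,h)\mapsto ghg^{-1}$ acting on the second copy of $G$) by pulling back along the diagonal $\De:G\to G\times G$. The key point is that both morphisms $c\circ\De$ and $p_2\circ\De$ equal $\id_G$, but this equality is witnessed by a \emph{family} of maps: as $t$ ranges over a parameter, the maps $g\mapsto t g t^{-1}$ interpolate between conjugation and the identity. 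What I want to exploit is that, for an easy group, every element $g$ lies in $Z(g)^\circ$, so that conjugation by $g$ itself is ``homotopic to the identity'' through the connected group $Z(g)^\circ$, and this should force $\te_M$ to be the identity.

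More precisely, first I would set up the right geometric object: consider the morphism $\sigma:G\times G\to G\times G$, $\sigma(g,h)=(g,ghg^{-1})$, and the projection $p_2:G\times G\to G$; the $G$-equivariant structure $\phi$ gives an isomorphism $\sigma^*(p_2^*M)=\phi\colon \ (p_2\circ\sigma)^*M\iso (p_2)^*M$ after the identifications, and $\te_M=\De^*(\text{this})$. Then I would introduce the ``conjugation at the diagonal'' and note that $\De^*\sigma$ factors as $g\mapsto(g, g\cdot g\cdot g^{-1})=(g,g)=\De(g)$, i.e. $\sigma\circ\De=\De$ on the nose. The subtlety is that $\te_M$ is \emph{not} automatically $\id_M$ just because $\sigma\circ\De=\De$ as morphisms of schemes, because $\phi$ is a chosen isomorphism, not an equality of functors, and the cocycle condition $\eqref{e:compatibility}$ only constrains compositions. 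So the real content is: the automorphism $\te$ of the identity functor, which is known to be monoidal and compatible with all the structure (this is the general nonsense about balancing isomorphisms in a braided setting, but we do not need the braiding), must be trivial because there are ``enough'' connected groups around. Concretely, I would argue as follows: by the cocycle/multiplicativity properties of $\te$ (which follow formally from $\eqref{e:compatibility}$), it suffices to check triviality of $\te_M$ after pulling $M$ back to, or restricting attention to, the situation over each conjugacy class. Using easiness, the conjugation action factors (geometrically, over each orbit $G/Z(g)$) through the \emph{connected} group $G/Z(g)^\circ$, and on a connected unipotent group acting, the equivariant structure of any object restricted along a connected orbit is ``rigid'' in the sense that pulling back $\phi$ along the constant-vs-identity comparison over the diagonal gives the identity — essentially because a connected unipotent group is an iterated extension of $\bG_a$'s and the relevant $\Hom$ groups in $\sD$ that would receive a nontrivial discrepancy vanish, or more cleanly because the restriction of $\te$ to an equivariant category $\sD_H(H)$ with $H$ connected and the relevant cohomological comparison being over a unipotent base is forced to be $\id$ by a weight/monodromy argument.

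The cleanest route, which I would ultimately take, is this: reduce to showing $\te_M=\id_M$ for $M$ supported set-theoretically on a single conjugacy class $C\subset G$; pick a geometric point $g\in C$ with stabilizer $Z(g)$, so $C\cong G/Z(g)$; since $G$ is easy, $g\in Z(g)^\circ(\kbar)$, hence the inner automorphism $\Ad(g)$ of $G$ lies in the connected group $(\Ad\text{ of }Z(g)^\circ)$ and is therefore connected-by-homotopy to $\id_G$ inside the group $\Aut^\circ$; then the comparison isomorphism $\phi$ evaluated at $g$ along the diagonal is the evaluation of the $G$-equivariant structure along a \emph{contractible} (or at least connected, unipotent, hence cohomologically trivial) path, so it is canonically $\id$ by the cocycle condition $\eqref{e:compatibility}$ applied to the family $\{ \Ad(z) : z\in Z(g)^\circ\}$. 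Spelling out: $\eqref{e:compatibility}$ says $\phi$ is a ``cocycle'', and restricting the parameter to the connected subgroup $Z(g)^\circ$ (which acts trivially on the point $g\in C$), the cocycle condition together with connectedness of $Z(g)^\circ$ forces the self-map induced on the stalk at $g$ to be trivial; that self-map, pulled back along $\De$, is exactly $(\te_M)_g$. Running this over all $g$ gives $\te_M=\id_M$.

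\textbf{Main obstacle.} The hard part is the last reduction: making rigorous the claim that the equivariant structure $\phi$, restricted over the diagonal and over a connected stabilizer, contributes nothing to $\te_M$. The danger is that $\phi$ is an isomorphism in a derived category, so ``evaluation at a point'' is not literally available and one must instead argue via the cocycle identity $\eqref{e:compatibility}$ pulled back along carefully chosen morphisms (the inclusion $Z(g)^\circ\times C\into G\times C$ and the identity section). I expect the correct formalization is: restrict everything along $Z(g)^\circ$, observe that the action of $Z(g)^\circ$ on the point $g$ is trivial so that $\al$ and $\pi$ agree there, deduce from $\eqref{e:compatibility}$ that the restriction of $\phi$ is a homomorphism from $Z(g)^\circ$ (as a ``group of automorphisms'' of the stalk) which must be trivial since $Z(g)^\circ$ is a connected unipotent group and the target is discrete (or a torsion-free object), and finally check that $\te_M$ at $g$ is precisely the value of this homomorphism at $g\in Z(g)^\circ(\kbar)$, which exists by easiness. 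Assembling these pieces — especially ensuring the ``homomorphism to a discrete group'' step is literally correct at the level of derived categories rather than just morally true — is where the real work lies; everything else is formal manipulation of the six functors and the cocycle condition.
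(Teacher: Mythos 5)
Your geometric intuition is exactly right — easiness plus the fact that a connected unipotent group acts trivially by monodromy on the stalk of a (constructible) sheaf is the heart of the argument — but there is a genuine gap in how you propose to finish, and filling it requires an idea not present in your sketch.

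The difficulty you flag (``evaluation at a point is not literally available'' in the derived category) is real, but the route you consider does not resolve it. First, the reduction to $M$ supported on a single conjugacy class is not available: objects of $\sD_G(G)$ do not decompose as direct sums of pieces supported on individual geometric conjugacy classes, and for $G$ of dimension $\geq 2$ there are infinitely many such classes. Second, and more importantly, even if you successfully carry out the ``continuity over $Z_G(x)^\circ$'' argument, what you naturally obtain is that $\te_M$ acts as the identity on every stalk of every cohomology sheaf $\cH^i(M)$, i.e.\ that $\te_M$ becomes the identity after passing to the associated graded of the Postnikov filtration. This shows only that $\te_M$ is a \emph{unipotent} automorphism of $M$; it does not on its own force $\te_M = \id_M$, because of the possible nontrivial extensions between the cohomology sheaves. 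Your sketch treats triviality on stalks as the end of the proof, which it is not.

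The paper closes this gap with a short but essential additional step. It first makes the continuity argument precise by passing to the ordinary cohomology sheaves $\cH^i(M)$ with respect to the standard $t$-structure: for $x\in G(k)$, the centralizer $Z_G(x)$ acts on the finite-dimensional stalk $\cH^i(M)_x$, the connected component $Z_G(x)^\circ$ acts trivially by continuity, and easiness gives $x\in Z_G(x)^\circ(k)$, whence $x$ acts trivially. This gives $\te_{\cH^i(M)} = \id$ for all $i$, hence $\te_M$ is unipotent. Then — and this is the piece missing from your proposal — since $G$ is unipotent it has exponent $p^n$ for some $n$, so the ``balancing'' automorphism satisfies $(\te_M)^{p^n}=\id_M$; a unipotent automorphism of finite order over $\ql$ (characteristic zero) must be the identity. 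Without this finite-order-plus-unipotence step, or some substitute for it, your argument does not close.
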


\begin{proof}{Proof $($cf.~\cite{intro}$)$} We may and do assume
that $k$ is algebraically closed. Consider the usual (non-perverse)
$t$-structure on $\sD(G)$ whose heart is the category $Sh_c(G,\ql)$
of constructible $\ql$-sheaves on $G$. If $M\in\sD(G)$, we
will write $\cH^i(M)$ for the cohomology sheaves of $M$ with respect
to this $t$-structure ($i\in\bZ$).

\mbr

Fix $M\in\sD_G(G)$ and $i\in\bZ$. For every $x\in G(k)$, we have the
induced action of the centralizer $Z_G(x)$ of $x$ in $G$ on the stalk $\cH^i(M)_x$, and by continuity, the neutral component
$Z_G(x)^\circ\subset Z_G(x)$ acts trivially on $\cH^i(M)_x$. In
particular, since $G$ is easy, $x$
acts trivially on $\cH^i(M)_x$, which shows that
$\te_{\cH^i(M)}=\id_{\cH^i(M)}$. This implies that $\te_M$
is a \emph{unipotent} automorphism of $M$. On the other hand, since
$G$ is unipotent, it has exponent $p^n$ for some $n\in\bN$, and it
follows that $\bigl(\te_M\bigr)^{p^n}=\id_M$. Finally, we
conclude that $\te_M=\id_M$, as desired.
\end{proof}



\section{Induction with compact supports}\label{s:induction}

\subsection{Setup}\label{ss:setup-induction} Throughout this section
$G$ is a unipotent algebraic group over a field $k$, $G'\subset G$ is a
closed subgroup, and $\ell$ is a prime different from
$\operatorname{char}k$. Both $G$ and $G'$ are allowed to be
disconnected. We will use the notation introduced in Section
\ref{s:equivariant-derived} above. Our goal is to define the functor
of ``induction with compact supports''
\begin{equation}\label{e:ind}
\ind_{G'}^G : \sD_{G'}(G') \rar{} \sD_G(G)
\end{equation}
and establish its basic properties, such as the existence of
a weak semigroupal structure (cf.~\S\ref{ss:semigroupal}) on
this functor and its compatibility with twists (\S\ref{ss:ind-twists}).
In \S\ref{ss:ind-weak-idemp} we study the restriction of the functor
\eqref{e:ind} to a functor between suitable Hecke subcategories of
$\sD_{G'}(G')$ and of $\sD_G(G)$.

\subsection{Definition of
$\ind_{G'}^G$}\label{ss:def-ind} In this subsection we will define
the functor \eqref{e:ind}.

\subsubsection{Motivation}\label{sss:induction-motivation}
To motivate the definition of \eqref{e:ind}, we first rewrite the
formula for the induced character (in the setting of representations
of finite groups) in a suggestive way. Let $\Ga$ be a finite group,
and $\Ga'\subset\Ga$ a subgroup. Consider the (free) right action of
$\Ga'$ on the product $\Ga\times\Ga'$ given by
$(g,g')\cdot\ga=(g\ga,\ga^{-1}g'\ga)$, and let
$\widetilde{\Ga}=(\Ga\times\Ga')/\Ga'$ be the set of orbits for this
action. The left $\Ga$-action on $\Ga\times\Ga'$ given by
$\ga:(g,g')\longmapsto(\ga g,g')$ descends to a left $\Ga$-action on
$\widetilde{\Ga}$. We have a natural $\Ga'$-equivariant injection
$i:\Ga'\into\widetilde{\Ga}$ induced by $g'\longmapsto(1,g')$, and a
natural $\Ga$-equivariant map $\pi:\widetilde{\Ga}\rar{}\Ga$ induced
by $(g,g')\longmapsto gg'g^{-1}$ (as always, $\Ga'$ and $\Ga$ act on
themselves by conjugation).

\mbr

We will use the following notation. If $X$ is any set, $\Fun(X)$
denotes the vector space of all functions $X\rar{}\ql$. If $H$ is an
(abstract) group acting on $X$, we write $\Fun(X)^H\subset\Fun(X)$
for the subspace of $H$-invariant functions. If $\phi:X\rar{}Y$ is a
map of sets, we have the pullback map $\phi^*:\Fun(Y)\rar{}\Fun(X)$
given by $\phi^*(f)=f\circ\phi$. Finally, if $\phi$ has finite
fibers (in particular, if $X$ itself is finite), we can also define
a linear map $\phi_!:\Fun(X)\rar{}\Fun(Y)$ by the formula
\[
(\phi_! f)(y) = \sum_{x\in\phi^{-1}(y)} f(x).
\]

\mbr

With this notation, one can easily verify the following statements.
\begin{enumerate}[(a)]
\item The map $i^*:\Fun(\widetilde{\Ga})^\Ga\rar{}\Fun(\Ga')^{\Ga'}$
is an isomorphism. \sbr
\item Let $\iga:\Fun(\Ga')^{\Ga'}\rar{}\Fun(\Ga)^\Ga$ be defined by $\iga=\pi_!\circ(i^*)^{-1}$. If $\rho$ is any finite dimensional representation of $\Ga'$ over $\ql$ and $\chi\in\Fun(\Ga')^{\Ga'}$
is its character, then the character of the representation
$\Ind_{\Ga'}^\Ga\rho$ of $\Ga$ equals $\iga(\chi)$.
\end{enumerate}

\subsubsection{Auxiliary constructions}\label{sss:induction-auxiliary}
We will define the functor $\ig$ by imitating the formula presented
in \S\ref{sss:induction-motivation}. We have a free right action of
$G'$ on $G\times G'$, given by
$(g,g')\cdot\ga=(g\ga,\ga^{-1}g'\ga)$, so we can form the quotient
$\widetilde{G}=(G\times G')/G'$ (it exists as a scheme, for
instance, because $G\times G'$ is affine), and $G$ acts on $\Gt$ on
the left. Similarly, we can consider the $G'$-equivariant injection
$i:G'\into\widetilde{G}$ induced by $g'\longmapsto(1,g')$, and the
$G$-equivariant morphism $\pi:\widetilde{G}\rar{}G$ induced by
$(g,g')\longmapsto gg'g^{-1}$, where $G$ and $G'$ act on themselves
by conjugation. Applying the constructions of
\S\ref{ss:funct-equiv-derived}, we obtain functors between the
equivariant derived categories
\[
\sD_{G'}(G') \xleftarrow{\ \ i^*\ \ } \sD_G(\Gt) \xrightarrow{\ \
\pi_!\ \ } \sD_G(G).
\]
The geometric analogue of statement (a) in
\S\ref{sss:induction-motivation} is the following
\begin{lem}\label{l:pullback-equivalence}
The functor $i^*:\sD_G(\Gt)\rar{}\sD_{G'}(G')$ is an equivalence of
categories.
\end{lem}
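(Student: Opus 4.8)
The plan is to exhibit an explicit quasi-inverse to $i^*$ and check that the two composites are isomorphic to the respective identity functors. The natural candidate comes from the geometry of $\widetilde G = (G\times G')/G'$: projection onto the first factor, $G\times G'\to G$, $(g,g')\mapsto g$, is $G'$-invariant for the given right $G'$-action, so it descends to a morphism $p:\widetilde G\to G/G'$, and this exhibits $\widetilde G$ as a fibre bundle over $G/G'$ with fibre $G'$; more precisely, $\widetilde G\cong G\times^{G'}G'$, the ``induced space.'' The point $1\cdot G'\in (G/G')(k)$ has fibre canonically identified with $G'$, and the inclusion of this fibre is exactly $i:G'\hookrightarrow\widetilde G$. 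The reason $i^*$ is an equivalence on $G$-equivariant derived categories is then a general principle: for a transitive (homogeneous) $G$-space $Y=G/G'$ with a chosen point $y$ having stabilizer $G'$, restriction along the fibre over $y$ gives an equivalence $\sD_G(\widetilde Y)\xrightarrow{\sim}\sD_{G'}(\widetilde Y_y)$ for any $G$-equivariant scheme $\widetilde Y$ over $Y$. This is the equivariant analogue of statement (a) in \S\ref{sss:induction-motivation}.

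First I would make the bundle description precise: construct the map $p:\widetilde G\to G/G'$ and verify it is (Zariski-, or at least étale-) locally trivial with fibre $G'$, and that the $G$-action on $\widetilde G$ covers the translation action of $G$ on $G/G'$, with the fibre over the base point $\bar 1$ being $G'$ together with its conjugation $G'$-action — matching $i$. Since $G$ is unipotent and the naive definition of $\sD_G(-)$ from \S\ref{ss:equivariant-derived} is being used, I can work with this concrete ``$G$-equivariant object of $\sD(-)$'' description throughout. Next I would write down the quasi-inverse. Given $(N,\psi)\in\sD_{G'}(G')$, the $G$-action lets one ``spread out'' $N$ over $\widetilde G$: concretely, pull $N$ back along the projection $q:G\times G'\to G'$, equip $G\times G'\xrightarrow{\ \text{act}\ }\widetilde G$ (the quotient map, which is a $G'$-torsor) with the descent datum coming from $\psi$ — the free right $G'$-action on $G\times G'$ together with the $G'$-equivariant structure $\psi$ on $N$ is exactly a descent datum for $q^*N$ along the torsor $G\times G'\to\widetilde G$ — and by $G'$-torsor descent (étale descent for constructible $\ql$-complexes) obtain $\widetilde N\in\sD(\widetilde G)$. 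The residual left $G$-action on $G\times G'$ commutes with the right $G'$-action and is compatible with $q^*N$, so it descends to a $G$-equivariant structure on $\widetilde N$; this defines the functor $j_!:\sD_{G'}(G')\to\sD_G(\widetilde G)$ inverse to $i^*$. (Equivalently, and perhaps more cleanly: $\sD_G(\widetilde G)=\sD(G\backslash\widetilde G)$ and $G\backslash\widetilde G\cong G'\backslash G'$ via $i$, where the right-hand $G'\backslash G'$ is the conjugation quotient stack, so $i^*$ is literally pullback along an isomorphism of stacks.)

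Then I would verify the two composites. For $i^*\circ j_!\cong\id$: restricting the descended complex $\widetilde N$ to the fibre $G'=\widetilde G_{\bar 1}$ recovers $N$, because the torsor $G\times G'\to\widetilde G$ restricted over that fibre is the trivial torsor $G'\times\{1\}\times G'\to G'$ (or: the stack isomorphism $G'\backslash G'\xrightarrow{\sim}G\backslash\widetilde G$ has $i^*$ as a literal inverse), and one checks the $G'$-equivariant structures match. For $j_!\circ i^*\cong\id$: given $(M,\phi)\in\sD_G(\widetilde G)$, its restriction $i^*M$ to the fibre, spread back out using the $G$-equivariant structure $\phi$, returns $M$ — this is exactly the statement that a $G$-equivariant complex on a $G$-space that is ``induced'' from a single $G'$-orbit-worth of fibres is determined by its restriction to one fibre, which follows since $G\times^{G'}\widetilde G_{\bar 1}\to\widetilde G$ is an isomorphism and $\phi$ provides precisely the gluing. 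In both directions one must also check naturality and compatibility with the equivariance isomorphisms, but these are formal once the torsor-descent bookkeeping is set up. Finally I would note that all descent arguments used are valid because $G\times G'\to\widetilde G$ is an fppf (indeed smooth surjective) $G'$-torsor and $\ell$-adic constructible complexes satisfy descent along such torsors.

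The main obstacle I expect is purely organizational rather than mathematical: carefully matching up the several equivariant structures — the right $G'$-descent datum, the left $G$-equivariance, and the compatibility condition \eqref{e:compatibility} — without getting lost in which projection is which. The ``hard part'' is therefore bookkeeping: making sure that the descent datum on $q^*N$ induced by $\psi$ really does satisfy the cocycle condition (this reduces to the compatibility axiom for $\psi$ as a $G'$-equivariant structure), and that the resulting $G$-equivariant structure on $\widetilde N$ satisfies \eqref{e:compatibility} (this reduces to associativity of the $G$-action on $G\times G'$). There is no genuine difficulty because $G,G'$ unipotent and the action $G\times G'\to\widetilde G$ a Zariski-locally-trivial torsor mean no subtle descent or stacky phenomena intervene; it is the ``category of $G$-equivariant objects'' in the most hands-on sense. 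If one prefers to avoid torsor descent entirely, one can instead construct $j_!$ and the isomorphisms by hand using a Zariski-open cover of $G/G'$ over which $\widetilde G$ trivializes, at the cost of a gluing argument — but the torsor/stack formulation is shorter and is the one I would present.
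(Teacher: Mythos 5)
Your proposal matches the paper's proof: both exhibit the quasi-inverse as ``pull back along $\fp':G\times G'\to G'$, then descend along the $G'$-torsor $\fq:G\times G'\to\Gt$,'' i.e., both factor $i^*$ through the intermediate category $\sD_{G\times G'}(G\times G')$ using the same two torsor structures (the paper packages the two descent steps as Lemmas \ref{l:free-quotient-pullback-equivalence} and \ref{l:free-quotient-pullback-quasi-inverse}). You also note the stacky shortcut $G\backslash\Gt\cong G'\backslash G'$, which the paper likewise mentions but declines to use because of its ad hoc definition of $\sD_G(-)$; your hands-on descent argument is the one the paper actually carries out.
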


This result is proved in \S\ref{sss:proof-l:pullback-equivalence}
below.

\subsubsection{The main definition}\label{sss:induction-definition} In the situation of
\S\ref{sss:induction-auxiliary}, let us choose a quasi-inverse to
the functor $i^*$, and denote it by $(i^*)^{-1}$, by a slight abuse
of notation.

\begin{defin}\label{d:induction}
The functor $\ind_{G'}^G:\sD_{G'}(G')\rar{}\sD_G(G)$ of
\emph{induction with compact supports} is defined as the composition
\[
\sD_{G'}(G') \xrar{\ \ (i^*)^{-1}\ \ } \sD_G(\Gt) \xrar{\ \ \pi_!\ \
} \sD_G(G).
\]
\end{defin}

\begin{rems}\label{r:induction}
\begin{enumerate}[$(1)$]
\item Strictly speaking, the definition we gave depends on the
choice of $(i^*)^{-1}$. However, different choices lead to
isomorphic functors $\ind_{G'}^G$, and since we only use induction
as a technical tool, we prefer to ignore this issue. \sbr
\item Along with the functor $\ind_{G'}^G$, one can introduce an
induction functor
\[
\Ind_{G'}^G=\pi_*\circ(i^*)^{-1}\ :\
\sD_{G'}(G')\rar{}\sD_G(G).
\]
We will only need this functor in the proof of Proposition \ref{p:induction-twists}.
\end{enumerate}
\end{rems}

\subsubsection{Proof of Lemma
\ref{l:pullback-equivalence}}\label{sss:proof-l:pullback-equivalence}
The result would have been more or less obvious, had we used the
definition of equivariant derived categories in terms of quotient
stacks (cf.~\S\ref{ss:equivariant-derived}). Indeed, the map
$g'\longmapsto(1,g')$ induces an isomorphism
$G'\rar{\simeq}G\bigl\backslash(G\times G')$, and hence an
isomorphism $G'\bigl/(\Ad G')\rar{\simeq}G\bigl\backslash(G\times
G')\bigl/G'=G\bigl\backslash\Gt$.
However, since we used an \emph{ad hoc} definition of the
equivariant derived category, we will give a proof of Lemma
\ref{l:pullback-equivalence} that only uses that definition. The
argument is based on two results:

\begin{lem}\label{l:free-quotient-pullback-equivalence}
Let $U$ be a unipotent group over $k$, let $N\subset U$ be a closed
normal subgroup, let $X$ be a scheme of finite type over $k$ with a
left $U$-action, and let $\phi:X\rar{}Y$ be a morphism of
$k$-schemes which makes $X$ an $N$-torsor over $Y$ $($a fortiori,
the induced action of $N$ on $X$ is free$)$. The pullback functor
$\phi^*$ can be upgraded to an equivalence of categories
\[
\phi^* : \sD_{U/N}(Y) \rar{} \sD_U(X).
\]
\end{lem}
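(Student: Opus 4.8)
\textbf{Proof plan for Lemma \ref{l:free-quotient-pullback-equivalence}.}

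The plan is to build an explicit quasi-inverse to $\phi^*$ by ``descent along the $N$-torsor $\phi$,'' using the \emph{ad hoc} definition of equivariant derived categories (Definition \ref{d:equiv-derived}) rather than quotient stacks. First I would observe that, forgetting the $U/N$-structure, the pullback $\phi^*:\sD(Y)\rar{}\sD(X)$ already has a quasi-inverse description for an $N$-torsor: since $N$ is unipotent, the morphism $\phi$ is an iterated affine-space bundle (one may filter $N$ by a chain of normal subgroups with successive quotients $\bG_a$, so $X\rar{}Y$ factors as a tower of $\bG_a$-torsors, each of which is Zariski-locally trivial), and hence $\phi^*$ is fully faithful with essential image the objects whose pullback to $X\times_Y X\cong N\times X$ along the two projections are identified via the obvious (canonical) isomorphism. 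In other words, $\sD(Y)$ is equivalent to the category of $N$-equivariant objects of $\sD(X)$; this is exactly $\sD_N(X)$ in the notation of \S\ref{ss:equivariant-derived}. So the content is the chain of equivalences $\sD_{U/N}(Y)\simeq \sD_U(\sD_N(X)) \simeq \sD_U(X)$, where the middle term means $U/N$-equivariance layered on top of $N$-equivariance.

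The key steps, in order, are: (1) prove the non-equivariant statement that $\phi^*:\sD(Y)\xrar{\sim}\sD_N(X)$ — this reduces, via the filtration of $N$, to the case $N=\bG_a$, where $\phi$ is an $\bA^1$-bundle and one invokes homotopy invariance of $\sD$ together with smooth (or proper, after compactifying fibrewise) base change to identify the descent data; (2) upgrade this to an equivalence of $U/N$-equivariant (equivalently, $U$-equivariant, since $N$ acts through the torsor structure) categories by checking that $\phi^*$ carries a $U/N$-equivariant structure on an object of $\sD(Y)$ to a $U$-equivariant structure on its pullback, compatibly, and conversely; here one uses that $\phi$ is $U$-equivariant and $N$-invariant, so the action diagrams for $U$ on $X$ and $U/N$ on $Y$ fit into a morphism of the ``bar-type'' diagrams defining equivariance in Definition \ref{d:equiv-derived}; (3) assemble a quasi-inverse: given $(M,\phi_M)\in\sD_U(X)$, descend $M$ through the torsor to an object $M_0\in\sD(Y)$ (using step (1)), then transport the equivariant structure to make $M_0$ an object of $\sD_{U/N}(Y)$, and verify this is functorial and inverse to $\phi^*$ up to natural isomorphism; (4) check the cocycle/compatibility condition \eqref{e:compatibility} is preserved throughout — this is bookkeeping with the standard diagrams but must be done carefully.

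The main obstacle I expect is step (2): keeping the two layers of equivariance ($U/N$ on $Y$ versus $U$ on $X$) compatible at the level of the coherence condition \eqref{e:compatibility}, because the \emph{ad hoc} definition forces one to manipulate explicit isomorphisms on threefold products $U\times U\times X$ (and $N\times N\times X$, $U\times N\times X$) rather than working stack-theoretically. The cleanest way to handle this, which I would adopt, is to note that $U$ and $N$ being unipotent lets one reduce the whole problem by dévissage: filter \emph{both} $N$ (to handle the torsor) and $U/N$ (to handle the remaining equivariance) by central $\bG_a$-subgroups, so that at each stage one is adding either a single $\bG_a$-torsor quotient or a single $\bG_a$-equivariance, each of which is a manageable base-change computation. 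I would also remark (as the authors do) that the stack-theoretic proof is essentially immediate — $G'\bigl\backslash(G\times G')\cong G'$ and hence $G\bigl\backslash\Gt \cong G'\bigl/(\Ad G')$ — and that the combinatorial proof above is forced on us only because we committed to the elementary definition of $\sD_G(X)$.
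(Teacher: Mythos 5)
The paper itself does not prove Lemma \ref{l:free-quotient-pullback-equivalence}; it is stated (together with Lemma \ref{l:free-quotient-pullback-quasi-inverse}) as a standard tool and then used immediately in the proof of Lemma \ref{l:pullback-equivalence}. So there is no ``paper proof'' to compare against, and I am judging your plan on its own merits.

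Your high-level decomposition is correct and is the right way to do it with the \emph{ad hoc} definition: first prove the non-equivariant torsor descent $\sD(Y)\xrar{\sim}\sD_N(X)$, then layer the residual $U/N$-equivariance on top, and the assertion that an $U$-equivariant structure on $\phi^*M$ is the same as an $N$-equivariant (descent) structure together with a $U/N$-equivariant structure on the descended object is exactly the content. The remark that the stack-theoretic proof is trivial is also in the spirit of the paper.

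There is, however, a genuine gap in step (1). You reduce the torsor descent to the case $N=\bG_a$ by filtering $N$ with successive quotients $\bG_a$. That filtration exists only when $N$ is \emph{connected}. The lemma is stated for an arbitrary closed normal subgroup $N\subset U$, and — more importantly — the application in the proof of Lemma \ref{l:pullback-equivalence} takes $N=\{1\}\times G'\cong G'$, where $G'$ is explicitly allowed to be disconnected (\S\ref{ss:setup-induction}). Your dévissage cannot see the finite \'etale part $\pi_0(N)$. Concretely, when $N$ is disconnected, the candidate quasi-inverse $\phi_![2\dim N](\dim N)$ composed with $\phi^*$ is no longer the identity on $\sD_N(X)$: by base change and the torsor identification $X\times_Y X\cong N\times X$, the counit becomes tensoring with $R\Ga_c(N,\ql)$, which is $\ql[-2\dim N](-\dim N)$ for connected $N$ but picks up a factor of rank $|\pi_0(N)(\kbar)|$ in general. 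One has to split off $N^\circ$ (your argument) and then handle $\pi_0(N)$ by finite \'etale (Galois) descent, where the counit is only split (cf.\ Lemma \ref{l:push-pull-summand}(b)) rather than an isomorphism. As written, your plan would establish the lemma only in the connected case, which is insufficient for the way the paper uses it.

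A smaller imprecision: identifying ``objects whose two pullbacks to $X\times_Y X\cong N\times X$ agree'' with $\sD_N(X)$ requires matching the action/projection pair $(\al,\pi)$ on $N\times X$ with the two projections $p_1,p_2$ on $X\times_Y X$ and checking the cocycle conditions correspond; this is routine but should be stated, since it is precisely the point at which descent data becomes an object of the category in Definition \ref{d:equiv-derived}.
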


\begin{lem}\label{l:free-quotient-pullback-quasi-inverse}
With the notation of Lemma
\ref{l:free-quotient-pullback-equivalence}, assume, in addition,
that $N$ admits a complement in $U$ $($i.e., a closed subgroup
$H\subset U$ which maps isomorphically onto $U/N${}$)$, and that
$\phi$ admits an $H$-equivariant section $\sg:Y\rar{}X$. Then the
functor
\[
\sg^* : \sD_U(X) \rar{} \sD_{U/N}(Y)=\sD_H(Y),
\]
understood as the composition of the forgetful functor
$\sD_U(X)\rar{}\sD_H(X)$ and the pullback via $\sg$, is a
quasi-inverse to the functor $\phi^*:\sD_{U/N}(Y)\rar{}\sD_U(X)$.
\end{lem}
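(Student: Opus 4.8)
The plan is to verify directly, using the \emph{ad hoc} definition of the equivariant derived category, that the two composites $\sg^*\circ\phi^*$ and $\phi^*\circ\sg^*$ are isomorphic to the respective identity functors; this automatically promotes $\phi^*$ to an equivalence, but since we already know that from Lemma~\ref{l:free-quotient-pullback-equivalence}, the real content is to check that $\sg^*$ is a \emph{specific} quasi-inverse. First I would observe that since $\sg$ is a section of $\phi$, we have $\phi\circ\sg=\id_Y$, so $\sg^*\circ\phi^*=(\phi\circ\sg)^*=\id_{\sD(Y)}$ already on the level of non-equivariant categories; the only thing to check is that this identification is compatible with the equivariant structures, i.e.\ that under the identification $\sD_{U/N}(Y)=\sD_H(Y)$ (coming from the fact that $H\hookrightarrow U\twoheadrightarrow U/N$ is an isomorphism), the equivariant structure of $\sg^*\phi^*M$ matches that of $M$. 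This is where the $H$-equivariance of $\sg$ enters: $\sg$ is $H$-equivariant for the $H$-action on $X$ obtained by restricting the $U$-action, so $\sg^*$ does take $\sD_H(X)$ to $\sD_H(Y)$, and the $H$-equivariant structure one gets on $\sg^*\phi^*M=M$ is the one induced from the $(U/N)$-structure of $M$ via the isomorphism $H\rar{\simeq}U/N$, which is exactly the $\sD_H(Y)$-structure under our identification.

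Next I would treat the harder composite, $\phi^*\circ\sg^*\colon \sD_U(X)\rar{}\sD_U(X)$, and produce a functorial isomorphism to $\id_{\sD_U(X)}$. Given $(M,\psi)\in\sD_U(X)$ with $\psi$ its $N$-equivariant (in fact $U$-equivariant) descent datum, the object $\phi^*\sg^*M$ is $\phi^*$ of the underlying sheaf of $\sg^*M$. The key geometric fact is that since $X\to Y$ is an $N$-torsor and $N$ has a complement $H$, the morphism $N\times Y\to X$, $(n,y)\mapsto n\cdot\sg(y)$, is an isomorphism — in other words $X\cong N\times Y$ with $\phi$ the second projection and $\sg$ the inclusion at $1\in N$. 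Using this trivialization, $\phi^*\sg^*M$ and $M$ are both identified with pullbacks along the two projections $N\times Y\to Y$ composed appropriately, and the $N$-equivariant structure $\psi$ of $M$ furnishes precisely the isomorphism comparing them; concretely, $\psi$ restricted over $N\times\{\sg(y)\}$-type loci gives the required isomorphism $M\cong\phi^*\sg^*M$. I would then check this isomorphism is $U$-equivariant by unwinding the cocycle condition \eqref{e:compatibility} for $\psi$: the $H$-part of the $U$-structure is matched because $\sg$ is $H$-equivariant, and the $N$-part is matched because the comparison isomorphism is built from $\psi$ itself, so the whole $U=N\rtimes H$ structure is respected.

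The main obstacle I anticipate is purely bookkeeping rather than conceptual: carefully setting up the isomorphism $X\cong N\times Y$ (or more precisely the cartesian/torsor formalism), tracking through which pullback-of-pullback canonical isomorphisms are being invoked, and then verifying the compatibility of the comparison isomorphism with the equivariance cocycle condition \eqref{e:compatibility}. Because the definition of $\sD_U(X)$ unravels into a diagram over $U\times U\times X$, the verification that $\phi^*\sg^*M\cong M$ is $U$-equivariant amounts to a diagram chase in $\sD(U\times U\times X)$; the subtlety is that the $U$-structure on $\phi^*\sg^*M$ is assembled from the $U$-structure on $\sg^*M\in\sD_H(Y)$ (which only remembers the $H$-direction) together with the canonical equivariant structure on $\phi^*$ of an $H$-equivariant object (which restores the $N$-direction via the torsor), and one must check these two pieces glue to the original $\psi$. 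Once the trivialization $X\cong N\times Y$ is in hand this becomes a routine, if slightly tedious, check, and everything else — functoriality in $M$, compatibility with the quasi-inverse already chosen in Lemma~\ref{l:free-quotient-pullback-equivalence} — follows formally.
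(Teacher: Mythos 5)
The paper states Lemma~\ref{l:free-quotient-pullback-quasi-inverse} (and Lemma~\ref{l:free-quotient-pullback-equivalence}) without proof, treating them as routine facts, so there is no author's argument to compare against. Judged on its own, your proposal is essentially the right argument and I see no gap in it: the two key observations — that the $H$-equivariant section trivializes the $N$-torsor as $X\cong N\times Y$, and that pulling back the $N$-equivariance datum $\psi$ of $M$ along $\id_N\times\sg:N\times Y\to N\times X$ produces exactly the comparison isomorphism $M\cong\phi^*\sg^*M$ (since $\al\circ(\id_N\times\sg)$ is the trivialization and $\pi\circ(\id_N\times\sg)=\sg\circ\pr_2$) — are exactly what one needs. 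The one place where I'd want a bit more care than your sketch provides is the final $U$-equivariance check: you present it as ``the $H$-part is matched because $\sg$ is $H$-equivariant, and the $N$-part is matched because the iso is built from $\psi$,'' but for a general $u=nh$ the $N$- and $H$-contributions interact through the semidirect product, so the verification genuinely does require unwinding the cocycle condition~\eqref{e:compatibility} over $U\times U\times X$ rather than treating the two factors independently. You do acknowledge this, and under the identification $X\cong N\times Y$ the $U$-action takes the explicit form $(n_0,h)\cdot(n,y)=(n_0\,hnh^{-1},\,hy)$, from which the diagram chase is mechanical. So: correct approach, complete in outline, with the equivariance verification left at the ``routine but tedious'' level — which is the same level of detail at which the paper itself leaves the entire lemma.
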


Let us now prove Lemma \ref{l:pullback-equivalence}. Recall that
$\Gt$ is defined as the quotient of $G\times G'$ by the right
$G'$-action defined by $(g,g')\cdot\ga = (g\ga,\ga^{-1}g'\ga)$.
Since we will also need to consider left $G$-actions, we prefer to
turn this action into a left $G'$-action as well, given by
$\ga:(g,g')\longmapsto(g\ga^{-1},\ga g'\ga^{-1})$ (merely for
notational convenience).

\mbr

 Let us write $\fq:G\times G'\rar{}\Gt$ for the quotient
morphism. We define a left action of $G\times G'$ on $G\times G'$ by
$(h,\ga):(g,g')\longmapsto(hg\ga^{-1},\ga g'\ga^{-1})$. Applying
Lemma \ref{l:free-quotient-pullback-equivalence} to $U=G\times G'$
and $N=\{1\}\times G'\subset U$, we obtain an equivalence
\[
\fq^* : \sD_G(\Gt) \rar{\sim} \sD_{G\times G'}(G\times G').
\]

\mbr

 On the other hand, let $\fp':G\times G'\rar{}G'$ denote the
second projection. It can be viewed as a quotient map for the
induced action of $G$ on $G\times G'$, where we embed $G\into
G\times G'$ via $g\longmapsto(g,1)$. Of course, the quotient group
$(G\times G')/(G\times\{1\})$ is naturally identified with $G'$. Let
$\De:G'\into G\times G'$ denote the diagonal embedding. Then
$\De(G')$ is a complement to $G\times\{1\}$ in $G\times G'$.
Moreover, the map $j:G'\rar{}G\times G'$ defined by
$g'\longmapsto(1,g')$ is a $\De(G')$-equivariant section of $\fp'$.

\mbr

 Applying Lemma \ref{l:free-quotient-pullback-quasi-inverse}, we
see that the functors
\[
\fp^{\prime\ast}:\sD_{G'}(G')\rar{}\sD_{G\times G'}(G\times G')
\qquad\text{and}\qquad j^*:\sD_{G\times G'}(G\times
G')\rar{}\sD_{G'}(G')
\]
are equivalences of categories that are quasi-inverse to each other.

\mbr

 Finally, since the composition of $\De:G'\into G\times G'$ and
the natural projection $G\times G'\rar{}G$ is equal to the inclusion
map $G'\rar{}G$, and since $i=\fq\circ j$ by definition, we see that
the functor $i^*:\sD_G(\Gt)\rar{}\sD_{G'}(G')$ is isomorphic to the
composition
\[
\sD_G(\Gt) \xrar{\ \ \fq^*\ \ } \sD_{G\times G'}(G\times G') \xrar{\
\ j^*\ \ } \sD_{G'}(G').
\]
We just showed that $\fq^*$ and $j^*$ are equivalences, whence so is
$i^*$. \qed

\subsection{An alternative viewpoint on induction
functors}\label{ss:induction-alternative} Suppose
$Y$ is a scheme of finite type over $k$ equipped with a \emph{transitive} left $G$-action. We
let $G$ act on itself by conjugation, as usual, and consider the
induced diagonal action of $G$ on $G\times Y$. Write
$Z=\bigl\{(g,y)\st g\cdot y=y\bigr\}\subset G\times Y$ and observe
that $Z$ is $G$-stable. Given $y\in Y(k)$, we let $G^y$ denote the
stabilizer of $y$ in $G$, and consider the inclusion morphism
$j_y:G^y\into Z$ given by $g\mapsto(g,y)$.

\begin{prop}\label{p:induction-alternative}
\begin{enumerate}[$($a$)$]
\item For every $y\in Y(k)$, the pullback
$j_y^*:\sD_G(Z)\rar{}\sD_{G^y}(G^y)$ is an equivalence of
categories.
 \sbr
\item If $\pr_1:Z\rar{}G$ is the first projection, the functors
\[
\pr_{1*}\circ (j_y^*)^{-1}, \quad \pr_{1!}\circ (j_y^*)^{-1} \quad :
\quad \sD_{G^y}(G^y) \rar{} \sD_G(G)
\]
are isomorphic to the induction functors $\Ind_{G^y}^G$ and
$\ind_{G^y}^G$, respectively.
\end{enumerate}
\end{prop}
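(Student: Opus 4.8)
The plan is to identify the scheme $Z$, together with the projection $\pr_1$ to $G$ and the inclusion $j_y$, with the auxiliary scheme $\widetilde{G}$ from \S\ref{sss:induction-auxiliary}. Concretely, I would apply the constructions of \S\ref{sss:induction-auxiliary} with the closed subgroup there taken to be $G^y\subset G$, obtaining $\widetilde{G}=(G\times G^y)/G^y$ for the right action $(g,g')\cdot\gamma=(g\gamma,\gamma^{-1}g'\gamma)$, the $G$-equivariant morphism $\pi:\widetilde{G}\to G$ induced by $(g,g')\mapsto gg'g^{-1}$, and the inclusion $i:G^y\hookrightarrow\widetilde{G}$ induced by $g'\mapsto(1,g')$. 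The heart of the matter is then a single claim: there is a $G$-equivariant isomorphism $\bar\phi:\widetilde{G}\isom Z$ with $\bar\phi\circ i=j_y$ and $\pr_1\circ\bar\phi=\pi$. Once this is established, both parts of the proposition follow formally.

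For the isomorphism, I would begin with the morphism $\phi:G\times G^y\to G\times Y$ given by $(g,g')\mapsto(gg'g^{-1},\,g\cdot y)$. Since $g'\cdot y=y$, one has $gg'g^{-1}\cdot(g\cdot y)=g\cdot y$, so $\phi$ factors through $Z$; a routine verification (repeatedly using $\gamma\cdot y=y$ for $\gamma\in G^y$) shows that $\phi$ is equivariant for the left $G$-actions $h\cdot(g,g')=(hg,g')$ on the source and the diagonal action on $Z$, and is invariant under the right $G^y$-action displayed above. Hence $\phi$ descends to a $G$-equivariant morphism $\bar\phi:\widetilde{G}\to Z$, and by inspection $\bar\phi\circ i=j_y$ and $\pr_1\circ\bar\phi=\pi$. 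Granting that $\bar\phi$ is an isomorphism, part (a) is immediate, because $j_y^*=i^*\circ\bar\phi^*$ is the composition of the equivalence $i^*$ of Lemma \ref{l:pullback-equivalence} with the equivalence $\bar\phi^*$ (pullback along an isomorphism); and for part (b), using $(\bar\phi^*)^{-1}\cong\bar\phi_!\cong\bar\phi_*$ together with $\pr_1\circ\bar\phi=\pi$, one obtains
\[
\pr_{1!}\circ(j_y^*)^{-1}\cong\pr_{1!}\circ\bar\phi_!\circ(i^*)^{-1}\cong(\pr_1\circ\bar\phi)_!\circ(i^*)^{-1}=\pi_!\circ(i^*)^{-1}=\ind_{G^y}^G,
\]
and in the same way $\pr_{1*}\circ(j_y^*)^{-1}\cong\pi_*\circ(i^*)^{-1}=\Ind_{G^y}^G$, the last equalities being the definitions of the induction functors (Definition \ref{d:induction} and Remark \ref{r:induction}).

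The step I expect to be the main obstacle is verifying that $\bar\phi$ is genuinely an isomorphism of schemes: it is straightforward that $\bar\phi$ is bijective on points, but in characteristic $p$ this is not by itself conclusive. The plan is to realize $\bar\phi$ as the descent of an evident isomorphism along a torsor. Since the $G$-action on $Y$ is transitive and $y\in Y(k)$, the orbit morphism $\al_y:G\to Y$, $g\mapsto g\cdot y$, identifies $Y$ with the homogeneous space $G/G^y$ and is a $G^y$-torsor for the right-multiplication action of $G^y$ on $G$; base-changing $\al_y$ along the projection $Z\to Y$, $(h,y')\mapsto y'$, one gets that the projection $Z\times_Y G\to Z$ is a $G^y$-torsor, so that $Z=(Z\times_Y G)/G^y$. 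On the other hand the morphism $\Psi:G\times G^y\to Z\times_Y G$, $(g,g')\mapsto\big((gg'g^{-1},\,g\cdot y),\,g\big)$, is an isomorphism of $k$-schemes, with inverse $\big((h,y'),g\big)\mapsto(g,\,g^{-1}hg)$ (note $g^{-1}hg\in G^y$ since $g\cdot y=y'$ and $h\cdot y'=y'$), and $\Psi$ carries the right $G^y$-action on $G\times G^y$ to the action $\gamma:\big((h,y'),g\big)\mapsto\big((h,y'),g\gamma\big)$ on $Z\times_Y G$. Passing to quotients, $\Psi$ induces an isomorphism $\widetilde{G}=(G\times G^y)/G^y\isom(Z\times_Y G)/G^y=Z$, and since $\phi$ is $\Psi$ followed by the projection $Z\times_Y G\to Z$, this induced isomorphism is exactly $\bar\phi$. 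Thus the whole argument reduces to the standard fact that the orbit morphism of a transitive action with a rational point is a torsor over $G/G^y$, which, given that \S\ref{ss:def-ind} already uses such quotients and torsors freely, should present no difficulty.
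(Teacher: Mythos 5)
Your proof is correct and takes essentially the same approach as the paper: the paper's (very terse) proof likewise identifies $Z$ with $\widetilde{G}=(G\times G^y)/G^y$ via the morphism $(g,g')\mapsto(gg'g^{-1},g\cdot y)$ and simply asserts that it induces a $G$-equivariant isomorphism. You supply the missing verification that $\bar\phi$ is genuinely an isomorphism of schemes (via the fiber-product/torsor argument), which the paper omits.
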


\begin{proof}{Proof}
Fix $y\in Y(k)$, write $G'=G^y$, and let $\widetilde{G}=(G\times G')/G'$ be defined as before. The morphism $G\times G'\rar{}G\times Y$ given by $(g,g')\mapsto(gg'g^{-1},g\cdot y)$ has image in $Z$ and induces a $G$-equivariant isomorphism $\widetilde{G}\rar{\simeq}Z$, which identifies $j_y$ with $i:G'\into\widetilde{G}$ and $\pr_1$ with $\pi:\widetilde{G}\rar{}G$. The proposition follows.
\end{proof}

\subsection{Useful notation}\label{ss:induction-useful-notation}
In this subsection we collect some of the notation introduced in
\S\ref{sss:induction-auxiliary} and in the proof of Lemma
\ref{l:pullback-equivalence}, given in
\S\ref{sss:proof-l:pullback-equivalence} above. It is convenient to
put all the maps we defined together into the following diagram:
\[
\xymatrix{
  &  G\times G' \ar@/_/[ddl]_{\fp'} \ar[dd]^{\fq} \ar[ddr]^{\fc} & \\
  & & \\
 G' \ar@/_/[uur]_j \ar[r]_i & \Gt \ar[r]_\pi & G
   }
\]
Here, $\fq$ is the quotient map for the $G'$-action, $\fp'$ is the
second projection, $j$ is the natural inclusion given by
$j(g')=(1,g')$, and $i=\fq\circ j$. Also, $\fc$ is the conjugation
map $(g,g')\longmapsto gg'g^{-1}$, and $\pi$ is the unique morphism
satisfying $\fc=\pi\circ\fq$. Finally, let us agree, from now on, to
denote the chosen quasi-inverse of the functor $i^*$ by
\[
\sD_{G'}(G') \ni M \longmapsto \Mt \in\sD_G(\Gt) .
\]

\subsection{Weak semigroupal structure on $\ind_{G'}^G$}\label{ss:weak-semigroupal-ind} In this
subsection we will define functorial morphisms
\begin{equation}\label{e:weak-semigr}
\vp_{M,N} : \ind_{G'}^G(M)*\ind_{G'}^G(N) \rar{} \ind_{G'}^G(M*N)
\end{equation}
for all $M,N\in\sD_{G'}(G')$, where the convolution on the left
(respectively, on the right) is computed on $G$ (respectively, on
$G'$). In \S\ref{ss:when-isom} we prove that under a suitable
condition on $M$ and $N$, the arrow $\vp_{M,N}$ is an isomorphism.
Of course, there is no reason for it to be an isomorphism in general
(in the setting of finite groups, induction of class functions
usually does not commute with convolution).

\subsubsection{Preparations}\label{sss:weak-semigr-preparations}
We keep the notation of \S\ref{ss:induction-useful-notation}. We
have an obvious morphism $\Gt\rar{}G/G'$ induced by the first
projection $G\times G'\rar{}G$. Form the fiber product
\[Z=\Gt\underset{G/G'}{\times}\Gt.\] Thus $Z$ is a closed subscheme of
$\Gt\times\Gt$, and the morphism $i\times i:G'\times
G'\rar{}\Gt\times\Gt$ factors through $Z$.
Further, let $\mu:G\times G\rar{}G$ and $\mu':G'\times G'\rar{}G'$
denote the respective multiplication morphisms. The next result is straightforward.
\begin{lem}\label{l:Z-group-scheme}
There exists a morphism $\mt:Z\rar{}\Gt$ such that
\[
\mt(y_1,y_2) = \bigl[ (g_1,h_1\cdot g_1^{-1}g_2\cdot h_2\cdot
g_2^{-1}g_1) \bigr] \qquad \forall\,(y_1,y_2)\in
Z\subset\Gt\times\Gt,
\]
where $(g_j,h_j)\in G\times G'$ are representatives of the
$G'$-orbits $y_j\in\Gt$ $(j=1,2)$, and $[(g,h)]$ denotes the
$G'$-orbit of a point $(g,h)\in G\times G'$. Furthermore, the square
\[
\xymatrix{
  G'\times G' \ar@{^{(}->}[rr]^{\ \ i\times i}\ar[d]_{\mu'} & & Z
  \ar[d]^{\mt} \\
  G' \ar@{^{(}->}[rr]^{i} & & \Gt
   }
\]
commutes and is cartesian, and the square
\[
\xymatrix{
  Z \ar[rr]^{\pi\times\pi\ \ \ } \ar[d]_{\mt} & & G\times G \ar[d]^\mu \\
  \Gt \ar[rr]^\pi & & G
   }
\]
commutes. Also, $Z$ is stable under the diagonal action of $G$,
and $\mt$ is $G$-equivariant.
\end{lem}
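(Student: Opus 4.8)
The plan is to construct the morphism $\mt$ explicitly at the level of the presenting spaces $G \times G'$ and to descend it, then to verify the two compatibility squares by a direct computation with representatives, and finally to check $G$-equivariance and the Cartesian property. First I would introduce the morphism $\widetilde{m} : (G \times G') \times_{G/G'} (G \times G') \to G \times G'$ on representatives by the formula
\[
\widetilde{m}\bigl( (g_1, h_1), (g_2, h_2) \bigr) = \bigl( g_1,\ h_1 \cdot (g_1^{-1} g_2) \cdot h_2 \cdot (g_2^{-1} g_1) \bigr),
\]
which makes sense precisely because $(g_1, h_1)$ and $(g_2, h_2)$ lie over the same point of $G/G'$, i.e.\ $g_1^{-1} g_2 \in G'$, so that the second component indeed lands in $G'$. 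I would then check that this formula is invariant under the diagonal right $G'$-action $(g_j, h_j) \cdot \gamma = (g_j \gamma, \gamma^{-1} h_j \gamma)$ on the fiber product: replacing $(g_1, h_1), (g_2, h_2)$ by $(g_1 \gamma, \gamma^{-1} h_1 \gamma), (g_2 \gamma, \gamma^{-1} h_2 \gamma)$ leaves the first component changed to $g_1 \gamma$ and the second to $\gamma^{-1}(h_1 \cdot g_1^{-1} g_2 \cdot h_2 \cdot g_2^{-1} g_1)\gamma$, which is exactly the $\gamma$-translate of the original output. Hence $\widetilde{m}$ descends to the quotients, giving $\mt : Z \to \Gt$ with the stated formula. (Here one also uses that $Z = \Gt \times_{G/G'} \Gt$ is the quotient of $(G\times G')\times_{G/G'}(G\times G')$ by the diagonal $G'$-action, which follows from the fact that forming $\Gt$ is a free quotient by $G'$ and base change commutes with taking such quotients.)

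Next I would verify the two squares. For the lower square, restrict $\mt$ along $i \times i : G' \times G' \hookrightarrow Z$, i.e.\ take $g_1 = g_2 = 1$, $h_1, h_2 \in G'$: the formula gives $\mt(i(h_1), i(h_2)) = [(1, h_1 h_2)] = i(\mu'(h_1, h_2))$, so the square commutes. That it is Cartesian amounts to the observation that the fiber of $\mt$ over a point of the form $i(h)$ consists of orbits $[(g_1, h_1), (g_2, h_2)]$ with $g_1 = 1$ (after using the $G'$-action to normalize) and $h_1 \cdot g_2 \cdot h_2 \cdot g_2^{-1} = h$ with $g_2 \in G'$ — and this, modulo the residual $G'$-action, is exactly a pair $(h_1, h_1^{-1} h)$ with $h_1 \in G'$, which is the fiber of $\mu'$ over $h$ under $i \times i$; I would phrase this more cleanly by exhibiting the morphism $G' \times G' \to Z$ inducing the iso on fibers, or simply by a diagram chase using that $i$ is a closed immersion and checking the universal property on $R$-points. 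For the upper square, compose $\mt$ with $\pi : \Gt \to G$, recalling that $\pi([(g, h)]) = g h g^{-1}$: then
\[
\pi\bigl(\mt(y_1, y_2)\bigr) = g_1 \cdot h_1 (g_1^{-1} g_2) h_2 (g_2^{-1} g_1) \cdot g_1^{-1} = (g_1 h_1 g_1^{-1})(g_2 h_2 g_2^{-1}) = \pi(y_1) \cdot \pi(y_2),
\]
using $\pi(y_j) = g_j h_j g_j^{-1}$, so that square commutes as well. Finally, $G$-stability of $Z$ is immediate since the diagonal $G$-action on $\Gt \times \Gt$ commutes with the projections to $G/G'$; and $G$-equivariance of $\mt$ follows by tracking the left $G$-action $h \cdot [(g, h')] = [(hg, h')]$ through the defining formula, which only alters the first component $g_1 \mapsto h g_1$ while the second component transforms correctly because $g_1^{-1} g_2$ and $g_2^{-1} g_1$ are unchanged.

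The main obstacle I anticipate is not the commutativity of the two squares — those are formal once the formula for $\mt$ is in place — but rather giving a clean, representative-free argument that $\mt$ is well defined as a morphism of schemes (not just on $k$-points) and that the lower square is genuinely Cartesian. The cleanest route is to avoid ad hoc fiber computations and instead realize everything as quotients by free $G'$-actions: one writes $Z$ as the quotient of the explicit fiber product $W := (G \times G') \times_{G/G'} (G \times G')$ by the diagonal $G'$-action, defines $\widetilde{m} : W \to G \times G'$ by the displayed formula (a genuine morphism of affine schemes), checks $G'$-equivariance as above so that it descends, and then deduces the Cartesian property of the lower square from the corresponding — and transparent — statement at the level of $W$ before quotienting, where it reduces to the identity $h_1 \cdot (g_1^{-1} g_2) h_2 (g_2^{-1} g_1) \in G'$ and a straightforward change of variables. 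Since this is labelled "straightforward" in the paper, I would keep the write-up brief, emphasizing the descent argument and leaving the elementary group-theoretic identities to the reader.
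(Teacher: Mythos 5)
Your approach --- lift $\mt$ to the presenting space $W=(G\times G')\times_{G/G'}(G\times G')$, check invariance, and descend --- is the natural one, and since the paper leaves this lemma without proof, there is nothing to compare against beyond the obvious verification; your checks of the two commutativity squares, the Cartesian property, and $G$-equivariance are all sound. However, there is a concrete error in the descent step. The morphism $\fq\times\fq : W \rar{} Z$ is a $G'\times G'$-torsor, not a $\Delta G'$-torsor: the two copies of $G'$ act independently, one on each factor of $W$. (Dimension count: if $G=G'$ is a single $\bG_a$, then $\dim W = 4$ while $\dim Z = \dim(\Gt\times\Gt) = 2$, so the fibers of $W\to Z$ are $2$-dimensional, not $1$-dimensional.) Consequently, checking that $\fq\circ\widetilde m$ is invariant under only the diagonal copy of $G'$ does not by itself produce a well-defined map on $Z$; you must check invariance under the full $G'\times G'$-action.

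Fortunately the computation you performed extends almost verbatim: for $(\gamma_1,\gamma_2)\in G'\times G'$,
\[
\widetilde m\bigl((g_1\gamma_1,\gamma_1^{-1}h_1\gamma_1),(g_2\gamma_2,\gamma_2^{-1}h_2\gamma_2)\bigr)=\bigl(g_1\gamma_1,\ \gamma_1^{-1}(h_1\cdot g_1^{-1}g_2\cdot h_2\cdot g_2^{-1}g_1)\gamma_1\bigr),
\]
because all the $\gamma_2$'s cancel internally; the output is the $\gamma_1$-translate of $\widetilde m\bigl((g_1,h_1),(g_2,h_2)\bigr)$ in $G\times G'$, so $\fq\circ\widetilde m$ is $G'\times G'$-invariant and descends to the desired $\mt:Z\rar{}\Gt$. (Alternatively, $Z$ \emph{is} a diagonal $G'$-quotient, but of the smaller space $G\times G'\times G'$, identified with the locus $g_1=g_2$ inside $W$; your displayed formula is on all of $W$, so that presentation would require restricting $\widetilde m$ to that slice first.) With this correction your argument is complete.
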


\subsubsection{Definition of the weak semigroupal
structure}\label{sss:weak-semigr-construction} Let us choose
$M,N\in\sD_{G'}(G')$. By definition, $(i^*\times
i^*)(\Mt\boxtimes\Nt)\cong M\boxtimes N$, whence, by the proper base
change theorem, we have functorial isomorphisms
\[
i^*\bigl(\mt_!(\Mt\boxtimes\Nt)\bigl\lvert_Z\bigr) \cong
\mu'_!(M\boxtimes N)\cong M*N,
\]
and thus we have functorial isomorphisms
\[
\ig(M*N) \cong \pi_!\mt_!(\Mt\boxtimes\Nt)\bigl\lvert_Z \cong \mu_!
(\pi\times\pi)_! (\Mt\boxtimes\Nt)\bigl\lvert_Z.
\]
If $f:Z\into\Gt\times\Gt$ denotes the inclusion morphism, we see
that the adjunction morphism
$\Mt\boxtimes\Nt\rar{}f_!f^*(\Mt\boxtimes\Nt)$ induces a natural
morphism
\[
(\ig M)*(\ig N) \cong\mu_!(\pi\times\pi)_!(M\boxtimes N) \rar{}
\ig(M*N).
\]
This is the desired morphism \eqref{e:weak-semigr}.

\subsection{Some auxiliary results} The following facts will be used several times in the rest of the section.

\begin{lem}\label{l:distinguished-triangle}
Let $X$ be a scheme of finite type over $k$, let $U\subset X$ be an
open subset, let $Z=X\setminus U$ be equipped with the reduced
induced subscheme structure, and let
\[
\xymatrix{
 U \ar@{^{(}->}[rr]^j & & X & & Z \ar@{_{(}->}[ll]_i
   }
\]
be the natural inclusions. For every $\cF\in\sD(X)$, there is a
distinguished triangle
\[
j_! j^! \cF \rar{} \cF \rar{} i_* i^* \cF \rar{} j_! j^! \cF[1],
\]
functorial in $\cF$, where the morphisms $j_!j^! \cF\rar{}\cF$ and
$\cF\rar{}i_* i^*\cF$ are induced via adjunction by the identity
morphisms $j^!\cF\rar{}j^!\cF$ and $i^*\cF\rar{}i^*\cF$.
\end{lem}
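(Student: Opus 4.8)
The plan is to recognize the asserted triangle as the standard localization (``recollement'') distinguished triangle attached to the decomposition of $X$ into the open piece $U$ and its closed complement $Z$, and to obtain it from the six-functor formalism recalled in \S\ref{ss:der-cat-constr-sheaves}. Since by definition $\sD(X)=\sD(X\tens_k k^{perf})$, and this decomposition base-changes to one of the same type over $k^{perf}$ compatibly with all six functors, I would first reduce to the case where $k$ is perfect. The key elementary point is that $j$, being an open immersion, is \'etale, so there is a canonical isomorphism $j^!\cong j^*$; under it the counit of $(j_!,j^!)$ is carried to the counit of $(j_!,j^*)$, and the triangle to be produced is identified with
\[
j_! j^* \cF \to \cF \to i_* i^* \cF \to j_! j^* \cF[1],
\]
whose first map is the counit of $(j_!,j^*)$ and whose second map is the unit of $(i^*,i_*)$.

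\mbr

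For the existence of this triangle I would appeal directly to the literature: over a finite or algebraically closed base it belongs to the standard package \cite{sga4,sga4.5,deligne-weil-2}, and over a perfect base it is contained in Ekedahl's treatment \cite{ekedahl}. If a self-contained argument is wanted, I would complete the counit $u\colon j_! j^*\cF\to\cF$ to a distinguished triangle $j_! j^*\cF\xrar{u}\cF\to C\to j_! j^*\cF[1]$; applying $j^*$ and using $j^* j_!\cong\id$ shows that $j^*(u)$ is an isomorphism, so $j^*C=0$; a complex with vanishing restriction to $U$ lies in the essential image of the fully faithful functor $i_*$, so the unit $C\to i_* i^* C$ is an isomorphism; using also $i^* j_!=0$ (which gives $i^*\cF\isom i^* C$), the composite $\cF\to C\isom i_* i^* C\cong i_* i^*\cF$ is, by the triangle identities, the unit of $(i^*,i_*)$, and this exhibits $i_* i^*\cF$ as a cone of $u$.

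\mbr

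The step I expect to require the most care is \emph{functoriality in $\cF$} of the entire triangle --- in particular of the connecting morphism $i_* i^*\cF\to j_! j^!\cF[1]$ --- because cones in a triangulated category are not themselves functorial. The two visible arrows are already natural, being adjunction morphisms; the issue is to choose the cones coherently. This is precisely what the gluing formalism for recollements supplies: from the two adjoint triples $(j_!,j^*,j_*)$, $(i^*,i_*,i^!)$ together with the vanishings $i^* j_!=0=j^* i_*$ one obtains the full triangle, functorially. Alternatively I would invoke a dg- or stable $\infty$-categorical enhancement of $\sD(X)$ --- available for constructible $\ell$-adic complexes --- in which the cofiber sequence $j_! j^*\cF\to\cF\to i_* i^*\cF$ associated with the two natural transformations is automatically functorial. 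Everything else is formal.
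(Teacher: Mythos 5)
The paper does not prove this lemma: it is invoked as a standard fact of the six-functor formalism, and the text immediately following the statement simply rewrites it, via $j^!=j^*$ and $i_*=i_!$, as the triangle \eqref{e:distinguished-triangle}. Your proposal correctly identifies the assertion as the recollement/localization triangle, the self-contained cone argument (using $j^*j_!\cong\id$, the characterization of the essential image of $i_*$ by vanishing of $j^*(-)$, and $i^*j_!=0$) is sound, and your caveat about the coherent functoriality of the connecting morphism is exactly the right point to flag. This amounts to what the paper takes for granted, so there is nothing to correct.
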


Let us write, as usual, $\cF\bigl\lvert_U=j^*\cF$ and
$\cF\bigl\lvert_Z=i^*\cF$. Since $j^!=j^*$ and $i_*=i_!$, the
distinguished triangle of Lemma \ref{l:distinguished-triangle} can
also be rewritten as
\begin{equation}\label{e:distinguished-triangle}
j_! \bigl(\cF\bigl\lvert_U\bigr) \rar{} \cF \rar{} i_!
\bigl(\cF\bigl\lvert_Z\bigr) \rar{} j_!
\bigl(\cF\bigl\lvert_U\bigr)[1].
\end{equation}

\begin{lem}\label{l:push-pull-summand}
Let $H$ be a possibly disconnected unipotent group over a field $k$,
and let $h:X\rar{}Y$ be an $H$-torsor, where $Y$ is a scheme of
finite type over $k$. For every $M\in\sD(Y)$, consider the canonical
adjunction morphism $\eps_M:h_!h^!M\rar{}M$. \sbr
\begin{enumerate}[$($a$)$]
\item If $H$ is connected, then $\eps_M$ is an isomorphism for all $M\in\sD(Y)$. \sbr
\item In general, $\eps_M$ has a natural splitting, i.e., there exist functorial
morphisms $s_M:M\rar{}h_!h^!M$ for all $M\in\sD(Y)$, such that
$\eps_M\circ s_M=\id_M$.
\end{enumerate}
\end{lem}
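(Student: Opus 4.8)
The plan is to treat the two parts separately: (a) follows from Poincar\'e--Verdier duality and a cohomology computation for affine-space bundles, and (b) is then deduced from (a) together with the classical trace argument for finite \'etale covers.

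\emph{Part (a).} Since $H$ is smooth, $h\colon X\to Y$ is a smooth affine morphism of relative dimension $d=\dim H$, so by Poincar\'e--Verdier duality $h^!\cong h^*(d)[2d]$, and by the projection formula $h_!h^!M\cong M\tens h_!h^!\ql_Y$ compatibly with the adjunction counits, so that $\eps_M$ becomes $\id_M\tens\eps_{\ql_Y}$. Thus it suffices to show that the trace map $\eps_{\ql_Y}\colon h_!h^!\ql_Y\to\ql_Y$ is an isomorphism. A morphism in $\sD(Y)$ is an isomorphism if and only if it is one after base change to $\kbar$ (it can be tested on stalks at geometric points, all of which factor through $Y\tens_k\kbar$), so we may assume $k=\kbar$; then $H$ is split unipotent and admits a filtration by closed subgroups with successive quotients $\cong\mathbb{G}_a$, so $h$ factors as a tower of $\mathbb{G}_a$-torsors. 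As the trace map is compatible with composition, we reduce to the case where $h$ is a single $\mathbb{G}_a$-torsor; such a torsor is Zariski-locally on $Y$ isomorphic to the projection $\bA^1\times Y\to Y$, and since the trace map is local on the base and compatible with base change, we reduce to the classical statement that $\mathrm{Tr}\colon H^2_c(\bA^1_{\kbar},\ql(1))\xrightarrow{\sim}\ql$. In particular $h_!h^!\cong\id$ when $H$ is connected.

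\emph{Part (b).} Let $H^\circ\subset H$ be the neutral component; it is a closed connected normal unipotent subgroup, and $\pi_0(H)=H/H^\circ$ is a finite \'etale group scheme (as $H$ is smooth), of some order $n$. Since $h$ is affine, the quotient $X/H^\circ$ exists as a $k$-scheme of finite type, and $h$ factors as $X\xrightarrow{\,h_1\,}X/H^\circ\xrightarrow{\,h_2\,}Y$ with $h_1$ an $H^\circ$-torsor and $h_2$ a $\pi_0(H)$-torsor, hence finite \'etale of degree $n$. We have $h_!=h_{2!}h_{1!}$, $h^!=h_1^!h_2^!$, and the counit $\eps^h_M$ is the composite
\[
h_{2!}h_{1!}h_1^!h_2^!M\xrightarrow{\ h_{2!}(\eps^{h_1}_{h_2^!M})\ }h_{2!}h_2^!M\xrightarrow{\ \eps^{h_2}_M\ }M .
\]
By part (a) the first arrow is an isomorphism. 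For the second: $h_2$ finite \'etale gives $h_{2!}=h_{2*}$ and $h_2^!=h_2^*$, and $\eps^{h_2}_M\colon h_{2*}h_2^*M\to M$ is the trace map, whose composition with the unit $\eta_M\colon M\to h_{2*}h_2^*M$ of the adjunction $h_2^*\dashv h_{2*}$ equals multiplication by $n$; as $\ql$ has characteristic $0$, $n$ is invertible, so $s^{h_2}_M:=\tfrac1n\eta_M$ is a functorial splitting of $\eps^{h_2}_M$. Then
\[
s_M:=\bigl(h_{2!}(\eps^{h_1}_{h_2^!M})\bigr)^{-1}\circ s^{h_2}_M
\]
is a functorial morphism $M\to h_!h^!M$ with $\eps^h_M\circ s_M=\eps^{h_2}_M\circ s^{h_2}_M=\id_M$, as required.

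The main obstacle is the substance of part (a): showing that the adjunction counit $h_!h^!\to\id$ of a torsor under a connected unipotent group is an isomorphism. All of part (b) is then formal bookkeeping with adjunctions, the only arithmetic input being that $n=\#\pi_0(H)$ is invertible in the characteristic-zero coefficient field $\ql$.
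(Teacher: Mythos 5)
Your proof is correct and takes essentially the same route as the paper. Part (b) is identical in substance (factor $h$ through $X/H^{\circ}$, use (a) to dispose of the connected piece, then use the classical trace/averaging argument for the finite \'etale piece with $n^{-1}\eta_M$ as the splitting); the only difference is that you spell out the factorization $h=h_2\circ h_1$ and the explicit construction of $s_M$, whereas the paper compresses this into "in view of (a), we may assume $H^{\circ}$ is trivial." In part (a), after both of you reduce to $k=\kbar$ and filter $H$ by $\bG_a$'s, the paper trivializes the torsor by base changing along the fppf-surjection $h$ itself, whereas you instead invoke Zariski-local triviality of each $\bG_a$-torsor (since $H^{1}_{\mathrm{fppf}}(-,\bG_a)=H^{1}_{\mathrm{Zar}}(-,\cO)$); both are standard and get you to the computation for $\bA^{1}_{\kbar}$. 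This is a cosmetic difference, not a different argument.
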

\begin{proof}{Proof}
(a) First, we may clearly assume that $k$ is algebraically closed
(using base change to an algebraic closure of $k$). Second, using
base change by the smooth surjective morphism $X\rar{h}Y$, we may
assume that $X$ is a trivial $H$-torsor over $Y$. Since $k$ is
algebraically closed and $H$ is a connected unipotent group over
$k$, it follows that $H$ has a filtration by normal connected
subgroups with successive subquotients isomorphic to the additive
group $\bG_a$ over $k$. Thus we may also assume that $H=\bG_a$. In
this case, the result reduces to the standard computation of the
cohomology with compact supports for an affine line over $k$.

\mbr

(b) In view of (a), we may assume that the neutral connected
component of $H$ is trivial, i.e., that $H$ is a finite \'etale
group scheme over $k$. In this case $h_*=h_!$, $h^!=h^*$, so that
$h^!$ is also left adjoint to $h_!$, and we obtain a canonical
adjunction morphism $\eta_M:M\rar{}h_!h^!M$. Let $\kbar$ be an
algebraic closure of $k$, and let $n=\abs{H(\kbar)}$. We claim that
the composition $\eps_M\circ\eta_M$ equals the multiplication by
$n$; assuming this claim, we can define $s_M=n^{-1}\cdot\eta_M$,
because $\ql$ has characteristic $0$, and then $s_M$ is the desired
splitting.
To prove the last claim, we again extend the base field to $\kbar$
and thus assume that $k=\kbar$. Then $H$ is a discrete group of
order $n$, and the claim becomes trivial.
\end{proof}

In practice, we will apply the following corollary of the lemma.
Note that $h$ is a smooth morphism of relative dimension $d=\dim H$,
so that there is a natural isomorphism of functors $h^!\cong
h^*[2d](d)$. Thus the next result is immediate.

\begin{cor}\label{c:push-pull-summand}
In the situation of Lemma \ref{l:push-pull-summand}, let $d=\dim H$.
If $H$ is connected $($respectively, in general$)$, every
$M\in\sD(Y)$ is naturally isomorphic to $($respectively, is naturally isomorphic to a direct
summand of$)$ $h_!h^*M[2d](d)$.
\end{cor}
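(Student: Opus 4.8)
The plan is to deduce the corollary directly from Lemma \ref{l:push-pull-summand} by identifying $h^!$ with a shift-and-twist of $h^*$, so that the two assertions become exactly parts (a) and (b) of that lemma. First I would record that an $H$-torsor $h:X\rar{}Y$ is automatically smooth and surjective of relative dimension $d=\dim H$: both properties are stable under faithfully flat base change and hold after pulling back along $h$ itself, since $X\times_Y X\cong H\times_k X\rar{}X$ and $H$ is smooth of dimension $d$ over $k$ (alternatively, base-change to a point of $Y$). Consequently relative purity for smooth morphisms, part of the six-functor formalism for the categories $\sD(-)$ (see \cite{sga4, ekedahl}), furnishes a canonical isomorphism of functors $h^!\cong h^*[2d](d):\sD(Y)\rar{}\sD(X)$.

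Next I would simply transport the conclusion of Lemma \ref{l:push-pull-summand} along this isomorphism. Composing $h_!$ with the purity isomorphism turns the adjunction morphism $\eps_M:h_!h^!M\rar{}M$ into a functorial morphism $h_!h^*M[2d](d)\rar{}M$. By part (a) of the lemma this morphism is an isomorphism whenever $H$ is connected, which yields the first statement (``$M$ is naturally isomorphic to $h_!h^*M[2d](d)$''). By part (b), in the general case it admits a functorial splitting $s_M$ with $\eps_M\circ s_M=\id_M$, so $M$ is naturally a direct summand of $h_!h^*M[2d](d)$, which is the second statement.

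I do not expect a genuine obstacle: essentially all of the content sits in Lemma \ref{l:push-pull-summand}, and this corollary is a repackaging of it. The one point deserving a word of care is to ensure that the purity isomorphism $h^!\cong h^*[2d](d)$ is chosen functorially in the argument, so that the induced morphism $h_!h^*M[2d](d)\rar{}M$ is functorial in $M$ and the splitting from part (b) survives the identification; but this functoriality is built into the standard formalism and requires no separate argument. Hence, as the preceding sentence of the text already indicates, the corollary is immediate.
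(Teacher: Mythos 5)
Your proposal is correct and takes exactly the approach the paper intends: the paper dispatches the corollary in the sentence immediately preceding it, observing that $h$ is smooth of relative dimension $d$ so that $h^!\cong h^*[2d](d)$, whence the corollary is immediate from Lemma \ref{l:push-pull-summand}. Your additional remarks on why $h$ is smooth of relative dimension $d$ and on the functoriality of the purity isomorphism are fine but not strictly needed.
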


\subsection{A case where \eqref{e:weak-semigr} is an
isomorphism}\label{ss:when-isom} In this subsection we establish a
sufficient condition for \eqref{e:weak-semigr} to be an isomorphism.
First we introduce more notation. If $M$ is an object of $\sD(G')$
or $\sD_{G'}(G')$, we will denote by $\Mb$ the object of $\sD(G)$
obtained from $M$ by extension by zero. It is clear that if
$M,N\in\sD(G')$, then
\begin{equation}\label{e:convol-ext-by-0}
 \overline{M}*\overline{N}\cong\overline{M*N}, \qquad
 \text{functorially in } M,N.
\end{equation}

\mbr

Next, we choose an algebraic closure, $\kbar$, of $k$. We can
consider the algebraic group $G\tens_k\kbar$ over $\kbar$. By a
slight abuse of notation, given an object of $\sD(G)$, we will
denote the corresponding object of $\sD(G\tens_k\kbar)$ by the same
letter. If $x\in G(\kbar)$, we denote by $\de_x$ the corresponding
delta-sheaf on $G\tens_k\kbar$.

\begin{prop}\label{p:induction-convolution}
If $M,N\in\sD_{G'}(G')$ are such that $\Mb*\de_x*\Nb=0$, as objects
of $\sD(G\tens_k\kbar)$, for all $x\in G(\kbar)\setminus G'(\kbar)$,
then \eqref{e:weak-semigr} is an isomorphism.
\end{prop}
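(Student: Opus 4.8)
The strategy is to work on the two cartesian squares appearing in Lemma~\ref{l:Z-group-scheme} and to trace the adjunction morphism used in \S\ref{sss:weak-semigr-construction} through a stratification of $\Gt\times\Gt$. Recall that $\vp_{M,N}$ was constructed from the adjunction morphism $\Mt\boxtimes\Nt\rar{}f_!f^*(\Mt\boxtimes\Nt)$, where $f:Z\into\Gt\times\Gt$ is the inclusion of the closed subscheme $Z=\Gt\times_{G/G'}\Gt$. After applying $\mu_!(\pi\times\pi)_!$ to this arrow (and using proper base change to rewrite the target as $\ig(M*N)$), the statement that $\vp_{M,N}$ is an isomorphism is equivalent to the statement that $\mu_!(\pi\times\pi)_!$ annihilates the ``complementary'' term, i.e. that $\mu_!(\pi\times\pi)_!\bigl(g_!g^*(\Mt\boxtimes\Nt)\bigr)=0$, where $g:(\Gt\times\Gt)\setminus Z\into\Gt\times\Gt$ is the open complement. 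So the first step is to fix the distinguished triangle from Lemma~\ref{l:distinguished-triangle} associated to the open-closed decomposition $((\Gt\times\Gt)\setminus Z)\hookrightarrow \Gt\times\Gt\hookleftarrow Z$, apply the (exact) functor $\mu_!(\pi\times\pi)_!$, and reduce the proposition to the vanishing of $\mu_!(\pi\times\pi)_!\bigl((\Mt\boxtimes\Nt)\bigl\vert_{(\Gt\times\Gt)\setminus Z}\bigr)$ extended by zero.

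\textbf{Reduction to stalks.} Next I would compute this object ``fiberwise'' over $G$. Since everything in sight is made of schemes of finite type, it suffices (after base change to $\kbar$, which is harmless by the definition of $\sD(-)$ over non-perfect fields and by proper base change) to check that every stalk vanishes; equivalently, by proper base change applied to a point $x\in G(\kbar)$, one must show that the compactly-supported cohomology of the fiber of $\mu\circ(\pi\times\pi)$ over $x$, intersected with the open set $(\Gt\times\Gt)\setminus Z$, with coefficients in $(\Mt\boxtimes\Nt)$, vanishes. Now I would unwind the geometry: a point of $\Gt$ is a $G'$-orbit $[(g,h)]$ with $(g,h)\in G\times G'$, and the map $\Gt\rar{}G/G'$ sends it to $gG'$; so $(\Gt\times\Gt)\setminus Z$ consists of pairs $(y_1,y_2)$ whose images $g_1G',g_2G'$ in $G/G'$ differ, i.e. $g_1^{-1}g_2\notin G'(\kbar)$. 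Under the quasi-inverse $M\mapsto\Mt$ (which by Lemma~\ref{l:free-quotient-pullback-quasi-inverse} is given by $\fp'^*$ followed by $\fq$-descent, or directly by pullback along a section), the restriction of $\Mt$ to the ``fiber over $g_1G'$'' is literally a copy of $M$ on $G'$; so the relevant integrand, on the fiber over $x$, is a convolution-type expression built out of $M$, $N$, and translation by $g_1^{-1}g_2$. The point of the change of variables is to recognize that the cohomology being computed is exactly $R\Gamma_c$ of $\Mb * \de_{g_1^{-1}g_2} * \Nb$ (as an object of $\sD(G\tens_k\kbar)$), averaged appropriately over the coset space — this is where the hypothesis $\Mb*\de_x*\Nb=0$ for $x\notin G'(\kbar)$ enters and kills everything.

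\textbf{The main obstacle.} The genuinely delicate step is the bookkeeping in the previous paragraph: identifying, canonically and $G$-equivariantly, the restriction of $(\pi\times\pi)_!\mu_!$ applied to $(\Mt\boxtimes\Nt)\bigl\vert_{(\Gt\times\Gt)\setminus Z}$ with the family $\{\Mb*\de_x*\Nb\}$ over $G(\kbar)\setminus G'(\kbar)$ (parametrized as $x=g_1^{-1}g_2$), including getting the extension-by-zero $\overline{(-)}$ from $G'$ to $G$ to appear correctly and checking that the fibration $(\Gt\times\Gt)\setminus Z \rar{} G/G'\times G/G'$ is well enough behaved (smooth, with the right torsor structure in the $G'$-directions) that the pushforward can be computed by first integrating along the $G'$-fibers and then along the residual $G/G'$-direction. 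I expect this to require invoking Lemma~\ref{l:pullback-equivalence} and Corollary~\ref{c:push-pull-summand} to trivialize the various torsors, plus proper base change repeatedly, but no new ideas. Once the identification is in place the conclusion is immediate: the hypothesis forces the complementary term to vanish, hence $\vp_{M,N}$ is an isomorphism.
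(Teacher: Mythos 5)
Your proposal is correct and follows essentially the same route as the paper: the open--closed decomposition of $\Gt\times\Gt$ along $Z$ via Lemma~\ref{l:distinguished-triangle}, base change to $\kbar$, trivialization of the $G'\times G'$-torsor $\fq\times\fq$ via Corollary~\ref{c:push-pull-summand} together with the identification $\fq^*\Mt\cong\fp^{\prime\ast}M$ from the proof of Lemma~\ref{l:pullback-equivalence}, and finally proper base change to recognize each stalk as a stalk of $\Mb*\de_x*\Nb$ with $x=g_1^{-1}g_2\notin G'(\kbar)$. The one piece of bookkeeping you flag as ``the main obstacle'' is handled in the paper by refining the map $\mu\circ(\pi\times\pi)\circ(\fq\times\fq)$ to a map $\Phi$ into $G\times G\times G$ that records $g_1$ and $g_1^{-1}g_2$ as extra coordinates, which makes the fiber over $(g,g_1,x)$ a literal copy of the convolution cycle for $\Mb*\de_x*\Nb$ and removes any need to ``average over the coset space.''
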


\begin{proof}{Proof}
Using base change from $k$ to $\kbar$, we may and do assume that $k$
is algebraically closed. Let $U=(\Gt\times\Gt)\setminus Z$, an open
subset of $\Gt\times\Gt$. We will use the following shorthand
notation: $(\Mt\boxtimes\Nt)_U$ is the extension of
$(\Mt\boxtimes\Nt)\bigl\lvert_U$ to $\Gt\times\Gt$ by zero outside
of $U$. Applying the distinguished triangle
\eqref{e:distinguished-triangle} to our situation (where
$X=\Gt\times\Gt$), we see that it is enough to check that
$\mu_!(\pi\times\pi)_!(\Mt\boxtimes\Nt)_U=0$.

\mbr

We still keep the notation of \S\ref{ss:induction-useful-notation}.
Consider the morphism
\[
\fq\times \fq:G\times G'\times G\times G'\rar{}\Gt\times\Gt,
\]
which is a torsor under $G'\times G'$. According to Corollary
\ref{c:push-pull-summand}, if $d'=\dim G'$, then
$(\Mt\boxtimes\Nt)_U$ is a direct summand of $(\fq\times
\fq)_!(\fq\times \fq)^*(\Mt\boxtimes\Nt)_U[4d'](2d')$, so to
complete the proof of the proposition, it will suffice to show that
\[
\mu_!(\pi\times\pi)_!(\fq\times \fq)_!(\fq\times
\fq)^*(\Mt\boxtimes\Nt)_U=0.
\]

\mbr

We have $(\fq\times \fq)^*(\Mt\boxtimes\Nt)_U=(\fq^*\Mt\boxtimes
\fq^*\Nt)_{(\fq\times \fq)^{-1}(U)}$, where the meaning of the
subscript $(\fq\times \fq)^{-1}(U)$ is similar to that of the
subscript $U$. According to the proof of Lemma
\ref{l:pullback-equivalence} given in
\S\ref{sss:proof-l:pullback-equivalence}, we have $\fq^*\Mt\cong
\fp^{\prime\ast}M$, where $\fp':G\times G'\rar{}G'$ is the
projection onto the second factor. Similarly, $\fq^*\Nt\cong
\fp^{\prime\ast}N$. Thus we are reduced to showing that
\begin{equation}\label{e:enough}
\bigl[ \mu\circ(\pi\times\pi)\circ(\fq\times \fq) \bigr]_! \bigl(
\fp^{\prime\ast}M\boxtimes \fp^{\prime\ast}N \bigr)_{(\fq\times
\fq)^{-1}(U)}=0.
\end{equation}

\mbr

 In fact, we will prove a stronger statement. Namely, consider
the morphism
\[
\Phi = \bigl[ \mu\circ(\pi\times\pi)\circ(\fq\times \fq) \bigr]
\times \pr_1 \times \xi \ :\  G\times G'\times G\times G' \rar{}
G\times G\times G,
\]
where $\pr_1:G\times G'\times G\times G'\rar{}G$ is the first
projection and $\xi:G\times G'\times G\times G'\rar{}G$ is given by
$(g_1,g'_1,g_2,g'_2)\longmapsto g_1^{-1}g_2$. We will prove that
\begin{equation}\label{e:enough-stronger}
\Phi_! \bigl( \fp^{\prime\ast}M\boxtimes \fp^{\prime\ast}N
\bigr)_{(\fq\times \fq)^{-1}(U)}=0,
\end{equation}
which will of course imply \eqref{e:enough}.

\mbr

 By definition, it is easy to check that $(\fq\times
\fq)^{-1}(U)=\Phi^{-1}\bigl(G\times G\times (G\setminus G')\bigr)$.
Hence it suffices to prove the vanishing of the stalk of
$\Phi_!(\fp^{\prime\ast}M\boxtimes \fp^{\prime\ast}N)$ at any given
point $(g,g_1,x)\in G(k)\times G(k)\times\bigl(G(k)\setminus
G'(k)\bigr)$. As usual, we apply the proper base change theorem. The
fiber $\Phi^{-1}(g,g_1,x)$ is naturally identified with the closed
subset
\[
\bigl\{ (h_1,h_2)\in G'\times G' \st h_1 x h_2 = g_1^{-1}gg_1x
\bigr\} \subset G'\times G'
\]
via the morphism $(h_1,h_2)\longmapsto (g_1,h_1,g_1 x,h_2)$. (This
is simply because the equation $g_1 h_1 g_1^{-1} \cdot (g_1
x)h_2(g_1 x)^{-1}=g$ is equivalent to $h_1 x h_2 = g_1^{-1}gg_1x$.)
Hence the stalk of $\Phi_!(\fp^{\prime\ast}M\boxtimes
\fp^{\prime\ast}N)$ at $(g,g_1,x)$ is quasi-isomorphic to the stalk
of the convolution $\Mb*\de_x*\Nb$ at $g_1^{-1}gg_1x$. Since $x\in
G(k)\setminus G'(k)$, we have $\Mb*\de_x*\Nb=0$ by assumption, which
implies \eqref{e:enough-stronger} and completes the proof of
Proposition \ref{p:induction-convolution}.
\end{proof}

\subsection{Induction of weak idempotents}\label{ss:ind-weak-idemp}
In this subsection we keep all the notation introduced earlier
(notably, at the beginning of \S\ref{ss:when-isom}).

\subsubsection{Statement of the main theorem}
Let $e\in\sD_{G'}(G')$ be a weak idempotent, and recall that the
corresponding Hecke subcategory of $\sD_{G'}(G')$ is denoted by
$e\sD_{G'}(G')$, because $\sD_{G'}(G')$ is weakly symmetric by Lemma \ref{l:central-equivariant-derived} (see \S\ref{ss:weak-idemp} for the all relevant
terminology).

\mbr

Assume that $\overline{e}*\de_x*\overline{e}=0$ for all $x\in
G(\kbar)\setminus G'(\kbar)$. It follows from Proposition
\ref{p:induction-convolution} that $f:=\ig(e)$ is a weak idempotent
in $\sD_G(G)$. Moreover, if $M\in e\sD_{G'}(G')$, then $M\cong e*M$,
which implies that $\overline{e}*\de_x*\overline{M}=0$ for all $x\in
G(\kbar)\setminus G'(\kbar)$ (in view of \eqref{e:convol-ext-by-0}),
and Proposition \ref{p:induction-convolution} shows that $\ig(M)\in
f\sD_G(G)$.

\begin{thm}\label{t:induction-Hecke}
\begin{enumerate}[$($a$)$]
\item In this situation, the functor
\begin{equation}\label{e:induction-Hecke}
\ig\Bigl\lvert_{e\sD_{G'}(G')} \,:\, e\sD_{G'}(G') \rar{} f\sD_G(G)
\end{equation}
is strong semigroupal $($with respect to the semigroupal structure introduced in \S\ref{ss:weak-semigroupal-ind}$)$ and induces a bijection on isomorphism classes of objects. \sbr
\item If the functor $M\longmapsto e*M$ is isomorphic to the
identity functor on $e\sD_{G'}(G')$, the functor
\eqref{e:induction-Hecke} is faithful. \sbr
\item If the functors $M\longmapsto e*M$ and $N\longmapsto f*N$ are
isomorphic to the identity functors on $e\sD_{G'}(G')$ and
$f\sD_G(G)$, respectively, then \eqref{e:induction-Hecke} is an
equivalence of categories, a quasi-inverse to which is provided by
the functor
\[
f\sD_G(G)\rar{}e\sD_{G'}(G'), \qquad N \longmapsto
e*\bigl(N\bigl\lvert_{G'}\bigr).
\]
\end{enumerate}
\end{thm}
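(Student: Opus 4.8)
The plan is to produce an explicit quasi-inverse candidate and identify the two composites with it; the three assertions then drop out formally. Let $\Psi\colon f\sD_G(G)\to e\sD_{G'}(G')$ denote the functor $N\mapsto e*(N|_{G'})$; it lands in the Hecke subcategory because $e*e\cong e$ and $(\sD_{G'}(G'),*)$ is weakly symmetric (Lemma \ref{l:central-equivariant-derived}). I use throughout that in a weakly symmetric semigroupal category every object $X$ of a Hecke subcategory $e\cM$ satisfies $X\cong e*X\cong e*X*e$. To begin with, if $M,N\in e\sD_{G'}(G')$ then $M\cong M*e$ and $N\cong e*N$, hence $\overline M\cong\overline M*\overline e$ and $\overline N\cong\overline e*\overline N$ by \eqref{e:convol-ext-by-0}, so $\overline M*\de_x*\overline N\cong\overline M*(\overline e*\de_x*\overline e)*\overline N=0$ for every $x\in G(\kbar)\setminus G'(\kbar)$; by Proposition \ref{p:induction-convolution} the morphism $\vp_{M,N}$ of \eqref{e:weak-semigr} is then an isomorphism, so \eqref{e:induction-Hecke} is strong semigroupal.

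Next I would produce a Mackey-type excision triangle for $\ig(M)|_{G'}$. Apply Proposition \ref{p:induction-alternative} with $Y=G/G'$ and $y=o:=eG'$, so $G^{o}=G'$, $\Gt\cong Z$, $\pi=\pr_{1}$ and $i=j_{o}$. The $G'$-action on $Y$ fixes the closed point $o$; let $W\subset\pi^{-1}(G')$ be the open part lying over $Y\setminus\{o\}$, whose closed complement in $\pi^{-1}(G')$ is exactly $i(G')$. Applying the excision triangle \eqref{e:distinguished-triangle} to $\Mt|_{\pi^{-1}(G')}$, pushing forward to $G'$, and using proper base change (Theorem \ref{t:proper-base-change}) to identify the outer terms with $\ig(M)|_{G'}$ and with $i^{*}\Mt\cong M$ respectively, yields a triangle functorial in $M$,
\[
R_{M}\longrightarrow\ig(M)|_{G'}\longrightarrow M\longrightarrow R_{M}[1],
\]
where $R_{M}$ records the contribution of $W$ (so that its extension by zero to $G$ is $\pi_{!}$ of $\Mt|_{W}$ extended by zero to $\Gt$). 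The crucial point, and the step I expect to be the main obstacle, is the vanishing $e*R_{M}=0$ for $M\in e\sD_{G'}(G')$. I would prove this by the stalk computation from the proof of Proposition \ref{p:induction-convolution}: since $e*R_{M}=0$ iff $\overline e*\overline{R_{M}}=0$ (by \eqref{e:convol-ext-by-0}), one computes the stalks of $\overline e*\overline{R_{M}}$ via proper base change and realizes them as $R\Gamma_{c}$ of stalks of convolutions of the shape $\overline e*\de_{g}*\overline M*\de_{g^{-1}}*\overline e$ with $g\in G(\kbar)\setminus G'(\kbar)$; as $M\cong e*M*e$ forces $\overline M\cong\overline e*\overline M*\overline e$, each of these factors through $\overline e*\de_{g}*\overline e=0$. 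Convolving the triangle above with $e$ then gives a functorial isomorphism $\Psi\circ\ig\cong(M\mapsto e*M)$ on $e\sD_{G'}(G')$. Finally, the projection formula $\ig\bigl(X*(N|_{G'})\bigr)\cong\ig(X)*N$, functorial in $X\in\sD_{G'}(G')$ and $N\in\sD_G(G)$ (the geometric form of $\Ind(V\tens\mathrm{Res}\,W)\cong\Ind(V)\tens W$, deduced from the projection formula for $\pi_{!}$ and proper base change), applied with $X=e$ gives a functorial isomorphism $\ig\circ\Psi\cong(N\mapsto f*N)$ on $f\sD_G(G)$.

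With these two functor isomorphisms in hand, the three parts are formal. For (a): given $N\in f\sD_G(G)$, weak symmetry gives $N\cong f*N\cong\ig(\Psi(N))$, so \eqref{e:induction-Hecke} is essentially surjective; and if $\ig(M)\cong\ig(M')$ with $M,M'\in e\sD_{G'}(G')$, then $M\cong e*M\cong\Psi\,\ig(M)\cong\Psi\,\ig(M')\cong e*M'\cong M'$, so \eqref{e:induction-Hecke} is injective on isomorphism classes, hence bijective on them. For (b): if $M\mapsto e*M$ is isomorphic to $\id$ on $e\sD_{G'}(G')$, then $\Psi\circ\ig\cong\id$, so \eqref{e:induction-Hecke} has a left inverse up to natural isomorphism and is therefore faithful. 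For (c): if moreover $N\mapsto f*N$ is isomorphic to $\id$ on $f\sD_G(G)$, then also $\ig\circ\Psi\cong\id$, whence $\ig$ and $\Psi$ are mutually quasi-inverse and $\Psi$ is precisely the functor $N\mapsto e*(N|_{G'})$ of the statement. All of this is bookkeeping with the Mackey triangle and the projection formula; the one genuinely substantial step is the vanishing $e*R_{M}=0$, the geometric avatar of Mackey's irreducibility criterion.
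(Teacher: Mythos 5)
Your overall architecture matches the paper's: establish two functorial isomorphisms $\Psi\circ\ig\cong(M\mapsto e*M)$ on $e\sD_{G'}(G')$ and $\ig\circ\Psi\cong(N\mapsto f*N)$ on $f\sD_G(G)$ (these are the paper's Propositions \ref{p:induce-restrict} and \ref{p:restrict-induce}) and then deduce (a), (b), (c) formally. Your reduction of the strong semigroupal claim to Proposition \ref{p:induction-convolution} is exactly the paper's. Your excision-triangle route to $\Psi\circ\ig\cong(M\mapsto e*M)$, via $R_M\to\ig(M)|_{G'}\to M\to R_M[1]$ and the vanishing $e*R_M=0$, is a reasonable variant of the paper's proof of Proposition \ref{p:induce-restrict}, which uses the complement $U''$ of $i(G')$ in all of $\Gt$ rather than in $\pi^{-1}(G')$, but the underlying stalk computation (producing a convolution factoring through $\overline e*\de_g*\overline e$ with $g\notin G'$) is the same mechanism.

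The genuine gap is the claimed ``projection formula'' $\ig\bigl(X*(N|_{G'})\bigr)\cong\ig(X)*N$ for all $X\in\sD_{G'}(G')$ and $N\in\sD_G(G)$. No such general formula holds. You are conflating the convolution $*$ (built from the multiplication morphisms $\mu_{G'}$ and $\mu_G$, which are different maps) with the tensor product $\otimes$, for which the projection formula for $\pi_!$ applies. The finite-group analogue you cite, $\Ind(V\otimes\mathrm{Res}\,W)\cong\Ind(V)\otimes W$, is about tensor products of representations; the corresponding statement for convolution of class functions $\iga(a)*b\cong\iga\bigl(a*(b|_{G'})\bigr)$ is false already for $G'=\{1\}$ (the left side is $|G|\cdot a\cdot b$ while the right side is supported at the identity). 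The paper's Proposition \ref{p:restrict-induce}, which you are trying to obtain for free, is precisely the isomorphism $f*N\cong\ig\bigl(e*(N|_{G'})\bigr)$, and its proof relies essentially on the Mackey-type hypothesis $\overline e*\de_x*\overline e=0$ for $x\in G(\kbar)\setminus G'(\kbar)$ (via Lemma \ref{l:sublemma}: $\overline e*N_{G\setminus G'}=0$, which in turn forces $\overline e*N\to\overline e*N_{G'}$ to be an isomorphism); it is not a specialization of any idempotent-free statement. The argument there is roughly as involved as the one for Proposition \ref{p:induce-restrict}, using the auxiliary closed subscheme $Z'\subset\Gt\times G$, proper base change, and the same stalk-vanishing mechanism. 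So one of the two main technical lemmas is missing from your proposal and cannot be replaced by general nonsense about $\pi_!$.
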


\subsubsection{An immediate consequence} Let us note at once the following

\begin{cor}\label{c:induction-min-idemp}
If $e$ is a minimal $($resp., geometrically minimal$)$ weak
idempotent in $\sD_{G'}(G')$ and the other assumptions are in force,
then $f=\ig(e)$ is a minimal $($resp., geometrically
minimal$)$ weak idempotent in $\sD_G(G)$.
\end{cor}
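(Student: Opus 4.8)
The plan is to derive this directly from Theorem \ref{t:induction-Hecke}(a), together with the characterization of minimality recorded after Definition \ref{d:minimal-idempotent}: in an additive weakly symmetric semigroupal category $\cM$, a weak idempotent is minimal if and only if its Hecke subcategory contains exactly two isomorphism classes of weak idempotents, namely $0$ and the idempotent itself. Both $\sD_{G'}(G')$ and $\sD_G(G)$ are of this type by Lemma \ref{l:central-equivariant-derived}. Under the standing hypothesis $\overline{e}*\de_x*\overline{e}=0$ for all $x\in G(\kbar)\setminus G'(\kbar)$, Proposition \ref{p:induction-convolution} shows that $f=\ig(e)$ is a weak idempotent, and Theorem \ref{t:induction-Hecke}(a) provides a strong semigroupal functor
\[
\ig\big\vert_{e\sD_{G'}(G')}\,:\,e\sD_{G'}(G')\rar{}f\sD_G(G)
\]
that is a bijection on isomorphism classes of objects.

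First I would handle the ``minimal'' assertion. Since the displayed functor is strong semigroupal and injective on isomorphism classes, an object $M\in e\sD_{G'}(G')$ satisfies $M*M\cong M$ precisely when $\ig(M)*\ig(M)\cong\ig(M)$; combined with essential surjectivity, this shows that $\ig$ restricts to a bijection between isomorphism classes of weak idempotents in $e\sD_{G'}(G')$ and in $f\sD_G(G)$. (Here one uses that $f\sD_G(G)$ is stable under $*$, so a weak idempotent of the Hecke subcategory is the same thing as a weak idempotent of $\sD_G(G)$ lying inside it.) Because $\ig$ is additive, $\ig(0)\cong 0$; by definition $\ig(e)=f$; and $e\not\cong 0$ forces $f\not\cong 0$ by injectivity. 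Hence $e\sD_{G'}(G')$ has exactly the two weak idempotents $0,e$ if and only if $f\sD_G(G)$ has exactly the two weak idempotents $0,f$, and the cited characterization of minimality then gives that $e$ is minimal in $\sD_{G'}(G')$ if and only if $f$ is minimal in $\sD_G(G)$.

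For the ``geometrically minimal'' assertion I would pass to field extensions. The quotient $\Gt=(G\times G')/G'$, the morphisms $i$ and $\pi$, and the functors $i^*$ and $\pi_!$ all commute with extension of the ground field, so for every algebraic extension $k'/k$ there is an isomorphism $\ig(e)\tens_k k'\cong\ind_{G'\tens_k k'}^{G\tens_k k'}(e\tens_k k')$ (up to the immaterial ambiguity in the choice of quasi-inverse to $i^*$). The hypothesis $\overline{e}*\de_x*\overline{e}=0$ is formulated over $\kbar$, hence is unaffected by replacing $(G,G',e)$ by its base change to $k'$; consequently Theorem \ref{t:induction-Hecke}(a), and therefore the ``minimal'' case just established, applies over $k'$. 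Since geometric minimality of $e$ means that $e\tens_k k'$ is minimal for every such $k'$, we conclude that $\ig(e)\tens_k k'$ is minimal for every such $k'$, i.e., $f$ is geometrically minimal.

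All the genuine content sits in Theorem \ref{t:induction-Hecke}(a); the corollary is bookkeeping built on top of it, so I do not expect a serious obstacle. The only points demanding a modicum of care, and the ones I would write out, are the base-change compatibility $\ig(e)\tens_k k'\cong\ind(e\tens_k k')$ and the identification of weak idempotents of the Hecke subcategory $f\sD_G(G)$ with those of $\sD_G(G)$ lying inside it — both immediate from the fact that $f\sD_G(G)$ is closed under $*$ and from the elementary compatibilities of $i^*$ and $\pi_!$ with base change.
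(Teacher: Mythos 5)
Your argument matches the paper's own proof: both reduce the geometrically minimal case to the minimal case by observing that all hypotheses are stable under base change, and both then deduce the minimal case from Theorem \ref{t:induction-Hecke}(a) together with the remark that a weak idempotent $f$ is minimal precisely when $f\sD_G(G)$ contains no weak idempotents other than $0$ and $f$, using the fact that $\ig$ induces a bijection on isomorphism classes of weak idempotents between the two Hecke subcategories. Your write-up is a little more explicit about why a strong semigroupal bijection on isomorphism classes preserves and reflects weak idempotents and about the base-change compatibility of $\ig$, but the underlying reasoning is the same.
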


It suffices to check that if $e$ is minimal, then so is $f$, as all
the hypotheses of the corollary are obviously invariant under base
change to algebraic extensions of $k$. To this end, observe that a
weak idempotent $f\in\sD_G(G)$ is minimal if and only if the
semigroupal category $f\sD_G(G)$ contains no weak idempotents other
than $0$ and $f$. By Proposition \ref{p:induction-convolution} and
Theorem \ref{t:induction-Hecke}, the functor
\eqref{e:induction-Hecke} induces a bijection between the set of
isomorphism classes of weak idempotents in $e\sD_{G'}(G')$ and the
set of isomorphism classes of weak idempotents in $f\sD_G(G)$,
whence the claim.

\subsubsection{Reduction of Theorem \ref{t:induction-Hecke} to two auxiliary propositions}
From now on we fix a weak idempotent $e\in\sD_{G'}(G')$ satisfying
$\overline{e}*\de_x*\overline{e}=0$ for all $x\in G(\kbar)\setminus
G'(\kbar)$. If $M,N\in e\sD_{G'}(G')$, then $M\cong M*e$ and $N\cong e*N$, which implies that $\overline{M}*\de_x*\overline{N}$ for all $x\in G(\kbar)\setminus G'(\kbar)$. Thus the first assertion of Theorem \ref{t:induction-Hecke}(a) results from Proposition \ref{p:induction-convolution}.

\mbr

Put $f=\ig(e)\in\sD_G(G)$. The following two propositions are proved below.

\begin{prop}\label{p:restrict-induce}
For each $N\in\sD_G(G)$, there is an isomorphism
\[f*N\xrar{\ \ \ \simeq\ \ \ }\ig\bigl(e*(N\bigl\lvert_{G'})\bigr),\] functorial with
respect to $N$.
\end{prop}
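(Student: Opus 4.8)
The plan is to prove the projection-formula-type identity $f*N\cong\ig(e*(N|_{G'}))$ by unwinding the definitions of $f=\ig(e)$, of the convolution $*$ on $\sD_G(G)$, and of the induction functor $\ig=\pi_!\circ(i^*)^{-1}$, and then applying the proper base change theorem together with the basic geometry relating $\Gt$, the map $\pi:\Gt\to G$, and the multiplication $\mu:G\times G\to G$.

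\begin{proof}{Sketch of the intended proof}
First I would reduce to the computation of $f*N=\mu_!(f\boxtimes N)=\mu_!\bigl((\pi_!\et)\boxtimes N\bigr)$, where $\et\in\sD_G(\Gt)$ denotes the chosen lift of $e$ (notation of \S\ref{ss:induction-useful-notation}); by the Künneth formula this equals $\mu_!(\pi\times\id_G)_!(\et\boxtimes N)$. So I need to understand the composite morphism $\mu\circ(\pi\times\id_G):\Gt\times G\rar{}G$. The key geometric input is that this morphism factors through a space built out of the same kind of quotient construction used to define $\ig$: writing a point of $\Gt$ as a $G'$-orbit $[(g,g')]$, one has $\mu\circ(\pi\times\id_G)\bigl([(g,g')],h\bigr)=gg'g^{-1}h$. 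I would form the variety $\Gt\times G$ with its diagonal $G$-action and its map to $G/G'$ (via the first projection $G\times G'\to G$), and identify $\Gt\times G$, via $\bigl([(g,g')],h\bigr)\mapsto\bigl([(g,g'\cdot g^{-1}hg)],\, g^{-1}hg\bigr)$-type coordinates, with the analogous $\Gt$-construction for the subgroup $G'$ acting on $G'\times G'$-data; more precisely, I expect a $G$-equivariant cartesian square relating $(\pi\times\id_G):\Gt\times G\to G\times G$ to $\pi:\Gt\to G$ after a base change along $i:G'\into\Gt$, analogous to the cartesian squares in Lemma \ref{l:Z-group-scheme}.

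The cleanest route is probably to reuse the alternative description of induction in Proposition \ref{p:induction-alternative}, applied with $Y=G/G'$ and $y$ the base point with stabilizer $G'$: there $\ig\cong\pr_{1!}\circ(j_y^*)^{-1}$ for $Z=\{(g,y)\st g\cdot y=y\}\subset G\times(G/G')$. I would enlarge this picture by one copy of $G$: set $\Zt=\{(g,y,h)\}\subset G\times(G/G')\times G$, with $G$ acting diagonally (conjugation on the two $G$-factors, the natural action on $G/G'$). Then $\pr_{13}:\Zt\to G\times G$ followed by $\mu$ should realize $f*N$ as $\mu_!\pr_{13!}$ of the pullback of $\et\boxtimes N$, while restricting along the appropriate section identifies that pullback with the lift (in $\sD_G(\widetilde{G})$) of $e*(N|_{G'})$ — using that $i^*$ of the relevant object is $\mu'_!$ of $e\boxtimes(N|_{G'})$, i.e.\ $e*(N|_{G'})$. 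Matching the two sides then amounts to the cartesian-square bookkeeping plus one application of the Künneth formula and one of proper base change (Theorem \ref{t:proper-base-change}); the $G$-equivariant structures are carried along automatically since all the morphisms in sight are $G$-equivariant.

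The main obstacle I anticipate is purely organizational: writing down the correct $G$-equivariant isomorphism of the relevant auxiliary varieties (the analogue of $\Gt$ for the pair $G'\subset G$ acting on the ``extra'' factor $G$) and verifying that the resulting square is genuinely cartesian, so that proper base change applies. There is also a small bookkeeping point about which quasi-inverse $(i^*)^{-1}$ is used on each side and the functoriality of the resulting isomorphism in $N$; this is handled by noting, as in Remark \ref{r:induction}(1), that different choices give canonically isomorphic functors, and by checking naturality on the level of the adjunction and base-change morphisms, which are themselves functorial. No genuinely hard estimate or deep theorem is needed beyond proper base change and the Künneth formula.
\end{proof}
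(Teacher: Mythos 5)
Your proposal has a genuine gap: you have not used the standing hypothesis that $\overline{e}*\de_x*\overline{e}=0$ for all $x\in G(\kbar)\setminus G'(\kbar)$ (fixed just before the statement of the proposition), and without it the conclusion is simply false. The ``change of coordinates'' $\bigl([(g,g')],h\bigr)\mapsto\bigl([(g,g'\cdot g^{-1}hg)],\,g^{-1}hg\bigr)$ that you propose is only defined when $g^{-1}hg\in G'$, i.e.\ on the closed subset
\[
Z'=\bigl\{([(g,h)],\ga)\in\Gt\times G\ \big\vert\ g^{-1}\ga g\in G'\bigr\}\subset\Gt\times G,
\]
and \emph{not} on all of $\Gt\times G$. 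So the cartesian square you are hoping for only relates $\nu:Z'\to\Gt$ (with $\nu([(g,h)],\ga)=[(g,hg^{-1}\ga g)]$) to $\mu':G'\times G'\to G'$ after base change along $i$; it does not compute $\mu_!(\pi\times\id)_!(\et\boxtimes N)$ outright, but only the piece supported on $Z'$. The open complement $U'=(\Gt\times G)\setminus Z'$ produces, via the triangle \eqref{e:distinguished-triangle}, a second summand, and you must prove that it contributes nothing: $\mu_!(\pi\times\id)_!\bigl((\et\boxtimes N)_{U'}\bigr)=0$.

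That vanishing is the real content of the proof, and it cannot be reduced to K\"unneth plus proper base change: it is exactly where the hypothesis on $e$ enters. In the paper one reduces it (via Corollary \ref{c:push-pull-summand} and a stalkwise base-change computation) to Lemma \ref{l:sublemma}, which states $\overline{e}*N_{G\setminus G'}=0$ for all $N\in\sD_G(G)$, and that lemma is proved by sandwiching ($\overline{e}*N\cong\overline{e}*N*\overline{e}$, using the weak symmetry from Lemma \ref{l:central-equivariant-derived} and the idempotence of $e$) and then invoking $\overline{e}*\de_x*\overline{e}=0$ for $x\notin G'$. Your claim that ``no genuinely hard estimate or deep theorem is needed beyond proper base change and the K\"unneth formula'' is therefore not right: the ``cartesian-square bookkeeping'' builds the \emph{morphism} $f*N\to\ig(e*(N|_{G'}))$, but establishing that it is an \emph{isomorphism} requires the Mackey-type hypothesis on $e$ in an essential way. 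A correct write-up following your route must (i) restrict the change-of-coordinates to $Z'$ and formulate the cartesian square there (as in the paper), and (ii) supply a separate vanishing argument on $U'$ that exploits the hypothesis on $e$.
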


\begin{prop}\label{p:induce-restrict}
For each $M\in e\sD_{G'}(G')$, there is an isomorphism
\[
e * \bigl( (\ig M)\bigl\lvert_{G'} \bigr) \xrar{\ \ \ \simeq\ \ \ }
e * M,
\]
functorial with respect to $M$.
\end{prop}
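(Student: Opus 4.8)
\textbf{Proof plan for Proposition \ref{p:induce-restrict}.}
The plan is to compute $(\ig M)\big\lvert_{G'}$ explicitly using the fiber-product description of $\Gt$ from \S\ref{ss:induction-useful-notation}, exactly as in the proof of Proposition \ref{p:induction-convolution}. Recall $\ig M=\pi_!\Mt$ with $\Mt\in\sD_G(\Gt)$ and $\fq^*\Mt\cong\fp^{\prime*}M$. Restricting $\pi_!\Mt$ to $G'\subset G$ and applying the proper base change theorem to the cartesian square with corners $\pi^{-1}(G')$, $\Gt$, $G'$, $G$, the object $(\ig M)\big\lvert_{G'}$ becomes $(\pi')_!(\Mt\big\lvert_{\pi^{-1}(G')})$ where $\pi':\pi^{-1}(G')\rar{}G'$ is the restriction of $\pi$. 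Unwinding the description of $\pi$ in terms of the quotient $\Gt=(G\times G')/G'$, the fiber of $\pi$ over a point of $G'$ breaks into the part lying over the identity coset $1\cdot G'\in G/G'$ — which contributes a copy of $M$ — and the part lying over the other cosets. So the first step is to produce a canonical decomposition (or distinguished triangle) separating these contributions.

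Second, I would convolve with $e$ on the left. The point is that convolving with $e$ on $G$, then restricting, is governed by the geometry on $G/G'$: the contribution of the non-identity cosets to $(\ig M)\big\lvert_{G'}$ is, after base change, a sum (over $x\in G(\kbar)\setminus G'(\kbar)$, organized by cosets) of complexes built from $\Mb*\de_x$-type terms. Convolving with $\bar e$ on the left and using $M\cong e*M$ together with the hypothesis $\bar e*\de_x*\bar e=0$ for $x\notin G'(\kbar)$ — via the shift $\bar e*\de_x*\bar M=\bar e*\de_x*\bar e*M'$ for a suitable $M'$, as in \S\ref{ss:ind-weak-idemp} — forces every such term to vanish. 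Concretely: apply the distinguished triangle \eqref{e:distinguished-triangle} on $\Gt$ (or $\pi^{-1}(G')$) splitting off the identity-coset locus, convolve the whole triangle with $e$, observe the "error" vertex dies after this convolution, and deduce $e*\bigl((\ig M)\big\lvert_{G'}\bigr)\xrar{\simeq}e*M$. The functoriality in $M$ is automatic since every arrow used (proper base change isomorphisms, the adjunction triangle, convolution with $e$) is functorial.

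The main obstacle I anticipate is bookkeeping the identification of $\pi^{-1}(G')\subset\Gt$ and the precise form of the complex $(\pi')_!(\Mt\big\lvert_{\pi^{-1}(G')})$ on the non-identity-coset locus — in particular, checking that its stalks are exactly the stalks of convolutions $\Mb*\de_x*(\text{something})$ with $x\notin G'(\kbar)$, so that the vanishing hypothesis applies after convolving with $e$. This is the same stalk computation performed in the proof of Proposition \ref{p:induction-convolution} (the identification of fibers $\Phi^{-1}(g,g_1,x)$ with $\{h_1xh_2=g_1^{-1}gg_1x\}$), adapted to the present setting where one of the two "copies" of $\Gt$ is replaced by the subgroup $G'$ itself; so I would mirror that argument rather than redo it from scratch. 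A secondary, purely formal point to handle carefully is that convolution with $e$ need not be an equivalence on $\sD_{G'}(G')$, so the conclusion must be stated, as it is, with $e*(-)$ applied on both sides rather than as an isomorphism $(\ig M)\big\lvert_{G'}\cong M$.
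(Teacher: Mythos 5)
Your plan is correct and follows essentially the same route as the paper's proof: decompose $\Gt$ (equivalently, its restriction over $G'$) into the image of $i:G'\into\Gt$ — which contributes $\overline{M}$ and hence $e*M$ after convolving — and its complement $U''$, then show that $\overline{e}*\pi_!(\Mt_{U''})$ vanishes by a stalkwise computation reducing to $\overline{e}*\de_g*\Mb\cong\overline{e}*\de_g*\overline{e}*\Mb=0$ for $g\notin G'(\kbar)$; the paper does this with the auxiliary morphism $\Phi''(h_1,g,h_2)=(h_1gh_2g^{-1},g)$, exactly the "mirror of Proposition \ref{p:induction-convolution}" you anticipate. The only cosmetic difference is that you propose to restrict to $G'$ before decomposing, whereas the paper decomposes $\Gt$ and proves the vanishing on all of $G$ before restricting; these are equivalent since restriction to $G'$ commutes with extension by zero and with left-convolution by a complex supported on $G'$.
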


These propositions clearly imply part (c) and the second assertion of part (a) of Theorem
\ref{t:induction-Hecke} (by restricting attention to the objects
$N\in f\sD_G(G)$). To see that they also imply part (b) of the
theorem, observe that by Proposition \ref{p:induce-restrict}, the
functor $M\longmapsto e*M$ on $e\sD_{G'}(G')$ is isomorphic to the
composition
\[
e\sD_{G'}(G') \xrar{\ \ \ig\ \ } \sD_G(G) \xrar{\text{restriction}}
\sD_{G'}(G') \xrar{\ \ e*-\ \ } e\sD_{G'}(G').
\]
If the composition is isomorphic to the identity functor on
$e\sD_{G'}(G')$, then the first term in the composition,
$\ig\bigl\lvert_{e\sD_{G'}(G')}$, has to be faithful.

\subsubsection{Proof of Proposition \ref{p:restrict-induce}}
The argument follows a pattern similar to the one used in the proof
of Proposition \ref{p:induction-convolution}. By definition, we have
\begin{equation}\label{e:f*N}
f*N = (\ig e)*N = \mu_!(\pi\times\id)_!(\et\boxtimes N),
\end{equation}
where we are using the morphisms
\[
\Gt\times G\xrar{\ \ \pi\times\id\ \ } G\times G \xrar{\ \ \mu\ \ }
G.
\]

\mbr

 Consider the closed subset
\[
Z' = \Bigl\{ \bigl( [(g,h)],\ga \bigr) \in \Gt\times G
\,\Bigl\lvert\, g^{-1}\ga g\in G' \Bigr\} \subset \Gt\times G.
\]
It is easy to check that $Z'$ is well defined and is stable under
the diagonal action of $G$ on $\Gt\times G$ (where, as always, $G$
acts on itself by conjugation). Moreover, let us recall the closed
subset $Z\subset\Gt\times\Gt$ introduced in
\S\ref{sss:weak-semigr-preparations}:
\[
Z = \Bigl\{ \bigl( [(g_1,h_1)], [(g_2,h_2)] \bigr) \in \Gt\times \Gt
\,\Bigl\lvert\, g_1^{-1}g_2\in G' \Bigr\}.
\]
It is clear that the morphism
$\id\times\pi:\Gt\times\Gt\rar{}\Gt\times G$ takes $Z$ into $Z'$.

\mbr

 Now consider the (clearly $G$-equivariant) morphism
\[
\nu : Z' \rar{} \Gt, \qquad \bigl( [(g,h)],\ga \bigr) \longmapsto
\bigl[ (g,hg^{-1}\ga g) \bigr].
\]
It is easy to check that $\nu$ is well defined, and we obtain a
commutative diagram
\[
\xymatrix{
 G' \times G' \ar[d]_{\mu'} \ar@{^{(}->}[rr]^{\ \ i\times i} & & Z
 \ar[d]^{\mt}
 \ar[rr]^{\id\times\pi} & & Z' \ar[dll]_{\nu} \\
 G' \ar@{^{(}->}[rr]^{i} & & \Gt & &
   }
\]
Furthermore, it is also easy to check that the following square is
cartesian:
\[
\xymatrix{
 G' \times G' \ar[d]_{\mu'} \ar@{^{(}->}[rr]^{\ \ i\times(\pi\circ i)} & & Z'
 \ar[d]^{\nu}
  \\
 G' \ar@{^{(}->}[rr]^{i} & & \Gt
   }
\]

\mbr

 Now we use the same argument as before. Put $U'=(\Gt\times
G)\setminus Z'$, write $(\et\boxtimes N)_{Z'}$ for the extension of
$(\et\boxtimes N)\bigl\lvert_{Z'}$ to $\Gt\times G$ by zero outside
of $Z'$, and define $(\et\boxtimes N)_{U'}$ similarly. Applying the
proper base change theorem to the cartesian square above, we obtain
functorial isomorphisms
\[
i^*\nu_!\bigl( (\et\boxtimes N)\bigl\lvert_{Z'} \bigr) \cong
\mu'_!\bigl(e\boxtimes (N\bigl\lvert_{G'})\bigr) \cong
e*(N\bigl\lvert_{G'}),
\]
whence $\ig\bigl(e*(N\bigl\lvert_{G'})\bigr)\cong\pi_!\nu_!\bigl(
(\et\boxtimes N)\bigl\lvert_{Z'} \bigr)$. We also have a commutative
diagram
\[
\xymatrix{
  Z' \ar[rr]^{\pi\times\id\ \ \ } \ar[d]_{\nu} & & G\times G \ar[d]^\mu \\
  \Gt \ar[rr]^\pi & & G
   }
\]
which implies that
\[
\ig\bigl(e*(N\bigl\lvert_{G'})\bigr) \cong \mu_!(\pi\times\id)_!
\bigl( (\et\boxtimes N)_{Z'} \bigr).
\]

\mbr

 In view of \eqref{e:f*N}, we see that the adjunction morphism
$(\et\boxtimes N)\rar{}(\et\boxtimes N)_{Z'}$ yields a morphism
$f*N\rar{}\ig\bigl(e*(N\bigl\lvert_{G'})\bigr)$, functorial with
respect to $N$. As before, to complete the proof of Proposition
\ref{p:restrict-induce}, it is enough to show that if
$\overline{e}*\de_x*\overline{e}=0$ for all $x\in G(\kbar)\setminus
G'(\kbar)$, then $\mu_!(\pi\times\id)_! \bigl( (\et\boxtimes N)_{U'}
\bigr)=0$.

\mbr

Henceforth, we may and do assume that $k$ is algebraically closed.
By Corollary \ref{c:push-pull-summand}, it is enough to prove that
$\mu_!(\pi\times\id)_!(\fq\times\id)_! \bigl[
(\fq\times\id)^*(\widetilde{e}\boxtimes N)_{U'} \bigr] = 0$, which
is equivalent to
\begin{equation}\label{e:enough-2}
\mu_!(\pi\times\id)_!(\fq\times\id)_! \Bigl[ \bigl(
(\fp^{\prime\ast}e) \boxtimes N\bigr)_{(\fq\times\id)^{-1}(U')}
\Bigr] = 0.
\end{equation}
(As in \S\ref{ss:induction-useful-notation}, we write $\fq:G\times
G'\rar{}\Gt$ for the quotient map and $\fp':G\times G'\rar{}G'$ for
the projection onto the second factor; and we are using the proof of
Lemma \ref{l:pullback-equivalence} given in
\S\ref{sss:proof-l:pullback-equivalence} to conclude that
$\fq^*\widetilde{e}\cong \fp^{\prime\ast}e$.)

\mbr

 Note that the morphism
\[
\mu\circ(\pi\times\id)\circ(\fq\times\id) : G\times G'\times G
\rar{} G
\]
is given by $(g,h,\ga)\longmapsto ghg^{-1}\ga$. Let us consider the
morphism
\[
\Phi' : G\times G'\times G \rar{} G\times G, \qquad
(g,h,\ga)\longmapsto (ghg^{-1}\ga,g).
\]
To establish \eqref{e:enough-2}, it suffices to prove that
\begin{equation}\label{e:enough-stronger-2}
\Phi'_! \Bigl[ \bigl( (\fp^{\prime\ast}e) \boxtimes
N\bigr)_{(\fq\times\id)^{-1}(U')} \Bigr] = 0.
\end{equation}
We also observe that
\[
(\fq\times\id)^{-1}(U') = \bigl\{ (g,h,\ga)\in G\times G'\times G
\st g^{-1}\ga g\in G\setminus G' \bigr\}.
\]

\mbr

 We will use the proper base change theorem to compute the stalk
of the left hand side of \eqref{e:enough-stronger-2} at a point
$(x,g)\in G(k)\times G(k)$. The fiber $\Phi^{\prime-1}(x,g)$ is
identified with the closed subset $\bigl\{ (h,\ga)\in G'\times G \st
ghg^{-1}\ga=x \bigr\}\subset G'\times G$. The equation
$ghg^{-1}\ga=x$ can be rewritten as $h\cdot g^{-1}\ga g=g^{-1}xg$,
so we see that $\Phi^{\prime-1}(x,g)\cap (\fq\times\id)^{-1}(U)$ can
be identified with the closed subset $ W = \bigl\{ (h,\ga')\in
G'\times (G\setminus G') \st h\ga' = g^{-1}xg \bigr\} $ via the
morphism $w:W\rar{}G\times G'\times G$ given by $(h,\ga')\longmapsto
(g,h,g\ga'g^{-1})$.

\mbr

 Since $N$ is $G$-equivariant, the pullback
$\la^*((\fp^{\prime\ast}e)\boxtimes N)$ is naturally identified with
$(e\boxtimes N)\bigl\lvert_W$, and thus, by the proper base change
theorem, $R\Ga_c\bigl(W,\la^*((\fp^{\prime\ast}e)\boxtimes N)\bigr)$
is naturally identified with the stalk at $g^{-1}xg$ of the
convolution $\overline{e}*N_{G/G'}$, where the meaning of the
subscript $G\setminus G'$ is as before: $N_{G\setminus G'}$ is the
extension of $N\bigl\lvert_{G\setminus G'}$ to $G$ by zero outside
of $G\setminus G'$. Hence we are reduced to the following
\begin{lem}\label{l:sublemma}
Under the assumptions of Proposition \ref{p:restrict-induce}, we
have
\[
\overline{e}*N_{G\setminus G'} = 0 \qquad \text{for all }
N\in\sD_G(G).
\]
\end{lem}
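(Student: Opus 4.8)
The plan is to deduce the statement from the (much stronger-looking, but for $G$-equivariant $N$ equivalent) assertion that $\overline{e}*N$ is supported on the closed subgroup $G'\subseteq G$, and to prove the latter by a proper base change computation that feeds on the hypothesis $\overline{e}*\delta_x*\overline{e}=0$. Since the vanishing of an object of $\sD(G)$ can be tested after base change, we may and do assume $k$ algebraically closed.

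Two elementary remarks come first. By \eqref{e:convol-ext-by-0} and $e*e\cong e$ we have $\overline{e}*\overline{e}\cong\overline{e*e}\cong\overline{e}$, so $\overline{e}$ is a weak idempotent in $\sD(G)$. Moreover, the morphism $\beta_{M,N}\colon M*N\to N*M$ built in the proof of Lemma~\ref{l:central-equivariant-derived} uses only the equivariant structure on $N$, hence exists, and is an isomorphism, for any $M\in\sD(G)$ and any $N\in\sD_G(G)$; applying it with $M=\overline{e}$, together with associativity and $\overline{e}\cong\overline{e}*\overline{e}$, gives
\[
\overline{e}*N \;\cong\; \overline{e}*\overline{e}*N \;\cong\; \overline{e}*(N*\overline{e}) \;\cong\; \overline{e}*N*\overline{e}.
\]
Now convolve the distinguished triangle $N_{G\setminus G'}\to N\to\overline{N\big\vert_{G'}}\to N_{G\setminus G'}[1]$ (the instance $U=G\setminus G'$, $Z=G'$ of \eqref{e:distinguished-triangle}) with $\overline{e}$ on the left. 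By \eqref{e:convol-ext-by-0}, $\overline{e}*\overline{N\big\vert_{G'}}\cong\overline{e*(N\big\vert_{G'})}$ is supported on $G'$; so, granting that $\overline{e}*N$ is supported on $G'$, the triangle shows $\overline{e}*N_{G\setminus G'}$ is as well. But $\overline{e}*N_{G\setminus G'}$ is also supported on $G'\cdot(G\setminus G')\subseteq G\setminus G'$, and these two loci are disjoint, so $\overline{e}*N_{G\setminus G'}=0$. It therefore remains to prove that $\overline{e}*N*\overline{e}$ is supported on $G'$.

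Fix $g\in G(k)\setminus G'(k)$. By proper base change, and using that $\overline{e}$ is supported on $G'$, the fibre over $g$ of the threefold multiplication $G\times G\times G\to G$ contributes only over $\{a,b\in G'\}$, and is there identified with $G'\times G'$ via $(a,b)\mapsto(a,a^{-1}gb^{-1},b)$; hence $(\overline{e}*N*\overline{e})_g\cong R\Gamma_c\bigl(G'\times G',(e\boxtimes e)\otimes\psi^*N\bigr)$, where $\psi\colon G'\times G'\to G$ is given by $\psi(a,b)=a^{-1}gb^{-1}$. Pushing forward along $\psi$ and applying the projection formula, this becomes $R\Gamma_c\!\left(G,\,N\otimes\psi_!(e\boxtimes e)\right)$. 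The stalk of $\psi_!(e\boxtimes e)$ at $h$ is $R\Gamma_c\bigl(\psi^{-1}(h),(e\boxtimes e)\big\vert_{\psi^{-1}(h)}\bigr)$, which the same proper base change computation (now inserting $\delta_h$ in the middle) identifies with $(\overline{e}*\delta_h*\overline{e})_g$; this vanishes for $h\notin G'$ by hypothesis. On the other hand, since $g\notin G'$, the image of $\psi$ is the double coset $G'gG'$, which is disjoint from $G'$, so $\psi_!(e\boxtimes e)$ is supported on $G\setminus G'$. These two facts force $\psi_!(e\boxtimes e)=0$, whence $(\overline{e}*N*\overline{e})_g=0$, as required.

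The only step demanding real care is the isomorphism $\overline{e}*N\cong\overline{e}*N*\overline{e}$, where the $G$-equivariance of $N$ enters essentially: for a general $M\in\sD(G)$ the convolution $\overline{e}*M$ need not be supported on $G'$ (for instance $\overline{e}*\delta_x$ is supported on $G'x$), so the hypothesis by itself cannot give the conclusion without exploiting equivariance. Everything else is routine bookkeeping with proper base change and the projection formula.
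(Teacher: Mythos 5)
Your proof is correct and runs on the same fuel as the paper's: the distinguished triangle for the decomposition $G = G' \sqcup (G\setminus G')$, the observation $\overline{e}*N\cong\overline{e}*N*\overline{e}$ (which, as you note, requires only equivariance of $N$ and the idempotency of $\overline{e}$), and a proper base change computation that isolates the term $\overline{e}*\delta_x*\overline{e}$ for $x\notin G'$ so that the hypothesis can be applied. The bookkeeping is organized slightly differently: the paper applies the distinguished triangle twice to reduce to $\overline{e}*N_{G\setminus G'}*\overline{e}=0$ and then computes via an auxiliary map $\la:G'\times(G\setminus G')\times G'\to G\times(G\setminus G')$, whereas you reduce (via the disjointness of supports) to showing $\overline{e}*N*\overline{e}$ has vanishing stalks off $G'$ and then compute a single stalk directly with the projection formula; the two are equivalent reformulations of the same intermediate step, and both arrive at the same vanishing $(\overline{e}*\delta_h*\overline{e})_g=0$.
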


\mbr

 To prove Lemma \ref{l:sublemma}, we use the distinguished
triangle \eqref{e:distinguished-triangle} with $M=N$, $X=G$ and
$Z=G'$. We see that it suffices to show that the natural morphism
$\overline{e}*N\rar{}\overline{e}*N_{G'}$ is an isomorphism. But
$\overline{e}\cong\overline{e}*\overline{e}$, and since
$N\in\sD_G(G)$, we see that
$\overline{e}*N\cong\overline{e}*N*\overline{e}$, functorially in
$N$ (see Lemma \ref{l:central-equivariant-derived}). On the other
hand, it is clear that
$\overline{e}*N_{G'}=\overline{e*(N\bigl\lvert_{G'})}$, and since
$e$ is a weak idempotent in $\sD_{G'}(G')$, we also have $e*M\cong
e*M*e$ functorially with respect to $M\in\sD(G')$ (applying Lemma
\ref{l:central-equivariant-derived} to $G'$ in place of $G$). Thus
we are reduced to showing that the morphism
\[
\overline{e}*N*\overline{e} \rar{} \overline{e}*N_{G'}*\overline{e},
\]
induced by the adjunction morphism $N\rar{}N_{G'}$, is an
isomorphism. Applying the distinguished triangle
\eqref{e:distinguished-triangle} once again, we see that it is
enough to show that
\begin{equation}\label{e:enough-3}
\overline{e}*N_{G\setminus G'}*\overline{e} = 0 \qquad \text{for all
} N\in\sD_G(G).
\end{equation}

\mbr

 Finally, to prove \eqref{e:enough-3}, note that
$\overline{e}*N_{G\setminus G'}*\overline{e}=\mu'_{3!}(e\boxtimes
N\bigl\lvert_{G\setminus G'}\boxtimes e)$, where
$\mu'_3:G'\times(G\setminus G')\times G'\rar{}G$ is given by
$(g_1,g_2,g_3)\longmapsto g_1 g_2 g_3$. Consider the map
\[
\la : G'\times(G\setminus G')\times G'\rar{}G\times (G\setminus G'),
\qquad (g_1,g_2,g_3)\longmapsto (g_1 g_2 g_3, g_2).
\]
By the proper base change theorem, the stalk of
$\la_!\bigl(e\boxtimes N\bigl\lvert_{G\setminus G'}\boxtimes
e\bigr)$ at a point $(g,x)\in G(k)\times \bigl(G(k)\setminus
G'(k)\bigr)$ is isomorphic to $N_x\tens
(\overline{e}*\de_x*\overline{e})_g$, where $N_x$ is the stalk of
$N$ at $x$. But $\overline{e}*\de_x*\overline{e}=0$ by assumption,
so $\la_!(e\boxtimes N\bigl\lvert_{G\setminus G'}\boxtimes e)=0$.
This forces \eqref{e:enough-3}, completing the proof of Lemma
\ref{l:sublemma} and of Proposition \ref{p:restrict-induce}.
\qed

\subsubsection{Proof of Proposition \ref{p:induce-restrict}} Once
again, the argument is very similar to the ones used in the proofs
of Propositions \ref{p:induction-convolution} and
\ref{p:restrict-induce}. The morphism $i:G'\rar{}\Gt$ is a closed
immersion; let $U''\subset\Gt$ denote the complement of its image.
As usual, we have an exact triangle
$\Mt_{U''}\rar{}\Mt\rar{}\Mt_{i(G')}\rar{}\Mt_{U''}[1]$, where the
meaning of the subscripts $U''$ and $i(G')$ is as before. In
addition, we have $\Mt_{i(G')}\cong i_!M$ by definition, and
therefore $\pi_!\Mt_{i(G')}\cong\Mb$. Thus we obtain a natural
morphism
\[
\overline{e}*\ig(M) = \mu_!(\overline{e}\boxtimes\pi_!\Mt) \rar{}
\mu_!(\overline{e}\boxtimes\pi_!\Mt_{i(G')}) \cong
\overline{e}*\overline{M}\cong\overline{e*M}.
\]
Restricting it to $G'$ yields a morphism
\[
e*\bigl( (\ig M)\bigl\lvert_{G'} \bigr) \rar{} e*M,
\]
functorial in $M\in e\sD_{G'}(G')$, and we would like to show that
it is an isomorphism.

\mbr

 As before, it is enough to prove that
\begin{equation}\label{e:enough-4}
\overline{e}*\bigl(\pi_!\Mt_{U''}\bigr)=0.
\end{equation}
In turn, to establish this equality, it is enough (by Corollary
\ref{c:push-pull-summand}) to check that
\begin{equation}\label{e:**}
\overline{e}* \pi_!\fq_! \bigl(\fq^*(\Mt_{U''})\bigr)=0.
\end{equation}
where $\fq:G\times G'\rar{}\Gt$ is the quotient map.

\mbr

 As always, we may and do assume that $k=\kbar$. Now
$\fq^{-1}(U'')=(G\setminus G')\times G'$, and
$\fq^*(\Mt_{U''})=(\fp^{\prime\ast}M)_{((G\setminus G')\times G')}$,
where $\fp':G\times G'\rar{}G'$ is the projection onto the second
factor. Thus the left hand side of \eqref{e:**} can be rewritten as
$\Psi_!(e\boxtimes\ql\boxtimes M)$, where we define
\[
\Psi:G'\times(G\setminus G')\times G'\rar{} G \qquad\text{by}\qquad
(h_1,g,h_2)\longmapsto h_1\cdot gh_2g^{-1},
\]
and $\ql$ denotes the constant rank $1$ local system on $G\setminus
G'$.

\mbr

 Let us consider instead the morphism
\[
\Phi'':G'\times(G\setminus G')\times G'\rar{} G\times (G\setminus
G'), \qquad (h_1,g,h_2)\longmapsto (h_1gh_2g^{-1},g).
\]
The fiber of $\Phi''$ over $(x,g)\in G(k)\times (G(k)\setminus
G'(k))$ is naturally identified with
\[
W' = \bigl\{ (h_1,h_2)\in G'\times G' \st h_1 g h_2 = xg \bigr\}
\]
via the morphism
\[
w':W'\rar{}G'\times(G\setminus G')\times G', \qquad
(h_1,h_2)\longmapsto(h_1,g,h_2).
\]
By the proper base change theorem, we have
\[
\Phi''_!(e\boxtimes\ql\boxtimes M)_{(x,g)} \cong
R\Ga_c\bigl(W',w^{\prime\ast}(e\boxtimes\ql\boxtimes M) \bigr) \cong
(\overline{e}*\de_g*\Mb)_{xg}.
\]
The latter stalk is zero because $g\in G(k)\setminus G'(k)$ and
$\Mb\cong\overline{e}*\Mb$. Thus we have proved that
$\Phi''_!(e\boxtimes\ql\boxtimes M)=0$. \emph{A fortiori},
$\Psi_!(e\boxtimes\ql\boxtimes M)=0$, which is equivalent to
\eqref{e:**}, which in turn implies \eqref{e:enough-4} and completes
the proof of Proposition \ref{p:induce-restrict}.

\subsection{Compatibility of induction with
twists}\label{ss:ind-twists}
Our final goal in this section is the following

\begin{prop}\label{p:induction-twists}
For every $M\in\sD_{G'}(G')$, we have $\ig(\te'_M)=\te_{\ig(M)}$ as
automorphisms of $\ig(M)$, where $\te'$ and $\te$ are the twists in
the categories $\sD_{G'}(G')$ and $\sD_G(G)$, respectively,
introduced in Definition \ref{d:twists-group}.
\end{prop}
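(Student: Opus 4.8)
\medbreak\noindent\emph{Proof proposal.}
The plan is to factor $\ig=\pi_!\circ(i^*)^{-1}$ (Definition \ref{d:induction}) and to show separately that $\pi_!$ and $(i^*)^{-1}$ respect the twist, after equipping the intermediate category $\sD_G(\Gt)$ with a twist of its own. The key observation is that the twist of Definition \ref{d:twists-group} is one instance of a general construction: if a unipotent group $H$ acts on a $k$-scheme $X$ of finite type, with action morphism $a\colon H\times X\to X$ and projection $p\colon H\times X\to X$, and one is given a \emph{stabilizer section}, i.e.\ a morphism $\tau\colon X\to H$ with $a\circ(\tau,\id_X)=\id_X$ (so $\tau(x)$ fixes $x$ for every point $x$), then, since also $p\circ(\tau,\id_X)=\id_X$, pulling back the equivariant structure $\phi_M\colon a^*M\rar{\simeq}p^*M$ of any $M\in\sD_H(X)$ along $(\tau,\id_X)$ yields a functorial automorphism $\te^\tau_M\colon M\rar{\simeq}M$. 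For $X=H$ with the conjugation action and $\tau=\id$ this is exactly $\te_M$; and for $X=\Gt$ with the $G$-action $h\cdot[(g,g')]=[(hg,g')]$ the morphism $\pi\colon\Gt\to G$ is a stabilizer section, because $\pi([(g,g')])\cdot[(g,g')]=[(gg',g')]=[(g,g')]$ by the relations defining $\Gt$ (\S\ref{ss:induction-useful-notation}); write $\te^\pi$ for the resulting twist on $\sD_G(\Gt)$.

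Next I would record the naturality of this construction: if $f\colon X\to Y$ is equivariant for a homomorphism $H_1\to H_2$ of unipotent groups, and the stabilizer sections of $X$ and $Y$ are $f$-compatible in the sense that $\tau_Y\circ f$ equals the composite of $\tau_X$ with $H_1\to H_2$, then $f^*$ intertwines $\te^{\tau_Y}$ with $\te^{\tau_X}$; and, when $H_1=H_2$ and $f_!$ is defined on the equivariant categories, $f_!$ intertwines $\te^{\tau_X}$ with $\te^{\tau_Y}$. Applying this: for the closed immersion $i\colon G'\into\Gt$ one has $\pi\circ i=$ the inclusion $G'\hookrightarrow G$ (since $\pi(i(g'))=g'$), so the sections $\id_{G'}$ of $G'$ and $\pi$ of $\Gt$ are $i$-compatible, hence $i^*(\te^\pi_M)=\te'_{i^*M}$ for all $M$; since $i^*$ is an equivalence (Lemma \ref{l:pullback-equivalence}) carrying $\te^\pi$ to $\te'$, its chosen quasi-inverse carries $\te'$ to $\te^\pi$. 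For $\pi\colon\Gt\to G$, equivariant for the conjugation $G$-actions, the sections are trivially $\pi$-compatible ($\id_G\circ\pi=\pi$), hence $\pi_!(\te^\pi_M)=\te_{\pi_!M}$. Combining, for every $M\in\sD_{G'}(G')$,
\[
\ig(\te'_M)=\pi_!\bigl((i^*)^{-1}(\te'_M)\bigr)=\pi_!\bigl(\te^\pi_{(i^*)^{-1}M}\bigr)=\te_{\pi_!(i^*)^{-1}M}=\te_{\ig(M)}.
\]

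The verifications that $\id_{G'}$ and $\pi$ are stabilizer sections, and that $i$ and $\pi$ intertwine them, are immediate from the explicit formulas, as is the case of $f^*$ in the naturality statement. The one step requiring genuine care is the case of $f_!$ — concretely, the identity $\pi_!(\te^\pi_M)=\te_{\pi_!M}$. There the equivariant structure on $\pi_!M$ is built from $\phi_M$ via the proper base change isomorphisms $a_G^*\pi_!\cong(\id_G\times\pi)_!\,a_{\Gt}^*$ and $p_G^*\pi_!\cong(\id_G\times\pi)_!\,p_{\Gt}^*$ attached to the cartesian squares of \S\ref{ss:funct-equiv-derived}, whereas $\te^\pi_M$ and $\te_{\pi_!M}$ are produced by the further pullbacks along $(\pi,\id_{\Gt})\colon\Gt\to G\times\Gt$ and $\De_G\colon G\to G\times G$. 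The identity then reduces to the assertion that the proper base change isomorphism of Theorem \ref{t:proper-base-change} for the cartesian square
\[
\xymatrix{
 \Gt \ar[r]^{(\id_G,\pi)} \ar[d]_{\pi} & G\times\Gt \ar[d]^{\id_G\times\pi} \\
 G \ar[r]^{\De_G} & G\times G
}
\]
is compatible, under horizontal and vertical pasting of cartesian squares, with the base change isomorphisms used to define the equivariant structure on $\pi_!M$ (the "distributivity" of proper base change). This is part of the standard six-functor formalism; unwinding it is the bulk of the diagram chase, and I expect it to be the main obstacle — essentially bookkeeping rather than a genuine difficulty.
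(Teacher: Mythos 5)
Your proof is correct, but it takes a genuinely different route from the paper's. You decompose $\ig=\pi_!\circ(i^*)^{-1}$, equip $\sD_G(\Gt)$ with its own twist using the ``stabilizer section'' $\pi\colon\Gt\to G$, and verify directly that $i^*$ and $\pi_!$ each intertwine the relevant twists; the latter is the one nontrivial step, and you correctly locate it as a pasting compatibility of proper base change isomorphisms (a standard but tedious six-functor check). The paper instead works entirely formally: it notes that the twist trivially commutes with the \emph{restriction} functor $j^*$, then uses Lemma \ref{l:induction-adjunction-properties} ($j^*\dashv\Ig$) to deduce, purely by adjunction nonsense, that $\Ig$ commutes with twists; finally it passes from $\Ig$ to $\ig$ via Lemma \ref{l:ind-Ind-Verdier} together with the cited fact (\cite[Prop.~7.2]{tanmay}) that $\bD^-_G$ commutes with the twist. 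Your argument is more elementary and self-contained (it avoids both the adjunction lemma and the external citation), and the ``stabilizer section'' framework you introduce is a reusable device; the paper's argument is shorter and sidesteps the base-change bookkeeping entirely, at the price of outsourcing the hard content to the Verdier-duality compatibility from \cite{tanmay}. One small correction: the top arrow in your displayed cartesian square should be $(\pi,\id_{\Gt})\colon\Gt\to G\times\Gt$, as in the surrounding text, not $(\id_G,\pi)$ (the latter does not typecheck as a morphism $\Gt\to G\times\Gt$); with that fix the square is indeed cartesian and your reduction goes through.
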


\begin{lem}\label{l:induction-adjunction-properties}
Let $j:G'\into G$ denote the inclusion map and put $d=\dim(G/G')$. Then $\Ig$ is right adjoint to the restriction functor $j^*:\sD_G(G)\rar{}\sD_{G'}(G')$, and $\ig$ is left adjoint to the functor $j^![2d](d):\sD_G(G)\rar{}\sD_{G'}(G')$.
\end{lem}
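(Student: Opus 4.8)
The plan is to deduce both statements formally from the two factorizations $\ig=\pi_!\circ(i^*)^{-1}$ and $\Ig=\pi_*\circ(i^*)^{-1}$ (Definition \ref{d:induction} and Remark \ref{r:induction}), together with the equivalence $i^*:\sD_G(\Gt)\rar{\sim}\sD_{G'}(G')$ of Lemma \ref{l:pullback-equivalence}. The only geometric input needed is the identity $\pi\circ i=j$, which is immediate from $i(g')=[(1,g')]$ and $\pi([(g,g')])=gg'g^{-1}$. Since $i^*$ is an equivalence, its chosen quasi-inverse $(i^*)^{-1}$ is simultaneously a left and a right adjoint of $i^*$. Using this together with the adjunction $(\pi^*,\pi_*)$ on the equivariant derived categories, I would compute, for $N\in\sD_G(G)$ and $M\in\sD_{G'}(G')$,
\[
\Hom(j^*N,M)=\Hom(i^*\pi^*N,M)\cong\Hom\bigl(\pi^*N,(i^*)^{-1}M\bigr)\cong\Hom\bigl(N,\pi_*(i^*)^{-1}M\bigr)=\Hom(N,\Ig M),
\]
naturally in $M$ and $N$; this proves that $\Ig$ is right adjoint to $j^*$.

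For the second assertion I would run the same chain of adjunctions with $(\pi_!,\pi^!)$ in place of $(\pi^*,\pi_*)$: for $M\in\sD_{G'}(G')$ and $N\in\sD_G(G)$,
\[
\Hom(\ig M,N)=\Hom\bigl(\pi_!(i^*)^{-1}M,N\bigr)\cong\Hom\bigl((i^*)^{-1}M,\pi^!N\bigr)\cong\Hom(M,i^*\pi^!N),
\]
so $\ig$ is left adjoint to the composite $i^*\circ\pi^!$, and it remains to identify this composite with $j^![2d](d)$. Since $j^!=(\pi\circ i)^!=i^!\circ\pi^!$ and shifts and Tate twists commute with $\pi^!$, it suffices to establish an isomorphism of functors $i^*\cong i^![2d](d):\sD_G(\Gt)\rar{}\sD_{G'}(G')$. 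This is the one point that is not pure formalism: $i$ is a closed immersion of $G'$ into $\Gt$, and I would invoke purity for this immersion. Here $\Gt=(G\times G')/G'$ is smooth over $k$ — being the quotient of the smooth scheme $G\times G'$ by the free action of the smooth group $G'$, or equivalently because the quotient map $\fq:G\times G'\rar{}\Gt$ is a $G'$-torsor, hence smooth and surjective — of dimension $\dim G$, and $i(G')$ is a smooth closed subscheme of codimension $\dim G-\dim G'=d$ (its closedness was already used in \S\ref{sss:proof-l:pullback-equivalence}). Purity then gives $i^!\ql_{\Gt}\cong\ql_{G'}(-d)[-2d]$, whence $i^!\cong i^*[-2d](-d)$, i.e.\ $i^*\cong i^![2d](d)$. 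Plugging this in yields $i^*\circ\pi^!\cong i^!\circ\pi^![2d](d)=j^![2d](d)$, as required.

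The main obstacle — such as it is — is the appeal to purity: I need not just the Gysin isomorphism on constant coefficients but the isomorphism $i^!F\cong i^*F[-2d](-d)$ for an arbitrary $F$, which rests on $i^!\ql_{\Gt}$ being an invertible object; this is exactly what the smoothness of the pair $(i(G'),\Gt)$ buys. So the care goes into recording that $\Gt$ is smooth and that $i$ is a closed immersion of smooth $k$-schemes of the expected codimension. Everything else is bookkeeping with adjoint functors, and I would also remark in passing that the unit and counit maps of $(\pi^*,\pi_*)$ and $(\pi_!,\pi^!)$ are $G$-equivariant, so these adjunctions pass to the equivariant categories, and that the fact that $i^*$ is an equivalence is precisely Lemma \ref{l:pullback-equivalence}.
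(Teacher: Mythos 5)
Your proof is correct and follows the same route as the paper's: factor $j=\pi\circ i$, write $\Ig=\pi_*\circ(i^*)^{-1}$ and $\ig=\pi_!\circ(i^*)^{-1}$, and transfer the adjunctions $(\pi^*,\pi_*)$, $(\pi_!,\pi^!)$ through the equivalence $i^*$. The paper's one-line proof leaves the step $i^*\cong i^![2d](d)$ implicit; you correctly identify that this is purity for the closed immersion $i:G'\into\Gt$ of smooth $k$-schemes of codimension $d$, and you supply the needed observation that $\Gt$ is smooth.
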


\begin{proof}{Proof}
Observe that $j=\pi\circ i$ with the notation of \S\ref{sss:induction-auxiliary}. Next apply the definitions of $\Ig$ and $\ig$ (see Definition \ref{d:induction} and Remark \ref{r:induction}(2)) and use the fact that $\pi_*$ and $\pi_!$ are, respectively, right and left adjoint to $\pi^*$ and $\pi^!$.
\end{proof}

To formulate the next lemma we let $\bD_G:\sD_G(G)\rar{}\sD_G(G)$ denote the Verdier duality functor and write $\bD_G^-=\iota^*\circ\bD_G=\bD_G\circ\iota^*:\sD_G(G)\rar{}\sD_G(G)$ (following \cite{intro}), where $\iota:G\rar{}G$ is given by $g\mapsto g^{-1}$.

\begin{lem}\label{l:ind-Ind-Verdier}
The functors $\bD^-_G\circ\Ig\circ\bD^-_{G'}$ and
$\ig[2d](d)$ are isomorphic.
\end{lem}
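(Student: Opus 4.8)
The plan is to identify both $\bD_G^-\circ\Ig\circ\bD_{G'}^-$ and $\ig[2d](d)$ as left adjoints of one and the same functor, namely $j^!:\sD_G(G)\to\sD_{G'}(G')$, and then invoke uniqueness of adjoints. This sidesteps any explicit geometric computation and uses essentially only the already-established Lemma \ref{l:induction-adjunction-properties}.

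First I would record the formal properties of $\bD_G^-$. Since $\bD_G\circ\bD_G\cong\id$ and $\iota$ is an isomorphism (so $\iota^!\cong\iota^*$, whence $\bD_G\circ\iota^*\cong\iota^*\circ\bD_G$), the functor $\bD_G^-$ is a contravariant auto-equivalence of $\sD_G(G)$ with $\bD_G^-\circ\bD_G^-\cong\id$; likewise $\bD_{G'}^-$ on $\sD_{G'}(G')$, where I write $\iota':G'\to G'$ for the inversion. (That $\bD_G$, $\iota^*$ and the adjunctions below all lift to, and are natural on, the equivariant categories is routine: the conjugation-action and projection morphisms $G\times G\to G$ are smooth of the same relative dimension, so duality and pullback along them agree up to a common shift-and-twist.) A short Hom-chase then shows that $\bD_G^-\circ\Ig\circ\bD_{G'}^-$ is left adjoint to $\bD_{G'}^-\circ j^*\circ\bD_G^-$: indeed $\Hom(\bD_G^-\Ig\bD_{G'}^-M,N)\cong\Hom(\bD_G^-N,\Ig\bD_{G'}^-M)\cong\Hom(j^*\bD_G^-N,\bD_{G'}^-M)\cong\Hom(M,\bD_{G'}^-j^*\bD_G^-N)$, using that $\bD_G^-$ and $\bD_{G'}^-$ are involutive and contravariant at the first and last steps, and that $\Ig$ is right adjoint to $j^*$ (Lemma \ref{l:induction-adjunction-properties}) in the middle.

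Next I would simplify that right-hand functor. Writing $\bD_{G'}^-=\iota'^*\circ\bD_{G'}$ and $\bD_G^-=\bD_G\circ\iota^*$, one has $\bD_{G'}^-\circ j^*\circ\bD_G^-\cong\iota'^*\circ(\bD_{G'}\circ j^*\circ\bD_G)\circ\iota^*$. The standard Verdier identity $f^!\cong\bD\circ f^*\circ\bD$ for the closed immersion $f=j$ gives $\bD_{G'}\circ j^*\circ\bD_G\cong j^!$, reducing the expression to $\iota'^*\circ j^!\circ\iota^*$; and since $j\circ\iota'=\iota\circ j$ (inversion commutes with the inclusion) together with $\iota^!\cong\iota^*$ and $\iota'^!\cong\iota'^*$ one gets $j^!\circ\iota^*\cong\iota'^*\circ j^!$, hence $\iota'^*\circ j^!\circ\iota^*\cong j^!$. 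On the other side, Lemma \ref{l:induction-adjunction-properties} states that $\ig$ is left adjoint to $j^![2d](d)$, hence $\ig[2d](d)$ is left adjoint to $[-2d](-d)\circ j^![2d](d)\cong j^!$ (shifts and Tate twists commute with $j^!$ and cancel). Two left adjoints of $j^!$ being canonically isomorphic, we conclude $\bD_G^-\circ\Ig\circ\bD_{G'}^-\cong\ig[2d](d)$.

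I do not expect a serious obstacle: the substantive ingredient is Lemma \ref{l:induction-adjunction-properties}, already proved, and everything else is formal manipulation of Verdier duality and adjunctions. The only real care needed is bookkeeping — making sure the involutivity of $\bD^-$ is applied correctly in the Hom-chase, and that the shifts and twists cancel to give exactly $j^!$ (so that it is $\ig[2d](d)$, and not $\ig[-2d](-d)$, that appears). As a fallback one could instead argue directly, combining $\bD_G\circ\pi_!\cong\pi_*\circ\bD_{\Gt}$ with the purity isomorphism $i^!\cong i^*[-2d](-d)$ for $i:G'\hookrightarrow\Gt$ — valid because $i(G')$ is the fibre over $eG'$ of the smooth morphism $\Gt\to G/G'$, hence a regular embedding of codimension $d$ with trivial normal bundle — and then transporting the inversions through; but the adjoint-comparison route is cleaner and I would prefer it.
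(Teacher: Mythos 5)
Your proposal is correct, and it takes essentially the same approach as the paper: the paper's (very terse) proof likewise combines Lemma~\ref{l:induction-adjunction-properties} with the fact that $\bD_G^-$, $\bD_{G'}^-$ are involutive anti-auto-equivalences and the Verdier identity relating $j^!$ and $j^*$ under duality, concluding by uniqueness of adjoints. Your rearrangement — showing both sides are left adjoints of $j^!$ via the identity $\bD_{G'}^-\circ j^*\circ\bD_G^-\cong j^!$ — is a clean, explicit spelling-out of exactly this argument, with the shift and twist bookkeeping carried out correctly.
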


\begin{proof}{Proof}
This follows from Lemma \ref{l:induction-adjunction-properties} using the fact that $\bD_G^-$ and $\bD_{G'}^-$ are anti-auto-equivalences together with the isomorphism $\bD_{G'}^-\circ(j^![2d](d))\circ\bD_G^-\cong j^*$.
\end{proof}

\begin{proof}{Proof of Proposition \ref{p:induction-twists}}
By definition, $\te'_{j^*N}=j^*(\te_N)$ for all $N\in\sD_G(G)$. Lemma \ref{l:induction-adjunction-properties} formally implies that $\Ig(\te'_M)=\te_{\Ig(M)}$ for all $M\in\sD_{G'}(G')$. It is shown in \cite[Prop. 7.2]{tanmay} that $\bD^-_G(\te_N)=\te_{\bD^-_G N}$ for all $N\in\sD_G(G)$ (and a similar statement holds for $\bD^-_{G'}$). Now Lemma \ref{l:ind-Ind-Verdier} finishes the proof.
\end{proof}



\section{Inner forms of algebraic groups and
$G$-schemes}\label{s:inner-forms}

The material of this section will be used to study the relationship between the induction functor introduced in \S\ref{s:induction} above and the operation of induction of class functions on finite groups (see \S\ref{ss:induction-sheaves-functions} below). It is also a necessary ingredient in the formulation of the relationship between character sheaves on a \emph{disconnected} unipotent group $G$ over $\bF_q$ and irreducible representations of $G(\bF_q)$; cf.~\cite{charsheaves}.

\subsection{Notation}
We fix an algebraic closure $\bF$ of a finite field of characteristic $p>0$. If $q$ is a power of $p$, we write $\bF_q$ for the unique subfield of $\bF$ consisting of $q$ elements. Given a scheme $X$ over $\bF_q$, we write $\Fr_q$ for the Frobenius endomorphism of $X\tens_{\bF_q}\bF$ (it is obtained by extension of scalars from the absolute Frobenius $\Phi_q:X\rar{}X$).

\mbr

Suppose $\Ga$ is an abstract group and $\vp:\Ga\rar{\simeq}\Ga$ is an automorphism. We can use $\vp$ to define an action of $\bZ$ on $\Ga$, and hence obtain the pointed set $H^1(\bZ,\Ga)$. Concretely, $H^1(\bZ,\Ga)$ can be identified with the set of \emph{$\vp$-conjugacy classes} in $\Ga$, the latter being the orbits of the $\Ga$-action on itself defined by $\ga:g\mapsto\vp(\ga)g\ga^{-1}$.

\subsection{Galois cohomology and torsors}\label{ss:galois-cohomology-torsors}
If $G$ is an algebraic group over $\bF_q$, the first Galois cohomology $H^1(\bF_q,G)$ is the pointed set of isomorphism classes of right $G$-torsors.
We can consider the action of $\bZ$ on $G(\bF)$ such that
$1\in\bZ$ acts via $\Fr_q$ and form the pointed set $H^1(\bZ,G(\bF))$ as above. The following result is standard (part (b) is due to Serge Lang \cite{lang}).

\begin{lem}\label{l:galois-cohomology-torsors}
\begin{enumerate}[$($a$)$]
\item Let $P$ be a right $G$-torsor, choose $p\in P(\bF)$, and
let $g\in G(\bF)$ be the unique element such that $p=\Fr_q(p)\cdot
g$. Then the $\Fr_q$-conjugacy class of $g$ in $G(\bF)$ is
independent of $p$, and the map $[P]\longmapsto[g]$ gives a
bijection
\[
H^1(\bF_q,G) \rar{\simeq} H^1(\bZ,G(\bF)).
\]
 \sbr
\item Let $G^\circ$ denote the neutral connected component of
$G$, and let $\Pi=G/G^\circ$. The natural map
$H^1(\bZ,G(\bF))\rar{}H^1(\bZ,\Pi(\bF))$ is a bijection.
 \sbr
\item Suppose that $G$ is a closed subgroup of an
algebraic group $U$ over $\bF_q$, form the quotient $Y=U/G$,
and let $\pi:U\rar{}Y$ denote the quotient morphism. The map
$y\longmapsto \pi^{-1}(y)$ induces a bijection between the set of
$U(\bF_q)$-orbits in $Y(\bF_q)$ and the kernel of the natural
map $H^1(\bF_q,G)\rar{}H^1(\bF_q,U)$.
\end{enumerate}
\end{lem}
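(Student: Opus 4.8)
The plan is to deduce all three statements from the standard dictionary between torsors over a finite field and non-abelian $H^1$, together with Lang's theorem; parts (a) and (c) are then essentially formal (cf.\ Serre, \emph{Galois cohomology}), while part (b) is where Lang--Steinberg enters. For (a): since $\bF$ is algebraically closed and $G$ is smooth, every right $G$-torsor over $\bF$ is trivial, so $P(\bF)\neq\emptyset$; fix $p\in P(\bF)$. The Frobenius $\Fr_q$ acts on the set $P(\bF)$, and because $P\tens_{\bF_q}\bF$ is a right $(G\tens_{\bF_q}\bF)$-torsor there is a unique $g\in G(\bF)$ with $p=\Fr_q(p)\cdot g$. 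Replacing $p$ by $p\gamma$ ($\gamma\in G(\bF)$) replaces $g$ by $\Fr_q(\gamma)^{-1}g\gamma$, which is a $\Fr_q$-conjugate of $g$ in the sense of \S\ref{ss:galois-cohomology-torsors}, so $[P]\mapsto[g]$ is well defined. It is bijective by the usual descent argument: from $g$ one reconstructs $P$ as the descent of $G\tens_{\bF_q}\bF$ along the semilinear automorphism ``$\Fr_q$ followed by right translation by $g$'', and isomorphic torsors give $\Fr_q$-conjugate elements.

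For (b): by (a) the map in question is the map on $\Fr_q$-conjugacy classes induced by the surjection $G(\bF)\twoheadrightarrow\Pi(\bF)$ (surjective because the fibres of $G\to\Pi=G/G^{\circ}$ are $G^{\circ}$-torsors over the algebraically closed field $\bF$, hence have $\bF$-points, recalling $\Pi$ is finite \'etale over the perfect field $\bF_q$). Surjectivity on conjugacy classes is then immediate: lift a representative. For injectivity, suppose $g_1,g_2\in G(\bF)$ have $\Fr_q$-conjugate images in $\Pi(\bF)$; after $\Fr_q$-conjugating $g_1$ we may assume $g_2=g_1 n$ with $n\in G^{\circ}(\bF)$, and we must produce $\delta\in G^{\circ}(\bF)$ with $\Fr_q(\delta)\,g_1\,\delta^{-1}=g_1 n$, i.e.\ $\bigl(g_1^{-1}\Fr_q(\delta)g_1\bigr)\cdot\delta^{-1}=n$. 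The left-hand side is the Lang map of the endomorphism $\sigma=\mathrm{Int}(g_1^{-1})\circ\Fr_q$ of $G^{\circ}\tens_{\bF_q}\bF$. A suitable power of $\sigma$ has the shape $\mathrm{Int}(w)\circ\Fr_{q^m}$ with $w\in G^{\circ}(\bF_{q^m})$; by Lang's theorem such an endomorphism is conjugate to $\Fr_{q^m}$ and hence has finite fixed-point group, so $\sigma$ is a Steinberg endomorphism, and the Lang--Steinberg theorem gives surjectivity of its Lang map. This produces the required $\delta$.

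For (c): for $y\in Y(\bF_q)$ the fibre $\pi^{-1}(y)$, being the pullback of the right $G$-torsor $\pi\colon U\to Y$ along $y$, is a right $G$-torsor over $\bF_q$; and the morphism $\pi^{-1}(y)\times_{\bF_q} U\to U$, $(p,u)\mapsto pu$ (product taken in $U$, using $\pi^{-1}(y)\subset U$), descends to a $U$-equivariant isomorphism from the induced $U$-torsor $\pi^{-1}(y)\wedge^{G}U$ onto $U$, so $[\pi^{-1}(y)]$ lies in $\ker\bigl(H^1(\bF_q,G)\to H^1(\bF_q,U)\bigr)$. Conversely, if the induced $U$-torsor of some right $G$-torsor $P$ is trivial, a trivialization exhibits $P$ (via $p\mapsto[(p,1)]$) as a $G$-stable subscheme of $U$ which is a $G$-torsor, hence as a fibre $\pi^{-1}(y)$ for a unique $y\in Y(\bF_q)$, obtained by composing $P\hookrightarrow U$ with $\pi$ and using $G$-invariance. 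Thus the image of $y\mapsto[\pi^{-1}(y)]$ is exactly the kernel. Finally, writing $y_j=\pi(\tilde u_j)$ with $\tilde u_j\in U(\bF)$, part (a) identifies $[\pi^{-1}(y_j)]$ with the $\Fr_q$-conjugacy class of $\Fr_q(\tilde u_j)^{-1}\tilde u_j$ in $G(\bF)$; a direct manipulation shows these classes coincide precisely when $\tilde u_1 G(\bF)=u\,\tilde u_2 G(\bF)$ for some $u\in U(\bF_q)$, i.e.\ when $y_1$ and $y_2$ lie in the same $U(\bF_q)$-orbit. (Alternatively, (c) is the standard ``orbits versus kernel'' exact sequence of pointed sets attached to $G\hookrightarrow U\twoheadrightarrow U/G$, transported through (a).)

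The only genuinely non-formal ingredient is the surjectivity of the twisted Lang map used in (b): one must know that an inner twist $\mathrm{Int}(g)\circ\Fr_q$ of a Frobenius endomorphism of the connected group $G^{\circ}$ is again a Steinberg endomorphism, so that Lang--Steinberg applies. Everything else is the torsor/$H^1$ dictionary plus careful bookkeeping with $\Fr_q$-conjugacy, and in the final write-up much of it can be replaced by references to Serre's \emph{Galois cohomology} and to Lang's paper \cite{lang}; the main care needed is to keep the left/right conventions for the $G$-action on $U$ and for $\Fr_q$-conjugation consistent throughout.
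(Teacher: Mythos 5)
The paper does not prove this lemma; it just records it as standard, citing Lang for part (b). Your write-up is a correct fill-in along the usual lines: part (a) is the semilinear-descent dictionary for torsors over a finite field, part (c) is the ``orbits versus kernel of $H^1$'' exact sequence of pointed sets transported through (a), and part (b) is the only substantive point, where you correctly reduce injectivity to surjectivity of the twisted Lang map $\delta\mapsto\sigma(\delta)\delta^{-1}$ on $G^\circ$ for $\sigma=\mathrm{Int}(g_1^{-1})\circ\Fr_q$. Your justification that $\sigma$ is a Steinberg endomorphism is compressed but repairable: one should take $m$ large enough that $g_1\in G(\bF_{q^m})$ \emph{and} that the resulting twisting element $w_m=\Fr_q^{m-1}(g_1)\cdots\Fr_q(g_1)g_1$ (which one checks lies in $G(\bF_{q^m})$) has some power landing in $G^\circ$; after replacing $m$ by a suitable multiple one gets $\sigma^{m}=\mathrm{Int}(w)\circ\Fr_{q^{m}}$ with $w\in G^\circ(\bF_{q^{m}})$, and then Lang's theorem for $G^\circ$ over $\bF_{q^{m}}$ conjugates this to $\Fr_{q^{m}}$, giving finiteness of fixed points. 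With that point spelled out, the argument is complete and is the standard one (see also Serre, \emph{Galois cohomology}, I.5, for the formal parts).
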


\begin{rems}
\begin{enumerate}[$($1$)$]
\item We recall that the kernel of a pointed map between pointed sets
$(S_1,s_1)\rar{f}(S_2,s_2)$ is defined as the subset
$f^{-1}(s_2)\subset S_1$.
 \sbr
\item The action of $U$ on $Y$ is by left translations.
 \sbr
\item If $y\in
Y(\bF_q)$, then $\pi^{-1}(y)$ is a closed subvariety of $U$
defined over $\bF_q$, and the action of $G$ on $U$ by right
multiplication makes $\pi^{-1}(y)$ a right $G$-torsor.
\end{enumerate}
\end{rems}

\subsection{Inner forms of algebraic
groups}\label{ss:inner-forms-algebraic-groups} We continue working
in the setup of \S\ref{ss:galois-cohomology-torsors}. Given $\al\in
H^1(\bF_q,G)$, we would like to define an inner form $G^\al$ of
$G$ determined by $\al$. Let $P$ be a right $G$-torsor whose
isomorphism class equals $\al$. Briefly, $G^\al$ is the group of
automorphisms of $P$ that commute with the right $G$-action. To
define $G^\al$ more formally, we consider a functor, which we
denote by $G^{P}$, from the category of $\bF_q$-schemes to the
category of groups, constructed as follows.

\mbr

Let $S$ be any $\bF_q$-scheme. We can view $P\times S$ as an
$S$-scheme, and we have a right action of $G$ on $P\times S$ by
$S$-scheme automorphisms. Then $G^{P}(S)$ is defined as the
group of $S$-scheme automorphisms of $P\times S$ that commute with
the $G$-action.

\begin{lem}
The functor $G^{P}$ is representable by an algebraic group over
$\bF_q$. Moreover, $G^{P}\tens_{\bF_q}\bF\cong
G\tens_{\bF_q}\bF$ as algebraic groups over $\bF$.
\end{lem}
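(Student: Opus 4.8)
The plan is to identify $G^{P}$ with the inner twist of $G$ by the torsor $P$, where $G$ is made to act on itself by conjugation. First I would record that $P$ is affine over $\bF_q$: since $G$ is unipotent it is affine, and after the faithfully flat base change $P\to\Spec\bF_q$ the torsor $P$ becomes trivial and hence affine over $P$, so affineness descends. Consequently the standard associated-bundle construction applies. Letting $G$ act on itself by conjugation (a left action by group automorphisms), the diagonal action of $G$ on $P\times_{\bF_q}G$ given by $(p,x)\cdot g=(pg,\,g^{-1}xg)$ is free, and because the fibre $G$ is affine, fppf descent is effective, so the quotient $\Gt:=(P\times_{\bF_q}G)/G$ is represented by a scheme over $\bF_q$; since $G$ is smooth of finite type, so is $\Gt$. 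As conjugation acts by group automorphisms, the multiplication, inversion and identity of $G$ descend to $\Gt$, making it an algebraic group over $\bF_q$, and by construction $\Gt\tens_{\bF_q}k'\cong G\tens_{\bF_q}k'$ as algebraic groups over $k'$ for every extension $k'/\bF_q$ over which $P$ trivializes.

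Next I would check that $\Gt$ represents the functor $G^{P}$. For an $\bF_q$-scheme $S$, unwinding the definition of the associated bundle shows that an $S$-point of $\Gt$ is the same as a $G$-equivariant $S$-morphism $\varphi\colon P\times_{\bF_q}S\to G\times_{\bF_q}S$, the target carrying the conjugation action. To such a $\varphi$ one attaches the $S$-morphism $\Psi_\varphi$ of $P\times_{\bF_q}S$ sending $p\mapsto p\cdot\varphi(p)$; one checks directly that $\Psi_\varphi$ commutes with the right $G$-action, that it is an automorphism, that every $S$-automorphism of $P\times_{\bF_q}S$ commuting with the right $G$-action arises in this way (using that $P$ is a torsor), and that $\varphi\mapsto\Psi_\varphi$ is a group isomorphism. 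This yields a natural isomorphism of functors $\Gt\cong G^{P}$, hence representability.

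For the second assertion I would note that $P\tens_{\bF_q}\bF$ is a nonempty smooth scheme of finite type over the algebraically closed field $\bF$, so it has an $\bF$-point; a torsor under $G\tens_{\bF_q}\bF$ possessing a rational point is trivial, so $P$ trivializes over $\bF$. By the last sentence of the first paragraph, $G^{P}\tens_{\bF_q}\bF\cong\Gt\tens_{\bF_q}\bF\cong G\tens_{\bF_q}\bF$ as algebraic groups over $\bF$.

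The step I expect to require the most care is the existence of $\Gt$ as an honest scheme, together with its group structure — that is, the effectivity of fppf descent in this situation — and the on-the-nose verification that $\Gt$ represents the functor $G^{P}$; the remaining ingredients (affineness of $P$, triviality of $P$ over $\bF$) are routine.
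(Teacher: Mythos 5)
Your proof is correct but takes a genuinely different route from the paper's. The paper argues abstractly: since $G$ is of finite type, $P$ trivializes over some finite extension $\bF_{q^n}$, so $G^{P}$ is represented by $G\tens_{\bF_q}\bF_{q^n}$ after base change, and one then invokes Galois descent for the representing object to get representability over $\bF_q$. You instead construct the representing object explicitly as the associated bundle $\Gt=(P\times_{\bF_q}G)/G$ (conjugation action on the second factor), using affine fppf descent along $P\to\Spec\bF_q$ to get the quotient, then separately verify the group structure descends and that $\Gt$ represents the functor $G^{P}$. Your construction is essentially the $X^{P}:=G\backslash(P\times X)$ construction that the paper itself introduces a few paragraphs later (\S\ref{ss:inner-forms-G-varieties}) together with Lemma \ref{l:inner-forms-compatibility}; you have in effect front-loaded that material into the proof of this lemma. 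The paper's route is shorter here but less concrete; yours produces the representing scheme directly and makes the isomorphism $G^{P}\tens_{\bF_q}k'\cong G\tens_{\bF_q}k'$ for any trivializing $k'$ visible from the construction, at the cost of spelling out the fppf-descent effectivity and the functor-of-points identification. Both are sound.
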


\begin{proof}{Proof}
In the case where $P$ is a trivial torsor (i.e.,
$P(\bF_q)\neq\varnothing$), one checks that $G^{P}$ is
representable by $G$ itself. In general, we have
$P(\bF_{q^n})\neq\varnothing$ for some $n\geq 1$. Thus $G^{P}$
is representable by $G\tens_{\bF_q}\bF_{q^n}$ after base change to
$\bF_{q^n}$, and Galois descent implies that $G^{P}$ is
representable over $\bF_q$.
\end{proof}

\begin{rem}
If $P'$ is another right $G$-torsor that is isomorphic to $P$,
then a choice of an isomorphism $P\rar{\simeq}P'$ induces an
isomorphism $G^{P}\rar{\simeq}G^{P'}$. Moreover, by
definition, any two isomorphisms $P\rar{}P'$ differ by an
element of $G^{P}(\bF_q)$. Consequently, we have an isomorphism
between $G^{P}$ and $G^{P'}$ that is unique up to inner
automorphisms.
\end{rem}

\begin{defin}\label{d:inner-form-subgroup}
Let $G$ be an algebraic group over $\bF_q$, and let $\al\in
H^1(\bF_q,G)$. For a representative $P$ of the isomorphism class
$\al$, we will write, somewhat imprecisely, $G^\al=G^{P}$. We
call $G^\al$ the \emph{inner form of $G$ defined by $\al$}.
\end{defin}

\begin{rem}
In view of the previous remark, we see that the set of conjugacy
classes in the group $G^\al(\bF_q)$ is determined
\emph{canonically} by $\al$.
Similarly, we can speak of \emph{the} set of irreducible
characters of the group $G^\al(\bF_q)$.
\end{rem}

\begin{rem}
The reader may prefer the following more concrete description of the
inner form $G^{P}$. Fix $p\in P(\bF)$, and let $g\in G(\bF)$ be the unique
element such that $p=\Fr_q(p)\cdot g$ $($so the $\Fr_q$-conjugacy
class of $g$ in $G(\bF)$ is the element of $H^1(\bZ,G(\bF))$
corresponding to the isomorphism class of $P${}$)$. Then there exists
an isomorphism
$G^{P}\tens_{\bF_q}\bF\rar{\simeq}G\tens_{\bF_q}\bF$ such
that the Frobenius endomorphism for $G^{P}$ becomes identified
with the endomorphism $x\mapsto g^{-1}\Fr_q(x)g$ of $G$.
\end{rem}

\subsection{Inner forms of
$G$-schemes}\label{ss:inner-forms-G-varieties} We remain in the
setup of \S\ref{ss:galois-cohomology-torsors}. Given $\al\in
H^1(\bF_q,G)$ and an $\bF_q$-scheme of finite type $X$ equipped with a
left $G$-action, we would like to define an inner form $X^\al$
of $X$ so that the corresponding inner form $G^\al$ of $G$
acts on $X^\al$. Once again, let $P$ be a representative of
$\al$, and define $G^{P}$ as above. Note that by construction,
$G^{P}$ acts on $P$ on the left; in fact, $P$ is a left
$G^{P}$-torsor. We also consider the free left action of $G$
on the product $P\times X$ given by $g\cdot(p,x)=(p\cdot
g^{-1},g\cdot x)$, and we form the quotient
$X^{P}:=G\setminus(P\times X)$. The actions of $G$ and
$G^{P}$ on $P\times X$ commute (here, $G^{P}$ acts on
$X$ trivially), so we obtain an induced action of $G^{P}$ on
$X^{P}$.

\begin{defin}
We write $X^\al=X^{P}$ $($somewhat imprecisely$)$, and we call
$X^\al$ $($together with the left action of $G^\al$ constructed above$)$ the \emph{inner form of the $G$-scheme $X$ defined by the cohomology class $\al$}.
\end{defin}

The next fact follows directly from the definitions.

\begin{lem}\label{l:inner-forms-compatibility}
Let $G$ be an algebraic group over $\bF_q$, let $P$ be a right
$G$-torsor, and let $X=G$ equipped with the conjugation action
of $G$. Then $X^{P}$ is naturally isomorphic to $G^{P}$,
also equipped with the conjugation action of $G^{P}$.
\end{lem}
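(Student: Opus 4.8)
The plan is to exhibit an explicit isomorphism $\theta\colon X^{P}\to G^{P}$ and verify that it carries the $G^{P}$-action on $X^{P}$ (the one induced from the left $G^{P}$-action on $P$) to the conjugation action of $G^{P}$ on itself; this is the standard fact that an inner form is the ``conjugation twist'' of $G$ by the torsor $P$. The cleanest way to produce $\theta$ is via the universal property of $G^{P}$, which represents the functor $S\mapsto\Aut_{G}(P\times S)$. Given an $\bF_q$-scheme $S$ and an $S$-point $(p,x)$ of $P\times X=P\times G$, the section $p$ trivializes $P\times_{\bF_q}S$ as a right $G$-torsor, and under this trivialization I let $\widetilde{\theta}(p,x)\in\Aut_{G}(P\times S)$ be left translation by $x$ on the $G$-factor (which commutes with the right $G$-action); concretely, $\widetilde{\theta}(p,x)$ sends $q=p\cdot h$ to $p\cdot xh$. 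This defines a morphism of $\bF_q$-schemes $\widetilde{\theta}\colon P\times G\to G^{P}$.

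The verification then breaks into three short point-level computations. First, $\widetilde{\theta}$ is invariant under the $G$-action $g\cdot(p,x)=(pg^{-1},gxg^{-1})$ used to build $X^{P}$: both $\widetilde{\theta}(pg^{-1},gxg^{-1})$ and $\widetilde{\theta}(p,x)$ send $q=p\cdot g^{-1}h$ to $p\cdot xg^{-1}h$. Hence $\widetilde{\theta}$ descends, by the universal property of the quotient, to a morphism $\theta\colon X^{P}\to G^{P}$. Second, $\theta$ is equivariant for the conjugation action on the target: for $\gamma\in G^{P}$, a direct computation using only that $\gamma$ commutes with the right $G$-action gives $\widetilde{\theta}(\gamma(p),x)=\gamma\circ\widetilde{\theta}(p,x)\circ\gamma^{-1}$ in $\Aut_{G}(P)=G^{P}$, which is exactly conjugation by $\gamma$. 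Third, $\theta$ is an isomorphism: this can be checked after the faithfully flat base change from $\bF_q$ to $\bF$, over which $P$ has a point $p_{0}$; trivializing $P\tens_{\bF_q}\bF$ by $p_{0}$ identifies $G^{P}\tens_{\bF_q}\bF$ with $G\tens_{\bF_q}\bF$ and $\theta\tens_{\bF_q}\bF$ with the map $(G\times G)/G\to G$, $[a,x]\mapsto axa^{-1}$, whose inverse is $x\mapsto[1,x]$.

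I do not expect a genuine obstacle: the lemma is asserted to follow directly from the definitions, and the content is just the classical identification of $G^{\al}$ with the twist of $G$ by conjugation. The only thing requiring care is bookkeeping --- keeping straight the left $G^{P}$-action versus the right $G$-action on $P$ and the ``diagonal'' $G$-action on $P\times X$ --- so that the equivariance in the second step comes out as conjugation and not, say, left translation; a minor point is to phrase the descent of $\widetilde{\theta}$ to $X^{P}$ via the universal property of the quotient rather than chasing an explicit inverse on $P\times G$.
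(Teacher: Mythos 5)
The paper states this lemma without proof (``The next fact follows directly from the definitions''), so there is no argument to compare against. Your explicit verification is correct and amounts to a careful unwinding of those definitions: the morphism $\widetilde{\theta}\colon P\times G\to G^{P}$ given by $\widetilde{\theta}(p,x)(p\cdot h)=p\cdot xh$ is indeed $G$-invariant for the twisting action $g\cdot(p,x)=(pg^{-1},gxg^{-1})$, descends to $\theta\colon X^{P}\to G^{P}$, intertwines the $G^{P}$-action on $X^{P}$ with conjugation (the computation $\widetilde{\theta}(\gamma(p),x)=\gamma\circ\widetilde{\theta}(p,x)\circ\gamma^{-1}$ uses only that $\gamma$ commutes with the right $G$-action, as you say), and becomes the evident isomorphism $(G\times G)/G\to G$, $[a,x]\mapsto axa^{-1}$ after trivializing $P$ over $\bF$. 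One small point you could make explicit is why $\widetilde{\theta}(p,x)$ is a well-defined element of $G^{P}(S)$ for an arbitrary $\bF_q$-scheme $S$: an $S$-point $p$ of $P$ gives a section of $P\times S\to S$, which trivializes the $G_S$-torsor $P\times S$, and under that trivialization left translation by $x$ is an $S$-automorphism commuting with the right $G$-action; this is what lets the pointwise formula define a morphism of schemes.
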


\subsection{Transport of equivariant
complexes}\label{ss:transport-equivariant-complexes} In this section
we assume that $G$ is a unipotent\footnote{This assumption is
imposed only because we decided to work with the ``naive''
definition of an equivariant derived category.} algebraic group over
$\bF_q$. Given $\al\in H^1(\bF_q,G)$ and a scheme $X$ of finite
type over $\bF_q$ equipped with a left $G$-action, our goal is to
define a canonical ``transport functor'' (in fact, an equivalence of
categories) $\sD_{G}(X)\rar{\sim}\sD_{G^\al}(X^\al)$.

\mbr

As usual, we choose a representative $P$ of $X$. Let
$\pr_2:P\times X\rar{}X$ denote the second projection, and let
$q:P\times X\rar{}X^{P}$ denote the quotient morphism for
the free left $G$-action defined in
\S\ref{ss:inner-forms-G-varieties}. As we already remarked, the
product $G\times G^{P}$ acts on $P\times X$ on the left;
moreover, $\pr_2$ is the quotient map for the action of $G^{P}$,
which is also free. Thus both pullback functors
\[
\sD_{G}(X) \xrightarrow{\ \pr_2^* \ } \sD_{G\times
G^{P}}(P\times X) \xleftarrow{\ q^*\ }
\sD_{G^{P}}(X^{P})
\]
are equivalences of categories.

\begin{defin}\label{d:transport-functor}
The composition
$q^*\circ(\pr_2^*)^{-1}:\sD_{G}(X)\rar{\sim}\sD_{G^\al}(X^\al)$
is called the functor of \emph{transport of equivariant complexes},
and is denoted by $M\longmapsto M^\al$.
\end{defin}

\begin{rem}
If an object $M\in\sD_{G}(X)$ comes from a $G$-equivariant
local system on $X$, then $M^\al$ is also a
$G^\al$-equivariant local system on $X^\al$.
\end{rem}

As a corollary of Lemma \ref{l:inner-forms-compatibility}, we now
also have the construction of a transport functor
$\sD_{G}(G)\rar{\sim}\sD_{G^\al}(G^\al)$, which is again
denoted by $M\longmapsto M^\al$.

\subsection{Alternative descriptions}\label{ss:alternative-inner}
In this subsection we present a slightly different viewpoint on the
constructions introduced in
\S\S\ref{ss:inner-forms-algebraic-groups}--\ref{ss:transport-equivariant-complexes}.
It has the advantage of being somewhat more concrete, although it is less evident that the constructions
appearing in this subsection are independent of the choices involved
in them.

\begin{prop}\label{p:alternative-inner}
Let $G$ be a closed subgroup of an algebraic group $U$ over
$\bF_q$. Define $Y=U/G$, equipped with the left $U$-action
by translations, let $\pi:U\rar{}Y$ denote the quotient map,
write $\overline{1}=\pi(1)$, and put $Z=\bigl\{(u,y)\st u\cdot
y=y\bigr\}\subset U\times Y$. We consider the diagonal action of
$U$ on $U\times Y$, where the action on the first factor is by
conjugation, and remark that $Z$ is stable under this action.
 \mbr
Finally, for each $y\in Y(\bF_q)$, let $\al(y)\in H^1(\bF_q,G)$
denote the isomorphism class of the right $G$-torsor $\pi^{-1}(y)$
$($cf.~Lemma \ref{l:galois-cohomology-torsors}$($c$))$.
 \sbr
\begin{enumerate}[$($a$)$]
\item For every $y\in Y(\bF_q)$, the stabilizer, $U^y$, of $y$ in $U$ is
isomorphic to the inner form $G^{\al(y)}$ of $G$ defined by the
cohomology class $\al(y)$.
\item Let $X$ be a scheme of finite type over $\bF_q$ equipped
with a left $G$-action, and let $\widetilde{X}=(U\times
X)/G$, where the right $G$-action on $U\times X$ is given
by $(u,x)\cdot g=(ug,g^{-1}\cdot x)$. Write
$p:\widetilde{X}\rar{}Y$ for the induced morphism. For every
$y\in Y(\bF_q)$, the fiber $p^{-1}(y)$ is isomorphic
to\footnote{Observe that $p^{-1}(y)$ is stable under $U^y\subset
U$; thus we have a left action of $U^y$ on $p^{-1}(y)$.} the
inner form $X^{\al(y)}$ in a way compatible with the isomorphism of
part $($a$)$.
\item Assume that $U$ is unipotent. For every $y\in Y(\bF_q)$,
the inclusion $j_y:U^y\into Z$, given by $g\mapsto (g,y)$,
induces an equivalence
$j_y^*:\sD_{U}(Z)\rar{\sim}\sD_{U^y}(U^y)$ $($as usual,
$U^y$ acts on itself by conjugation$)$. Furthermore, the
composition
\[
j_y^*\circ (j^*_{\overline{1}})^{-1} : \sD_{G}(G) \rar{\sim}
\sD_{U^y}(U^y) \simeq \sD_{G^{\al(y)}}(G^{\al(y)})
\]
is isomorphic to the transport functor introduced in
\S\ref{ss:transport-equivariant-complexes}.
\item Again, assume that $U$ is unipotent, and let the notation be
as in part $($b$)$. Given $y\in Y(\bF_q)$, write $\Xt^y=p^{-1}(y)$,
and let $i_y:\Xt^y\into\widetilde{X}$ denote the inclusion. Then
$i_y^*:\sD_{U}(\widetilde{X})\rar{}\sD_{U^y}(\Xt^y)$ is an
equivalence, and the composition
\[
i_y^*\circ (i^*_{\overline{1}})^{-1} : \sD_{G}(X) \rar{\sim}
\sD_{U^y}(\Xt^y) \simeq \sD_{G^{\al(y)}}(X^{\al(y)})
\]
is isomorphic to the transport functor of Definition
\ref{d:transport-functor}.
\end{enumerate}
\end{prop}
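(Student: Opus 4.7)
My plan is to derive all four parts from a concrete description of $P_y := \pi^{-1}(y)$, the right $G$-torsor representing $\al(y)$. For part (a), I observe that $U^y$ preserves $P_y$, so left translation by $U^y$ on $P_y$ commutes with the right $G$-action and factors through the automorphism group $G^{P_y} = G^{\al(y)}$, giving a homomorphism $U^y \to G^{\al(y)}$. To show this is an isomorphism, I will work over an extension $\bF_{q^n}$ on which $P_y$ becomes a trivial torsor, observe that a chosen trivialization identifies both groups with $G_{\bF_{q^n}}$, and descend to $\bF_q$. Part (b) is immediate: the fiber $p^{-1}(y)$ of $\widetilde{X} = (U \times X)/G \to Y$ equals $(P_y \times X)/G$ for the right $G$-action $(u,x) \cdot g = (ug, g^{-1} x)$, and reinterpreting this right action as a left action by $g^{-1}$ turns the quotient into $G \setminus (P_y \times X) = X^{\al(y)}$; the $U^y$-action on the first factor then descends to the $G^{\al(y)}$-action on $X^{\al(y)}$ compatibly with part (a).

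The heart of the proof is part (c), for which my plan is to realize $Z$ as a special case of the induction scheme $\widetilde{U}$ of \S\ref{ss:def-ind} applied to the inclusion $G \subset U$. The morphism $\psi : U \times G \to Z$ defined by $(u, g) \mapsto (u g u^{-1}, \pi(u))$ exhibits $U \times G$ as a right $G$-torsor over $Z$ (under the action $(u, g) \cdot h = (u h, h^{-1} g h)$), yielding a $U$-equivariant isomorphism $\widetilde{U} \xrightarrow{\simeq} Z$ that intertwines $i$ with $j_{\overline{1}}$. The equivalence claim for $j_y^*$ then reduces to Proposition \ref{p:induction-alternative}(a) applied with the $G$-action on a transitive $G$-variety replaced by the $U$-action on $Y = U/G$. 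To identify the composition $j_y^* \circ (j_{\overline{1}}^*)^{-1}$ with the transport functor, I will use the description $(j_{\overline{1}}^*)^{-1} \cong (\psi^*)^{-1} \circ \fp^{\prime *}$ read off from the proof of Lemma \ref{l:pullback-equivalence}, where $\fp' : U \times G \to G$ is the projection onto the second factor. The crucial geometric inputs are $\psi^{-1}(j_y(U^y)) = P_y \times G$ and $(P_y \times G)/G \cong U^y$ (with the latter identification agreeing with part (a)); a diagram chase then shows that $j_y^* \circ (j_{\overline{1}}^*)^{-1}(M)$ is the descent of $\fp^{\prime *} M \bigl\lvert_{P_y \times G}$ along the $G$-torsor $P_y \times G \to U^y \cong G^{\al(y)}$, which is precisely the transport functor of Definition \ref{d:transport-functor} applied with $P = P_y$.

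Part (d) follows the same template, with $U \times X \to \widetilde{X}$ replacing $\psi : U \times G \to Z$. This map is again a right $G$-torsor, its restriction over $\Xt^y = p^{-1}(y)$ pulls back to $P_y \times X$, and the induced identification $(P_y \times X)/G \cong X^{\al(y)}$ is the one from part (b); the remainder of the argument is parallel to (c). The main obstacle throughout is keeping track of the simultaneous right $G$-actions (used for forming the quotients defining $\widetilde{U}$, $\widetilde{X}$, $G^{\al(y)}$, and $X^{\al(y)}$) and the left $U$-, $U^y$-, and $G^{\al(y)}$-actions (used for equivariance), and verifying that the pullback equivalences of Lemmas \ref{l:free-quotient-pullback-equivalence} and \ref{l:free-quotient-pullback-quasi-inverse} compose correctly with the transport functor; none of the individual compatibilities is difficult, but together they require systematic bookkeeping of several cartesian diagrams.
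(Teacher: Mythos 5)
The paper states Proposition \ref{p:alternative-inner} without proof, so there is no argument to compare against; taken on its own merits, your proposal is correct and is almost certainly the route the author had in mind, since it reuses Proposition \ref{p:induction-alternative} and the proof of Lemma \ref{l:pullback-equivalence} given immediately before.

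The one place where you should write out the verification rather than only flag it is the compatibility between the two descriptions of $U^y$: the isomorphism of part (a) sends $v\in U^y$ to the automorphism $p\mapsto vp$ of $P_y$, while the quotient map $\psi\bigl\lvert_{P_y\times G}:P_y\times G\rar{}U^y$, $(p,h)\mapsto php^{-1}$, must be identified with $q:P_y\times G\rar{}G^{P_y}$. Unwinding the identification $G\setminus(P_y\times G)\cong G^{P_y}$ of Lemma \ref{l:inner-forms-compatibility}, the orbit of $(p,h)$ corresponds to the $G$-equivariant automorphism $p'\mapsto pxp^{-1}p'$, i.e., left multiplication by $php^{-1}$; this is exactly what $\psi$ produces, so the two descents agree and the composite $j_y^*\circ(j_{\overline{1}}^*)^{-1}$ really is the transport functor. (Incidentally, the composition in Definition \ref{d:transport-functor} should read $(q^*)^{-1}\circ\pr_2^*$; your description ``descend $\pr_2^*M$ along $q$'' matches the intended meaning, not the literal displayed formula.) With this check spelled out, the rest of the bookkeeping you describe goes through, and the same template handles part (d) by replacing the second factor $G$ with $X$ throughout.
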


\subsection{Relation between $\ig$ and induction of class
functions}\label{ss:induction-sheaves-functions}

\begin{prop}\label{p:induction-sheaves-functions}
Let $G$ be a unipotent group over $\bF_q$, let $G'\subset G$ be a
closed subgroup, and let $M\in\sD_{G'}(G')$. Then
\begin{equation}\label{e:induction-sheaves-functions}
t_{\ig M} = \sum_{\al\in\Ker(H^1(\bF_q,G')\rar{}H^1(\bF_q,G))}
\ind_{G^{\prime\al}(\bF_q)}^{G(\bF_q)} t_{M^\al}.
\end{equation}
\end{prop}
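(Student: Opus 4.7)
The plan is to combine the alternative description of $\ig$ in Proposition \ref{p:induction-alternative} with the trace formula of Lemma \ref{l:sheaves-functions-properties}(3), and then stratify the resulting fixed-point sum by Galois cohomology classes via Lemma \ref{l:galois-cohomology-torsors}(c). First I would set $Y = G/G'$ and $Z = \bigl\{(g,y)\in G\times Y : g\cdot y = y\bigr\}$, with $\pr_1 : Z\to G$ the first projection, $\overline{1} = \pi(1)$ where $\pi : G\to Y$ is the quotient, and $j_y(g) = (g,y)$. Applying Proposition \ref{p:induction-alternative}(b) at $y = \overline{1}$ (for which $G^{\overline{1}} = G'$) gives $\ig M \cong \pr_{1!}(N)$ with $N := (j_{\overline{1}}^*)^{-1}(M) \in \sD_G(Z)$, and Lemma \ref{l:sheaves-functions-properties}(3) then yields, for every $g\in G(\bF_q)$,
$$t_{\ig M}(g) = \sum_{y\in Y(\bF_q),\, g\cdot y = y} t_N(g,y).$$

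Next I would stratify $Y(\bF_q)$ by its $G(\bF_q)$-orbits. Lemma \ref{l:galois-cohomology-torsors}(c) identifies these orbits with $\Ker\bigl(H^1(\bF_q,G')\to H^1(\bF_q,G)\bigr)$; for each class $\al$ in this kernel, pick an $\bF_q$-rational representative $y_\al$ of the corresponding orbit. By Proposition \ref{p:alternative-inner}(a), the stabilizer $G^{y_\al}\subset G$ is canonically $\bF_q$-isomorphic to the inner form $G^{\prime\al}$, and by Proposition \ref{p:alternative-inner}(c), the pullback $j_{y_\al}^*(N)$ corresponds under this identification to the transported complex $M^\al\in\sD_{G^{\prime\al}}(G^{\prime\al})$. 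In particular $t_N(g',y_\al) = t_{M^\al}(g')$ for every $g'\in G^{y_\al}(\bF_q)$.

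Finally, set $H_\al := G^{y_\al}(\bF_q)\subset G(\bF_q)$. Points of the orbit $G(\bF_q)\cdot y_\al$ correspond bijectively to cosets $xH_\al$ with $x\in G(\bF_q)$ via $xH_\al \mapsto x\cdot y_\al$, and $g$ fixes $x\cdot y_\al$ if and only if $x^{-1}gx\in H_\al$. The $\bF_q$-rational $G$-equivariance of $N$ forces $t_N(x\cdot p) = t_N(p)$ for all $\bF_q$-rational $x$ and $p$; hence
$$t_N(g, x\cdot y_\al) = t_N(x^{-1}gx, y_\al) = t_{M^\al}(x^{-1}gx).$$
Summing over cosets $xH_\al$ with $x^{-1}gx\in H_\al$ recovers the Frobenius formula $\bigl(\ind_{G^{\prime\al}(\bF_q)}^{G(\bF_q)} t_{M^\al}\bigr)(g)$ for induction of class functions, and summing over $\al$ yields \eqref{e:induction-sheaves-functions}.

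The main point requiring care is verifying that the $\bF_q$-rational identifications of $G^{y_\al}$ with $G^{\prime\al}$ and of $j_{y_\al}^*(N)$ with $M^\al$ are precisely the ones supplied by Proposition \ref{p:alternative-inner} together with the definition in \S\ref{ss:transport-equivariant-complexes}, so that the Frobenius eigenvalues on stalks match up with the definition of $t_{M^\al}$. Once this compatibility is in place, the rest is a formal combination of Lemma \ref{l:sheaves-functions-properties}(3) (equivalently, the proper base change theorem) with the orbit-counting Lemma \ref{l:galois-cohomology-torsors}(c).
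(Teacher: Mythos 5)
Your argument is correct and follows essentially the same route as the paper's: compute $t_{\ig M}$ via $\pr_{1!}$ (the paper's $\pi_!$ on $\Gt$, identified with your $\pr_{1!}$ on $Z$ by Proposition \ref{p:induction-alternative}) and the Grothendieck trace formula, decompose the fixed-point sum over $Y(\bF_q)$ into $G(\bF_q)$-orbits, identify the fiber over $y_\al$ and its complex with the inner form $G^{\prime\al}$ and $M^\al$ via Proposition \ref{p:alternative-inner}, and then match each orbit sum with a Frobenius induction term. The only cosmetic difference is that you use the $Z\subset G\times Y$ model directly with part (c) of Proposition \ref{p:alternative-inner}, while the paper works on $\Gt$ with an auxiliary Lemma \ref{l:straightforward} and part (d); you also spell out the orbit-to-coset bookkeeping (using $\bF_q$-rational $G$-equivariance of $N$) that the paper leaves as ``easy to see.''
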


\begin{rems}
\begin{enumerate}[$($1$)$]
\item For every $\al\in\Ker(H^1(\bF_q,G')\rar{}H^1(\bF_q,G))$,
we realize $G^{\prime\al}$ as a subgroup of $G$ using
Proposition \ref{p:alternative-inner}(a).
 \sbr
\item The notation on the right hand side of
\eqref{e:induction-sheaves-functions} is as follows: if
$\Ga'\subset\Ga$ are finite groups, then
$\iga:\Fun(\Ga')^{\Ga'}\rar{}\Fun(\Ga)^\Ga$ denotes the usual
induction map from class functions on $\Ga'$ to class functions on
$\Ga$ $($cf.~\S\ref{sss:induction-motivation}$)$.
 \sbr
\item As a special case of Proposition \ref{p:induction-sheaves-functions}, we observe that if $G'$ is connected, then $H^1(\bF_q,G')$ is trivial, so the sum in \eqref{e:induction-sheaves-functions} reduces to $\ind_{G'(\bF_q)}^{G(\bF_q)}t_M$. Hence for connected $G'$, the proposition states that $\ig$ is compatible with induction of class functions $($via the sheaves-to-functions correspondence$)$ ``on the nose.''
\end{enumerate}
\end{rems}

\begin{proof}{Proof of Proposition \ref{p:induction-sheaves-functions}}
Form the quotient $\Gt=(G\times G')/G'$, where the right action of $G'$ on $G\times G'$ is given by $(g,g')\cdot\ga=(g\ga,\ga^{-1}g'\ga)$, and equip it with the $G$-action induced by the left translation action of $G$ on the first factor in $G\times G'$.

\mbr

The conjugation map $G\times G'\rar{}G$ (given by $(g,g')\mapsto gg'g^{-1}$) induces a $G$-equivariant morphism $\pi:\Gt\rar{}G$, and the map $i:G'\into G\times G'$ given by $g'\mapsto(1,g')$ induces a $G'$-equivariant morphism $i:G'\rar{}\Gt$.

\mbr

Fix $M\in\sD_{G'}(G')$. By Lemma \ref{l:pullback-equivalence}, there is a (unique up to isomorphism) object $\Mt\in\sD_G(\Gt)$ such that $i^*\Mt\cong M$. We put $N=\pi_!(\Mt)\in\sD_G(G)$, so that, by definition, $N\cong\ig M$. By Lemma \ref{l:sheaves-functions-properties}(c), $t_N=\pi_!(t_{\Mt})$, so to prove \eqref{e:induction-sheaves-functions} we need to calculate the function $t_{\Mt}:\Gt(\bF_q)\rar{}\ql$.

\mbr

To this end, define $Y=G/G'$ and equip it with the translation action of $G$.
The first projection $G\times G'\rar{}G$ induces a $G$-equivariant morphism $p:\Gt\rar{}Y$.
Then $\Gt(\bF_q)$ is the disjoint union of the sets of $\bF_q$-points of the fibers $p^{-1}(y)$, where $y$ ranges over $Y(\bF_q)$. For each $y\in Y(\bF_q)$, write $G^y\subset G$ for the stabilizer of $y$ in $G$ and observe that the fiber $p^{-1}(y)\subset\Gt$ is $G^y$-stable.

\mbr

The next result is straightforward.

\begin{lem}\label{l:straightforward}
Let $y\in Y(\bF_q)$ and choose $g\in G(\bF)$ that maps onto $y$. Then
\begin{enumerate}[$($a$)$]
\item $G^y=gG'g^{-1}$;
\item the map $G^y\rar{}G\times G'$ given by $\ga\mapsto(g,g^{-1}\ga g)$ induces a $G^y$-equivariant inclusion $i_y:G^y\into\Gt$ $($where $G^y$ acts on itself by conjugation$)$;
\item $i_y$ induces an isomorphism $G^y\rar{\simeq}p^{-1}(y)$, which is independent of the choice of $g$; and
\item the composition $\pi\circ i_y:G^y\rar{}G$ is equal to the natural inclusion $G^y\into G$.
\end{enumerate}
\end{lem}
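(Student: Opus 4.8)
The claim consists of four essentially formal statements about the quotient $\Gt=(G\times G')/G'$ and the point $y\in Y(\bF_q)$ with a chosen lift $g\in G(\bF)$, so the plan is to verify each part by unwinding the definitions; the only mildly subtle point is tracking that the isomorphism in part (c) does not depend on the choice of $g$.

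\medbreak\noindent\textbf{Plan of proof.} First I would fix $y\in Y(\bF_q)$ and $g\in G(\bF)$ mapping to $y$ under $G\to Y=G/G'$. For part (a): since $Y=G/G'$ carries the left translation action of $G$, the stabilizer of the coset $gG'$ is exactly $\{h\in G\mid hgG'=gG'\}=gG'g^{-1}$, which proves $G^y=gG'g^{-1}$ (a priori as subgroups of $G\tens_{\bF_q}\bF$, but since $y$ is rational, $G^y$ is an $\bF_q$-subgroup). For part (b): consider the map $\ga\mapsto(g,g^{-1}\ga g)$ from $G^y$ into $G\times G'$; one must first check it lands in $G\times G'$, i.e. $g^{-1}\ga g\in G'$ for $\ga\in G^y$, which is precisely part (a). Composing with the quotient map $\fq:G\times G'\to\Gt$ gives $i_y:G^y\to\Gt$. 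To see $G^y$-equivariance (with $G^y$ acting on itself by conjugation and on $\Gt$ via the left-translation $G$-action restricted to $G^y$), I would compute: for $h\in G^y$, $h\cdot i_y(\ga)=[\,(hg,g^{-1}\ga g)\,]$, while $i_y(h\ga h^{-1})=[\,(g,g^{-1}h\ga h^{-1}g)\,]$, and these agree in $\Gt$ because $(hg,g^{-1}\ga g)$ and $(g,g^{-1}h\ga h^{-1}g)$ differ by the right $G'$-action of $\ga_0:=g^{-1}hg\in G'$ (using $(g_1,g_1')\cdot\ga_0=(g_1\ga_0,\ga_0^{-1}g_1'\ga_0)$): indeed $g\cdot\ga_0=g\cdot g^{-1}hg=hg$ and $\ga_0^{-1}(g^{-1}h\ga h^{-1}g)\ga_0=(g^{-1}h^{-1}g)(g^{-1}h\ga h^{-1}g)(g^{-1}hg)=g^{-1}\ga g$ after cancellation.

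\medbreak\noindent For part (c), I would use part (b) of Lemma~\ref{l:galois-cohomology-torsors} together with the identification of $p^{-1}(y)$: the morphism $p:\Gt\to Y$ is induced by the first projection $G\times G'\to G\to Y$, so $p^{-1}(y)$ is the image under $\fq$ of $\{(g_1,g_1')\mid g_1G'=gG'\}=\{(g\ga_1,g_1')\mid \ga_1\in G',\,g_1'\in G'\}$, and modding out by the right $G'$-action one checks $i_y$ restricts to an isomorphism $G^y\rar{\simeq}p^{-1}(y)$ (this is exactly the computation done in the proof of Proposition~\ref{p:induction-alternative}, applied here with the roles of $G'$ and the stabilizer). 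The independence of the choice of $g$ is the point requiring a little care: if $g'=g\gamma$ with $\gamma\in G'(\bF)$ is another lift, the two maps $\ga\mapsto[(g,g^{-1}\ga g)]$ and $\ga\mapsto[(g\gamma,\gamma^{-1}g^{-1}\ga g\gamma)]$ into $\Gt$ coincide, again because $(g\gamma,\gamma^{-1}g^{-1}\ga g\gamma)=(g,g^{-1}\ga g)\cdot\gamma$ in the right $G'$-action defining $\Gt$; hence the induced isomorphism onto $p^{-1}(y)$ is the same. Finally, part (d) is immediate: $\pi:\Gt\to G$ is induced by the conjugation map $\fc(g_1,g_1')=g_1g_1'g_1^{-1}$, so $\pi(i_y(\ga))=\pi([(g,g^{-1}\ga g)])=g(g^{-1}\ga g)g^{-1}=\ga$, i.e.\ $\pi\circ i_y$ is the inclusion $G^y\into G$.

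\medbreak\noindent The main (and essentially only) obstacle is bookkeeping: making sure the various left $G$-actions, the right $G'$-action defining $\Gt$, and the conjugation actions are consistently oriented so that the equivariance in (b) and the well-definedness in (c) come out cleanly. There is no deep input — parts (a), (b), (d) are direct definition-chasing, and (c) follows from Lang's theorem (Lemma~\ref{l:galois-cohomology-torsors}(b)) applied to the connected-component structure, or more directly from the torsor description already established in Proposition~\ref{p:induction-alternative}.
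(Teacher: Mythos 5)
Your proof is correct and is exactly the kind of direct definition-chasing the paper intends when it dismisses the lemma as "straightforward" without proof. Parts (a), (b), and (d) are verified cleanly, and the bookkeeping with the right $G'$-action defining $\Gt$ is tracked accurately in both the equivariance check in (b) and the independence-of-$g$ check in (c).

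One small point worth correcting: your closing remark that part (c) "follows from Lang's theorem (Lemma~\ref{l:galois-cohomology-torsors}(b)) applied to the connected-component structure" is a red herring. Nothing in part (c) requires Lang's theorem; the isomorphism $i_y:G^y\rar{\simeq}p^{-1}(y)$ is obtained by the same direct computation you already carried out (surjectivity: any class in $p^{-1}(y)$ has a representative of the form $(g,g_2')$ after acting by $G'$, and $g_2'=g^{-1}\ga g$ for $\ga=gg_2'g^{-1}\in G^y$; injectivity: if two images agree, the relating element of $G'$ fixes the first coordinate $g$, hence is trivial). Lang's theorem enters later in the proof of Proposition~\ref{p:induction-sheaves-functions}, via Lemma~\ref{l:galois-cohomology-torsors}(c), when one indexes the $G(\bF_q)$-orbits in $Y(\bF_q)$ by $\Ker(H^1(\bF_q,G')\rar{}H^1(\bF_q,G))$ — not in the present lemma. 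You might also note explicitly that since $i_y$ is a morphism of smooth $\bF_q$-schemes that is bijective on $\bF$-points and both sides have the same dimension, it is an isomorphism of schemes, not merely a bijection of point sets; but this is the sort of routine verification the paper's "straightforward" is clearly meant to absorb.
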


We can now complete the proof of \eqref{e:induction-sheaves-functions}. For each $y\in Y(\bF_q)$, write $\al(y)\in H^1(\bF_q,G')$ for the isomorphism class of the right $G'$-torsor $q^{-1}(y)\subset G$, where $q:G\rar{}Y$ is the quotient map. If $M^y=i_y^*\Mt\in\sD_{G^y}(G^y)$, then by Lemma \ref{l:straightforward}(c) and Proposition \ref{p:alternative-inner}(d), we can identify $M^y$ with $M^{\al(y)}\in\sD_{G'^{\al(y)}}(G'^{\al(y)})$, where $G'^{\al(y)}$ is identified with $G^y$ using Proposition \ref{p:alternative-inner}(a). Since $t_N=\pi_!(t_{\Mt})$, Lemma \ref{l:straightforward}(d) shows that
\begin{equation}\label{e:big-sum}
t_{\ig M} = t_N = \sum_{y\in Y(\bF_q)} \overline{t_{M^y}},
\end{equation}
where $\overline{t_{M^y}}$ denotes the function on $G(\bF_q)$ obtained from $t_{M^y}:G^y(\bF_q)\rar{}G(\bF_q)$ via extension by zero. Now suppose $\cO\subset Y(\bF_q)$ is a single $G(\bF_q)$-orbit and set $\al(\cO)=\al(y)$ for any $y\in\cO$ (note that $\al(y)$ does not depend on the choice of $y\in\cO$). It is then easy to see that
\begin{equation}\label{e:small-sum}
\sum_{y\in\cO} \overline{t_{M^y}} = \ind_{G'^{\al}(\bF_q)}^{G(\bF_q)} t_{M^\al}, \qquad \text{where } \al=\al(\cO).
\end{equation}
As $\cO$ ranges over all $G(\bF_q)$-orbits in $Y(\bF_q)$, the corresponding cohomology class $\al(\cO)\in H^1(\bF_q,G')$ ranges over $\Ker(H^1(\bF_q,G')\rar{}H^1(\bF_q,G))$ by Lemma \ref{l:galois-cohomology-torsors}(c). Combining this observation with \eqref{e:big-sum}--\eqref{e:small-sum} yields \eqref{e:induction-sheaves-functions}.
\end{proof}



\section{Geometric reduction process}\label{s:reduction}

\subsection{Overview} The main goal of this section is to prove the
following result:
\begin{thm}\label{t:adm-pair-compatible}
Let $G$ be a $($possibly disconnected$)$ unipotent group over
$\bF_q$, let $\rho$ be a nonzero representation of $G(\bF_q)$ over
$\ql$, and let $(A,\cN)$ be a pair consisting of a normal connected
subgroup $A\subset G$ and a $G$-invariant multiplicative $\ql$-local
system $\cN$ on $A$ such that the restriction of $\rho$ to
$A(\bF_q)$ is scalar, given by the $1$-dimensional character
$t_{\cN}:A(\bF_q)\rar{}\ql^\times$. Then there exists an admissible
pair $(H,\cL)$ for $G$ such that $A\subset H$,
$\cN\cong\cL\bigl\lvert_A$, and the restriction of $\rho$ to
$H(\bF_q)$ has as a direct summand the $1$-dimensional
representation defined by $t_{\cL}:H(\bF_q)\rar{}\ql^\times$.
\end{thm}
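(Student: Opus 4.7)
The plan is to proceed by induction on $\dim G - \dim A$, the inductive step being to enlarge the pair $(A,\cN)$ to a new pair $(B,\cL_B)$ with $A\subsetneq B\subset G^\circ$, where $B$ is a closed connected normal subgroup of $G$ and $\cL_B$ is a $G$-invariant multiplicative $\ql$-local system on $B$ extending $\cN$. Passing to a nonzero subrepresentation $\rho'$ of $\rho$ on which $B(\bF_q)$ acts by the scalar $t_{\cL_B}$, the inductive hypothesis applied to $(G,B,\cL_B,\rho')$ furnishes an admissible pair $(H,\cL)$ with $B\subset H$ and $\cL|_B\cong\cL_B$, hence with $A\subset H$ and $\cL|_A\cong\cN$, as required.

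The key tool for the enlargement is Proposition \ref{p:ext-loc-sys}, which provides a criterion for extending a multiplicative local system from a closed connected subgroup to a larger closed connected subgroup. Two structural cases drive the enlargement. If the connected unipotent quotient $G^\circ/A$ is non-commutative, I would let $B$ be the preimage in $G$ of a non-trivial connected central subgroup of $G^\circ/A$, which exists because $G^\circ/A$ is nilpotent with non-trivial center; then Proposition \ref{p:ext-loc-sys} produces an extension $\cL_B$, and its $G$-invariance can be arranged using the $G$-invariance of $\cN$ together with the centrality of $B/A$ in $G^\circ/A$. If $G^\circ/A$ is commutative, I would form the geometric commutator pairing on $G^\circ/A$ determined by $\cN$ (phrased in terms of biextensions, cf.~the Appendix); if this pairing is degenerate, $B$ is taken to be the preimage in $G$ of its connected radical, and again Proposition \ref{p:ext-loc-sys} supplies the required $\cL_B$.

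The base case is when $G^\circ/A$ is commutative and the geometric commutator pairing is non-degenerate. Assuming first that $G$ is connected, this establishes admissibility of $(A,\cN)$ directly with $H=A$, $\cL=\cN$: the stabilizer of $(A,\cN)$ under the conjugation action of $G$ is all of $G$; conditions (1) and (2) in the geometric admissibility definition of \S\ref{ss:def-admissible} are exactly what has been arranged; and condition (3) is vacuous. For general possibly disconnected $G$, the component group $G/G^\circ$ is handled by a classical Clifford/Mackey-theoretic argument at the level of $\bF_q$-points, which may require one further (dimension-preserving) adjustment inside $G^\circ$ to satisfy condition (3) geometrically. The scalar-action hypothesis is transferred through each inductive step by Frobenius reciprocity: since $\rho|_{A(\bF_q)}$ is scalar by the restriction $t_{\cL_B}|_{A(\bF_q)}=t_\cN$, the $t_{\cL_B}$-isotypic component of $\rho|_{B(\bF_q)}$ is non-zero.

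\textbf{The main obstacle} is ensuring that the extensions provided by Proposition \ref{p:ext-loc-sys} can be chosen simultaneously multiplicative \emph{and} $G$-invariant, since the extension criterion on its own only guarantees the former. This will require combining the extension with the $G$-invariance of $\cN$ and a biextension/cocycle analysis on the quotient $B/A$, which is central in $G^\circ/A$ by construction. A secondary difficulty is bridging the geometric admissibility conditions with their algebraic counterparts in the disconnected case, where Galois-cohomological considerations (Section \ref{s:inner-forms}) and the component group $G/G^\circ$ interact non-trivially.
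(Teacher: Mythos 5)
Your inductive framework is genuinely different from the paper's, and the difference is precisely where the gap lies. You induct on $\dim G - \dim A$, keeping $G$ fixed and enlarging $A$ to a larger normal connected subgroup $B$ together with a $G$-\emph{invariant} extension $\cL_B$ of $\cN$. The paper (§\ref{ss:geom-heis-reps}--\ref{ss:proof-adm-pair-compatible}) instead inducts simultaneously on $\dim G$ and $|\pi_0(G)(\bF)|$: after producing an extension $\cN'$ of $\cN$ to a larger normal connected $B$, it does \emph{not} ask for $\cN'$ to be $G$-invariant; when $\cN'$ fails to be $G$-invariant, it passes to the proper subgroup $G_1 = N_G(\cN')$ and applies the inductive hypothesis to $(G_1,\rho|_{G_1(\bF_q)},B,\cN')$, then checks the resulting admissible pair is still admissible for $G$. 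This ``shrink $G$'' step is the essential idea your proposal lacks.

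The obstacle you flag --- simultaneously arranging multiplicativity and $G$-invariance of $\cL_B$ --- is not merely technical; it is insurmountable in general, and here is why. Given any extension $\cN'$ of $\cN$ to $B$ (with $B/A$ central in $G^\circ/A$), the obstruction class $\cF_g := {}^g\cN'\tens(\cN')^{\vee}$, for $g\in G^\circ$, descends to a multiplicative local system on $B/A$, and one checks by multiplicativity that $(\cF_g)_{\bar b}$ is governed by $t_\cN([g^{-1},b^{-1}])$; in other words, the map $g\mapsto[\cF_g]$ is essentially the commutator pairing $(G^\circ/A)\times(B/A)\to\qzp$ induced by $\cN$. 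Since $G^\circ$ acts trivially on $B/A$ (centrality), twisting $\cN'$ by any local system pulled back from $B/A$ does \emph{not} change $\cF_g$. So if this pairing is nontrivial, \emph{no} choice of extension is even $G^\circ$-invariant. It can indeed be nontrivial: take $G=UL_4$, $A=Z(G)\cong\bG_a$ (the $E_{14}$-line), $\cN$ nontrivial on $A$, and $B$ the preimage of $Z(G/A)^\circ$ (the $E_{13},E_{24}$ plane modulo $A$). The self-pairing on $B/A$ is trivial (so $\cN$ does extend to $B$), but $[E_{34},E_{13}]=-E_{14}$ shows that the pairing $(G/A)\times(B/A)\to\qzp$ is nontrivial, so no extension of $\cN$ to $B$ is $G$-invariant. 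Your induction cannot proceed from this configuration without shrinking $G$. A secondary but fixable gap: in your non-commutative case, choosing $B/A$ to be \emph{any} nontrivial connected central subgroup does not guarantee the hypothesis of Proposition~\ref{p:ext-loc-sys} (triviality of $\com^*\cN$ on $B\times B$); the paper handles this by taking $B/A$ inside the kernel of $\phi_\cN$ restricted to $Z/A$, where $Z$ is the preimage of $Z(G/A)^\circ$, using a lower-central-series argument to show that kernel meets $Z/A$ in positive dimension.
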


We note that, in particular, we can take $G$ to be connected, $\rho$
to be irreducible, and the pair $(A,\cN)$ to be trivial. In this
case, in view of the Frobenius reciprocity, the proposition implies
the existence of an admissible pair $(H,\cL)$ for $G$ such that
$\rho$ is a direct summand of $\Ind_{H(\bF_q)}^{G(\bF_q)}t_{\cL}$,
which proves the claim of \S\ref{ss:step-1} above.

\mbr

The reason for stating Theorem \ref{t:adm-pair-compatible} the way
we did is that this approach makes it easier to give an inductive
proof of the proposition. The title of this section (``Geometric
reduction process'') is motivated by an analogy between (the proof
of) Theorem \ref{t:adm-pair-compatible} and the (algebraic)
reduction process described in one of the appendices to
\cite{intro}, where it is proved that every irreducible
representation of a finite nilpotent group $\Ga$ can be
\emph{canonically} realized as a representation induced from a
``Heisenberg representation'' (\emph{op.~cit.}) of a subgroup of
$\Ga$.

\mbr

The notion of an admissible pair (for a unipotent group over an
arbitrary field of characteristic $p>0$) is defined in
\S\ref{ss:def-admissible}. However, it is more convenient to
formulate this definition in the framework of Serre duality
developed in the Appendix, rather than in the language of
multiplicative local systems. Therefore we first explain the
relationship between these two approaches to Serre duality in
\S\ref{ss:Serre-duality-two-approaches}. We then state an auxiliary
result in \S\ref{ss:ext-mult} (it is equivalent to one
of the results proved in the Appendix) and use it to prove Theorem
\ref{t:adm-pair-compatible} in
\S\S\ref{ss:geom-heis-reps}--\ref{ss:proof-adm-pair-compatible}.

\subsection{Two approaches to Serre
duality}\label{ss:Serre-duality-two-approaches} Let us first fix an
arbitrary field $k$ and a connected algebraic group $G$ over $k$. If
$A$ is an abstract abelian group, we will view $A$ as a discrete
group scheme over $k$. Thus we have the notion of a central
extension of $G$ by $A$, as well as the notion of a
\emph{multiplicative $A$-torsor} on $G$ (defined by an obvious
analogy with Definition \ref{d:multiplicative}). Let us define a
\emph{rigidification} of an $A$ torsor $\cE$ on $G$ to be a
trivialization of the pullback $1^*\cE$, where $1:\Spec k\rar{}G$ is
the unit morphism, and let us define a \emph{rigidified} $A$-torsor
on $G$ to be an $A$-torsor on $G$ equipped with a chosen
rigidification. Since $G$ is connected and $A$ is discrete, it is
easy to see that rigidified $A$-torsors on $G$ form a discrete
groupoid (i.e., a category with no non-identity morphisms). On the
other hand, plain $A$-torsors on $G$ form a groupoid where the group
of automorphisms of every object is isomorphic to $A$.

\mbr

The following result is proved in \cite{masoud}.
\begin{lem}\label{l:mult-tors-central-exts}
\begin{enumerate}[$($a$)$]
\item Every multiplicative $A$-torsor on $G$ admits a rigidification,
and the forgetful functor induces a bijection between the set of
isomorphism classes of rigidified multiplicative $A$-torsors on $G$
and that of plain ones. \sbr
\item The natural forgetful functor from the groupoid
of central extensions of $G$ by $A$ to the groupoid of rigidified
multiplicative $A$-torsors on $G$ is an equivalence.
\end{enumerate}
\end{lem}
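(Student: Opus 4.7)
The plan is to treat the two parts in sequence, exploiting the discreteness of $A$ and the connectedness of $G$ throughout. For part (a), I would first establish existence of rigidifications. Given a multiplicative $A$-torsor $\cE$ together with a multiplicative structure iso $\phi:\mu^*\cE \iso \cE\boxtimes\cE$, pulling back along $(1,1):\Spec k\to G\times G$ yields an isomorphism $1^*\cE \cong 1^*\cE\otimes 1^*\cE$ of $A$-torsors on $\Spec k$. In the abelian group $H^1(\Spec k, A)$ (tensor product corresponding to Baer sum) this forces $2[1^*\cE]=[1^*\cE]$, hence $[1^*\cE]=0$, so a trivialization of $1^*\cE$, i.e.\ a rigidification, exists; this proves surjectivity of the forgetful map on isomorphism classes.

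Injectivity then amounts to saying that any two rigidifications of a fixed multiplicative $\cE$ yield isomorphic rigidified multiplicative torsors. The key observation is that because $G$ is connected and $A$ is discrete, every scheme morphism $G\to A$ is constant; consequently the automorphism group of $\cE$ as a plain $A$-torsor coincides with $A$ itself (global translations), and this group acts simply transitively on the $A$-torsor of rigidifications of $\cE$. More generally, given an isomorphism $\alpha:\cE_1\iso\cE_2$ of plain multiplicative torsors with rigidifications $r_1,r_2$, the unique translation on $\cE_2$ taking $\alpha(r_1)$ to $r_2$ combines with $\alpha$ into an isomorphism of rigidified objects, which is the required injectivity.

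For part (b), I would construct a quasi-inverse. A central extension $1\to A\to \widetilde G\to G\to 1$ evidently produces a multiplicative $A$-torsor $\widetilde G\to G$ with canonical rigidification given by the identity element $e\in\widetilde G(k)$. Conversely, given $(\cE,\phi,r)$, one builds a multiplication morphism $\widetilde\mu:\cE\times\cE\to\cE$ from $\phi$: viewing $\cE\boxtimes\cE$ as the quotient of $\cE\times_k\cE$ by the antidiagonal $A$-action and composing $\phi^{-1}$ with the projection $\mu^*\cE\to\cE$ gives a candidate $\widetilde\mu$; the remaining $A$-ambiguity (again parametrized by constant $A$-valued functions on the connected base $G\times G$) is killed by the normalization $\widetilde\mu(r,r)=r$. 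Associativity, the unit axiom, and the existence of inverses follow from the same connectedness principle: the discrepancy between any two candidate triple products $\cE^{\times 3}\to\cE$ is an $A$-valued morphism on the connected scheme $G^{\times 3}$, hence constant in $A$, and the normalization at $(r,r,r)$ pins that constant to the identity.

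The main technical obstacle is organizing the quasi-inverse cleanly enough to verify associativity and, in parallel, the full faithfulness of the forgetful functor --- i.e., that morphisms of central extensions lifting a fixed map $G\to G$ and inducing $\id_A$ correspond precisely to morphisms of rigidified multiplicative torsors over that map. This faithfulness reduces once more to the rigidity statement that a self-morphism of a central extension covering $\id_G$ and inducing $\id_A$ is itself the identity, which follows from the triviality of $\operatorname{Hom}(G,A)$ for connected $G$ and discrete $A$.
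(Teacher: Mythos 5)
The paper gives no proof of this lemma; it simply states ``The following result is proved in \cite{masoud}.'' Your direct, elementary argument is therefore an alternative rather than a reconstruction, and it is essentially correct. One point worth making explicit in part (a): since the paper treats multiplicativity as a \emph{property} of the torsor rather than a structure, morphisms of multiplicative $A$-torsors are simply torsor morphisms, so $\Aut(\cE)\cong A$ and this group acts simply transitively on rigidifications, exactly as you say. (Had $\phi:\mu^*\cE\iso\cE\boxtimes\cE$ been part of the data, a translation $t_a$ would intertwine $\phi$ with itself only for $a=2a$, i.e.\ $a=0$; the groupoid would already be discrete and the injectivity would be even more immediate.) In part (b), the step you flag as the main obstacle does close by the argument you gesture at, but it deserves to be spelled out: after fixing $\widetilde\mu$ by $\widetilde\mu(r,r)=r$, the unit axiom follows because $x\mapsto\widetilde\mu(r,x)$ is an $A$-torsor endomorphism of $\cE$ over $\id_G$, hence a constant translation, pinned to zero at $x=r$; associativity is the same rigidity argument over $G\times_k G\times_k G$; inverses come from restricting $\phi$ to the anti-diagonal of $G\times_k G$, which together with the rigidification trivializes $\cE\otimes\iota^*\cE$ for $\iota:G\to G$ the inversion; and full faithfulness follows because $\Hom(G,A)=0$ for $G$ connected and $A$ discrete renders both groupoids discrete, so a bijection on isomorphism classes already yields the equivalence. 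The reference \cite{masoud} establishes this in the more structural language of strictly commutative Picard groupoids and (gr-)stacks, which pays off for the bi-extension and lifting results later in Appendix A but is more machinery than this particular lemma needs; your bare-hands proof is adequate and arguably more transparent for a reader who only wants Lemma \ref{l:mult-tors-central-exts}.
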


\mbr

Now we assume that $k$ has characteristic $p>0$ and $G$ is a
connected \emph{unipotent} group over $k$. Fix a prime $\ell\neq p$.
Our goal is to relate multiplicative $\ql$-local systems on $G$ to
central extensions of $G$ by the discrete group $\qzp$.

\mbr

Let us fix a homomorphism $\psi:(\bQ_p,+)\rar{}\ql^\times$ with
kernel equal to $\bZ_p$. Then $\psi$ identifies $\qzp$ with the
group $\mu_{p^\infty}(\ql)$ of roots of unity in $\ql^\times$ whose
order is a power of $p$. Given a central extension
$1\rar{}\qzp\rar{}\Gt\rar{}G\rar{}1$, we can view $\Gt$ as a
multiplicative $\qzp$-torsor on $G$, and using the homomorphism
$\psi$, we obtain the induced multiplicative $\ql$-local system on
$G$, which we denote $\Gt_\psi$.

\begin{lem}\label{l:mult-loc-sys-unip-centr-ext-qzp}
The map $\Gt\mapsto\Gt_\psi$ constructed above is an isomorphism between the group $H^2(G,\qzp)$ of isomorphism classes of central extensions of $G$ by $\qzp$ and the
group of isomorphism classes of multiplicative $\ql$-local systems
on $G$.
\end{lem}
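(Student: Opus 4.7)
The plan is to combine Lemma \ref{l:mult-tors-central-exts} with a direct change-of-coefficients argument. By that lemma, the group $H^2(G,\qzp)$ is naturally identified with isomorphism classes of multiplicative $\qzp$-torsors on $G$, and under this identification the map $\Gt\mapsto\Gt_\psi$ factors as this identification followed by the change-of-coefficients functor from multiplicative $\qzp$-torsors to multiplicative $\ql$-local systems induced by $\psi:\qzp\hookrightarrow\ql^\times$. Since both sides carry compatible abelian group structures (Baer sum on one side, tensor product on the other), and this functor is clearly a homomorphism, it suffices to show that it is bijective on isomorphism classes.

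The key input will be that for every connected unipotent group $G$ over a field of characteristic $p$, the geometric \'etale fundamental group of $G\tens_k\kbar$ is pro-$p$. I would prove this by induction on $\dim G$, using a composition series whose successive quotients are isomorphic to $\bG_a$ over $\kbar$, together with the Artin-Schreier description of $\pi_1$ of the affine line in characteristic $p$ and the short exact sequence of fundamental groups associated to a fibration. Granted this, since $\ell\neq p$, any continuous homomorphism from a pro-$p$ group to $\ql^\times$ has image contained in $\mu_{p^\infty}(\ql)=\psi(\qzp)$: compact pro-$p$ subgroups of $\ql^\times$ are forced into $\overline{\bZ}_\ell^\times$, where the only pro-$p$ torsion (for $\ell\neq p$) consists of $p$-power roots of unity.

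With this in hand, injectivity is immediate from the injectivity of $\psi$. For surjectivity, given a multiplicative $\ql$-local system $\cL$ on $G$, I would regard it geometrically as a continuous character of $\pi_1$ of $G\tens_k\kbar$ together with Galois descent data, observe by the pro-$p$ argument that its image lies in $\psi(\qzp)$, and thereby factor $\cL$ through $\psi$ as arising from a $\qzp$-torsor. The multiplicativity isomorphism $\mu^*\cL\cong\cL\boxtimes\cL$ transfers along $\psi$ because it is itself an isomorphism of rank-one local systems whose monodromy is forced into $\psi(\qzp)$ by the same argument.

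The main difficulty I anticipate lies in a clean bookkeeping of the descent and coherence data, particularly when $k$ is not algebraically closed or $G$ is non-commutative: one must verify that the multiplicativity isomorphism together with its associativity constraint and the Galois-equivariance structure all lift to the $\qzp$-level, rather than merely existing after extension by $\psi$. Each such lift follows formally once one notes that the relevant isomorphisms live between local systems whose monodromy is already forced into $\psi(\qzp)$, so the ``ratio'' local system whose triviality encodes the constraint has monodromy in $\psi(\qzp)$ as well; but writing this out carefully with all coherence diagrams is where most of the technical work will sit.
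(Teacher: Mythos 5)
Your overall strategy — invoke Lemma \ref{l:mult-tors-central-exts}, then show that the monodromy of any multiplicative $\ql$-local system is forced into $\psi(\qzp)$ — is essentially the same as the paper's. However, your key auxiliary claim is false: the geometric \'etale fundamental group of a connected unipotent group over $\kbar$ is \emph{not} pro-$p$ in general. Already for $G=\bG_a$, i.e.\ for $\bA^1_{\kbar}$, Abhyankar's conjecture (Raynaud, Harbater) shows that $\pi_1(\bA^1_{\kbar})$ has, for instance, $A_5$ as a quotient when $p=5$; Artin--Schreier theory describes its abelian $p$-power quotients but by no means all of it, so your proposed inductive proof of pro-$p$-ness cannot work. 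What is true, and what the paper actually uses, is the weaker statement that $\pi_1(\Gb)$ has \emph{no nontrivial quotients of order prime to $p$} (equivalently, its maximal prime-to-$p$ quotient is trivial), which follows from $\Gb$ being isomorphic as a variety to an affine space. This weaker statement is all you need, because you only care about the image of a character $\pi_1(\Gb)\rar{}\ql^\times$ landing in $\cO_K^\times$ for some finite $K/\bQ_\ell$, and $\cO_K^\times$ modulo its $p$-power roots of unity is a profinite group of order prime to $p$. So your argument survives with that replacement, but as written the lemma you rely on is simply wrong.

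A second, smaller gap: you defer the treatment of the non-geometric part of $\pi_1$ to ``Galois descent data'' without saying how one rules out monodromy coming from $\Gal(\kbar/k)$. The paper handles this cleanly by using the split short exact sequence $1\rar{}\pi_1(\Gb,\overline{1})\rar{}\pi_1(G,\overline{1})\rar{}\Gal(\kbar/k)\rar{}1$ (split by the unit section) together with the observation that multiplicativity forces $1^*\cL$ to be trivial, hence the character is trivial on the image of $\Gal(\kbar/k)$. You should make that explicit rather than fold it into ``descent bookkeeping,'' since the image of the Galois factor is not a priori constrained to $p$-power roots of unity. Once these two points are fixed, the rest of your outline (injectivity from injectivity of $\psi$; surjectivity by factoring the monodromy through $\psi$; the multiplicativity isomorphism transferring because all relevant local systems have monodromy in $\psi(\qzp)$) matches the paper's argument.
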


\begin{proof}{Proof}
In view of Lemma \ref{l:mult-tors-central-exts}, it is enough to
show that if $\cL$ is an arbitrary multiplicative $\ql$-local system
on $G$, then $\cL$ is induced from a $\qzp$-torsor on $G$ via
$\psi$. To this end, we will show that if
$f:\pi_1(G)\rar{}\ql^\times$ is the homomorphism corresponding to
$\cL$, then the image of $f$ is finite and is contained in the
subgroup $\mu_{p^\infty}(\ql)$, where $\pi_1(G)$ is the algebraic
fundamental group of $G$, see \cite{sga1}.

\mbr

Choose an algebraic closure $\overline{k}$ of $k$, write
$\Gb=G\tens_k\overline{k}$, let $1:\Spec k\rar{}G$ denote the unit
morphism as before, and let $\overline{1}$ denote the corresponding
$\overline{k}$-point of either $G$ or $\Gb$. By \cite[Thm.~IX.6.1]{sga1}, we have a short exact sequence of groups
\[
1 \rar{} \pi_1(\Gb,\overline{1}) \rar{} \pi_1(G,\overline{1}) \rar{}
\operatorname{Gal}(\overline{k}/k) \rar{} 1,
\]
which is split by the homomorphism
$\operatorname{Gal}(\overline{k}/k) \rar{} \pi_1(G,\overline{1})$
induced by the morphism $1:\Spec k\rar{}G$. By the definition of
multiplicativity, the $\ql$-local system $1^*\cL$ on $\Spec k$
satisfies $(1^*\cL)\tens(1^*\cL)\cong 1^*\cL$, whence $1^*\cL$ is
trivial. In other words, the composition
$\Gal(\overline{k}/k)\rar{1_*}\pi_1(G,\overline{1})\rar{f}\ql^\times$
is trivial. Thus $f$ is determined by its restriction to
$\pi_1(\Gb,\overline{1})$, which we will also denote by $f$.

\mbr

By definition, $f$ factors through a continuous homomorphism
$\pi_1(\Gb,\overline{1})\rar{}K^\times$, where $K$ is a finite
extension of $\bQ_\ell$ contained in $\ql$. Moreover, by
compactness, the image of $f$ must lie in $\cO_K^\times$, where
$\cO_K\subset K$ is the ring of integers of $K$. The structure of
$\cO_K^\times$ is known; in particular, if $\fm_K$ denotes the
unique maximal ideal of $\cO_K$, then
$(\cO_K^\times)/(1+\fm_K)\cong(\cO_K/\fm_K)^\times$ is finite, and
$1+\fm_K$ has a descending filtration by closed subgroup with
successive quotients isomorphic to the additive group of the residue
$\cO_K/\fm_K$, which is a finite field of characteristic $\ell$. It
follows that the subgroup
$\mu_{p^\infty}(K)=K\cap\mu_{p^\infty}(\ql)\subset\cO_K^\times$ is
finite, and the quotient $\cO_K^\times/\mu_{p^\infty}(K)$ is a
profinite group whose order is relatively prime to $p$.

\mbr

On the other hand, since $G$ is connected,
$\Gb$ is isomorphic to an affine space over $\overline{k}$, so its
algebraic fundamental group has no nontrivial quotients of order
prime to $p$. Thus the image of $f$ lies in $\mu_{p^\infty}(K)$,
completing the proof.
\end{proof}

\subsection{Definition of an admissible
pair}\label{ss:def-admissible} The notion of an admissible pair is a
\emph{geometric} one; thus we will first formulate it for an
algebraically closed base field, and then for an arbitrary one.
Moreover, it is more convenient to begin by working in the framework
of Serre duality developed in the Appendix.

\subsubsection*{Normalizer of a pair $(H,\chi)$}
Let us fix a perfect field $k$ of characteristic $p>0$ and a
unipotent\footnote{The definition applies equally well (with obvious
simplifications) in the case where $G$ is a \emph{perfect} unipotent
group over $k$ (see \S\ref{aa:perfect-unipotent}).} algebraic group
$G$ over $k$. From now on, by default, all subgroups of group
schemes are assumed to be closed. Consider a pair $(H,\chi)$
consisting of a \emph{connected} subgroup $H\subset G$ and a central
extension $1\rar{}\qzp\rar{}\Ht\rar{\chi}H\rar{}1$. We can view
$\chi$ as a $k$-point of the Serre dual $H^*$ of $H$, see
\S\ref{aa:noncomm-Serre}, which is a perfect (possibly disconnected)
commutative unipotent group over $k$ by Proposition
\ref{p:serre-noncomm}. Let $N_G(H)$ denote the normalizer of $H$ in
$G$. Since the Serre dual $H^*$ is defined by a universal property,
we obtain an induced regular action of the perfectization
$N_G(H)_{per\!f}$ on $H^*$ by $k$-group scheme automorphisms. We let
$G'_{per\!f}\subset N_G(H)_{per\!f}$ denote the stabilizer of $\chi$
under this action. The notation is explained by the fact that
$G'_{per\!f}$ is the perfectization of a uniquely determined closed
subgroup of $G$, which we denote by $G'$ and (unambiguously) call
the ``normalizer of $(H,\chi)$ in $G$''. We remark that $G'$ may be
disconnected even if $G$ is connected.

\begin{defin}\label{d:adm-pair-centr-ext-alg-closed}
Let $k$ be an algebraically closed field of characteristic $p>0$,
let $G$ be a unipotent algebraic group (or perfect unipotent group)
over $k$, and let $(H,\chi)$ be a pair consisting of a connected
subgroup $H\subset G$ and an element $\chi\in H^*(k)$. We say that
this pair is \emph{admissible for $G$} if the following three
conditions are satisfied.
\begin{enumerate}[(1)]
\item Let $G'$ be the normalizer of $(H,\chi)$ in $G$, defined in
the previous paragraph. Then the quotient group $G^{\prime\circ}/H$
is commutative, i.e., $[G^{\prime\circ},G^{\prime\circ}]\subset H$.
\item The homomorphism
$(G^{\prime\circ}/H)_{per\!f}\rar{}(G^{\prime\circ}/H)_{per\!f}^*$
constructed in \S\ref{aa:construction-alt}, which is well defined in
our situation in view of condition (1), is an \emph{isogeny}.
\item Given $g\in G(k)$, write $H^g=g^{-1}Hg$, and
let $\chi^g\in(H^g)^*(k)$ be obtained from $\chi$ by transport of
structure via $H^g\rar{\simeq}H$, $h\longmapsto ghg^{-1}$. If
$g\not\in G'(k)$, then
\[
\chi\bigl\lvert_{(H\cap H^g)^\circ} \not\cong
\chi^g\bigl\lvert_{(H\cap H^g)^\circ}
\]
\end{enumerate}
\end{defin}

\begin{defin}\label{d:adm-pair-centr-ext}
Let $k$ be an arbitrary field of characteristic $p>0$, let $G$ be a
unipotent algebraic group over $k$, let $H\subset G$ be a connected
subgroup, and let $\chi$ be a central extension of $H$ by $\qzp$.
The pair $(H,\chi)$ is said to be \emph{admissible for $G$} if the
pair $(H\tens_k\overline{k},\chi\tens_k\overline{k})$ obtained from
$(H,\chi)$ by base change to an algebraic closure $\overline{k}$ of
$k$ is admissible for $G\tens_k\overline{k}$.
\end{defin}

\subsubsection*{Admissible pairs in the context of multiplicative
local systems} Now let $k$ be a field of characteristic $p>0$, let
$\ell$ be a prime different from $p$, and choose a homomorphism
$\psi:(\bQ_p,+)\rar{}\ql^\times$ with kernel $\bZ_p$, as in
\S\ref{ss:Serre-duality-two-approaches}. Fix a unipotent algebraic
group $G$ over $k$, a connected subgroup $H\subset G$, and a
multiplicative $\ql$-local system $\cL$ on $H$ (see Definition
\ref{d:multiplicative}). By Lemma
\ref{l:mult-loc-sys-unip-centr-ext-qzp}, $\cL$ is associated to a
unique (up to isomorphism) central extension $\chi$ of $H$ by $\qzp$
via the homomorphism $\psi$.

\begin{defin}\label{d:adm-pair-mult-loc-sys}
We say that the pair $(H,\cL)$ is \emph{admissible for $G$} if the
pair $(H,\chi)$ is admissible for $G$ in the sense of Definition
\ref{d:adm-pair-centr-ext}. In the case where $k$ is perfect, the
\emph{normalizer} of the pair $(H,\cL)$ in $G$ is defined as the
normalizer of $(H,\chi)$ in $G$.
\end{defin}

It is not hard to see that this definition does not depend on the
choice of $\psi$.

\subsection{Extension of multiplicative local
systems}\label{ss:ext-mult} The next result is used in the proof of
Theorem \ref{t:adm-pair-compatible}. However, we also find it to be
interesting in its own right. It is a natural geometrization of the
fact that if $\Ga$ is a group, $H\subset\Ga$ is a subgroup such that
$[\Ga,\Ga]\subset H$, and $\chi:H\rar{}\ql^\times$ is a
homomorphism, then $\chi$ extends to a homomorphism
$\Ga\rar{}\ql^\times$ if and only if
$\chi\bigl\lvert_{[\Ga,\Ga]}\equiv 1$ (a simple exercise).

\begin{prop}\label{p:ext-loc-sys}
Let $G$ be a connected unipotent group over an arbitrary field $k$
of characteristic $p>0$, let $\ell$ be a prime different from $p$,
let $H\subset G$ be a closed connected subgroup such that
$[G,G]\subset H$, and let $\cL$ be a multiplicative $\ql$-local
system on $H$. Then there exists a multiplicative $\ql$-local system
$\cL'$ on $G$ with $\cL'\bigl\lvert_H\cong\cL$ if and only if the
pullback $\com^*\cL$ is a trivial $\ql$-local system on $G\times G$,
where $\com:G\times G\rar{}H$ is the commutator morphism,
$\com(g_1,g_2)=g_1g_2g_1^{-1}g_2^{-1}$.
\end{prop}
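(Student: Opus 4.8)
The plan is to reduce the geometric statement to the classification of multiplicative local systems via central extensions (Lemma \ref{l:mult-loc-sys-unip-centr-ext-qzp}) together with the five-term exact sequence associated to the extension $1\to H\to G\to G/H\to 1$, and to interpret the obstruction class $\com^*\cL$ in those terms. First I would fix the homomorphism $\psi:(\bQ_p,+)\to\ql^\times$ with kernel $\bZ_p$ and use Lemma \ref{l:mult-loc-sys-unip-centr-ext-qzp} to replace $\cL$ by the corresponding class $\chi\in H^2(H,\qzp)$; an extension $\cL'$ of $\cL$ to $G$ corresponds to a class $\widetilde\chi\in H^2(G,\qzp)$ restricting to $\chi$. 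So I must analyze the image of the restriction map $H^2(G,\qzp)\to H^2(H,\qzp)$. Since $[G,G]\subset H$, the quotient $Q:=G/H$ is a connected commutative unipotent group, and I would work with the Hochschild–Serre (inflation–restriction) spectral sequence for the group extension $1\to H\to G\to Q\to 1$ with coefficients in the trivial module $\qzp$. Here it is cleanest to think of everything in terms of central extensions by the discrete group $\qzp$ (so that $H^2(-,\qzp)$ means ``isomorphism classes of such extensions'', as in the Appendix's framework), which is why the statement was set up that way.

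The key point is the exact sequence of low-degree terms. A class $\chi\in H^2(H,\qzp)$ lies in the image of $H^2(G,\qzp)\to H^2(H,\qzp)$ (at least into the $Q$-invariant part, but $Q$ acts trivially on $H^2(H,\qzp)$ here since $G$ acts on $H$ through inner automorphisms which act trivially on cohomology because $H$ is commutative up to the relevant degree — more precisely the conjugation action of $G$ on $H^2(H,\qzp)$ is trivial as inner automorphisms act trivially on cohomology) precisely when its image under the transgression-type differential $d_2:H^2(H,\qzp)\to H^2(Q, \underline{\Hom}(\text{something}))$ — or, in the more hands-on formulation, when the obstruction to extending $\chi$ — vanishes. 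I would then identify this obstruction concretely: the failure of a cocycle representing $\chi$ on $H$ to extend over $G$ is measured exactly by the commutator pairing, i.e. by the class $\com^*\cL$ on $G\times G$, where $\com(g_1,g_2)=g_1g_2g_1^{-1}g_2^{-1}$. Geometrically: choosing any ($\ell$-adic, or $\qzp$-torsor) extension of $\cL$ as a rigidified multiplicative torsor over the \emph{scheme} $G$ (not respecting multiplicativity) is automatic since $G$ is an affine space hence simply connected modulo $p$-torsion — wait, that is not quite it; rather, I would argue that $\mu^*\cL'\otimes(\cL'\boxtimes\cL')^{-1}$, for a candidate $\cL'$ obtained by pulling back along a set-theoretic (scheme-theoretic) splitting of $G\to Q$, is a multiplicative-type local system on $G\times G$ pulled back from $Q\times Q$, and its restriction to the commutator subgroup computes $\com^*\cL$. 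This is the heart of the matter and the step I expect to be the main obstacle: making the identification ``obstruction to multiplicative extension $=$ class of $\com^*\cL$'' precise and functorial, rather than merely heuristic by analogy with the finite-group computation $\chi|_{[\Ga,\Ga]}\equiv 1$.

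Concretely I would proceed as follows. For the \emph{necessity} direction: if $\cL'$ on $G$ is multiplicative with $\cL'|_H\cong\cL$, then $\com^*\cL=\com^*(\cL'|_H)$ is the pullback of $\cL'$ along $G\times G\xrightarrow{\com}G$; but $\com$ factors through the commutator map into $[G,G]\subset H$, and for any multiplicative $\cL'$ the pullback along the commutator morphism $G\times G\to G$ of a multiplicative local system is canonically trivial — this is the geometric avatar of the identity $\chi(g_1g_2g_1^{-1}g_2^{-1})=\chi(g_1)\chi(g_2)\chi(g_1)^{-1}\chi(g_2)^{-1}=1$, and follows directly from $\mu^*\cL'\cong\cL'\boxtimes\cL'$ applied repeatedly, together with $\iota^*\cL'\cong(\cL')^{-1}$ from multiplicativity. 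For the \emph{sufficiency} direction: assume $\com^*\cL$ is trivial on $G\times G$. Pick a closed subvariety section (as a scheme, using that $Q$ is a connected unipotent group so $G\to Q$ is a torsor under the connected group $H$ and admits no section issue here — actually I would instead use the filtration of $Q$ by $\bG_a$'s and induct, or simply use that the extension of \emph{local systems} only sees $\pi_1$, so I can replace the problem by: construct $f':\pi_1(G)\to\mu_{p^\infty}$ extending $f:\pi_1(H)\to\mu_{p^\infty}$). Then I would run the low-degree exact sequence of the Hochschild–Serre spectral sequence: $\chi$ extends iff its image under $d_2\colon E_2^{0,2}\to E_2^{2,1}$ vanishes; and $E_2^{2,1}=H^2(Q,H^1(H,\qzp))=H^2(Q,\underline{\Hom}(H_{\mathrm{ab}},\qzp))$, while the differential is, up to sign, cup product with the class in $H^1(Q,H^1(H,\qzp))=\Ext^1(Q, \underline{\Hom}(H,\qzp))$ of the extension $G/[G,G]$-type data — and one checks that evaluating this obstruction is the same as restricting $\chi$ along the commutator morphism, because the commutator morphism $Q\times Q\to H$ (it descends to $Q\times Q$ since $[G,G]$ is killed and $H$ is central enough — in fact $\com$ descends to a bilinear alternating map $Q\times Q\to H^{\mathrm{ab}}$, or even into $[G,G]\subset H$) represents precisely the relevant class. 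Thus triviality of $\com^*\cL$ forces the obstruction to vanish, giving the desired $\cL'$. I would present the argument in the central-extension language (citing the Appendix and Lemmas \ref{l:mult-tors-central-exts}, \ref{l:mult-loc-sys-unip-centr-ext-qzp}) to keep it $\ell$-independent, then translate back to local systems at the end via $\psi$, noting independence of $\psi$ exactly as after Definition \ref{d:adm-pair-mult-loc-sys}.
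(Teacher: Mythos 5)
Your opening move matches the paper: you translate the problem to central extensions of group schemes by the discrete group $\qzp$ via Lemma \ref{l:mult-loc-sys-unip-centr-ext-qzp}, and indeed the paper explicitly reduces Proposition \ref{p:ext-loc-sys} to the central-extension statement Proposition \ref{p:lifts}. Your necessity direction is also fine and standard. But the sufficiency direction — which is the whole content — has a genuine gap, and you say so yourself (``this is the heart of the matter and the step I expect to be the main obstacle''). Let me be concrete about what is missing and why the Hochschild--Serre route, as sketched, does not close it.

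First, the statement ``$\chi$ lifts to $H^2(G,\qzp)$ iff $d_2(\chi)=0$'' is not correct even formally: a class in $E_2^{0,2}$ is in the image of $H^2(G,\qzp)\to H^2(H,\qzp)$ only if it survives \emph{all} differentials $d_2, d_3, \dots$ and, additionally, only if the relevant filtration jump on $H^2(G,\qzp)$ is actually realized, which imposes conditions coming from the columns $E_\infty^{i,2-i}$ with $i>0$. You would need to show that all of these obstructions, not just $d_2$, are controlled by the class $\com^*\cL$. Second, and more fundamentally, there is no off-the-shelf Hochschild--Serre spectral sequence here: $G$, $H$, $Q$ are group schemes, and $H^2(-,\qzp)$ in this paper means isomorphism classes of central extensions in the category of group schemes. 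To even set up your spectral sequence you would need to work with fppf cohomology of classifying stacks or some equivalent device, carefully identify the $E_2$-page terms such as $H^2(Q,\underline{\Hom}(H^{\mathrm{ab}},\qzp))$ as appropriate $\Ext$-groups of sheaves, and compute the transgression — none of which you do. Third, the single most important input to the paper's actual proof of Proposition \ref{p:lifts} is the vanishing $\Ext^2(G/H,\qzp)=0$ in the fppf sheaf category, established via the Artin--Schreier sequence together with Breen's theorem that $\Ext^j(\bG_a,\bG_a)=0$ for $j\geq 1$ (Lemma \ref{l:vanishing-ext-2}). This vanishing is precisely what kills all the ``higher'' obstructions you would otherwise have to analyze term by term, and it appears nowhere in your argument. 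Without it, even a correctly-set-up spectral sequence would leave you with unverified differentials in positive base degree.

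The paper's route is quite different in spirit from yours: it chooses a lift $c:G\times G\to\Ht$ of the commutator morphism with $c(1,1)=1$, observes (using connectedness of $G$ and discreteness of $\qzp$) that $c$ automatically satisfies the identities of Breen's strictly stable crossed modules, and thereby endows the quotient stack $G/\Ht$ with the structure of a strictly commutative Picard stack with $\pi_0=G/H$ and $\pi_1=\qzp$. The classification of such Picard stacks by $\Ext^2(\pi_0,\pi_1)$, combined with the vanishing $\Ext^2(G/H,\qzp)=0$, forces $G/\Ht$ to be the trivial Picard stack, and triviality is exactly what allows the $1$-morphism $H\to\qzp$-$\underline{\operatorname{tors}}$ coming from $\cL$ to extend to one on $G$. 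This packages both the ``obstruction = commutator class'' identification and the ``no further obstructions'' claim into a single clean structural statement, sidestepping the bookkeeping of a spectral sequence entirely. If you want to salvage a spectral-sequence argument you would at minimum need Lemma \ref{l:vanishing-ext-2} as an input, and even then the crossed-module/Picard-stack formulation is considerably cleaner.
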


In view of Lemma \ref{l:mult-loc-sys-unip-centr-ext-qzp}, this
result is essentially equivalent to Proposition \ref{p:lifts},
proved in \S\ref{aa:lift-central-exts} of the Appendix.

\begin{rem}
Naively, one might have replaced the condition that $\com^*\cL$ is
trivial by the stronger requirement that $\cL\bigl\lvert_{[G,G]}$ is
a trivial $\ql$-local system on $[G,G]$. However, the latter
condition is \emph{not} necessary. Indeed, as explained in
\cite{masoud}, there are examples of connected unipotent groups $G$
for which there is a multiplicative $\ql$-local system $\cL$ on $G$
with $\cL\bigl\lvert_{[G,G]}$ being nontrivial. $($We leave it to the
reader to check that the fake Heisenberg groups defined in
\S\ref{ss:fake-Heis} are among such examples.$)$
\end{rem}

\subsection{A special case of Theorem
\ref{t:adm-pair-compatible}}\label{ss:geom-heis-reps} In this
subsection we prove Theorem \ref{t:adm-pair-compatible} in the
special case where $\rho$ is irreducible and $[G,G^\circ]\subset A$.
Using the construction explained in \S\ref{aa:construction-alt}, we
see that $\cN$ induces a homomorphism of perfect $\bF_q$-groups
$\phi_{\cN}:(G/A)_{perf}\rar{}(G^\circ/A)_{perf}^*$. Let $H$ be a
maximal (with respect to inclusion) connected subgroup of $G$ with
the property that $A\subset H$ and the composition
\[
(H/A)_{per\!f} \hookrightarrow (G/A)_{per\!f} \xrar{\ \ \phi_{\cN}\
\ } (G^\circ/A)^*_{per\!f} \onto (H/A)^*_{per\!f}
\]
is trivial (the subscript ``perf'' is defined in
\S\ref{aa:perfect}). By the definition of $\phi_{\cN}$, this implies
that the pullback of $\cN$ by the commutator map $H\times H\rar{}A$
is trivial. By Proposition \ref{p:ext-loc-sys}, there is a
multiplicative $\ql$-local system $\cL$ on $H$ with
$\cN\cong\cL\bigl\lvert_A$.

\mbr

We first claim that $\cL$ can be chosen so that
$t_{\cL}:H(\bF_q)\rar{}\ql^\times$ is a direct summand of the
restriction of $\rho$ to $H(\bF_q)$. Indeed, since the homomorphism
\[
(H/A)(\bF_q)=(H/A)_{per\!f}(\bF_q)\rar{}(H/A)_{per\!f}^*(\bF_q)=(H/A)^*(\bF_q)
\]
induced by $\cN$ is trivial, we see, in particular, that
$t_{\cN}:A(\bF_q)\rar{}\ql^\times$ is trivial on
$[H(\bF_q),H(\bF_q)]$. Now let $V$ denote the representation space
of $\rho$, so that $A(\bF_q)$ acts on $V$ through the character
$t_{\cN}$. Then we see that $\rho(H(\bF_q))\subset \Aut(V)$ is a
commutative subgroup; in particular, by Schur's lemma, there exists
a character $\nu:H(\bF_q)\rar{}\ql^\times$ which is contained in the
restriction of $\rho$ to $H(\bF_q)$. \emph{A fortiori}, $\nu$ and
$t_{\cL}$ agree on $A(\bF_q)$, and hence $\nu\cdot t_{\cL}^{-1}$
comes from a character $(H/A)(\bF_q)\rar{}\ql^\times$. But $H/A$ is
a connected commutative algebraic group over $\bF_q$, so, as we
mentioned in Remark \ref{r:character-mult-loc-sys}, there exists a
multiplicative $\ql$-local system $\cE$ on $H/A$ such that $\nu\cdot
t_{\cL}^{-1}=t_{\widetilde{\cE}}$, where $\widetilde{\cE}$ is the
pullback of $\cE$ to $H$. In other words,
$\nu=t_{\cL\tens\widetilde{\cE}}$. But the restriction of
$\widetilde{\cE}$ to $A$ is trivial by construction, so we may
replace $\cL$ with $\cL\tens\widetilde{\cE}$ without loss of
generality.

\mbr

It remains to verify that $(H,\cL)$ is
admissible. Since $[G,H]\subset[G,G^\circ]\subset A\subset H$, we
see that $H$ is normal in $G$, so condition (3) in the
definition of admissibility is automatic. Condition (1) holds
because $H\supset A$ and $G^\circ/A$ is central in $G/A$. Finally,
condition (2) holds by the maximality requirement in the choice of
$H$.

\subsection{Proof of Theorem
\ref{t:adm-pair-compatible}}\label{ss:proof-adm-pair-compatible} Let
us complete the proof of Theorem \ref{t:adm-pair-compatible} in
general. We will use simultaneous induction on $\dim G$ and on the
length of the \'etale group $\pi_0(G)=G/G^\circ$ (i.e., the number
of elements of $\pi_0(G)(\bF)$). By the result of
\S\ref{ss:geom-heis-reps}, we may assume that
$[G,G^\circ]\not\subset A$. We may also assume that $\rho$ is
irreducible.

\mbr

Let $Z\subset G^\circ$ denote the preimage in $G$ of the neutral
component of the center of $G/A$. By assumption, $Z\neq G^\circ$, so
$Z$ is a proper connected subgroup of $G^\circ$. As in
\S\ref{ss:geom-heis-reps}, ${\cN}$ induces a $k$-group morphism
$\phi_{\cN}:(G/A)_{per\!f}\rar{}(Z/A)^*_{per\!f}$. Since $\dim
Z<\dim G$, the restriction of this morphism to
$(Z/A)_{per\!f}\rar{}(Z/A)_{per\!f}^*$ has positive dimensional
kernel\footnote{If $K$ is the neutral connected component of the
kernel of $\phi_{\cN}$, then $K$ is normal in $G/A$ and we obtain a
decreasing sequence $K$, $[G/A,K]$, $[G/A,[G/A,K]],\dotsc$ of normal
connected subgroups of $G/A$. The last nontrivial term of this
sequence is contained in $Z/A$ by the definition of $Z$.}.
Hence there is a connected subgroup $B\subset Z$ such that
$A\subsetneq B$ and the composition
$
(B/A)_{perf}\hookrightarrow(G/A)_{perf}\to(Z/A)_{perf}^*\onto(B/A)_{perf}^*
$
is trivial. As in \S\ref{ss:geom-heis-reps}, we see that there
exists a multiplicative $\ql$-local system $\cN'$ on $B$ such that
$\cN\cong\cN'\bigl\lvert_A$ and $t_{\cN'}$ is a summand of $\rho\bigl\lvert_{B(\bF_q)}$.

\mbr

On the other hand, $B$ is normal in $G$ since $A\subset
B\subset Z$, and hence $\cN'$ is not $G$-invariant by the maximality
of $(A,\cN)$. Let $G_1$ denote the normalizer of $\cN'$ in $G$. Then
$G_1$ is a proper subgroup of $G$, so either $\dim G_1<\dim G$, or
$\abs{\pi_0(G_1)(\bF)}<\abs{\pi_0(G)(\bF)}$. In either case, if we
let $\rho_1$ be the restriction of $\rho$ to $G_1(\bF_q)$, we may
assume that Theorem \ref{t:adm-pair-compatible} holds
for $\rho_1$ and the pair $(B,\cN')$.

\mbr

Let $(H,\cL)$ denote a pair consisting of a connected subgroup
$H\subset G_1$ and a multiplicative $\ql$-local system $\cL$ on $H$,
which satisfies the conclusion of Theorem
\ref{t:adm-pair-compatible} for the $4$-tuple $(G_1,\rho_1,B,\cN')$.
We assert that it also satisfies the conclusion of this theorem for
$(G,\rho,A,{\cN})$. To prove this assertion, we only need to check
that $(H,\cL)$ is admissible with respect to $G$.

\mbr

Let $G'$ denote the normalizer of the pair $(H,\cL)$ in $G$.
We have $G'\subset G_1$. Indeed, if $g\in G(\bF)$,
$g\not\in G_1(\bF)$, then, by construction, $g$ does not fix $\cN'$,
and hence, \emph{a fortiori}, it cannot fix $(H,\cL)$ (because $B$
is normal in $G$ and $\cL\bigl\lvert_B\cong\cN'$).

\mbr

Since $G'\subset G_1$, conditions (1) and (2) in
the definition of admissibility for $(H,\cL)$ hold with respect to
$G$ because they hold with respect to $G_1$. To verify condition
(3), let $g\in G(\bF)$, $g\not\in G'(\bF)$. If $g\in G_1(\bF)$,
there is nothing to do because $(H,\cL)$ is admissible with respect
to $G_1$. If $g\not\in G_1(\bF)$, then, since $B\subset(H\cap
H^g)^\circ$ and $g$ does not fix $\cN'$, it follows that the
restrictions of $\cL$ and $\cL^g$ to $(H\cap H^g)^\circ$ cannot be
isomorphic, completing the induction step in the proof of Theorem
\ref{t:adm-pair-compatible}.



\section{Analysis of Heisenberg idempotents}\label{s:heis-idemp}

In this section we study a certain special type of geometrically
minimal weak idempotents (cf.~Definition \ref{d:geom-min-idemp}) in
the equivariant derived categories of unipotent algebraic groups.
The main result of the section is Proposition \ref{p:heis-idemp}.

\subsection{Setup}\label{ss:heis-idemp-setup}
Throughout this section, we fix a field $k$ of characteristic $p>0$,
let $U$ be a possibly disconnected unipotent group over $k$, and let
$(N,\cL)$ be an admissible pair for $U$ in the sense of Definition
\ref{d:adm-pair-mult-loc-sys}, such that its normalizer in $U$ is
all of $U$ (so that the third condition in the definition of
admissibility is vacuous). In particular, $N$ is a normal closed
connected subgroup of $U$, and $\cL$ is a multiplicative $\ql$-local
system on $N$ which is invariant under the conjugation action of
$U$. In this subsection we construct certain objects associated to
the data $U,N,\cL$.

\subsubsection{Construction of $e_\cL$ and
$e'_\cL$}\label{sss:heis-idemp-construction} It follows from our
assumptions that $\cL$ has a natural $U$-equivariant structure (because $N$ is connected). Let
$\bK_N$ denote the dualizing complex of $N$; then $\bK_N$ has a
natural $U$-equivariant structure as well, since $N$ is normal in
$U$. It follows that we can define $e_{\cL}=\cL\tens\bK_N$ as an
object of $\sD_U(N)$. Let $e'_{\cL}$ denote the object of $\sD_U(U)$
obtained from $e_{\cL}$ via extension by zero.

\subsubsection{Construction of a morphism
$\e\rar{}e'_\cL$}\label{sss:heis-idemp-closed} Let $1:\Spec
k\rar{}U$ be the unit morphism, and let $\e=1_!\ql$ be the
delta-sheaf at $1$, equipped with the ``trivial'' $U$-equivariant
structure. Recall that $\e$ is a unit object in the monoidal
category $\sD_U(U)$ under convolution. Of course, we can equally
well think of $\e$ as an object of $\sD_N(N)$. If $p:N\rar{}\Spec k$
is the structure morphism, then $\bK_N=p^!\ql$, so we get a
canonical identification $\ql\rar{\simeq}1^!\bK_N$. By adjunction,
we get a canonical morphism $\e\rar{}\bK_N$. On the other hand,
since the stalk $\cL_1$ of $\cL$ at $1$ has a natural
trivialization, we obtain an isomorphism
$\e\tens\cL^\vee\rar{\simeq}\e$, where $\cL^\vee=\hom(\cL,\ql)$ is
the dual local system on $\cL$. Composing the two morphisms we just constructed, we obtain a natural
morphism $\e\tens\cL^\vee\rar{}\bK_N$, which induces a morphism
$\e\rar{}e_{\cL}$ in $\sD_U(N)$, and hence a morphism
$\e\rar{}e'_{\cL}$ in $\sD_U(U)$.

\subsubsection{Construction of a homomorphism $\vp_\cL:(U^\circ/N)_{perf}\rar{}(U^\circ/N)_{perf}^*$}
\label{sss:heis-idemp-pairing} Before stating the main result of the
section, we need one last construction. Let us fix a homomorphism
$\psi:(\bQ_p,+)\rar{}\ql^\times$ with kernel $\bZ_p$. By Lemma
\ref{l:mult-loc-sys-unip-centr-ext-qzp}, the multiplicative local
system $\cL$ on $N$ is induced from a central extension $\Nt$ of $N$
by $\qzp$ via $\psi$. Let $k_{perf}$ be the perfect closure of $k$
(see \cite{greenberg}); it is the maximal purely inseparable
algebraic extension of $k$, and hence is determined up to a unique
$k$-isomorphism. As recalled in \S\ref{aa:perfect}, for every
$k$-scheme $X$, we can construct its \emph{perfectization},
$X_{perf}$, which is a scheme over $k_{perf}$. In particular, we
obtain the induced central extension $\Nt_{perf}$ of $N_{perf}$ by
$\qzp$, which is $U_{perf}$-invariant. Using the construction
explained in \S\ref{aa:construction-alt} with $Z=U_{perf}^\circ$,
the neutral connected component of $U$ (recall that $U^\circ/N$ is
commutative, so this choice of $Z$ is allowed), we obtain a
homomorphism of perfect unipotent groups
$U^\circ_{perf}/N_{perf}\rar{}\bigl(U^\circ_{perf}/N_{perf}\bigr)^*$
over $k_{perf}$, i.e., a homomorphism
\[
\vp_\cL:(U^\circ/N)_{perf}\rar{}(U^\circ/N)_{perf}^*.
\]
By the definition of admissibility, $\vp_\cL$ is an isogeny.

\subsection{Statement of the main result}\label{ss:heis-idemp-main}

\begin{prop}\label{p:heis-idemp} Let $U$, $N$, $\cL$, $e_\cL$, $e'_\cL$
and $\vp_\cL$ be as above.
\begin{enumerate}[$($a$)$]
\item The morphism $\e\rar{}e'_\cL$ constructed in
\S\ref{sss:heis-idemp-closed} becomes an isomorphism after
convolving with $e'_\cL$. A fortiori, $e'_\cL$ is a weak idempotent
in $\sD_U(U)$. \sbr
\item In fact, $e'_\cL$ is a geometrically minimal weak idempotent
in $\sD_U(U)$ $($Def.~\ref{d:geom-min-idemp}$)$. \sbr
\item Let $\te$ denote the twist automorphism of the identity
functor on $\sD_{U\tens_k\kbar}(U\tens_k\kbar)$, introduced in
\S\ref{ss:twists-groups}. If the restriction of $\te$ to the Hecke
subcategory
\[
e'_\cL\sD_{U\tens_k\kbar}(U\tens_k\kbar) \,\subset\,
\sD_{U\tens_k\kbar}(U\tens_k\kbar)
\]
is trivial, then $U$ is connected and $\vp_\cL$ is an isomorphism.
\end{enumerate}
\end{prop}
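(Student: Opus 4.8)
The plan is to work over $\kbar$ throughout (part (c) is a statement about $U\tens_k\kbar$, so no generality is lost) and to exploit the interplay between the twist automorphism $\te$ and the two features we want to control: the disconnectedness of $U$ and the failure of $\vp_\cL$ to be an isomorphism. The guiding principle is that $\te$ restricted to the Hecke subcategory $e'_\cL\sD_U(U)$ records information about the $U/U^\circ$-action and about the ``gerbe'' defined by $\cL$ on $U^\circ/N$; triviality of $\te$ should force both of these to be as rigid as possible. First I would set up the Hecke subcategory $\cM := e'_\cL\sD_U(U)$ explicitly. By parts (a) and (b), $e'_\cL$ is a geometrically minimal weak idempotent, and by the general theory of \S\ref{ss:weak-idemp}, $\cM$ is an additive weakly symmetric semigroupal category with a unique nonzero weak idempotent up to isomorphism. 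I expect $\cM$ to be equivalent (as a semigroupal category) to something like the category of representations of $(U/U^\circ)$ twisted by $\Ker(\vp_\cL)$, or, more precisely, to the equivariant derived category of a point with respect to the finite group scheme built from $U/U^\circ$ and the kernel of the isogeny $\vp_\cL$. The key computation is to identify the twist $\te$ on objects of $\cM$ with an explicit automorphism on that side: on the piece coming from $U/U^\circ$, $\te$ acts by the ``monodromy'' of the component group action, and on the piece coming from $\Ker(\vp_\cL)$, it acts by the pairing that $\cL$ (equivalently, the central extension $\Nt$) induces on $\Ker(\vp_\cL)\subset U^\circ/N$.

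Granting this identification, the argument concludes as follows. Suppose $\te\restr{\cM}$ is trivial. Evaluating triviality of $\te$ on the objects supported over the nonidentity components of $U$ forces the component group $\pi_0(U)$ to act trivially in a way that, combined with the admissibility condition (3) being vacuous here and with Lang's theorem-style rigidity, is incompatible with $U$ being disconnected unless $\pi_0(U)$ is trivial; hence $U = U^\circ$ is connected. Then, with $U$ connected, $\vp_\cL : (U/N)_{perf}\rar{}(U/N)^*_{perf}$ is an isogeny whose kernel $K$ is a finite (perfect) unipotent group scheme, and the object of $\cM$ corresponding to a nontrivial element of the Cartier dual of $K$ has twist equal to a nontrivial root of unity (this is exactly the point where the ``square of the braiding'' alluded to in the introduction enters — the twist on such an object is the self-pairing value). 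Triviality of $\te$ on $\cM$ therefore forces $K$ to be trivial, i.e., $\vp_\cL$ is an isomorphism.

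The main obstacle, I expect, is making the identification of $\te\restr{\cM}$ with the explicit pairing precise — that is, computing the twist automorphism on the standard generating objects of the Hecke subcategory $e'_\cL\sD_U(U)$. The cleanest route is probably to reduce to the already-understood commutative case: when $U = U^\circ$ and $N$ is central, $\cL$ corresponds to a bi-extension of $U/N$ by $\qzp$ (via Serre duality, cf.~the Appendix and \S\ref{ss:Serre-duality-two-approaches}), the Hecke category is the category of local systems on $\Ker(\vp_\cL)$, and the twist is computed directly from the definition in \S\ref{ss:twists-groups} as the value of the commutator pairing — this is a short diagram chase with the conjugation morphism $c$ and the diagonal $\De$. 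The general case is then assembled from this by pulling back along $\Spec k \rar{} U^\circ/N$ and bootstrapping through the induction-compatibility of twists (Proposition \ref{p:induction-twists}), which handles the passage from $U^\circ$ to $U$; the disconnected contribution to $\te$ is seen to be the monodromy around the components, which vanishes precisely when $\pi_0(U)$ is trivial. A secondary subtlety is bookkeeping the perfectization: all the group-scheme-theoretic statements (finiteness of $K$, nontriviality of its Cartier dual) are cleaner over $k_{perf}$, and one must check that triviality of $\te$ over $\kbar$ descends appropriately, which is routine since $\kbar \supset k_{perf}$.
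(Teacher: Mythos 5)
Your high-level intuition is right: the twist restricted to the Hecke subcategory records both the $\pi_0(U)$-contribution and the pairing induced by $\cL$ on $\Ker\vp_\cL$, and triviality of $\te$ should force both to collapse. But the paper's actual proof takes a cleaner and structurally different route, and your plan has two real soft spots. The paper does not compute $\te$ directly at all. Instead it invokes Theorem \ref{t:tanmay} (quoting \cite{tanmay}), which says the perverse heart $\cM\subset e'_{\cL}\sD_U(U)$ is a \emph{modular} fusion category with ribbon twist equal to $\te$; then Remark \ref{r:fusion-modular-remarks}(2) (a standard M\"uger-center argument: trivial twist $\Rightarrow$ symmetric braiding $\Rightarrow$ trivial modular category) gives immediately that $\cM$ has a unique simple object, namely $e'_{\cL}$. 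From there it passes from $\cM$ to $\cM_0$ (the connected-group version) via the $\Ga=U/U^\circ$-equivariantization $\cM\simeq\cM_0^\Ga$ and reads off both conclusions from the explicit classification of simple objects in \cite[\S4.1]{tanmay}. So the key structural input you are missing is modularity of the Hecke category; without it, ``trivial twist $\Rightarrow$ one simple'' is not a theorem.

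Concerning the two soft spots. First, your claim that ``the object of $\cM$ corresponding to a nontrivial element of the Cartier dual of $K$ has twist equal to a nontrivial root of unity'' is not literally correct: the twist on such an object is the value of the quadratic form $q$ of Lemma \ref{l:skewsym=>nondeg-quadr-form}, and a nondegenerate quadratic form can (and in the hyperbolic case does) vanish on nonzero isotropic vectors. What is true, and what your argument should say, is the aggregate statement: if $q$ vanishes on \emph{all} of $\Ker\vp_\cL$, then the associated bilinear form $B_f(x,y)=q(x+y)-q(x)-q(y)$ vanishes identically, contradicting Proposition \ref{p:pairing-bi-ext-nondeg} unless $\Ker\vp_\cL=0$. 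Second, your argument for connectedness of $U$ (``monodromy around components,'' ``Lang's theorem-style rigidity'') is not a real argument; the clean mechanism, implicit in the paper, is that once $\cM_0\simeq\mathrm{Vec}$ the equivariantization $\cM=\cM_0^\Ga$ is a twisted $\Rep(\Ga)$, which has more than one simple object whenever $\Ga\neq 1$. If you pursue your direct route you would, in effect, be reconstructing the content of \cite{tanmay}, which is possible but much longer than the citation-based proof the paper gives.
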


This proposition is the last ingredient in the proofs of the main
results of our work, stated in Section \ref{s:results}. The rest of
the section is devoted to its proof. If $U$ is as above, a weak
idempotent in $\sD_U(U)$ isomorphic to one of the form $e'_\cL$ will
be called a \emph{Heisenberg idempotent}\footnote{The notion of a
Heisenberg idempotent is the geometric analogue of the notion of a
Heisenberg representation of a finite group, introduced in
\cite{intro}.}, which explains the title of the section.

\begin{rem}
One can show that the converse of Proposition \ref{p:heis-idemp}(c)
holds as well (see \cite{tanmay}), but we do not need this fact.
\end{rem}

\subsection{Proof of Proposition
\ref{p:heis-idemp}(a)}\label{ss:proof-p:heis-idemp-a} It is
enough to show that the morphism $\e\rar{}e_\cL$ in $\sD_N(N)$
becomes an isomorphism after convolving with $e_\cL$. Without loss
of generality, we may and do assume that $k$ is algebraically
closed. Then $e_\cL\cong\cL[2\dim N]$, so it suffices to prove that
$\e\rar{}e_\cL$ becomes an isomorphism after convolving with $\cL$.

\mbr

Fix $g\in N(k)$, and let $\rho_g:N\rar{}N$ be defined by
$n\longmapsto n^{-1}g$. By the proper base change theorem, the
induced morphism on the stalks, $(\e*\cL)_g\rar{}(e_\cL*\cL)_g$, is
the same as the morphism obtained by applying the functor
$R\Ga_c(N,-)$ to the induced morphism
$\e\tens(\rho_g^*\cL)\rar{}e_\cL\tens(\rho_g^*\cL)$. However,
$\rho_g^*\cL$ is naturally isomorphic to $\cL^\vee$, because $\cL$ is
multiplicative, and the morphism
$\e\tens(\rho_g^*\cL)\rar{}e_\cL\tens(\rho_g^*\cL)$ becomes the
canonical morphism $\e\rar{}\bK_N$. Applying the functor
$R\Ga_c(N,-)$, we recover the adjunction morphism
$\ql\rar{}p_!p^!\ql$ (where $p:N\rar{}\Spec k$ is the structure
morphism), which is an isomorphism because $N$ is a
connected unipotent group over $k$, and hence is isomorphic to an
affine space over $k$.

\subsection{Proof of Proposition \ref{p:heis-idemp}(b)}
Without loss of generality, we may and do assume that $k$ is algebraically closed. Then we must prove that $e'_{\cL}$ is a minimal weak idempotent in $\sD_U(U)$, which is equivalent to showing that the Hecke subcategory $e'_{\cL}\sD_U(U)$ contains no weak idempotents apart from $0$ and $e'_{\cL}$.

\mbr

The category $e'_{\cL}\sD_U(U)$ is studied in\footnote{The results of \cite{tanmay} use the construction of the arrow $\e\rar{}e'_{\cL}$ presented in \S\ref{sss:heis-idemp-closed}, but are otherwise independent of the current section.} \cite{tanmay}. Let us recall the results of \emph{op.~cit.} that will be used in the current proof.

\begin{thm}[\cite{tanmay}, Theorem 1.5]\label{t:tanmay}
Let $\cM\subset e'_{\cL}\sD_U(U)$ be the full subcategory consisting of objects $M$ for which $M[-\dim N]$ is a perverse\footnote{See \cite{bbd}; we only consider the \emph{middle perversity} in this article.} sheaf on $U$.
 \sbr
\begin{enumerate}[$($a$)$]
\item The natural functor $D^b(\cM)\rar{}e'_{\cL}\sD_U(U)$ is an equivalence of categories.
 \sbr
\item The subcategory $\cM$ is closed under convolution, and is a $($semisimple$)$ fusion category with unit object $e'_{\cL}$.
 \sbr
\item There exists a ribbon structure on the fusion category $\cM$, which makes $\cM$ a modular category, and is such that the corresponding twist $($balancing$)$ is equal to the canonical automorphism $\te$ of the identity functor introduced in \S\ref{ss:twists-groups}.
\end{enumerate}
\end{thm}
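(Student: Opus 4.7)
The plan is to analyze the Hecke subcategory $e'_\cL\sD_U(U)$ via a concrete model, and then establish parts (a), (b), (c) in sequence. After base change we may assume $k=\kbar$, so $e'_\cL$ is the extension by zero of $\cL[2\dim N]$ from $N$ to $U$. Proposition \ref{p:heis-idemp}(a) already tells us that $e'_\cL$ is a weak idempotent, and part (b) says it is (geometrically) minimal. The first step would be to give an explicit description of objects of $\cM$ after restriction to $N$: by convolving with $e'_\cL$ and using the multiplicativity of $\cL$, such a restriction should decompose into $\cL$-isotypic components, one on each coset of $N$ in $U$, giving a ``$U/N$-graded'' picture. The admissibility/isogeny condition on $\vp_\cL$ means that $U^\circ/N$ carries a symplectic-type structure, so this picture should mimic the behaviour of representations of a finite Heisenberg group.

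For (a), I would show that $\cM$ is the heart of a (shifted) perverse $t$-structure on $e'_\cL\sD_U(U)$. The equivalence $D^b(\cM)\rar{}e'_\cL\sD_U(U)$ would then follow from a Beilinson-style realization argument, once one proves that $\Ext^n$ computed in $e'_\cL\sD_U(U)$ agrees with $\Ext^n$ in $\cM$. Minimality of $e'_\cL$ (Proposition \ref{p:heis-idemp}(b)) together with the ``graded'' description should make $\cM$ semisimple, so $\Ext^{>0}$ in $\cM$ vanishes; hence it suffices to show the higher Ext's in $e'_\cL\sD_U(U)$ vanish, which should come from a direct stalk computation using proper base change and the affineness of connected unipotent groups (as in the proof of Proposition \ref{p:heis-idemp}(a) in \S\ref{ss:proof-p:heis-idemp-a}).

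For (b), closure under convolution amounts to bounding the perverse cohomological amplitude of $\mu_!(M\boxtimes N)$ for $M,N\in\cM$, which I would do via the fibration structure of $\mu$ and the cartesian diagrams of \S\ref{sss:weak-semigr-preparations}; the admissibility condition keeps the relevant fibres ``symplectic'' and hence the amplitude collapses. The unit is $e'_\cL$ by Proposition \ref{p:heis-idemp}(a). Rigidity (duals) comes from Verdier duality $\bD_U$ on $\sD_U(U)$ combined with the fact that $\cL$ has a dual local system $\cL^\vee$. Finiteness of the number of simple objects (fusion) should reduce, via the graded description, to the counting of characters of the finite group $(U^\circ/N)_{perf}(\bF)$ that are fixed by the Frobenius data, which is finite.

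For (c), the braiding on $\cM$ is inherited from the braided monoidal structure on $\sD_U(U)$ (described in \cite{intro}; compare Lemma \ref{l:central-equivariant-derived}). A ribbon structure is then pinned down by specifying a twist; I would take the twist to be $\te$ restricted to $\cM$ and verify the ribbon axioms by unwinding the definitions of $\te$ (via the diagonal $\Delta$ and the equivariant structure) and of the braiding (via the morphism $\xi$ in the proof of Lemma \ref{l:central-equivariant-derived}). The main obstacle will be the modularity assertion, i.e.\ non-degeneracy of the braiding: one must show that the only object of $\cM$ centralizing all others is $e'_\cL$. I would attack this by translating centrality into a condition on the associated $(U^\circ/N)_{perf}$-grading and then using that $\vp_\cL$ is an isogeny to rule out nontrivial transparent objects, in the spirit of the fact that a non-degenerate symplectic form leaves no non-zero radical. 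This last step is where the admissibility hypothesis does its most substantive work.
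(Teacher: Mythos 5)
This theorem is not proved in the paper at all: it is quoted verbatim from Deshpande's paper \cite{tanmay} (Theorem 1.5 there), and the text explicitly says it is ``recalling'' results of \emph{op.~cit.} So there is no in-paper argument to compare yours against; what can be checked is whether your sketch is internally consistent with the paper's logical structure, and there it has a genuine problem. You invoke Proposition \ref{p:heis-idemp}(b) (minimality of $e'_{\cL}$) to get semisimplicity of $\cM$, but in this paper Proposition \ref{p:heis-idemp}(b) is \emph{deduced from} Theorem \ref{t:tanmay}(a),(b) via Lemma \ref{l:idempotents-fusion-category}. Using it as an input to Theorem \ref{t:tanmay} is circular; the paper even flags the required direction of dependency in a footnote (``the results of \cite{tanmay} use the construction of the arrow $\e\rar{}e'_{\cL}$ \ldots but are otherwise independent of the current section''). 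Moreover, minimality of a weak idempotent would not by itself give semisimplicity of the heart; in the actual argument semisimplicity comes from Ext-vanishing between the simple $\cL$-twisted translation-invariant local systems, which rests on cohomology-vanishing statements of the type of Lemma \ref{l:coho-nontriv-mult-loc-syst}, not on minimality.

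A second concrete slip: your finiteness count for the simple objects refers to ``characters of the finite group $(U^\circ/N)_{perf}(\bF)$,'' but over $\bF=\kbar$ this group is infinite whenever $\dim(U^\circ/N)>0$. The finite group that actually parametrizes the simple objects (and whose nondegenerate quadratic form, cf.~Lemma \ref{l:skewsym=>nondeg-quadr-form}, is responsible for modularity in part (c)) is $\pi_0(\Ker\vp_{\cL})$, the component group of the kernel of the isogeny $\vp_{\cL}$. Your instinct for part (c) --- that nondegeneracy of the braiding should come from nondegeneracy of the symplectic/quadratic data attached to $\cL$ --- is the right one, but it must be routed through this finite metric group rather than through $(U^\circ/N)(\bF)$ itself.
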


\begin{rems}\label{r:fusion-modular-remarks}
\begin{enumerate}[$($1$)$]
\item The word ``semisimple'' in the formulation of part (b) is only added for emphasis. Our use of the term ``fusion category'' agrees with that of \cite{ENO}. Thus part (b) means that $\cM$ is a semisimple $\ql$-linear monoidal category over $\ql$, which is rigid, has finitely many simple objects and finite dimensional $\Hom$-spaces, and is such that the unit object is simple.
 \sbr
\item We do not require the precise definitions of the terms ``ribbon structure'' or ``modular category''; see, e.g., \cite{BK}. All we need is the fact that a modular fusion category where the twist is trivial has only one simple object (which is necessarily the unit object).
\end{enumerate}
\end{rems}

We see that Proposition \ref{p:heis-idemp}(b) follows from the more general

\begin{lem}\label{l:idempotents-fusion-category}
Let $\cM$ be a weakly symmetric fusion category. The bounded derived category $D^b(\cM)$ $($equipped with the induced tensor product$)$ has no weak idempotents other than $0$ and the unit object.
\end{lem}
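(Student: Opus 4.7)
My plan is to proceed in two stages: first reduce to showing that any weak idempotent in $D^b(\cM)$ is concentrated in degree zero, and then conclude within $\cM$ itself using Frobenius--Perron dimensions.

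Since $\cM$ is semisimple, $D^b(\cM)\simeq\bigoplus_{n\in\bZ}\cM[n]$, so every object $E\in D^b(\cM)$ has a unique decomposition $E=\bigoplus_n E_n[n]$ with $E_n\in\cM$ and only finitely many nonzero. The tensor product extends to $D^b(\cM)$ because in a fusion category $\otimes$ is biexact (each variable has both a left and a right adjoint via rigidity), and the extension is graded: $(E\otimes F)_n=\bigoplus_{i+j=n}E_i\otimes F_j$.

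Next I would record the elementary fact that in a fusion category the tensor product of two nonzero objects is nonzero: for any nonzero $X$, the coevaluation $\vone\to X\otimes X^*$ is nonzero in a semisimple rigid category, so $\vone$ is a direct summand of $X\otimes X^*$, and then $X\otimes Y=0$ together with tensoring on the left by $X^*$ would force $Y=0$. Now let $E=\bigoplus_n E_n[n]$ be a nonzero weak idempotent, and let $m,M$ be respectively the smallest and largest integers with $E_m,E_M\ne 0$. The degree $2m$ and degree $2M$ components of $E\otimes E$ are $E_m\otimes E_m$ and $E_M\otimes E_M$, both nonzero by the above; since $E\otimes E\cong E$ has no components outside $[m,M]$, we must have $m\le 2m$ and $2M\le M$, hence $m=M=0$. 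So $E$ lies in $\cM$ itself (concentrated in degree zero).

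It remains to show that a nonzero weak idempotent $E\in\cM$ is isomorphic to $\vone$. Here I would invoke the Frobenius--Perron dimension for $\cM$: a ring homomorphism $\mathrm{FPdim}\colon K_0(\cM)\to\bR$ taking strictly positive values on nonzero objects, with $\mathrm{FPdim}(V)\ge 1$ for every simple $V$ and equality exactly when $V$ is invertible. From $E\otimes E\cong E$ and $E\ne 0$ we obtain $\mathrm{FPdim}(E)^2=\mathrm{FPdim}(E)>0$, hence $\mathrm{FPdim}(E)=1$; decomposing $E$ into simples then forces $E$ to be a single simple object of FPdim equal to $1$, hence invertible. Finally, tensoring the isomorphism $E\otimes E\cong E$ with $E^*$ gives $E\cong\vone$. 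The main obstacle is purely conceptual rather than computational---one must invoke the Frobenius--Perron theory (e.g., from Etingof--Nikshych--Ostrik); a slightly more hands-on variant would analyze the idempotent equation $[E]^2=[E]$ directly in $K_0(\cM)$ via Perron--Frobenius applied to the fusion matrices, but the essential ingredient is the same.
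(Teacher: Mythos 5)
Your proof is correct, and it takes a genuinely different route in its second half. For the reduction to degree zero, the paper passes through the equivalence $H^\bullet\colon D^b(\cM)\to\cM^{gr}$ and introduces a length function $\ell$ satisfying $\ell(X\tens Y)\ge\ell(X)\cdot\ell(Y)$, concluding at once that a weak idempotent is a \emph{simple} object concentrated in a single degree (which must then be $0$); your argument looks only at the extremal degrees $m$ and $M$ and derives $0\le m\le M\le 0$ from the same "nonzero $\tens$ nonzero is nonzero" observation, which is a clean and slightly more local way to reach the same conclusion, though it does not by itself yield simplicity. The two approaches then diverge: the paper uses weak symmetry to show that $X^*\tens X$ is again a weak idempotent, applies the just-established simplicity to conclude $X^*\tens X\cong\e$ (since $\e$ is a summand), and hence deduces invertibility; you instead invoke the Frobenius--Perron dimension of ENO as a ring homomorphism $K_0(\cM)\to\bR$, get $\operatorname{FPdim}(E)=1$ from the idempotent equation, and use $\operatorname{FPdim}\ge 1$ on simples to force $E$ to be a single invertible simple. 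A noteworthy consequence of your route is that it never uses the weak-symmetry hypothesis at all, so you are in fact proving the lemma for an arbitrary fusion category; the paper's proof does need weak symmetry but is more elementary in that it avoids the Frobenius--Perron machinery. Both are valid.
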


\begin{proof}{Proof}
Let $\tens$ and $\e$ denote the monoidal functor and the unit object of $\cM$.
In the proof we will repeatedly use the observation that if $X,Y\in\cM$ are nonzero, then $X\tens Y$ is also nonzero. The reason is that if $X^*$ is a right dual\footnote{We are using the terminology of \cite{BK,ENO}.} of $X$, then $X^*\tens X$ contains $\e$ as a direct summand, and hence $X^*\tens X\tens Y$ contains $Y$ as a direct summand.

\mbr

First let us check that every weak idempotent in $D^b(\cM)$ comes from a simple object of $\cM$. Write $\cM^{gr}$ for the category of bounded graded objects of $\cM$; in other words, $\cM^{gr}$ is the category of bounded complexes over $\cM$ in which all the differentials are equal to $0$. The cohomology functor $H^\bullet:D^b(\cM)\rar{}\cM^{gr}$ is an equivalence of categories because $\cM$ is semisimple, and it is moreover a monoidal equivalence by the K\"unneth formula. The comment in the previous paragraph implies that the length function $\ell:\cM^{gr}\rar{}\bZ_{\geq 0}$, which assigns to an object $X\in\cM^{gr}$ the sum of lengths of all the components of $X$, satisfies $\ell(X\tens Y)\geq\ell(X)\cdot\ell(Y)$. It follows that every weak idempotent in $\cM^{gr}$ has length $1$, and hence it must be a simple object concentrated in a single degree $k$. Now we are forced to have $k+k=k$, and hence $k=0$, as desired.

\mbr

Now let $X\in\cM$ be a nonzero weak idempotent. Then the right dual $X^*$ is also a weak idempotent, and hence so is $X^*\tens X$ (since $\cM$ is weakly symmetric). We already saw that $X^*\tens X$ must be simple, and thus $X^*\tens X\cong\e$. So $X$ is invertible, and since $X\tens X\cong X$, we see that $X\cong\e$, proving the lemma.
\end{proof}

\subsection{Proof of Proposition \ref{p:heis-idemp}(c)}
By remark \ref{r:fusion-modular-remarks}(2), the hypothesis of Proposition \ref{p:heis-idemp}(c) implies that the category $\cM\subset e'_{\cL}\sD_U(U)$ defined in Theorem \ref{t:tanmay} has only one simple object, namely, $e'_{\cL}$ itself. Write $\Ga=U/U^\circ$, where $U^\circ\subset U$ is the neutral connected component, and let $\cM_0\subset e'_{\cL}\sD_{U^\circ}(U)$ be the full subcategory consisting of objects $M$ for which $M[-\dim N]$ is a perverse sheaf on $U$. The natural action of $\Ga$ on $\sD_{U^\circ}(U)$ induces an action of $\Ga$ on $\cM_0$, and by \cite[Lemma 1.4]{tanmay}, the $\Ga$-equivariantization $\cM_0^\Ga$ is equivalent to $\cM$. By \cite[Theorem 1.3]{tanmay}, $\cM_0$ is also a fusion category, so we see that $e'_{\cL}$ is the only simple object of $\cM_0$ (indeed, every simple object of $\cM_0$ can be realized as a direct summand of a simple object of $\cM_0^\Ga$). On the other hand, all simple objects of $\cM_0$ are described in \cite[\S4.1]{tanmay}, and that description implies that $U$ is connected and that $\vp_{\cL}$ is an isomorphism.



\section{The proofs of the main results}\label{s:proofs}

In this section we put together all the preliminary results obtained
in Sections \ref{s:equivariant-derived}--\ref{s:heis-idemp} to prove
Theorem \ref{t:dim-reps-easy}, Theorem \ref{t:descr-L-packets} and
Proposition \ref{p:L-packets-min-idemp}.

\subsection{The key result} Throughout this section we work with a
fixed connected unipotent group $G$ over a field $k$ of
characteristic $p>0$. For the most part, we will take $k=\bF_q$, but
it is convenient to formulate one of the results in the more general
setting. Below we will state a result (Proposition \ref{p:key}) to
which all the other results to be proved in this section are easily
reduced.

\mbr

It is convenient to introduce the following notation. If
$k=\bF_q$ and $e\in\sD_G(G)$ is any weak idempotent, let us write
$L(e)$ for the set of isomorphism classes of irreducible
representations $\rho$ of $G(\bF_q)$ over $\ql$ in which $t_e$ acts
by the identity operator. If $e$ is minimal (Definition
\ref{d:minimal-idempotent}), it follows from Definition
\ref{d:L-packets} and Remark \ref{r:idempotents-L-packets}(1) that
$L(e)$ is either empty (when $t_e\equiv 0$) or an $\bL$-packet (when
$t_e\not\equiv 0$).

\mbr

Let $(H,\cL)$ be an admissible pair for $G$, and
let $G'$ be its normalizer in $G$ $($see Definition
\ref{d:adm-pair-mult-loc-sys}$)$. Let $e'_{\cL}\in\sD_{G'}(G')$ be
the object obtained by applying the construction of
\S\ref{sss:heis-idemp-construction} with $G'$ and $H$ in place of
$U$ and $N$, and let $e_{H,\cL}=\ig e'_\cL$.

\begin{prop}\label{p:key}
With this notation,
\begin{enumerate}[$($a$)$]
\item $e_{H,\cL}$ is a geometrically minimal weak idempotent in
$\sD_G(G)$; and \sbr
\item if $\sC$ denotes the geometric conjugacy class of $(H,\cL)$
$($see \S\ref{ss:descr-L-packets}$)$ and $k=\bF_q$, then
\[L(\sC)=L(e_{H,\cL}),\] where $L(\sC)$ is introduced in Definition
\ref{d:adm-pair-L-packet}.
\end{enumerate}
\end{prop}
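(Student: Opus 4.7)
For part~(a), the plan is to apply Proposition~\ref{p:heis-idemp}(b) to the triple $(G', H, \cL)$---the hypothesis from \S\ref{ss:heis-idemp-setup} that the admissible pair is normalized by the whole ambient group holds by construction of $G'$---to see that $e'_\cL$ is a geometrically minimal weak idempotent in $\sD_{G'}(G')$, and then to invoke Corollary~\ref{c:induction-min-idemp} to transport this minimality through $\ig$. The only nontrivial hypothesis of the corollary is the vanishing $\overline{e'_\cL} * \de_x * \overline{e'_\cL} = 0$ in $\sD(G \tens_k \bF)$ for every $x \in G(\bF) \setminus G'(\bF)$. Proper base change identifies the stalk of this convolution at a point $g = h_1^0 x h_2^0 \in HxH$ with the compactly supported cohomology of the fiber $K = H \cap x H x^{-1}$ with coefficients in the multiplicative local system whose stalk at $u$ is $\cL_u \tens \cL_{x^{-1} u^{-1} x}$. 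Unpacking conjugation conventions on $K^\circ$, triviality of this local system is equivalent to $\cL\bigl\lvert_{K^\circ} \cong \cL^{x^{-1}}\bigl\lvert_{K^\circ}$, which is precisely what admissibility condition~(3) for $g = x^{-1} \notin G'$ forbids. The resulting nontrivial multiplicative $\ql$-local system on the connected unipotent group $K^\circ$ has vanishing $R\Ga_c$ by a standard argument, and translation by representatives of the remaining components of $K$ propagates the vanishing to all of $K$.

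For part~(b), the strategy is to compute $t_{e_{H,\cL}}$ explicitly via Proposition~\ref{p:induction-sheaves-functions}, yielding
\[
t_{e_{H,\cL}} = \sum_{\al \in \Ker\bigl(H^1(\bF_q, G') \to H^1(\bF_q, G)\bigr)} \ind_{G'^\al(\bF_q)}^{G(\bF_q)} t_{(e'_\cL)^\al}.
\]
By construction $(e'_\cL)^\al$ is the extension by zero from $H^\al$ to $G'^\al$ of $\cL^\al \tens \bK_{H^\al}$, so $t_{(e'_\cL)^\al}$ is a strictly positive rational multiple of the extension by zero of $t_{\cL^\al}$. Since $H^\al$ is normal in $G'^\al$ (a property inherited from the $\bF_q$-structure by inner twisting) and $t_{\cL^\al}$ is $G'^\al(\bF_q)$-invariant, an elementary identity of class functions shows that $\ind_{G'^\al(\bF_q)}^{G(\bF_q)} t_{(e'_\cL)^\al}$ equals a strictly positive rational multiple of $\ind_{H^\al(\bF_q)}^{G(\bF_q)} t_{\cL^\al}$. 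Hence $t_{e_{H,\cL}} = \sum_\al c_\al \cdot \ind_{H^\al(\bF_q)}^{G(\bF_q)} t_{\cL^\al}$ with $c_\al > 0$, and Lemma~\ref{l:galois-cohomology-torsors}(c) combined with Proposition~\ref{p:alternative-inner} identifies the summation index with the set of $G(\bF_q)$-orbits on the $\bF_q$-rational members of $\sC$. For any irreducible $\rho$ of $G(\bF_q)$, Frobenius reciprocity gives
\[
\bigl\lan t_{e_{H,\cL}}, \chi_\rho \bigr\ran = \sum_\al c_\al \cdot \bigl\lan t_{\cL^\al}, \chi_\rho\bigl\lvert_{H^\al(\bF_q)} \bigr\ran,
\]
a sum of nonnegative real numbers. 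Part~(a) ensures $t_{e_{H,\cL}}$ is a central idempotent, so $\rho \in L(e_{H,\cL})$ is equivalent to nonvanishing of the displayed pairing, which by positivity happens precisely when $\rho$ occurs in $\Ind_{H^\al(\bF_q)}^{G(\bF_q)} t_{\cL^\al}$ for some $\al$---that is, precisely when $\rho \in L(\sC)$.

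The main obstacle is the stalk calculation in part~(a): tracking the conjugation-by-$x$ conventions carefully enough to identify the local system produced by the convolution as precisely the obstruction measured by admissibility condition~(3). Once that reduction to the standard vanishing of $R\Ga_c$ of a nontrivial multiplicative local system on a connected unipotent group is in place, part~(a) follows from the induction machinery of Section~\ref{s:induction}, and part~(b) reduces to the positivity argument paired with the inner-form bookkeeping of Section~\ref{s:inner-forms}.
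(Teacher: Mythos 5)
Your proof is correct and follows essentially the same route as the paper's. For part~(a) you invoke Proposition~\ref{p:heis-idemp}(b) to get geometric minimality of $e'_\cL$ in $\sD_{G'}(G')$ and then Corollary~\ref{c:induction-min-idemp} to push it forward, checking the Mackey-type vanishing $\overline{e'_\cL}*\de_x*\overline{e'_\cL}=0$ for $x\notin G'(\kbar)$ by a stalk computation that identifies the fiber with $H\cap xHx^{-1}$ carrying the local system $\cL\tens{}^x\cL^\vee$ and then appeals to admissibility~(3) together with the vanishing of $R\Ga_c$ of a nontrivial multiplicative local system on a unipotent group (the paper's Lemma~\ref{l:coho-nontriv-mult-loc-syst}, which already handles the propagation from the neutral component to the remaining components); this is exactly the paper's argument in~\S\ref{ss:proof-p:key-a}. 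For part~(b) you use Proposition~\ref{p:induction-sheaves-functions} and the triviality of $H^1(\bF_q,G)$, and then compare with $L(\sC)$ via Frobenius reciprocity using positivity of the coefficients and the central idempotent property from~(a); this matches~\S\ref{ss:proof-p:key-b}, the only cosmetic difference being that you rewrite $\ind_{G'^\al(\bF_q)}^{G(\bF_q)} t_{(e'_\cL)^\al}$ as a positive rational multiple of $\ind_{H^\al(\bF_q)}^{G(\bF_q)} t_{\cL^\al}$ so as to line up with Definition~\ref{d:adm-pair-L-packet} verbatim, whereas the paper stays with class functions on $G'^\al(\bF_q)$.
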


This proposition is proved in
\S\S\ref{ss:proof-p:key-a}--\ref{ss:proof-p:key-b}. First we explain
how it implies all the other results to be proved in this section.
In
\S\S\ref{ss:proof-t:descr-L-packets}--\ref{ss:proof-t:dim-reps-easy},
we assume that $k=\bF_q$.

\subsection{Proof of Theorem
\ref{t:descr-L-packets}}\label{ss:proof-t:descr-L-packets} Proposition \ref{p:key} implies that if $\sC$ is a
geometric conjugacy class of admissible pairs for $G$, then $L(\sC)$
is an $\bL$-packet of irreducible representations of $G(\bF_q)$.

\mbr

Conversely, consider an $\bL$-packet $\sP$ of irreducible
representations of $G(\bF_q)$ over $\ql$, and choose $\rho\in\sP$.
By Theorem \ref{t:adm-pair-compatible} (and Frobenius reciprocity),
there exists a geometric conjugacy class $\sC$ of admissible pairs
for $G$ such that $\rho\in L(\sC)$. Then
$L(\sC)\cap\sP\neq\varnothing$, and since $L(\sC)$ is also an
$\bL$-packet by the previous paragraph, we see that $L(\sC)=\sP$,
proving Theorem \ref{t:descr-L-packets}. \qed

\subsection{Proof of Proposition
\ref{p:L-packets-min-idemp}}\label{ss:proof-p:L-packets-min-idemp}
Let us fix two irreducible representations, $\rho_1$ and $\rho_2$,
of $G(\bF_q)$ over $\ql$. We tautologically have
$(i)\Longrightarrow(ii)\Longrightarrow(iii)$ in the statement of
Proposition \ref{p:L-packets-min-idemp}, so we only need to show
that $(iii)\Longrightarrow(i)$. Assume that $(iii)$ holds. By the
arguments above, there exists an admissible pair $(H,\cL)$ for $G$
such that $\rho_1\in L(e_{H,\cL})$. By Proposition \ref{p:key},
$e_{H,\cL}$ is a geometrically minimal weak idempotent in
$\sD_G(G)$. Now $t_{e_{H,\cL}}$ acts as the identity in $\rho_1$,
and hence, by assumption, it also acts as the identity in $\rho_2$.
This means that $\rho_1$ and $\rho_2$ both lie in $L(e_{H,\cL})$,
which is a single $\bL$-packet, and the proof is complete. \qed

\subsection{Proof of Theorem
\ref{t:dim-reps-easy}}\label{ss:proof-t:dim-reps-easy} In this
subsection we assume that $G$ is an \emph{easy} unipotent group over
$\bF_q$. Let $\rho$ be an irreducible representation of $G(\bF_q)$
over $\ql$. We must prove that the dimension of $\rho$ is a power of
$q$.

\subsubsection{Step 1}\label{sss:dimensions-step-1}
By the arguments above, there exists an admissible pair $(H,\cL)$
for $G$ such that $\rho$ is a direct summand of
$\Ind_{H(\bF_q)}^{G(\bF_q)}t_{\cL}$. Let $G'$ be the normalizer of
$(H,\cL)$ in $G$ (Definition \ref{d:adm-pair-mult-loc-sys}). We
first show, using Proposition \ref{p:heis-idemp}, that $G'$ is
connected and the dimension of $G'/H$ is even.

\mbr

Let $e'_\cL\in\sD_{G'}(G')$ be the extension of $\cL\tens\bK_H$ by
zero to $G'$, as before. From Proposition \ref{p:heis-idemp}(a) and
Lemma \ref{l:closed-idemp} below, it follows that the functor
$M\longmapsto e'_\cL*M$ is isomorphic to the identity functor on the
Hecke subcategory $e'_\cL\sD_{G'}(G')\subset\sD_{G'}(G')$. Now
Theorem \ref{t:induction-Hecke}(b) implies that the restriction
\[
\ig\bigl\lvert_{e'_\cL\sD_{G'}(G')} : e'_\cL\sD_{G'}(G') \rar{}
\sD_G(G)
\]
is a \emph{faithful} functor. By Lemma \ref{l:triv-twists-easy},
the twist automorphism of the identity functor on $\sD_G(G)$ is
trivial, because $G$ is easy. As $\ig$ is compatible with twists
(Proposition \ref{p:induction-twists}), the restriction of the twist
on $\sD_{G'}(G')$ to the Hecke subcategory $e'_\cL\sD_{G'}(G')$ is
trivial as well. These statements continue to hold after base change
from $\bF_q$ to $\bF$. By Proposition \ref{p:heis-idemp}(c), $G'$ is
connected, and the homomorphism
$\vp_\cL:(G'/H)_{perf}\rar{}(G'/H)^*_{perf}$ induced by $\cL$ is an
isomorphism. Since $\vp_\cL$ obviously arises from a skewsymmetric
bi-extension of $G'/H$ by $\qzp$ (cf.~Remark
\ref{r:varphi-skew-symmetric}; see also \S\ref{aa:construction-alt}
for the construction of $\vp_\cL$, and \S\ref{aa:symm-skew-symm} for
the terminology), it follows from Proposition
\ref{p:existence-lagr}(b) that $G'/H$ is even-dimensional.

\mbr

We now pause to state and prove the lemma used in the previous
paragraph.

\begin{lem}\label{l:closed-idemp}
Let $\cM$ be a monoidal category with monoidal bifunctor $\tens$ and
unit object $\e$, and consider an arrow $\e\rar{}e$ in $\cM$ that
becomes an isomorphism after tensoring with $e$ on the right. Then
the functor $X\longmapsto e\tens X$ is isomorphic to the identity
functor on the subcategory $e\cM\subset\cM$.
\end{lem}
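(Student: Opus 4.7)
The plan is entirely formal. Denote the given arrow by $u : \e \to e$. The hypothesis says that after tensoring on the right with $e$, the arrow $u \tens \id_e : \e \tens e \to e \tens e$ is an isomorphism, so (composing with the left unitor) we obtain an isomorphism $v : e \xrar{\simeq} e \tens e$. The candidate natural transformation is $\eta : \id_{\cM} \Rightarrow (e \tens -)$ defined by $\eta_X := u \tens \id_X$ (pre-composed with the inverse of the left unitor $X \iso \e \tens X$). Naturality of $\eta$ on the whole category $\cM$ is automatic from the bifunctoriality of $\tens$, so the only thing to check is that $\eta_X$ is an isomorphism whenever $X$ lies in the subcategory $e\cM$.

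Now $e\cM$ is by definition the essential image of the functor $Y \longmapsto e \tens Y$, so every $X \in e\cM$ admits an isomorphism $\phi : X \iso e \tens Y$ for some $Y \in \cM$. By naturality of $\eta$ applied to $\phi$, it suffices to show that $\eta_{e\tens Y}$ is an isomorphism for every $Y$. Using the associativity constraint we identify $\eta_{e \tens Y} = u \tens \id_{e \tens Y}$ with $(u \tens \id_e) \tens \id_Y$, and the latter is an isomorphism because $u \tens \id_e$ is, by hypothesis.

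No step here is a genuine obstacle; the proof is a one-line verification in a monoidal category, and the only subtlety worth being explicit about is the routine bookkeeping with the associator and unitor in order to rewrite $\eta_{e \tens Y}$ as the tensor product of $u \tens \id_e$ with $\id_Y$. In particular, the assumption that $\cM$ is weakly symmetric is not needed for the lemma as stated (it is only used elsewhere to identify $e\cM$ with the Hecke subcategory $e\cM e$).
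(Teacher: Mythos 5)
Your argument is correct and is essentially the paper's own proof, just spelled out in a bit more detail. The paper also defines the candidate isomorphism as $(u\tens\id_X)$ composed with the unitor, notes that for $X\in e\cM$ one has $X\cong e\tens Y$ (the paper phrases this as $X\cong e\tens X$, which is the special case $Y=X$), and then uses the associator to rewrite $u\tens\id_{e\tens Y}$ as $(u\tens\id_e)\tens\id_Y$ to conclude it is an isomorphism. Your closing remark — that weak symmetry is not needed, only the monoidal structure — is also consistent with how the lemma is stated and used in the paper.
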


\begin{rem}\label{r:closed-idemp}
Here, the notation is similar to that used in \S\ref{ss:weak-idemp},
namely, $e\cM$ is the essential image of the functor
$\cM\rar{}\cM$ given by $X\longmapsto e\tens X$. With the assumption
of the lemma, it is obvious that $e$ is a weak idempotent in $\cM$
in the sense of \S\ref{ss:weak-idemp}. However, the existence of an
arrow $\e\rar{}e$ satisfying the assumption of Lemma
\ref{l:closed-idemp} is a much stronger condition than merely
requiring $e$ to be a weak idempotent. (In \cite{intro}, arrows
$\e\rar{}e$ that become isomorphisms after tensoring with $e$ on
either side are called \emph{closed idempotents}.) In particular, we
do not expect the conclusion of Lemma \ref{l:closed-idemp} to hold
for an arbitrary weak idempotent $e\in\cM$.
\end{rem}

\begin{proof}{Proof of Lemma \ref{l:closed-idemp}}
We use the fact that $\tens$ is equipped with an
associativity constraint. If $X\in e\cM$, then $X\cong e\tens X$,
because $e\cong e\tens e$. Hence for any $X\in e\cM$, the arrow
$\e\rar{}e$ becomes an isomorphism after we apply the functor
$Y\longmapsto Y\tens X$ to it. This (together with the unit
constraint for $\tens$) gives us a functorial collection of
isomorphisms $X\rar{\simeq}e\tens X$ for all $X\in e\cM$, as
desired.
\end{proof}

\subsubsection{Step 2}\label{sss:dimensions-step-2} Now we complete the proof of Theorem
\ref{t:dim-reps-easy}. Consider the commutator morphism
$\com:G'\times G'\rar{}H$, $(g_1,g_2)\longmapsto g_1 g_2 g_1^{-1}
g_2^{-1}$, and form the pullback local system $\cL'=\com^*\cL$ on
$G'\times G'$. Since the map
$\vp_\cL:(G'/H)_{perf}\rar{}(G'/H)_{perf}^*$ induced by $\cL$ is an
isomorphism, it is easy to deduce from Proposition
\ref{p:Serre-Pontryagin} that the trace function $t_{\cL'} :
G'(\bF_q)\times G'(\bF_q) \rar{} \ql^\times$ descends to a perfect
pairing
\[
B_{\cL} : (G'/H)(\bF_q)\times (G'/H)(\bF_q) \rar{} \ql^\times,
\]
i.e., a bimultiplicative map that induces an isomorphism
\[
(G'/H)(\bF_q) \xrar{\ \ \simeq\ \ }
\Hom\bigl((G'/H)(\bF_q),\ql^\times\bigr).
\]
Next, the definition of $\vp_\cL$ implies that $B_\cL$
is equal to the map induced by the commutator pairing defined by the
character $t_\cL:H(\bF_q)\rar{}\ql$, namely,
\[
G'(\bF_q) \times G'(\bF_q) \xrar{\ \ \com\ \ } H(\bF_q) \xrar{\ \
t_\cL\ \ } \ql^\times.
\]
It is well known (see, e.g., the appendix on Heisenberg
representations in \cite{intro}) that the nondegeneracy of $B_\cL$
implies that $G'(\bF_q)$ has a unique irreducible representation,
call it $\rho'$, which acts on $H(\bF_q)$ by the scalar $t_\cL$.
Moreover, $\rho'$ has dimension $[G'(\bF_q):H(\bF_q)]^{1/2}$, which
is a power of $q$ by the first step of the proof. Furthermore, by
the Frobenius reciprocity, the irreducible representation of
$G(\bF_q)$ with which we started, $\rho$, is a direct summand of
$\Ind_{G'(\bF_q)}^{G(\bF_q)}\rho'$. However, the definition of an
admissible pair, together with Mackey's irreducibility criterion,
imply that $\Ind_{G'(\bF_q)}^{G(\bF_q)}\rho'$ is irreducible. Thus
$\rho\cong\Ind_{G'(\bF_q)}^{G(\bF_q)}\rho'$, whence (as $G'$ is
connected)
\[
\dim\rho = [G(\bF_q):G'(\bF_q)]\cdot\dim\rho = q^{\dim G-\dim
G'}\cdot [G'(\bF_q):H(\bF_q)]^{1/2},
\]
which is a power of $q$, completing the proof of Theorem
\ref{t:dim-reps-easy}.

\subsection{Proof of Proposition \ref{p:key}(a)}\label{ss:proof-p:key-a}
In this subsection $k$ is allowed to be an arbitrary field of
characteristic $p>0$. We will use the notation introduced at the
beginning of \S\ref{ss:when-isom}. In view of Corollary
\ref{c:induction-min-idemp}, it suffices to check that for every
$g\in G(\kbar)\setminus G'(\kbar)$, we have
$\overline{e'_\cL}*\de_g*\overline{e'_\cL}=0$. Since the notion of
an admissible pair is stable under base change from $k$ to $\kbar$,
we may as well \emph{assume that $k$ is algebraically closed}. Let
us fix $g\in G(k)\setminus G'(k)$.

\mbr

Consider the morphism
\[
m_g:H\times H\rar{}G, \qquad (h_1,h_2)\longmapsto h_1 g h_2.
\]
By definition,
$\overline{e'_\cL}*\de_g*\overline{e'_\cL}=m_{g!}(e_\cL\boxtimes
e_\cL)$. To complete the proof, it suffices to show that for every
$x\in G(k)$, the stalk $m_{g!}(e_\cL\boxtimes e_\cL)_x=0$. Up to
cohomological shift\footnote{Recall that $\bK_H\cong\ql[2\dim
H](\dim H)$, and Tate twists are trivial since $k=\kbar$.}, this is
the same as proving that $m_{g!}(\cL\boxtimes\cL)_x=0$. By the
proper base change theorem, this is equivalent to
$R\Ga_c(m_g^{-1}(x),\cL\boxtimes\cL)=0$.

\mbr

 Let us fix $x\in G(k)$. If $m_g^{-1}(x)=\varnothing$, there is
nothing to check. Otherwise, fix a $k$-point $(h_1,h_2)$ of
$m_g^{-1}(x)$. Then $m_g^{-1}(x)$ can be identified
with $H\cap gHg^{-1}$ via the map $w:H\cap gHg^{-1}\rar{} H\times H$
given by $w(h)=(h_1 h,g^{-1}h^{-1}gh_2)$.

\mbr

 The (isomorphism class of the) local system $\cL$ on $H$
is invariant under left and right translations (this follows
from the multiplicativity of $\cL$). Thus
\[
w^*(\cL\boxtimes\cL) \cong \cL\Bigl\lvert_{H\cap gHg^{-1}} \tens
^g\cL^\vee\Bigl\lvert_{H\cap gHg^{-1}},
\]
where $^g\cL$ denotes the multiplicative $\ql$-local system on
$gHg^{-1}$ obtained from $\cL$ by transport of structure via
$h\longmapsto ghg^{-1}$, and $^g\cL^\vee$ is its dual local system.

\mbr

 By the definition of admissibility, we are reduced to the
following well known
\begin{lem}\label{l:coho-nontriv-mult-loc-syst}
Let $A$ be an algebraic group over a field $k$, and let $\cL$ be a
multiplicative $\ql$-local system on $A$ such that
$\cL\bigl\lvert_{A^\circ}$ is nontrivial. Then $R\Ga_c(A,\cL)=0$.
\end{lem}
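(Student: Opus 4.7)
The proof is a standard d\'evissage reducing to classical vanishing results on elementary algebraic groups. First, since $R\Ga_c$ is compatible with flat base change, I would pass to an algebraic closure and assume $k=\kbar$. To reduce to $A$ connected, decompose $A$ as the disjoint union of the cosets $aA^\circ$; for each representative $a$, the translation isomorphism $t_a:A^\circ\iso aA^\circ$ together with the multiplicativity of $\cL$ yields $t_a^*\bigl(\cL\restr{aA^\circ}\bigr)\cong\cL_a\tens\cL\restr{A^\circ}$, hence $R\Ga_c(aA^\circ,\cL\restr{aA^\circ})\cong\cL_a\tens R\Ga_c(A^\circ,\cL\restr{A^\circ})$. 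Summing over cosets reduces the problem to the case in which $A$ is connected and $\cL$ is a nontrivial multiplicative local system on $A$.

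For $A$ connected I would proceed by induction on $\dim A$, using a proper positive-dimensional connected normal subgroup $N\subsetneq A$ at the inductive step. Such an $N$ exists unless $A$ is a minimal building block: in the noncommutative case one may take $N=[A,A]$, which is connected, normal and proper; in the commutative case one takes the maximal connected affine subgroup $L$ from the Chevalley decomposition $0\to L\to A\to X\to 0$, so that either $L\subsetneq A$ supplies $N$, or $A=X$ is an abelian variety. Let $p:A\to A/N$ be the quotient. If $\cL\restr{N}$ is nontrivial, the inductive hypothesis gives $R\Ga_c(N,\cL\restr{N})=0$; proper base change combined with translation and multiplicativity then shows $(Rp_!\cL)_{\bar a}\cong\cL_a\tens R\Ga_c(N,\cL\restr{N})=0$ for every geometric point $\bar a\in A/N$, so $Rp_!\cL=0$ and $R\Ga_c(A,\cL)=0$. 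If $\cL\restr{N}$ is trivial, the relation $\mu^*\cL\cong\cL\boxtimes\cL$ restricted to $A\times N$ shows that $\cL$ is invariant under right translation by $N$, and hence descends to $\cL\cong p^*\cL'$ for a still multiplicative and still nontrivial local system $\cL'$ on $A/N$; a projection formula and Leray argument then reduce the vanishing to $R\Ga_c(A/N,\cL')=0$, which holds by induction.

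The base cases are when $A$ admits no proper positive-dimensional connected normal subgroup: either $A$ is an abelian variety, where $R\Ga(A,\cL)=0$ for nontrivial multiplicative $\cL$ is the classical vanishing going back to Mumford (also immediate from Fourier--Mukai); or $A$ is one-dimensional, of type $\bG_m$ (Kummer sheaves) or $\bG_a$ in positive characteristic (Artin--Schreier sheaves), where the vanishing is standard. The chief technical point in the inductive step is the descent when $\cL\restr{N}$ is trivial: one must verify both that $\cL$ descends as a local system (using the $N$-equivariance supplied by multiplicativity) and that the descent $\cL'$ is again multiplicative (using the full faithfulness of $(p\times p)^*$ on local systems, since $p\times p$ is a smooth surjection with geometrically connected fibers). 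A secondary subtlety is the non-properness of $p$, which is handled by computing the stalks of $Rp_!\cL$ via proper base change rather than by invoking a Leray spectral sequence for $Rp_*$.
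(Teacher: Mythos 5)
Your overall strategy---reduce to connected $A$ by summing over cosets, then run a d\'evissage down to elementary building blocks---is workable for the unipotent groups to which the paper actually applies this lemma, but as a proof of the statement as given it has a genuine gap at the inductive step. You assert that a noncommutative connected $A$ always admits $N=[A,A]$ as a \emph{proper} positive-dimensional connected normal subgroup. This fails exactly when $A$ is perfect, e.g.\ semisimple: for $A=PGL_n$ with $p\nmid n$ one has $[A,A]=A$, and yet the central isogeny $SL_n\rar{}PGL_n$ is a nontrivial connected $\mu_n$-covering, so pushing it out along a nontrivial character of $\mu_n(\kbar)$ produces a nontrivial multiplicative $\ql$-local system on $A$. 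The statement is therefore not vacuous for such $A$, none of your base cases (abelian varieties, $\bG_a$, $\bG_m$) covers it, and your induction never reaches it. The remainder of the d\'evissage (the coset reduction via $t_a^*(\cL\restr{aA^\circ})\cong\cL_a\tens\cL\restr{A^\circ}$, the dichotomy on $\cL\restr{N}$, and the descent to $A/N$, where one must also note that $Rp_!\ql$ has constant cohomology sheaves because $N$ is connected) is sound, if laborious.

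The paper's proof sidesteps every case distinction with one application of proper base change. After the same reduction to connected $A$, consider the square with $\mu:A\times A\rar{}A$ on top, $\pr_1:A\times A\rar{}A$ on the left, and the structure morphisms $\pi:A\rar{}\Spec k$ on the bottom and right; it is cartesian via $(x,y)\mapsto(x,xy)$. Proper base change and multiplicativity give $\pi^*R\Ga_c(A,\cL)\cong\pr_{1!}\mu^*\cL\cong\pr_{1!}(\cL\boxtimes\cL)\cong R\Ga_c(A,\cL)\tens\cL$, so a constant complex is isomorphic to its twist by $\cL$; comparing cohomology sheaves, the nontriviality of the rank-one local system $\cL$ forces $R\Ga_c(A,\cL)=0$. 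This argument is uniform in $A$ (no classification of building blocks, no base cases) and in particular covers the perfect groups your induction misses. I would recommend replacing the d\'evissage by this two-line argument, or, at minimum, either restricting your statement to groups admitting the required normal subgroups or supplying a separate treatment of the perfect case.
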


\begin{proof}{Proof}
It suffices to show that $f_!\cL=0$, where $f:A\rar{}\pi_0(A)$ is
the natural quotient morphism and $\pi_0(A)=A/A^\circ$. Since $\cL$
is multiplicative, it in turn suffices to show that
$R\Ga_c(A^\circ,\cL\bigl\lvert_{A^\circ})=0$. Thus we may assume,
without loss of generality, that $A$ is connected and $\cL$ is as
before.

\mbr

 The following diagram is clearly cartesian:
\[
\xymatrix{
  A\times A \ar[rr]^\mu \ar[d]_{\pr_1} & & A \ar[d]^\pi \\
  A \ar[rr]^\pi & & \Spec k
   }
\]
where $\pi:A\rar{}\Spec k$ is the structure morphism and
$\pr_1:A\times A\rar{}A$ is the projection onto the first factor. By
the proper base change theorem,
\[
\pi^*R\Ga_c(A,\cL)\cong \pr_{1!}\mu^*\cL\cong
\pr_{1!}(\cL\boxtimes\cL)\cong R\Ga_c(A,\cL)\tens\cL.
\]
Since $\cL$ is a nontrivial local system on $A$, this clearly forces
$R\Ga_c(A,\cL)=0$.
\end{proof}

\subsection{Proof of Proposition \ref{p:key}(b)}\label{ss:proof-p:key-b}
We now take $k=\bF_q$ and recall that $G'\subset G$ denotes the normalizer of the given admissible pair $(H,\cL)$ and $\sC$ denotes the geometric conjugacy class of $(H,\cL)$. We can identify $\sC$ with $(G/G')(\bF_q)$. Note also that $H^1(\bF_q,G)$ is trivial because $G$ is connected, so we can choose representatives $\bigl\{(H_\al,\cL_\al)\bigr\}_{\al\in H^1(\bF_q,G')}$ of the $G(\bF_q)$-orbits in $\sC$ such that the normalizers of $(H_\al,\cL_\al)$ are the inner forms $G'^\al\subset G$ of $G'$ (see Definition \ref{d:inner-form-subgroup} and Proposition \ref{p:alternative-inner}).

\mbr

By Definition \ref{d:adm-pair-L-packet}, the set $L(\sC)$ consists of all irreducible representations $\rho$ of $G(\bF_q)$ such that the function\footnote{Here we view $t_{\cL_\al}$ as a conjugation-invariant function on the finite group $G'^\al(\bF_q)$.} $t_{\cL_\al}$ acts nontrivially in the representation $\rho\bigl\lvert_{G'^{\al}(\bF_q)}$ for some $\al\in H^1(\bF_q,G')$. On the other hand,
Proposition \ref{p:induction-sheaves-functions} shows that $t_{e_{H,\cL}}=t_{\ig e'_\cL}$ is equal to the
character of the representation $\bigoplus\limits_{\al\in H^1(\bF_q,G')}
\Ind_{G'^{\al}(\bF_q)}^{G(\bF_q)} t_{\cL_\al}$ of $G(\bF_q)$. In view of Frobenius reciprocity and the definition of $L(e_{H,\cL})$, this implies that $L(\sC)=L(e_{H,\cL})$ and completes the proof of Proposition \ref{p:key}(b).



\appendix

\section*{Appendix A: Serre duality and bi-extensions}

\setcounter{section}{1}

In this appendix, which can (for the most part) be read
independently of the rest of the paper, we recall the classical
Serre duality theory \cite{serre,begueri} for connected commutative
unipotent groups, explain how to extend this theory to the case
where the commutativity assumption is dropped (following a
suggestion of Drinfeld), and establish a number of technical results
on Serre duality and skewsymmetric bi-extensions that are used in
the main body of the text. Our presentation closely follows
\cite{drinfeld-lectures}, and we verify a few of the statements
conjectured there.

\subsection{Prologue}\label{aa:app-prologue} If $G$ is
an algebraic group over a field $k$ and $\ell$ is a prime different
from $\operatorname{char}k$, we recall that a $\ql$-local system
$\cL$ on $G$ is said to be \emph{multiplicative} if
$\mu^*(\cL)\cong\cL\boxtimes\cL$, where $\mu:G\times_k G\rar{}G$ is
the multiplication morphism. This notion is a natural geometrization
of the notion of a homomorphism $\Ga\rar{}\ql^\times$, where $\Ga$ is an
abstract group. In the purely algebraic setting, the set of all such
homomorphisms is itself an abelian group, and this observation is
useful in the character theory of finite groups. It is natural to
ask whether this statement has a geometric analogue.

\mbr

In particular, we would like to construct a ``moduli space'' of
multiplicative $\ql$-local systems on $G$. We assume that $G$ is
connected: otherwise local systems on $G$ have nontrivial
automorphisms, and there is no convenient way to ``rigidify'' them. Moreover, if we want this moduli space to be
something resembling an algebraic group as well, it is not hard to
see \cite{intro} that $G$ must be unipotent. Next, if $G$ is a
unipotent group over a field of characteristic $0$, then every local
system on $G$ is constant, so we will assume that
$\operatorname{char}k=p>0$.

\mbr

In this case fix an injection of groups
$\psi:\qzp\into\ql^\times$. It identifies $\qzp$ with the group of
roots of unity in $\ql^\times$ whose order is a power of $p$, and
one easily checks (see Lemma
\ref{l:mult-loc-sys-unip-centr-ext-qzp}) that every multiplicative
$\ql$-local system on $G$ comes from a multiplicative $\qzp$-torsor
on $G$ (defined in an obvious manner). This observation allows us to
have a more natural theory which is independent of $\ell$.

\mbr

Next, even for connected $G$, multiplicative $\qzp$-torsors on $G$
are still not rigid, because being multiplicative is only a
property. To rigidify the situation we must look at multiplicative
$\qzp$-torsors $\cE$ on $G$ equipped with a trivialization of the
pullback $1^*\cE$, where $1:\Spec k\rar{}G$ is the multiplicative
identity. Giving such data is equivalent to giving a central
extension of $G$ by the discrete group $\qzp$ in the category of
group schemes over $k$. This is proved in \cite{masoud}. We find it
more natural, and technically much more convenient, to work with
central extensions of group schemes rather than multiplicative local
systems or torsors. Therefore the results of this appendix will
usually be phrased in the language of (bi-)extensions.

\mbr

Finally, we recall (see Remark \ref{r:need-perfect} below) that the
``moduli space'' of central extensions of $G$ by $\qzp$ can only be
canonically defined as a \emph{perfect} scheme.

\subsection{Organization}\label{aa:app-organization} The first
half of the appendix is devoted to the Serre duality\footnote{Not to
be confused with Serre duality in the theory of cohomology of
coherent sheaves.} for connected \emph{commutative} perfect
unipotent groups, the idea of which goes back to \cite{serre}. In
\S\ref{aa:perfect} we provide some background on perfect schemes,
perfect group schemes, and the perfectization functor, following
\cite{greenberg,drinfeld-lectures}. In \S\ref{aa:perfect-unipotent}
we define perfect quasi-algebraic groups and perfect unipotent
groups. In \S\ref{aa:Serre-duality} we recall the main statement of
the classical Serre duality theory following \cite{begueri}. In
\S\ref{aa:bi-ext} we recall Mumford's notion \cite{mumford} of a
bi-extension, and in \S\ref{aa:bi-ext-bi-mult} we relate it to the
notion of a ``bimultiplicative torsor''. In
\S\ref{aa:Serre-Pontryagin} we relate Serre duality for connected
commutative unipotent groups over finite fields to Pontryagin
duality for finite $p$-groups. In \S\ref{aa:pairing-bi-ext} we prove
a result on bi-extensions of commutative connected unipotent groups
which is used in the study of admissible pairs. Finally, in
\S\ref{aa:lagrangian} we prove the existence of ``almost
Lagrangian'' subgroups with respect to a skewsymmetric bi-extension
(defined in \S\ref{aa:symm-skew-symm}) of a commutative unipotent
group by $\qzp$, under suitable additional assumptions,
cf.~\cite{drinfeld-lectures}.

\mbr

The second half of the appendix discusses Serre duality for
\emph{noncommutative} groups. In \S\ref{aa:noncomm-Serre} we define
the Serre dual of any connected perfect unipotent group,
and in
\S\S\ref{aa:construction-alt}--\ref{aa:proof-l:vanishing-ext-2} we
establish the geometric analogues of certain standard constructions
and results on $1$-dimensional characters of abstract groups.

\subsection{Perfect schemes and group schemes}\label{aa:perfect} Fix a prime $p$. Let us
recall that a scheme $S$ in characteristic $p$, i.e., such that $p$
annihilates the structure sheaf $\cO_S$ of $S$, is said to be
\emph{perfect} if the morphism $\cO_S\rar{}\cO_S$, given by
$f\longmapsto f^p$ on the local sections of $\cO_S$, is an
isomorphism of sheaves. In particular, a commutative ring $A$ of
characteristic $p$ is perfect \cite{greenberg} if and only if $\Spec
A$ is a perfect scheme.

\mbr

Let $\mathfrak{Sch}_p$ denote the category of all $\bF_p$-schemes,
and let $\mathfrak{Perf}_p$ be the full subcategory of
$\mathfrak{Sch}_p$ formed by perfect schemes. The inclusion functor
$\mathfrak{Perf}_p\into\mathfrak{Sch}_p$ has a right adjoint which
we will call the \emph{perfectization functor}, and will denote by
$X\longmapsto X_{per\!f}$. We note that this functor was constructed
by M.J.~Greenberg in \cite{greenberg}, who denotes it by
$X\longmapsto X^{1/p^{\infty}}$, and calls $X^{1/p^{\infty}}$ the
\emph{perfect closure} of $X$.

\mbr

Next let $k$ be a perfect field of characteristic $p$, let
$\mathfrak{Sch}_k$ be the category of $k$-schemes and
$\mathfrak{Perf}_k$ the full subcategory consisting of perfect
schemes. The natural morphism $(\Spec k)_{per\!f}\rar{}\Spec k$ is
an isomorphism, so for any $X\in\mathfrak{Sch}_k$, the
perfectization $X_{per\!f}$ is automatically a scheme over $k$ (if
$k$ is not perfect, then $X_{per\!f}$ is a scheme over the perfect
closure of $k$). Hence $X\longmapsto X_{per\!f}$ can be upgraded to
a functor $\mathfrak{Sch}_k\rar{}\mathfrak{Perf}_k$, which is also
right adjoint to the natural inclusion.

\begin{rem}\label{r:perfect-group-scheme}
If $A$ and $B$ are perfect $k$-algebras, so is their tensor product
$A\tens_k B$. Indeed the $p$-th power homomorphism $A\tens_k
B\rar{}A\tens_k B$, $x\longmapsto x^p$, is the tensor product of the
corresponding homomorphisms $A\rar{}A$ and $B\rar{}B$. It follows
that the product of two perfect schemes over $k$ is perfect, so the
inclusion functor $\mathfrak{Perf}_k\into\mathfrak{Sch}_k$ preserves
products. Hence a group object in the category $\mathfrak{Perf}_k$
is automatically a group scheme over $k$ in the usual sense, which
is perfect as a scheme. In particular, the term ``perfect group
scheme over $k$'' is unambiguous.
\end{rem}

\begin{rem}
On the other hand, the perfectization functor
$\mathfrak{Sch}_k\rar{}\mathfrak{Perf}_k$ preserves limits by
abstract nonsense (because it has a left adjoint). In particular, if
$G$ is a group scheme over $k$, then $G_{per\!f}$ becomes a perfect
group scheme over $k$.
\end{rem}

\subsection{Perfect unipotent groups}\label{aa:perfect-unipotent}
Let us fix a perfect field $k$ of characteristic $p>0$. A perfect
scheme $Y$ over $k$ is said to be \emph{of quasi-finite type over
$k$} if it is isomorphic to $X_{per\!f}$ for a scheme $X$ of finite
type over $k$. We define a \emph{quasi-algebraic group} over $k$ to
be a perfect group scheme such that the underlying scheme is of quasi-finite type over $k$.

\mbr

The next result is not strictly necessary for what follows, but we
find it to be at least psychologically helpful.

\begin{lem}\label{l:quasi-algebraic-perfectization}
If $G$ is an affine quasi-algebraic group over $k$, then $G$ is
isomorphic to the perfectization
of an affine algebraic group over $k$.
\end{lem}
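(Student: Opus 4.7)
The plan is to exploit the classical fact that every commutative Hopf algebra over a field $k$ is the directed union of its finitely generated Hopf subalgebras (equivalently, every affine group scheme over $k$ is a cofiltered limit of affine algebraic groups; see e.g.~Waterhouse, \emph{Introduction to Affine Group Schemes}, Ch.~3). Applied to the coordinate Hopf algebra $R = \cO(G)$ of our affine perfect quasi-algebraic group $G$, this yields a directed family $\{R_i\}$ of finitely generated Hopf subalgebras of $R$ over $k$ with $R = \bigcup_i R_i$.

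Next, I use the hypothesis that $R \cong B_{per\!f}$ for some finitely generated $k$-algebra $B$. Since $R$ is perfect and hence reduced, the canonical map $B \to R$ is injective, so I can regard $B$ as a $k$-subalgebra of $R$. Finite generation of $B$ together with the directedness of $\{R_i\}$ furnishes an index $i_0$ with $B \subset R_{i_0} \subset R$. I claim that $H := \Spec R_{i_0}$ is the desired model. Indeed, applying $(-)_{per\!f}$ to this chain of inclusions and using that perfectization preserves inclusions between reduced rings (it is a filtered colimit of injective Frobenius maps, and $R_{i_0}$ is reduced as a subring of the reduced ring $R$), I obtain
\[
R = B_{per\!f} \,\subset\, (R_{i_0})_{per\!f} \,\subset\, R_{per\!f} = R,
\]
so $(R_{i_0})_{per\!f} = R$. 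Because $R_{i_0}$ is a Hopf subalgebra of $R$, the induced Hopf structure on $(R_{i_0})_{per\!f}$ coincides (by uniqueness of extensions to the perfectization) with the restriction of the Hopf structure on $R$. Hence $H_{per\!f} \cong G$ as affine group schemes over $k$, as desired.

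The main obstacle is the classical descent fact quoted in the first paragraph. The standard proof begins with an arbitrary finitely generated subalgebra $A_0 \subset R$, uses the fundamental theorem on coalgebras to enlarge $A_0$ to a finite-dimensional subcoalgebra of $R$, then closes under multiplication and the antipode, verifying that this closure process terminates at a finitely generated Hopf subalgebra. Without this input, one would be forced to argue by directly descending the Hopf structure maps $\Delta$, $\epsilon$, $S$ of $R$ through a common Frobenius iterate to some finitely generated $B' \subset R$ with $B'_{per\!f} = R$; this is considerably more tedious, because descending $\Delta$ on $B'$ may eject $S$ from $B'$ and one must alternate enlargements of $B'$ (and verify group axioms at a finite stage) before the Hopf structure fully descends.
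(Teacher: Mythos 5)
Your proof is correct and follows essentially the same path as the paper's: both start from a finitely generated $B$ with $B_{per\!f}\cong\cO(G)$, invoke the fundamental theorem on coalgebras to place $B$ inside a finitely generated Hopf subalgebra $B'$, and conclude $(B')_{per\!f}=\cO(G)$, so $\Spec B'$ is the desired algebraic model. (Minor polish: you should replace $B$ by its image in $R$, i.e.\ by $B/\sqrt{0}$, before claiming $B\hookrightarrow R$ — reducedness of $R$ alone does not give injectivity of $B\rar{}B_{per\!f}$ — but this does not affect the argument.)
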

\begin{proof}{Proof}
In view of Remark \ref{r:perfect-group-scheme}, we have $G=\Spec A$,
where $A$ is a commutative Hopf algebra over $k$ which is perfect as
a ring. By assumption, there exists a finitely generated
$k$-subalgebra $B\subset A$ such that $A$ is the perfect closure
\cite{greenberg} of $B$. Every coalgebra over a field is the
filtered union of its finite dimensional sub-coalgebras, so there
is a finitely generated Hopf subalgebra $B'\subset A$ such that
$B\subset B'$. Then $A$ is the perfect closure of $B'$
as well, and $G'=\Spec B'$ is an affine algebraic group over $k$
(because $A$ is reduced), and $G\cong G'_{per\!f}$, as desired.
\end{proof}

\begin{defin}
A \emph{perfect unipotent group} over $k$ is a perfect group scheme
over $k$ which is isomorphic to the perfectization of a unipotent
algebraic group over $k$.
\end{defin}

The two basic examples of perfect unipotent groups over $k$ are the
discrete group $\bZ/p\bZ$ and the perfectization $\bG_{a,\,per\!f}$
of the additive group $\bG_a$. If $k=\overline{k}$, then
every connected perfect unipotent group over $k$ has a finite
filtration by closed normal subgroups with successive subquotients
isomorphic to $\gap$.

\mbr

We denote by $\cpu_k$ the category of all commutative perfect
unipotent groups over $k$, and by $\cpuc_k\subset\cpu_k$ the full
subcategory formed by connected group schemes. It is not hard to see
that $\cpu_k$ is an abelian category; in particular, for a morphism
$f:G\rar{}H$ in $\cpu_k$, we can talk about the kernel, $\Ker f$, of
$f$, and we have the notion of an exact sequence in $\cpu_k$.
Moreover, $\cpuc_k$ is an exact subcategory of $\cpu_k$.

\subsection{Classical Serre duality}\label{aa:Serre-duality} We
continue to work over a fixed perfect field $k$ of characteristic
$p>0$. If $G\in\cpuc_k$, we define a contravariant functor
\begin{equation}\label{e:serre-dual}
G^* : \mathfrak{Sch}_k \rar{} \bigl\{ \text{abelian groups} \bigr\},
\qquad S\longmapsto \Ext^1_S(G\times_k S,\bQ_p/\bZ_p),
\end{equation}
where $\Ext^1$ denotes the first $\Ext$ group computed in the
category of commutative group schemes over $S$, and $\bQ_p/\bZ_p$ is
viewed as a discrete group scheme over $S$. We call this functor the
\emph{Serre dual} of the group $G$.

\mbr

The idea of this construction goes back to Serre's article
\cite{serre}. However, in the form needed for our purposes, the
duality theory appears to be due to L.~Begueri:
\begin{thm}[\cite{begueri}]\label{t:serre-duality}
The restriction of the functor $G^*$ to the subcategory
$\mathfrak{Perf}_k$ is representable by an object of $\cpuc_k$,
which is also denoted by $G^*$. Moreover, the functor $G\longmapsto
G^*$ is an exact anti-auto-equivalence of the category $\cpuc_k$.
\end{thm}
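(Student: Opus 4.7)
The plan is to prove the theorem by a devissage reducing everything to an explicit computation for the single building block $\gap$. First I would replace $k$ by its algebraic closure (recovering the general case by Galois descent at the end), so as to use the fact that every object $G\in\cpuc_k$ admits a finite filtration $0=G_0\subset G_1\subset\cdots\subset G_n=G$ by closed subgroups in $\cpuc_k$ with each successive quotient $G_i/G_{i-1}$ isomorphic to $\gap$. This filtration, combined with the long exact sequences of $\Ext^\bullet(-,\qzp)$ associated to short exact sequences in $\cpuc_k$, is the engine that drives every step.

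The core computation is the identification of $\gap^*$. I would produce explicit extensions from the Artin--Schreier--Witt sequences
\begin{equation*}
0 \rar{} \bZ/p^n\bZ \rar{} W_n \xrar{F-1} W_n \rar{} 0
\end{equation*}
(restricted to perfect schemes, so that $F-1$ becomes surjective in the fppf topology on $\mathfrak{Perf}_k$), and then assemble them by passing to the projective limit in $n$. This should identify $\gap^*$ with the group of unipotent Witt covectors, a perfect connected commutative unipotent group equipped with a canonical self-pairing with $\gap$ valued in $\qzp$. Along the way one checks that $\Hom_{\cpuc_k}(\gap,\qzp)=0$ (since $\qzp$ is discrete and $\gap$ is connected) and that $\Ext^i_{\cpuc_k}(\gap,\qzp)=0$ for $i\geq 2$, using the two-term resolution $\gap\xrar{F-1}\gap$ of the relevant quotient and the fact that $F-1$ is fppf-locally surjective on perfect schemes.

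With this in hand, representability and exactness on all of $\cpuc_k$ follow by devissage along the filtration above. Given $0\to G'\to G\to G''\to 0$ in $\cpuc_k$, the long exact sequence of $\Ext^\bullet(-,\qzp)$, combined with the vanishing of $\Hom$ and of $\Ext^2$ on $\gap$ (hence on every $G\in\cpuc_k$ by induction), yields a short exact sequence
\begin{equation*}
0 \rar{} (G'')^* \rar{} G^* \rar{} (G')^* \rar{} 0
\end{equation*}
of fppf sheaves on $\mathfrak{Perf}_k$; representability then propagates through the filtration since extensions of perfect connected commutative unipotent group schemes in $\cpuc_k$ by the same are again representable in $\cpuc_k$. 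Exactness of $G\mapsto G^*$ is built into this argument.

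Finally, for the anti-auto-equivalence, I would construct a natural biduality morphism $G\rar{}G^{**}$ from the tautological pairing $G\times G^*\rar{}B(\qzp)$ and check it is an isomorphism. By the five lemma and one more devissage this reduces to the case $G=\gap$, where it follows from the explicit Witt-covector model of $\gap^*$ and the self-duality of the pairing built from Witt vectors. The main obstacle is the bookkeeping at the heart of the devissage: one must verify carefully that the relevant $\Ext^2$ groups in $\cpuc_k$ vanish and that the fppf $\Ext^1$ sheaf on $\mathfrak{Perf}_k$ agrees with the $\Ext^1$ in $\cpuc_k$ at each stage, so that representability and the long exact sequence can be used in tandem without slipping out of the category.
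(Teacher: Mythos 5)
The paper does not prove this theorem: it is cited directly to Begueri [Be80] and then used as a black box throughout the appendix (for instance, Proposition \ref{p:serre-noncomm} reduces the noncommutative case to it). So there is no in-paper argument to compare your sketch against, and the review below is of the sketch on its own terms.

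Your devissage strategy --- reduce to $\kbar$, filter $G$ by copies of $\gap$, propagate representability and exactness through long exact sequences of $\Ext(-,\qzp)$, and finish biduality by the five lemma --- is reasonable and is in the spirit of Begueri's treatment. But one concrete claim in it is wrong: the Serre dual of $\gap$ is $\gap$ itself, not the ind-group of unipotent Witt covectors. The covector group is the dual of the full pro-algebraic Witt group in Serre's proalgebraic framework; it is an ind-object and hence not in $\cpuc_k$, so it cannot represent $\gap^*$. The paper records the correct identification in \S\ref{aa:lagrangian}: $\gap^* \cong \gap$, in such a way that the induced anti-involution of $\End(\gap)\cong k\{\tau,\tau^{-1}\}$ sends $\tau \mapsto \tau^{-1}$ and fixes $k$. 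Your Artin--Schreier computation, carried out on the perfect site where $F-1$ is an fppf epimorphism, should produce exactly $\gap$ (a class in $\Ext^1_S(\gap\times S,\bZ/p\bZ)$ is detected by an element of $\gap(S)$), so the covector detour is both unnecessary and incorrect. Apart from this, you rightly identify the two genuine pressure points: comparing the fppf $\Ext^1$ sheaf on $\mathfrak{Perf}_k$ (which is what defines $G^*$) with $\Ext^1$ computed in the abelian category $\cpu_k$, so that the long exact sequence is a sequence of representable functors; and killing the higher $\Ext$ groups. For the latter, the paper uses exactly the tool you mention --- Breen's theorem that $\Ext^j(\bG_a,\bG_a)=0$ for $j\geq 1$ --- in Lemma \ref{l:vanishing-ext-2}, though it does so in the service of Proposition \ref{p:lifts} rather than of Theorem \ref{t:serre-duality}.
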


\begin{rem}\label{r:need-perfect}
If $G$ is a connected commutative unipotent group over $k$ in the
usual sense, the natural morphism $G^*\rar{}(G_{per\!f})^*$ is an
isomorphism of functors on $\mathfrak{Sch}_k$. On the other hand, as
explained, e.g., in \cite{intro}, the functor $G^*$ is not
representable\footnote{However, it is ind-representable: see the
appendix on Serre duality in \cite{intro}.} on the whole category
$\mathfrak{Sch}_k$ already for $G=\bG_a$. This is the reason for
working with perfect group schemes in the context of Serre duality.
\end{rem}

\subsection{Bi-extensions}\label{aa:bi-ext} The notion of a
bi-extension of group schemes was discovered by Mumford in
\cite{mumford}, and later generalized by Grothendieck in SGA 7-1. It
can be formulated in several equivalent ways; the following approach
will be convenient for us. Let $G_1$, $G_2$ be group schemes over a
field $k$, and let $A$ be a commutative group scheme over $k$. A
\emph{bi-extension} of $(G_1,G_2)$ by $A$ is a scheme $E$ over $k$,
equipped with an action of $A$ and a morphism
$\pi:E\rar{}G_1\times_k G_2$ which makes $E$ an $A$-torsor over
$G_1\times_k G_2$, together with the following additional
structures. \sbr
\begin{enumerate}[(a)]
\item Choices of sections of $\pi$ along $\{1\}\times G_2$ and $G_1\times\{1\}$, by
means of which the ``slices'' $\pi^{-1}\bigl(\{1\}\times G_2\bigr)$
and $\pi^{-1}\bigl(G_1\times\{1\}\bigr)$ will be identified with
$A\times_k G_2$ and $G_1\times_k A$, respectively, where $1$ denotes
the unit in $G_1$ or $G_2$.
 \sbr
\item A morphism $\bullet_1:E\times_{G_2}E\rar{}E$ which makes $E$ a
group scheme over $G_2$ and makes $\pi$ a central extension of
$G_1\times_k G_2$, viewed as a group scheme over $G_2$, by
$A\times_k G_2$, in a way compatible with the identification
$A\times_k G_2\cong\pi^{-1}\bigl(\{1\}\times G_2\bigr)$.
 \sbr
\item A morphism $\bullet_2:E\times_{G_1}E\rar{}E$ which makes $E$ a
group scheme over $G_1$ and makes $\pi$ a central extension of
$G_1\times_k G_2$, viewed as a group scheme over $G_1$, by
$G_1\times_k A$, in a way compatible with the identification
$G_1\times_k A\cong\pi^{-1}\bigl(G_1\times\{1\}\bigr)$. \sbr
\end{enumerate}
These data are required to satisfy the following compatibility
condition: if $T$ is any $k$-scheme and
$e_{11},e_{12},e_{21},e_{22}\in E(T)=\Hom_{k-schemes}(T,E)$, then
\[
(e_{11}\bullet_2 e_{12}) \bullet_1 (e_{21}\bullet_2 e_{22}) =
(e_{11}\bullet_1 e_{21}) \bullet_2 (e_{12}\bullet_1 e_{22})
\]
whenever both sides of this equality are defined, i.e., whenever
\[
\pi(e_{11})=(g_1,g_2), \quad \pi(e_{12})=(g_1,g_2'), \quad
\pi(e_{21})=(g_1',g_2), \quad \pi(e_{22})=(g_1',g_2')
\]
for some $g_1,g_1'\in G_1(T)$ and $g_2,g_2'\in G_2(T)$.

\begin{defin}\label{d:trivialization-bi-ext}
The notion of an \emph{isomorphism} of bi-extensions is defined in
the obvious way, and bi-extensions of $(G_1,G_2)$ by $A$ form a
groupoid which we denote by $\biext(G_1,G_2;A)$. (It is even a
strictly commutative Picard groupoid, see \cite{drinfeld-lectures},
but we will not use this fact.) A \emph{trivial} bi-extension is one
which is isomorphic to $A\times_k G_1\times_k G_2$ equipped with the
obvious $A$-action, the natural projection $A\times_k G_1\times_k
G_2\rar{}G_1\times_k G_2$, and the obvious partial group laws coming
from the group law on $A$. Equivalently, a bi-extension $E$ as above
is trivial if it has a \emph{trivialization}, i.e., a
bimultiplicative section $\sg:G_1\times_k G_2\rar{}E$ of $\pi$,
which means that for any $k$-scheme $T$ and any choice of
$g_1,g_1'\in G_1(T)$ and $g_2,g_2'\in G_2(T)$, we have
$\sg(g_1g_1',g_2)=\sg(g_1,g_2)\bullet_1\sg(g_1',g_2)$ and
$\sg(g_1,g_2g_2')=\sg(g_1,g_2)\bullet_2\sg(g_1,g_2')$.
\end{defin}

\begin{rem}[\cite{drinfeld-lectures}]\label{r:trivializations-bi-ext}
Bi-extensions are to
central extensions as bimultiplicative maps are to homomorphisms,
where, for abstract groups $\Ga_1,\Ga_2,A$, we say that a map
$\be:\Ga_1\times\Ga_2\rar{}A$ is bimultiplicative if $\be(\ga_1,-)$
is a homomorphism for every fixed $\ga_1\in\Ga_1$, and
$\be(-,\ga_2)$ is a homomorphism for every fixed $\ga_2\in\Ga_2$.
This analogy manifests itself in many different ways.

\mbr

For instance,
if $G$ is a group scheme over $k$ and $A$ is a commutative group
scheme over $k$, then the group of automorphisms of any central
extension of $G$ by $A$ is naturally isomorphic to the group
$\Hom(G,A)$ of morphisms of $k$-group schemes $G\rar{}A$.
Consequently, for any trivial central extension of $G$ by $A$, its
trivializations form a torsor under $\Hom(G,A)$. Similarly, if $G_1$
and $G_2$ are group schemes over $k$, then the group of
automorphisms of any bi-extension of $(G_1,G_2)$ by $A$ is naturally
isomorphic to the group of bi-multiplicative morphisms of
$k$-schemes $G_1\times_k G_2\rar{}A$, and hence trivializations of
any trivial bi-extension of $(G_1,G_2)$ by $A$ form a torsor under
the latter group.
\end{rem}

\begin{cor}\label{c:bi-ext-connected}
If $A$ is a discrete commutative group and $G_1$, $G_2$ are group
schemes over $k$, at least one of which is a connected algebraic or
quasi-algebraic group, then bi-extensions of $(G_1,G_2)$ by $A$ have
no non-trivial automorphisms. In particular, every such bi-extension
has at most one trivialization.
\end{cor}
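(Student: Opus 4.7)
My plan is to use Remark \ref{r:trivializations-bi-ext}, which reduces both assertions to a single claim. Namely, that remark identifies the automorphism group of any bi-extension of $(G_1,G_2)$ by $A$ with the group $B=B(G_1,G_2;A)$ of bi-multiplicative morphisms $\beta:G_1\times_k G_2\to A$, and for any trivial bi-extension identifies the set of its trivializations with a $B$-torsor. It therefore suffices to prove that $B$ is trivial when $A$ is discrete and $G_1$ is a connected algebraic or quasi-algebraic group over $k$; by the evident symmetry $G_1\leftrightarrow G_2$ of the notions of bi-extension and bi-multiplicative morphism, this covers both cases of the hypothesis.

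The first step is that such a $G_1$ is automatically \emph{geometrically} connected. In the algebraic case, this is the standard observation that the neutral component of $G_1\tens_k\kbar$ is $\Gal(\kbar/k)$-stable, since it contains the $k$-rational unit, so it descends to a connected component of $G_1$ and must therefore equal all of $G_1\tens_k\kbar$. The quasi-algebraic case $G_1\cong X_{per\!f}$ reduces to the algebraic one because $X\to X_{per\!f}$ is a universal homeomorphism and perfectization commutes with base change to the perfect field $\kbar$, so connectedness of $G_1$ transports to $X$ and back.

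Granted geometric connectedness, the connected components of $G_1\times_k G_2$ are precisely the subsets of the form $G_1\times_k C$ for $C$ a connected component of $G_2$. Since $A$ is discrete, $\beta$ is locally constant on the underlying space of $G_1\times_k G_2$; restricted to each such $G_1\times_k C$ it factors through a single point of $A$, so $\beta$ is determined by its composition $\bar\beta:=\beta\circ(1\times\id_{G_2}):G_2\to A$ with the unit section of $G_1$. Finally, bi-multiplicativity in the first variable yields $\beta(1,g_2)=\beta(1\cdot 1,g_2)=\beta(1,g_2)^2$ on $T$-points, forcing $\beta(1,g_2)=1_A$; hence $\bar\beta\equiv 1_A$, and so $\beta\equiv 1_A$. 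I expect the only real subtlety to be the verification of geometric connectedness in the quasi-algebraic case; once this is in place, everything else is a clean application of the dictionary of Remark \ref{r:trivializations-bi-ext}.
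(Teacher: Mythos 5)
Your proof is correct and follows exactly the route the paper leaves implicit: the corollary is stated as an immediate consequence of Remark \ref{r:trivializations-bi-ext}, reducing everything to the triviality of bi-multiplicative morphisms $G_1\times_k G_2\to A$ into a discrete group when one factor is connected, which is what you prove. The only cosmetic points are a typo ($X_{per\!f}\to X$, not the reverse) and that for the quasi-algebraic case one can observe more directly that $X_{per\!f}\to X$ and its base change to $\kbar$ are homeomorphisms, so connectedness on both sides transfers without separately invoking that perfectization commutes with base change.
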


\begin{defin}\label{d:comm-bi-ext}
A bi-extension $E$ of $(G_1,G_2)$ by $A$ is said to be
\emph{commutative} if the two partial group laws on $E$ are
commutative. Of course, this can only happen if $G_1$ and $G_2$ are
commutative group schemes. In \cite{mumford}, Mumford imposes the
commutativity requirement in the very definition of a bi-extension.
\end{defin}

\subsection{Bi-extensions and bimultiplicative
torsors}\label{aa:bi-ext-bi-mult} Some of the data and the
compatibility conditions that appear in the definition of a
bi-extension can often be ignored. To explain this comment, let us
introduce the notion of a bimultiplicative torsor, by analogy with
the notion of a multiplicative torsor.

\begin{defin}
In the situation above, let \[\mu_1:G_1\times_k G_1\rar{}G_1
\qquad\text{and}\qquad \mu_2:G_2\times_k G_2\rar{}G_2\] be the
multiplication morphisms, and let \[\pr_{13},\pr_{23}:G_1\times_k
G_1\times_k G_2\rar{}G_1\times_k G_2\] and
\[\pr_{12},\pr_{13}:G_1\times_k G_2\times_k G_2\rar{}G_1\times_k
G_2\] be the projections. An $A$-torsor $\cE$ on $G_1\times_k G_2$
is said to be \emph{bimultiplicative} if \[
(\mu_1\times\id_{G_2})^*(\cE) \cong
\pr_{13}^*(\cE)\tens\pr_{23}^*(\cE) \qquad\text{as } A-\text{torsors
on } G_1\times_k G_1\times_k G_2
\]
and
\[
(\id_{G_1}\times\mu_2)^*(\cE) \cong
\pr_{12}^*(\cE)\tens\pr_{13}^*(\cE) \qquad\text{as } A-\text{torsors
on } G_1\times_k G_2\times_k G_2.
\]
\end{defin}

It is clear that a bi-extension $E$ of $(G_1,G_2)$ by $A$ determines
a bimultiplicative $A$-torsor on $G_1\times_k G_2$ by forgetting the
partial group laws on $E$. The proof of the following analogue of
Lemma \ref{l:mult-tors-central-exts} is straightforward, and is
therefore omitted.

\begin{lem}\label{l:bi-ext=bi-mult-tors}
Let $G_1$ and $G_2$ be connected algebraic or quasi-algebraic groups
over a field $k$, and let $A$ be an abstract commutative group,
viewed as a discrete group scheme over $k$. Consider the groupoid
$\sG$ of bimultiplicative $A$-torsors $\cE$ on $G_1\times_k G_2$
equipped with a trivialization of $(1\times 1)^*\cE$. Then the
forgetful functor $\biext(G_1,G_2;A)\rar{}\sG$ is an equivalence of
categories, and both groupoids are discrete. Furthermore, if $G_1$
and $G_2$ are commutative, then every bi-extension of $(G_1,G_2)$ by
$A$ is automatically commutative as well.
\end{lem}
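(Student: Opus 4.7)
The plan is to prove the three assertions---discreteness, equivalence, and automatic commutativity---in sequence, all by exploiting one recurring principle: any morphism of $k$-schemes from a connected product of copies of $G_1$ and $G_2$ to the discrete group $A$ is constant, and any bimultiplicative such morphism is therefore the identity of $A$.

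First I would establish discreteness of both groupoids. By Remark \ref{r:trivializations-bi-ext}, automorphisms of a bi-extension of $(G_1,G_2)$ by $A$ are canonically identified with bimultiplicative morphisms $G_1\times_k G_2\rar{}A$; since $G_1\times_k G_2$ is connected while $A$ is discrete, any such morphism is constant, and bimultiplicativity forces the value to be the identity. Automorphisms of a rigidified bimultiplicative torsor in $\sG$ are likewise bimultiplicative morphisms $G_1\times_k G_2\rar{}A$ restricting to $0$ at $(1,1)$, and the same argument applies. This gives Corollary-style discreteness and simultaneously shows that the forgetful functor $\biext(G_1,G_2;A)\rar{}\sG$ is faithful.

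For essential surjectivity, given $(\cE,\tau)\in\sG$ I need to reconstruct the two partial group laws. The bimultiplicativity of $\cE$ supplies isomorphisms of $A$-torsors
\[
\al_1:(\mu_1\times\id_{G_2})^*\cE \iso \pr_{13}^*\cE\tens\pr_{23}^*\cE, \qquad \al_2:(\id_{G_1}\times\mu_2)^*\cE \iso \pr_{12}^*\cE\tens\pr_{13}^*\cE,
\]
each a priori defined only up to an automorphism of the underlying torsor, i.e., up to a bimultiplicative morphism into $A$. The rigidification $\tau$ together with the connectedness principle pins $\al_1$ and $\al_2$ down uniquely. Unravelling, $\al_1$ translates into a morphism $\bullet_1:E\times_{G_2}E\rar{}E$ of schemes over $G_2$, and $\al_2$ into $\bullet_2:E\times_{G_1}E\rar{}E$. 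Associativity of each partial law, compatibility with the prescribed sections along $\{1\}\times G_2$ and $G_1\times\{1\}$, and the interchange law relating $\bullet_1$ and $\bullet_2$ then become equalities of torsor morphisms over connected products of copies of $G_1$ and $G_2$ that visibly coincide after pullback along the identity section; discreteness forces them to agree globally. Fullness of the forgetful functor is checked by the same token: any morphism of underlying rigidified bimultiplicative torsors automatically intertwines the reconstructed partial laws.

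Finally, when $G_1$ and $G_2$ are commutative, I would derive commutativity of $\bullet_1$ and $\bullet_2$ by an Eckmann--Hilton style argument. Given $e_1,e_2\in E$ over $(g_1,g_2)$ and $(g_1',g_2)$, let $u(g)\in\pi^{-1}(g,1)$ denote the canonical lift provided by the identification $\pi^{-1}(G_1\times\{1\})\cong G_1\times_k A$, and similarly for lifts over $\{1\}\times G_2$. Plugging these lifts into the interchange law together with the commutativity of $G_1$ in the base yields the identity $e_1\bullet_1 e_2=e_2\bullet_1 e_1$ up to a bimultiplicative $A$-valued error term, which is then killed by the discreteness argument; the analogous manipulation using commutativity of $G_2$ handles $\bullet_2$. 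The main technical obstacle I foresee is the careful bookkeeping in the middle step: every axiom I verify is an equality of sections of an $A$-torsor over a product of several copies of $G_1$ and $G_2$, and I must check that the single rigidification at $(1,1)$ is enough to simultaneously kill all the bimultiplicative ambiguities in the reconstructions of $\al_1$, $\al_2$, and the compatibilities between them; once this organizational point is handled, everything reduces to the connectedness-versus-discreteness principle.
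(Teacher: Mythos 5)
The paper omits this proof, saying only that it is ``straightforward'' and analogous to the proof of Lemma~\ref{l:mult-tors-central-exts} from \cite{masoud}, so there is no argument of the author's to compare against. Your overall strategy---using connectedness of $G_1 \times_k G_2$ (and its powers) against discreteness of $A$ to kill automorphisms and pin down the reconstructed partial group laws via the rigidification at $(1,1)$---is the right one and would work. The discreteness and equivalence parts are correctly organized: automorphisms of a bi-extension are bimultiplicative $A$-valued morphisms (Remark~\ref{r:trivializations-bi-ext}), hence constant, hence trivial; the isomorphisms $\al_1$, $\al_2$ in the bimultiplicativity condition are each ambiguous only up to a constant in $A$ (not an arbitrary bimultiplicative morphism, as you phrase it, but the outcome is the same), and the rigidification normalizes them; all the axioms to check are equalities of torsor morphisms over geometrically connected schemes that agree along the identity section, hence agree everywhere.

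One part of your plan is framed in a way that would not literally carry through: the Eckmann--Hilton manipulation for commutativity. The interchange law relates $\bullet_1$ and $\bullet_2$ but does not permute the arguments of $\bullet_1$, and the canonical lifts $u(g)\in\pi^{-1}(g,1)$ are $\bullet_2$-identities, so substituting them only recovers the tautology $e_1\bullet_1 e_2 = e_1\bullet_1 e_2$ (together with the identity $u(g_1)\bullet_1 u(g_1')=u(g_1 g_1')$). The interchange law is in fact not needed: for $e_1$ over $(g_1,g_2)$ and $e_2$ over $(g_1',g_2)$, the commutativity of $G_1$ makes $e_1\bullet_1 e_2$ and $e_2\bullet_1 e_1$ lie in the same $A$-torsor fibre, so their ratio defines a commutator $c(e_1,e_2)\in A$; centrality of $A$ and commutativity of $A$ show $c$ is invariant under the $A\times A$-action, so $c$ descends to a morphism $G_1\times_k G_1\times_k G_2\rar{}A$, which is constant by connectedness and zero at $(1,1,1)$. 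This is exactly the ``kill the error by discreteness'' step you already end with; you just don't need the interchange law or Eckmann--Hilton to get there. With that correction, your proof is complete in outline.
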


\subsection{Serre duality and Pontryagin
duality}\label{aa:Serre-Pontryagin} In the remainder of the
appendix, the only bi-extensions that we consider will be
bi-extensions of connected unipotent groups by $\qzp$. Let us fix a
perfect field $k$ of characteristic $p>0$ and an object
$G\in\cpuc_k$. If $G^*\in\cpuc_k$ is the Serre dual
(\S\ref{aa:Serre-duality}) of $G$, then, by definition, we have a
central extension $\cU$ of $G\times_k G^*$ by $\qzp$ in the category
of group schemes over $G^*$, which is universal in the obvious
sense. The following claim is readily verified:
\begin{lem}\label{l:homomorphisms-biextensions}
Let $A$ be a perfect group scheme over $k$, and let $f:A\rar{}G^*$
be an arbitrary morphism of $k$-schemes. Then $f$ is a morphism of
group schemes if and only if $(\id_G\times f)^*\cU$ has a structure
of a bi-extension of $(G,A)$ by $\qzp$, compatible with its
structure of a central extension of $G\times_k A$ by $\qzp$ as an
$A$-group scheme.
\end{lem}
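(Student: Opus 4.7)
{Proof proposal}
The plan is to exploit the universal property of $G^*$ on both sides of the equivalence. Recall that for any perfect $k$-scheme $S$, a morphism of $k$-schemes $f:S\rar{}G^*$ corresponds bijectively (via pullback of $\cU$) to a central extension of $G\times_k S$ by $\qzp$ in the category of $S$-group schemes. Thus the first partial group law on $E:=(\id_G\times f)^*\cU$ (making it a central extension of $G\times_k A$ by $\qzp$ as an $A$-group scheme) is encoded by $f$ being a morphism of $k$-schemes alone. The remaining content of the lemma is that the existence of a compatible second partial group law $\bullet_2:E\times_G E\rar{}E$ is equivalent to $f$ respecting the multiplications on $A$ and $G^*$. (Compatibility with unit and inverse will be automatic from the same argument applied to the other group scheme axioms; and since $G$ is connected, Corollary \ref{c:bi-ext-connected} guarantees that such a $\bullet_2$, if it exists, is unique.)

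For the forward direction, suppose $f:A\rar{}G^*$ is a morphism of group schemes. Recall that the group scheme structure on $G^*$ is Baer addition of central extensions. Concretely, for $x_1,x_2\in G^*(S)$ corresponding to central extensions $E_1,E_2$ of $G_S$ by $\qzp$ as $S$-group schemes, the sum $x_1+x_2$ corresponds to the Baer sum $E_1\oplus^{Baer}E_2$. Apply this to $S=A\times_k A$ and $x_i=f\circ p_i$, where $p_1,p_2:A\times_k A\rar{}A$ are the projections. The hypothesis $f\circ\mu_A=\mu_{G^*}\circ(f\times f)$ together with the universal property then gives a canonical isomorphism of central extensions of $G\times_k A\times_k A$ by $\qzp$ over $A\times_k A$:
\[
(\id_G\times\mu_A)^*E \,\cong\, p_{13}^*E\,\oplus^{Baer}\,p_{23}^*E,
\]
where $p_{13},p_{23}:G\times_k A\times_k A\rar{}G\times_k A$ are the evident projections. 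Unwinding the definition of Baer sum on the $\qzp$-torsor level, the right-hand side is precisely $E\times_G E$ viewed as a $\qzp$-torsor (via the sum map $\qzp\times\qzp\rar{}\qzp$), while the left-hand side is the target of $\bullet_2$. This isomorphism therefore furnishes $\bullet_2$, and the Baer-sum compatibility between central extension structure over $A\times_k A$ and pullback yields exactly the bi-extension axiom between $\bullet_1$ and $\bullet_2$ (with trivializations along $\{1\}\times A$ and $G\times\{1\}$ coming from $f(1)=1\in G^*$ and from the unit section of $\cU$, respectively).

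The reverse direction runs the same reasoning backwards. A compatible $\bullet_2$ on $E$ is, by the same unwinding, the datum of an isomorphism $(\id_G\times\mu_A)^*E\cong p_{13}^*E\oplus^{Baer}p_{23}^*E$ of central extensions of $G\times_k A\times_k A$ by $\qzp$ over $A\times_k A$. By the universal property of $G^*$ (applied to the perfect base $A\times_k A$, using that $A$ was assumed perfect), the left-hand side classifies $f\circ\mu_A:A\times_k A\rar{}G^*$ and the right-hand side classifies $\mu_{G^*}\circ(f\times f)$; the existence of the isomorphism forces these two morphisms to coincide, which is multiplicativity of $f$. Compatibility with the unit and inverse follows analogously from the trivialization data of the bi-extension. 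The main (very mild) obstacle is the bookkeeping required to match the bi-extension axiom with the associativity/commutativity of Baer sum; once the identification of $\bullet_2$ with an isomorphism of central extensions is clear, everything reduces to the universal property.
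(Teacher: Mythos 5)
The paper offers no proof of this lemma—it is introduced with ``The following claim is readily verified''—so there is no written argument to compare against. Your proposal fills that gap with the expected unwinding of definitions, and the core of it is correct: identify the second partial group law $\bullet_2$ with an isomorphism of central extensions
$(\id_G\times\mu_A)^*E \cong p_{13}^*E\oplus^{Baer}p_{23}^*E$
over the (perfect) base $A\times_k A$, and observe that by the universal property of $G^*$ the left side classifies $f\circ\mu_A$ while the right side classifies $\mu_{G^*}\circ(f\times f)$. The compatibility axiom between $\bullet_1$ and $\bullet_2$ is precisely the requirement that this torsor isomorphism respects $\bullet_1$, hence is an isomorphism of central extensions; this is the key point, and you identify it correctly. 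The uniqueness of $\bullet_2$, if it exists, does hold because $G$ is connected—though more precisely it comes from the vanishing of automorphisms of central extensions of $G_S$ by $\qzp$, i.e.\ $\Hom(G,\qzp)=0$, rather than directly from Corollary \ref{c:bi-ext-connected} (which is about trivializations of bi-extensions).

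One small bookkeeping slip: you attribute the trivialization along $\{1\}\times A$ to $f(1)=1\in G^*$ and the one along $G\times\{1\}$ to the unit section of $\cU$, but this is backwards. In the paper's conventions (Def.\ \ref{d:comm-bi-ext}, item (b)), $\bullet_1$ realizes $E$ as a central extension over the base $G_2=A$, so its unit section trivializes $E$ along $\{1\}\times A$; this trivialization is inherited from $\cU$. The trivialization along $G\times\{1\}$ is the restriction $1_A^*E$, which is the extension classified by $f(1_A)\in G^*(k)$, and its triviality is equivalent to $f(1_A)=0$. The substance of your argument is unaffected, but when you invoke ``compatibility with the unit'' in the reverse direction it is this second trivialization you are using.
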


In particular, we see that $\cU$ itself comes from a (unique)
bi-extension of $(G,G^*)$ by $\qzp$, which we will also denote by
$\cU$. Furthermore, morphisms of $k$-group schemes $f:A\rar{}G^*$
correspond bijectively with bi-extensions of $(G,A)$ by $\qzp$.

\mbr

In this subsection we assume that $k=\bF_q$ is finite. Then
$G(\bF_q)$ is a finite abelian $p$-group, and we can consider its
Pontryagin dual, which can be canonically defined as
$G(\bF_q)^*=\Hom(G(\bF_q),\qzp)$. It is natural to ask about the
relationship between $G^*(\bF_q)$ and $G(\bF_q)^*$. As we will see
shortly, these two groups are canonically isomorphic. First, note
that we have an analogue of the sheaves-to-functions correspondence
(\S\ref{ss:sheaves-to-functions}) in the context of $\qzp$-torsors.
Namely, isomorphism classes of $\qzp$-torsors on $\Spec\bF_q$ are in
natural bijection with continuous homomorphisms
$\phi:\Gal(\bF/\bF_q)\rar{}\qzp$. In turn, such homomorphisms are in
bijection with elements of $\qzp$, via $\phi\longmapsto\phi(F_q)$,
where $F_q\in\Gal(\bF/\bF_q)$ is the geometric Frobenius. Now if $X$
is an arbitrary scheme over $\bF_q$ and $\cE$ is a $\qzp$-torsor
over $X$, we obtain a functor $t_\cE:X(\bF_q)\rar{}\qzp$, defined by
sending $x\in X(\bF_q)$ to the element of $\qzp$ corresponding to
the $\qzp$-torsor $x^*\cE$ over $\Spec\bF_q$.

\begin{prop}\label{p:Serre-Pontryagin}
If $G$ is a perfect connected commutative unipotent group over
$\bF_q$, $G^*$ is its Serre dual, and $\cU$ is the universal
bi-extension of $(G,G^*)$ by $\qzp$, then the map
\[
G(\bF_q)\times G^*(\bF_q) = (G\times G^*)(\bF_q) \xrar{\ \ t_\cU\ \
} \qzp
\]
is a perfect pairing, i.e., it is bi-additive and identifies
$G^*(\bF_q)$ with $G(\bF_q)^*$.
\end{prop}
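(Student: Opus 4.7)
The plan is to verify bi-additivity of $t_\cU$ directly from the bi-extension structure, and then prove perfectness of the pairing by devissage, reducing to the base case $G=\gap$ via the exactness of Serre duality (Theorem~\ref{t:serre-duality}) and Lang's theorem. Bi-additivity is the easy part: the partial group law $\bullet_1$ on $\cU$ over $G^*$ amounts to an isomorphism $(\mu_G\times\id_{G^*})^*\cU\cong \pr_{13}^*\cU\otimes_{\qzp}\pr_{23}^*\cU$ of rigidified $\qzp$-torsors on $G\times G\times G^*$ (cf.~Lemma~\ref{l:bi-ext=bi-mult-tors}). Since the trace function on $\qzp$-torsors over $\bF_q$-schemes sends such tensor products to pointwise sums, evaluating on $\bF_q$-points gives additivity in the first variable; the second partial law yields additivity in $y$ symmetrically.

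For perfectness, I would induct on $\dim G$. Given any short exact sequence $0\to G_1\to G\to G_2\to 0$ in $\cpuc_{\bF_q}$ with $\dim G_i<\dim G$, Theorem~\ref{t:serre-duality} yields a dual exact sequence $0\to G_2^*\to G^*\to G_1^*\to 0$, where $G_2^*\hookrightarrow G^*$ is pullback of extensions along $G\twoheadrightarrow G_2$ and $G^*\twoheadrightarrow G_1^*$ is restriction along $G_1\hookrightarrow G$. Lang's theorem gives $H^1(\bF_q,-)=0$ on all four connected groups, so both sequences remain exact on $\bF_q$-points. The universal bi-extension $\cU$ is compatible with these exact sequences: its pullback to $G_1\times G_2^*$ is canonically trivial (because elements of $G_2^*$ classify extensions pulled back from $G_2$, hence split on the kernel $G_1$), and the induced bi-extensions on $G_1\times G_1^*$ and $G_2\times G_2^*$ are the corresponding universal ones $\cU_{G_1}$ and $\cU_{G_2}$. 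A standard ``five lemma for pairings'' of finite abelian groups then upgrades perfectness of the pairings for $G_1$ and $G_2$ to perfectness of the pairing for $G$.

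The hard part will be the base case, which reduces to $G=\gap$ over $\bF_q$ (the only isomorphism class of nontrivial simple objects in $\cpuc_{\bF_q}$, up to which one can filter by the inductive step). Here $G(\bF_q)\cong\bF_q$, and one must compute $G^*(\bF_q)$ together with the pairing. Artin--Schreier--Witt theory classifies extensions of $\gap$ by $p^{-n}\bZ/\bZ$ via the Witt vector sequence $0\to W_n(\bF_q)\to W_n(\bG_a)\xrar{F-1} W_n(\bG_a)\to 0$, and assembling these descriptions as $n\to\infty$ yields the Serre dual $(\gap)^*$. The induced pairing on $\bF_q$-points must then be identified with the classical Artin--Schreier--Witt pairing, whose perfectness is well known. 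Matching the universal bi-extension $\cU$ (defined abstractly via $\Ext^1$) with the concrete Witt vector construction --- via the connecting homomorphism in the long exact sequence of $\bF_q$-points of a universal extension --- is the main technical obstacle; once this identification is made, the devissage of the previous paragraph carries the result to arbitrary $G\in\cpuc_{\bF_q}$.
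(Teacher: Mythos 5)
Your proposal is mathematically reasonable, but it takes a genuinely different and substantially longer route than the paper. You reduce by devissage to the base case $G=\gap$ and then propose to compute the pairing explicitly via Artin--Schreier--Witt theory, correctly flagging as the ``main technical obstacle'' the identification of the universal $\Ext^1$ bi-extension with the concrete Witt-vector construction. The paper sidesteps all of this with a single direct construction: given any character $\chi\colon G(\bF_q)\to\qzp$, push forward the Lang exact sequence $0\to G(\bF_q)\to G\xrar{L}G\to 0$ (where $L(g)=\Phi(g)g^{-1}$ is the Lang isogeny) along $\chi$ to obtain an extension $\Gt_\chi$ of $G$ by $\qzp$, i.e., an element $y\in G^*(\bF_q)$; the standard computation (cf.~SGA $4\frac{1}{2}$, \emph{Sommes trig.}) shows $t_{\Gt_\chi}=\chi$, so $\chi(x)=t_\cU(x,y)$. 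This produces a preimage of every $\chi$ simultaneously for all $G$, with no induction on dimension, no compatibility of $\cU$ with exact sequences, and no Witt-vector bookkeeping; injectivity then comes for free by cardinality (both $G(\bF_q)^*$ and $G^*(\bF_q)$ have order $q^{\dim G}$, the latter since $G^*$ is connected and one applies Lang's theorem again). Your devissage scheme could be made to work, but note that establishing the compatibility of $\cU$ with short exact sequences and the five-lemma-for-pairings reduction is itself a nontrivial lemma you would need to prove, and the base case is where the real difficulty hides — exactly the part the Lang-isogeny argument makes trivial. If you want a cleaner proof by your own method, you might at least replace the Witt-vector base case by the Lang-isogeny push-forward trick applied to $\gap$ alone.
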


\begin{proof}{Proof}
The only nontrivial part is to show that every $\chi\in G(\bF_q)^*$
can be represented as $x\longmapsto t_\cU(x,y)$ for some $y\in
G^*(\bF_q)$. By Lang's theorem \cite{lang}, we have an exact
sequence of perfect commutative unipotent groups over $\bF_q$,
\begin{equation}\label{e:lang}
0 \rar{} G(\bF_q) \rar{} G \rar{L} G \rar{} 0,
\end{equation}
where $L:G\rar{}G$, $g\longmapsto \Phi(g)g^{-1}$ is the Lang
isogeny\footnote{Here, $\Phi:G\rar{}G$ is the absolute Frobenius
morphism, defined as the identity map on the underlying set of $G$,
and the map $f\longmapsto f^q$ on local sections of the structure
sheaf $\sO_G$ of $G$.}. Choose a homomorphism
$\chi:G(\bF_q)\rar{}\qzp$. The pushforward of \eqref{e:lang} by
$\chi$ is an extension
\[
0 \rar{} \qzp \rar{} \Gt_\chi \rar{} G \rar{} 0,
\]
which defines an element $y\in G^*(\bF_q)$. One checks easily
(cf.~\cite{sga4.5}, \emph{Sommes. trig.}) that the function
$t_{\Gt_\chi}:G(\bF_q)\rar{}\qzp$, defined by $\Gt_\chi$ (viewed as
a $\qzp$-torsor over $G$), is equal to $\chi$. Hence
$\chi(x)=t_{\cU}(x,y)$ for all $x\in G(\bF_q)$.
\end{proof}

\subsection{Canonical pairing associated to a
bi-extension}\label{aa:pairing-bi-ext} Let us fix two objects
$G_1,G_2\in\cpuc_k$ and a morphism $f:G_1\rar{}G_2$. Since
$(G_2^*)^*$ is canonically identified with $G_2$ by Theorem
\ref{t:serre-duality}, we see from Lemma
\ref{l:homomorphisms-biextensions} that $f$ corresponds to a
bi-extension of $(G_2^*,G_1)$ by $\qzp$. Consider the dual morphism
$f^*:G_2^*\rar{}G_1^*$. The kernels $\Ker f$ and $\Ker f^*$ are
possibly disconnected objects of $\cpu_k$.

\mbr

\emph{Until the end of the subsection we assume that $k$ is an
algebraically closed field of characteristic $p>0$.} Then the groups
of connected components $\pi_0(\Ker f)$ and $\pi_0(\Ker f^*)$ are
finite discrete abelian $p$-groups. Our goal is to define, following
\cite{drinfeld-lectures}, a canonical nondegenerate pairing of
abelian $p$-groups
\begin{equation}\label{e:pairing-bi-ext}
B_f : \pi_0(\Ker f^*) \times \pi_0(\Ker f) \rar{} \qzp,
\end{equation}
i.e., a bi-additive map inducing an isomorphism of abelian groups
\begin{equation}\label{e:B-f-induced}
\pi_0(\Ker f^*)\rar{}\Hom(\pi_0(\Ker f),\qzp).
\end{equation}

\mbr

The definition of \eqref{e:pairing-bi-ext} is simple. Since
$f\bigl\lvert_{\Ker f}=0$, the restriction
$E\bigl\lvert_{G_2^*\times(\Ker f)}$ is a trivial bi-extension.
Since $G_2^*$ is connected, there is only one trivialization of
$E\bigl\lvert_{G_2^*\times(\Ker f)}$ by Corollary
\ref{c:bi-ext-connected}. Similarly, the bi-extension
$E\bigl\lvert_{(\Ker f^*)\times G_1}$ has a unique trivialization.
Thus we obtain \emph{two} trivializations of $E\bigl\lvert_{(\Ker
f^*)\times(\Ker f)}$, which, by Remark
\ref{r:trivializations-bi-ext}, must ``differ'' by a bi-additive
morphism $(\Ker f^*)\times(\Ker f)\rar{}\qzp$. Since $\qzp$ is
discrete, the latter factors through a bi-additive map
\eqref{e:pairing-bi-ext}.

\begin{prop}\label{p:pairing-bi-ext-nondeg}
The pairing \eqref{e:pairing-bi-ext} we just defined is
nondegenerate.
\end{prop}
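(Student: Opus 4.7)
The plan is to establish non-degeneracy of $B_f$ by showing that the map \eqref{e:B-f-induced} is injective; by the evident symmetry of the bi-extension construction between its two arguments, the transposed map $\pi_0(\Ker f)\to\Hom(\pi_0(\Ker f^*),\qzp)$ will then be injective as well, and since $\Hom(A,\qzp)$ has the same cardinality as a finite abelian $p$-group $A$, the two mutual injections will force both maps to be isomorphisms. Thus it suffices to prove the following: if $\xi\in(\Ker f^*)(k)$ satisfies $B_f(\xi,\eta)=0$ for every $\eta\in(\Ker f)(k)$, then $\xi$ lies in $(\Ker f^*)^\circ$.

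The first move is to translate $\xi$ into concrete data. By the universal property of $G_2^*$ expressed in Lemma \ref{l:homomorphisms-biextensions}, $\xi$ corresponds to a central extension $0\to\qzp\to\widetilde{G_2}\to G_2\to 0$. The hypothesis $f^*(\xi)=0$ means that the pulled-back extension $\widetilde{G_2}\times_{G_2}G_1$ of $G_1$ by $\qzp$ is trivial; because $G_1$ is connected and $\qzp$ is discrete, $\Hom(G_1,\qzp)=0$, so this pullback admits a \emph{unique} splitting, encoded by a unique group-scheme lift $s\colon G_1\to\widetilde{G_2}$ of $f$.

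The second and technically most delicate step, which I expect to be the main obstacle, is to identify the pairing in these terms. Concretely, I will verify that, under the canonical identification of the fiber of $\widetilde{G_2}$ over $1_{G_2}$ with $\qzp$ supplied by the identity element of $\widetilde{G_2}$, one has
\[
B_f(\xi,\eta)=s(\eta)\qquad\text{for every }\eta\in(\Ker f)(k).
\]
The trivialization of $E$ on $G_2^*\times\Ker f$ inherited from the structural section of the universal bi-extension $\cU$ along $G_2^*\times\{1_{G_2}\}$ contributes the trivial value $0\in\qzp$ at each pair $(\xi,\eta)$, whereas the trivialization of $E$ on $\Ker f^*\times G_1$ coming from the vanishing of $f^*$ on $\Ker f^*$ is precisely the one furnished by $s$; the pairing is the difference of these two. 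Verifying this identification in detail requires careful bookkeeping through the equivalence between bi-extensions and bimultiplicative torsors of Lemma \ref{l:bi-ext=bi-mult-tors}, together with a check that the two constructions of the relevant sections agree up to sign.

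Granted the formula $B_f(\xi,\eta)=s(\eta)$, the vanishing hypothesis will force the restriction of $s$ to $\Ker f$ to be identically zero as a morphism of group schemes: it vanishes on $k$-points by assumption and automatically on the connected component $(\Ker f)^\circ$ since $\qzp$ is discrete. Hence $s$ will descend to a splitting $\bar s\colon\Image f\to\widetilde{G_2}$, and quotienting $\widetilde{G_2}$ by $\bar s(\Image f)$ will exhibit $\widetilde{G_2}$ as the pullback of a central extension of $\Coker f$ by $\qzp$. Translating back via Serre duality, $\xi$ will lie in the image of the natural morphism $(\Coker f)^*\to G_2^*$. The exactness of Serre duality on $\cpuc_k$ (Theorem \ref{t:serre-duality}), applied to the short exact sequence $0\to\Image f\to G_2\to\Coker f\to 0$ in $\cpuc_k$ (which is indeed a sequence of connected groups since the image and cokernel of a morphism between connected groups are themselves connected), identifies $(\Coker f)^*$ with a \emph{connected} subgroup of $G_2^*$ contained in $\Ker f^*$; consequently $\xi\in(\Ker f^*)^\circ$, as required.
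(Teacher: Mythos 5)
Your argument is correct and follows essentially the same route as the paper: reduce nondegeneracy to injectivity of \eqref{e:B-f-induced} (the paper invokes the symmetry between $f$ and $f^*$; your cardinality bootstrap is a valid way to close that loop), unwind $\xi$ into a central extension $\widetilde{G_2}$ with its unique lift $s\colon G_1\to\widetilde{G_2}$, identify $B_f(\xi,-)$ with the homomorphism $\pi_0(\Ker f)\to\qzp$ induced by $s$, and then use exactness of Serre duality applied to $0\to\Image f\to G_2\to\Coker f\to 0$ to deduce that $\xi$ lies in the connected image of $(\Coker f)^*$, hence in $(\Ker f^*)^\circ$. This is precisely the content of the paper's Lemma~\ref{l:auxiliary-p:pairing-bi-ext-nondeg} and the unlabeled lemma $t'(x)=B_f(\overline{t},x)$ preceding it, so there is no substantive difference in method.
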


To prove this proposition, it suffices to show the injectivity of
the induced homomorphism \eqref{e:B-f-induced}, for then we can
apply the same argument replacing $f$ with $f^*$. Thus let
$g\in(\Ker f^*)(k)$ be such that $B_f(\overline{g},x)=0$ for all
$x\in\pi_0(\Ker f)$, where $\overline{g}$ denotes the image of $g$
in $\pi_0(\Ker f^*)$. We must show that $g\in(\Ker f^*)^\circ$. To
this end we need to obtain a more concrete description of $(\Ker
f^*)^\circ$.

\mbr

Consider an arbitrary extension of commutative group schemes over
$k$,
\[
0 \rar{} \qzp \rar{} \widetilde{G_2} \rar{\eta} G_2 \rar{} 0.
\]
Let $t\in G_2^*(k)$ denote the corresponding element, and assume
that $t\in\Ker f^*$, which means that there exists a morphism
$\widetilde{f}:G_1\rar{}\widetilde{G_2}$ such that
$\eta\circ\widetilde{f}=f$. Since $G_1$ is connected and $\qzp$ is
discrete, $\widetilde{f}$ is unique. Moreover, it takes $\Ker f$ to
$\qzp$, so we obtain an induced homomorphism
$t':\pi_0(\Ker f)\rar{}\qzp$.

\mbr

The following claim is simply a matter of unwinding the definitions:

\begin{lem}
We have $t'(x)=B_f(\overline{t},x)$ for all $x\in\pi_0(\Ker f)$.
\end{lem}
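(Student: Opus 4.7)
{Proof proposal}
My plan is to directly unwind the definitions of $B_f(\overline{t},x)$ and $t'(x)$ at a fixed pair $(t,x)\in(\Ker f^*)(k)\times(\Ker f)(k)$, and match the two answers pointwise. The first preparatory step is to realize the bi-extension $E$ as an explicit pullback. Using Lemma \ref{l:homomorphisms-biextensions} together with Theorem \ref{t:serre-duality} (double-duality identifies $G_2$ with $(G_2^*)^*$), the morphism $f:G_1\to G_2$ corresponds to the bi-extension $E\cong(\id\times f)^*\cU'$, where $\cU'$ is the universal bi-extension on $G_2^*\times G_2$. The crucial consequence is that for any $t\in G_2^*(k)$, the restriction $E|_{\{t\}\times G_1}$ is canonically the pullback central extension $\widetilde{G_2}\times_{G_2}G_1$, where $\widetilde{G_2}$ is the central extension of $G_2$ classified by $t$.

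The second step identifies the two canonical trivializations appearing in the definition of $B_f$ in these explicit terms. The trivialization of $E|_{(\Ker f^*)\times G_1}$, restricted to $\{t\}\times G_1$, is the unique group-scheme splitting of $\widetilde{G_2}\times_{G_2}G_1$, and this splitting is precisely $g\mapsto(\widetilde{f}(g),g)$; hence at $x\in\Ker f$ it evaluates to $\widetilde{f}(x)\in\eta^{-1}(0)=\qzp$, which is exactly $t'(x)$ by the construction of $t'$. On the other side, the trivialization of $E|_{G_2^*\times(\Ker f)}$ corresponds, under Lemma \ref{l:homomorphisms-biextensions} applied to the connected group $\Ker f$ \emph{after} noting the relevant composition is the zero morphism (since $f|_{\Ker f}=0$), to the canonical trivialization of $\cU'$ along $G_2^*\times\{0\}$ built into the bi-extension data of $\cU'$; this trivialization sends every $(t,0)$ to the identity element of the partial group law, which is $0\in\qzp$ under the identification above.

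The final step is to combine these two descriptions. The defining relation of $B_f$ says that at the point $(t,x)\in(\Ker f^*)\times(\Ker f)$ the two trivializations differ by the element $B_f(\overline{t},x)\in\qzp$ acting on the fiber $E_{(t,x)}$. Substituting the explicit values $\widetilde{f}(x)$ and $0$ computed above yields $B_f(\overline{t},x)=\widetilde{f}(x)=t'(x)$, which is the desired identity.

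The only genuine subtlety — and the step I expect to need the most care — is verifying that the two different identifications of the fiber $E_{(t,x)}$ with $\qzp$ (one coming from the trivialization of $E|_{G_2^*\times(\Ker f)}$, the other from the kernel of $\eta:\widetilde{G_2}\to G_2$) coincide. This is a compatibility between the canonical trivialization of $\cU'$ along $G_2^*\times\{0\}$ and the $\qzp$-action used to view $\widetilde{G_2}$ as a central extension, and it follows directly from the axioms for a bi-extension (Definition \ref{d:trivialization-bi-ext}(a)) together with the fact that the chosen sections are forced to be identity sections of the respective partial group laws $\bullet_1,\bullet_2$. Once this compatibility is spelled out, the lemma follows without any further computation.
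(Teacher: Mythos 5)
Your proof is correct, and since the paper dismisses this lemma as ``simply a matter of unwinding the definitions,'' your careful unwinding — realizing $E=(\id\times f)^*\cU'$ via double duality, computing the canonical trivialization of $E|_{(\Ker f^*)\times G_1}$ restricted to $\{t\}\times G_1$ as the unique splitting $g\mapsto(\widetilde{f}(g),g)$ of $\widetilde{G_2}\times_{G_2}G_1$, matching the other canonical trivialization with the pulled-back identity section of $\cU'$ along $G_2^*\times\{1\}$, and reading off the difference at $(t,x)$ — is essentially the intended argument. The compatibility of the two fiber identifications of $E_{(t,x)}$ with $\qzp$ that you flag at the end is indeed the only point requiring a second look, and it does follow from the bi-extension axioms as you indicate (the compatibility condition forces the section along $G_2^*\times\{1\}$ to be multiplicative under $\bullet_1$, so it agrees with the identification coming from $\Ker\eta$).
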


In view of this result, it is clear that Proposition
\ref{p:pairing-bi-ext-nondeg} follows from

\begin{lem}\label{l:auxiliary-p:pairing-bi-ext-nondeg}
In the situation above, the following are equivalent:
\begin{enumerate}[$($i$)$]
\item $t$ belongs to the neutral connected component $(\Ker f^*)^\circ$;
\item the restriction of $t$ to the image of $f$ is trivial;
\item the homomorphism $t'$ is identically zero.
\end{enumerate}
\end{lem}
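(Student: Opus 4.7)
The plan is to prove the chain of implications $(\text{iii})\Leftrightarrow(\text{ii})\Leftrightarrow(\text{i})$, exploiting in the second equivalence the exactness of Serre duality (Theorem \ref{t:serre-duality}).

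For $(\text{iii})\Leftrightarrow(\text{ii})$, I would argue directly from the uniqueness of the lift $\widetilde{f} : G_1 \rar{} \widetilde{G_2}$. Since $(\Ker f)^\circ$ is connected and $\qzp$ is discrete, $\widetilde{f}\bigl\lvert_{(\Ker f)^\circ}$ is automatically trivial, so $t' \equiv 0$ if and only if $\widetilde{f}$ vanishes on all of $\Ker f$, equivalently, $\widetilde{f}$ descends along $p : G_1 \twoheadrightarrow \text{Im}(f)$ to a morphism $s : \text{Im}(f) \rar{} \widetilde{G_2}$ with $\eta \circ s = \id_{\text{Im}(f)}$. The existence of such a section $s$ is exactly the triviality of the extension $\widetilde{G_2}$ restricted to $\text{Im}(f)$, so $(\text{iii})\Leftrightarrow(\text{ii})$. (The reverse implication uses again the uniqueness of the lift: given $s$, the composition $s \circ f$ lifts $f$, so must coincide with $\widetilde{f}$.)

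For $(\text{i})\Leftrightarrow(\text{ii})$, let $r : G_2^* \rar{} \text{Im}(f)^*$ denote the dual of the inclusion $\iota : \text{Im}(f) \into G_2$. By the functoriality of the Serre dual, condition (ii) is equivalent to $r(t) = 0$, i.e., $t \in \Ker r$. I would apply Theorem \ref{t:serre-duality} to the short exact sequence
\begin{equation*}
0 \rar{} \text{Im}(f) \rar{\iota} G_2 \rar{} G_2/\text{Im}(f) \rar{} 0
\end{equation*}
which lies entirely in $\cpuc_k$ (since $\text{Im}(f)$, being the image of a morphism between connected groups, is connected, and so is $G_2/\text{Im}(f)$). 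The duality yields a short exact sequence $0 \rar{} (G_2/\text{Im}(f))^* \rar{} G_2^* \rar{r} \text{Im}(f)^* \rar{} 0$ in $\cpuc_k$. Hence $\Ker r \cong (G_2/\text{Im}(f))^*$ is \emph{connected}. Moreover, $\Ker r \subset \Ker f^*$, because any trivialization of the extension over $\text{Im}(f)$ pulls back along $f = \iota \circ p$ to a trivialization over $G_1$. Consequently $\Ker r \subset (\Ker f^*)^\circ$.

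It remains to show $(\Ker f^*)^\circ \subset \Ker r$, equivalently, that the quotient $\Ker f^*/\Ker r$ is discrete. The already-established equivalence $(\text{ii})\Leftrightarrow(\text{iii})$ exhibits a natural morphism $\Ker f^* \rar{} \Hom(\pi_0(\Ker f),\qzp)$, $t \mapsto t'$, whose kernel is precisely $\Ker r$; since $\Hom(\pi_0(\Ker f),\qzp)$ is a finite discrete abelian group, any connected subgroup of $\Ker f^*$ must be contained in $\Ker r$, so $(\Ker f^*)^\circ = \Ker r$. This completes the proof. The main subtlety lies in ensuring the duality operates cleanly: one must verify that the sequence being dualized actually lives in $\cpuc_k$ (which is where the connectedness of $G_2$ and of images of morphisms between connected groups is essential), and that the morphism $t \mapsto t'$ assembles into an algebraic morphism of perfect group schemes --- but since we work over an algebraically closed $k$ and only care about the $k$-point structure for the connectedness argument, this poses no serious obstacle.
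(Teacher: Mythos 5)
Your proof is correct and takes essentially the same route as the paper's: (ii)$\Leftrightarrow$(iii) by the uniqueness of the lift $\widetilde f$ (and the observation that $\widetilde f$ vanishes on $\Ker f$ exactly when it descends to a section over $f(G_1)$), and (i)$\Leftrightarrow$(ii) by dualizing $0\to f(G_1)\to G_2\to G_2/f(G_1)\to 0$ via Theorem~\ref{t:serre-duality} to exhibit $\Ker r\cong\bigl(G_2/f(G_1)\bigr)^*$ as a connected finite-index subgroup of $\Ker f^*$, hence its neutral component. Your closing caveat about $t\mapsto t'$ needing to be an algebraic morphism is unnecessary --- the paper works only with the exact sequence of $k$-point groups $0\to\bigl(G_2/f(G_1)\bigr)^*(k)\to(\Ker f^*)(k)\to\Hom(\pi_0(\Ker f),\qzp)$ and the finiteness of the last term, which suffices since $k$ is algebraically closed.
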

\begin{proof}{Proof}
The equivalence between conditions (ii) and (iii) is obvious.
Indeed, since we are already assuming that $f^*(t)=0$, it is clear
that the restriction of $t$ to the image of $f$ is trivial if and
only if the homomorphism $\widetilde{f}$ defined above vanishes on
$\Ker f$, which is in turn equivalent to the vanishing of $t'$.

\mbr

For the equivalence between (i) and (ii), note first that the
composition of $f$ with the quotient map $G_2\onto G_2/f(G_1)$
equals zero, hence the composition \[(G_2/f(G_1))^*\into
G_2^*\rar{f^*}G_1^*\] also equals zero. Since $(G_2/f(G_1))^*$ is
connected, we see that (ii) implies (i). The argument of the
previous paragraph shows that we have an exact sequence
\[
0 \rar{} (G_2/f(G_1))^*(k) \rar{} (\Ker f^*)(k) \rar{}
\Hom(\pi_0(\Ker f),\qzp),
\]
and the last group is finite. Thus $(G_2/f(G_1))^*$ maps
isomorphically onto $(\Ker f^*)^\circ$, whence (i) also implies (ii)
and the proof of the lemma is complete.
\end{proof}

\subsection{Symmetric and skewsymmetric
bi-extensions}\label{aa:symm-skew-symm} Let $k$ be a perfect field
of characteristic $p>0$, let $G\in\cpuc_k$, and let
$E\rar{\pi}G\times_k G$ be a bi-extension of $(G,G)$ by $\qzp$. As
explained above, $E$ determines (and is determined by) a
homomorphism $f:G\rar{}G^*$. We will think of its dual, $f^*$, as a
homomorphism $G\rar{}G^*$ as well, using the canonical
identification $(G^*)^*\cong G$ (see Theorem \ref{t:serre-duality}).

\begin{defin}\label{d:symm-skew-symm}
The bi-extension $E$ is said to be \emph{symmetric} if
$\tau^*(E)\cong E$, where $\tau:G\times_k G\rar{}G\times_k G$ is the
transposition of the two factors. Equivalently, $E$ is symmetric if
$f^*=f$. We say that $E$ is \emph{skewsymmetric}\footnote{The term
``alternating'' might be more appropriate, but we decided to be
consistent with \cite{drinfeld-lectures}.} if the restriction of $E$
to the diagonal in $G\times_k G$ is a trivial $\qzp$-torsor.
\end{defin}

\begin{rem}
In general, skewsymmetry is not a property of a bi-extension;
rather, one needs to define an extra structure, called a
\emph{skewsymmetry constraint} in \cite{drinfeld-lectures}.
However, since $G$ is connected and $\qzp$ is discrete, being
skewsymmetric becomes a property in our situation (similarly to
Lemma \ref{l:bi-ext=bi-mult-tors}).
\end{rem}

\begin{rem}
If the bi-extension $E$ considered above is skewsymmetric, it is
easy to check that $f=-f^*$. The converse statement holds if (and
only if) $p>2$.
\end{rem}

\begin{lem}[``Parity change'']\label{l:parity-change}
In the situation of this subsection, assume that $k$ is
algebraically closed, and consider the canonical pairing
\eqref{e:pairing-bi-ext} defined in \S\ref{aa:pairing-bi-ext}. If
$E$ is symmetric $($respectively, skewsymmetric$)$, so that, in
particular, $\Ker f=\Ker f^*$, then $B_f$ is alternating\footnote{In
other words, $B_f(x,x)=0$ for all $x\in\pi_0(\Ker f)=\pi_0(\Ker
f^*)$.} $($respectively, symmetric$)$.
\end{lem}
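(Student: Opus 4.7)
Plan. The strategy is to exploit the given structure on $E$ via the swap involution $\tau: G\times_k G\to G\times_k G$, $(x,y)\mapsto(y,x)$. The key rigidity tool throughout will be the uniqueness of bi-multiplicative trivializations of a trivial bi-extension of $(G_1,G_2)$ by $\qzp$ when at least one factor is connected: for any connected quasi-algebraic group $G$ over $k$, $\Hom(G,\qzp)=0$ since $\qzp$ is discrete, so $\operatorname{Bihom}(G_1,G_2;\qzp)=0$ in this setting. This forces any two candidate trivializations to coincide once we normalize at the identity.

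I would first observe that in both the symmetric case ($f=f^*$) and the skewsymmetric case ($f=-f^*$, as remarked right before the lemma), the kernels coincide: set $N:=\Ker f=\Ker f^*$. Let $\sigma_1$ be the unique bi-multiplicative trivialization of $E\bigl\lvert_{G\times N}$ and $\sigma_2$ the unique bi-multiplicative trivialization of $E\bigl\lvert_{N\times G}$. By construction of \S\ref{aa:pairing-bi-ext}, $B_f$ is the bi-additive function determined on $N\times N$ by $\sigma_1=B_f\cdot\sigma_2$. Since $\tau$ interchanges $G\times N$ and $N\times G$, the pullbacks $\tau^*\sigma_1$ and $\tau^*\sigma_2$ are bi-multiplicative trivializations of $\tau^*E\bigl\lvert_{N\times G}$ and $\tau^*E\bigl\lvert_{G\times N}$, respectively.

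For the symmetric case, the condition $f=f^*$ furnishes a unique canonical isomorphism $\Phi:\tau^*E\rar{\simeq}E$ of bi-extensions. Uniqueness of bi-multiplicative trivializations then forces $\Phi\circ\tau^*\sigma_1=\sigma_2$ and $\Phi\circ\tau^*\sigma_2=\sigma_1$, since both sides of each equation are the unique bi-multiplicative trivialization of $E$ over the same subscheme. Restricting to the diagonal $\Delta(N)\subset N\times N$, on which $\tau$ acts as the identity, this gives $\sigma_1\bigl\lvert_{\Delta(N)}=\sigma_2\bigl\lvert_{\Delta(N)}$, and therefore $B_f(x,x)=0$ for all $x\in N$. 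Since $B_f$ is bi-additive, this is equivalent to saying that $B_f$ is alternating.

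For the skewsymmetric case, the relation $f+f^*=0$ implies that the bi-extension $E\otimes\tau^*E$ corresponds to the zero morphism $G\to G^*$ and is therefore trivial, yielding a unique canonical isomorphism $\eta:\tau^*E\rar{\simeq}E^{-1}$. Uniqueness of bi-multiplicative trivializations again forces $\eta\circ\tau^*\sigma_1=\sigma_2^\vee$ and $\eta\circ\tau^*\sigma_2=\sigma_1^\vee$, where $\sigma_j^\vee$ denotes the trivialization of $E^{-1}$ dual to $\sigma_j$. A direct calculation in the fibers then shows two things on $N\times N$: the relation $\sigma_1=B_f\cdot\sigma_2$ pulls back under $\tau$ to $\tau^*\sigma_1=(B_f\circ\tau)\cdot\tau^*\sigma_2$ (argument swap), and separately translates to $\sigma_2^\vee=B_f\cdot\sigma_1^\vee$ in $E^{-1}$ (torsor inversion flips numerator and denominator in the ratio). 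Transporting the first equation across $\eta$ and comparing with the second yields $B_f\circ\tau=B_f$, i.e., $B_f(y,x)=B_f(x,y)$, so $B_f$ is symmetric.

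The main delicacy, and where the ``parity change'' phenomenon literally takes place, lies in the skewsymmetric step: the swap $\tau$ contributes one interchange (of the arguments of $B_f$), while the inversion in $E^{-1}$ contributes another (of the roles of $\sigma_1,\sigma_2$ in defining $B_f$), and the two combine to give symmetry rather than antisymmetry. Consequently, the bookkeeping of sign/inversion conventions when comparing trivializations of $\tau^*E$ with those of $E^{-1}$ via $\eta$ is the only part of the argument that requires genuine care.
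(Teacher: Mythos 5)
Your overall strategy—exploit the swap $\tau$ via the rigidity of trivializations over $N\times G$ and $G\times N$—is the right one and, in the symmetric case, is essentially the same as the paper's (the paper works with a map $\widetilde\tau:E\to E$ covering $\tau$ and interchanging the two partial group laws, which is your $\Phi$ composed with the canonical identification $\tau^*E\cong E$ above the diagonal). However, there is a small gap in your symmetric case as written. From $\Phi\circ\tau^*\sigma_1=\sigma_2$ you restrict to $\Delta(N)$ and conclude $\sigma_1\lvert_{\Delta(N)}=\sigma_2\lvert_{\Delta(N)}$, justified only by ``$\tau$ acts as the identity on $\Delta(N)$.'' That observation gives you $\tau^*\sigma_1\lvert_{\Delta}=\sigma_1\lvert_{\Delta}$, but you still have $\Phi\bigl(\sigma_1(x,x)\bigr)=\sigma_2(x,x)$, and $\Phi$ over the diagonal is a priori an automorphism of the $\qzp$-torsor $E\lvert_{\Delta(G)}$, not obviously the identity. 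You must add the observation that $\Phi$, being the unique isomorphism of bi-extensions, fixes the rigidification point, so $\Phi\lvert_{\Delta(G)}$ is an automorphism of a $\qzp$-torsor over the connected scheme $G$ fixing one fiber, hence the identity. This is exactly the ``by continuity'' step the paper invokes. Once inserted, the symmetric case is complete.

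Your treatment of the skewsymmetric case is correct, is free of the analogous subtlety (no restriction to the diagonal is needed, since you compare two equations valid on all of $N\times N$), and takes a genuinely different route from the paper. The paper derives symmetry of $B_f$ as a corollary of Lemma \ref{l:skewsym=>nondeg-quadr-form}, which builds a $\qzp$-valued quadratic form $q$ from the canonical lift $\widetilde\Delta:G\to E$ of the diagonal and then reads off symmetry from the polarization identity $B_f(x,y)=q(x+y)-q(x)-q(y)$. Your argument bypasses $q$ and $\widetilde\Delta$ entirely, producing the unique isomorphism $\eta:\tau^*E\iso E^{-1}$ from the relation $f^*=-f$ alone and comparing the two resulting expressions for $\sigma_2^\vee$ in terms of $\sigma_1^\vee$. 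This is shorter and more symmetric with your treatment of part (a); it also only uses the weaker relation $f^*=-f$ rather than the full diagonal triviality. The paper's route, while longer, has the advantage of producing the quadratic refinement $q$, which the paper needs anyway (it records more information about $E$ than $B_f$ alone). For proving this lemma in isolation, your approach is cleaner; for the paper's purposes the extra structure $q$ is worth having.
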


This lemma is proved at the end of the subsection. In the
skewsymmetric case, we can in fact prove a more precise result (see
Lemma \ref{l:skewsym=>nondeg-quadr-form} below). To state it we
introduce some notation. Because we are working in the commutative
situation, we will denote the partial group laws on $E$ by $+_1$ and
$+_2$, as opposed to $\bullet_1$ and $\bullet_2$. The group laws on
$G$ and $A$ will also be written additively. Since $E$ is either
symmetric or skewsymmetric, we have $\Ker f=\Ker f^*$, and thus we
have unique trivializations $\la:(\Ker f)\times_k G\rar{}E$ and
$\rho:G\times_k(\Ker f)\rar{}E$ (cf.~Definition
\ref{d:trivialization-bi-ext}), as in \S\ref{aa:pairing-bi-ext}.

\mbr

With this notation, the bi-additive map \eqref{e:pairing-bi-ext} is
explicitly defined by
\[
\la(x,y) = B_f(x,y) + \rho(x,y) \qquad \forall\, x,y\in\Ker f.
\]
If, in addition, $E$ is skewsymmetric, then, by Definition
\ref{d:symm-skew-symm}, we have a unique morphism
$\widetilde{\De}:G\rar{}E$ such that $\widetilde{\De}(0)=0$ and
$\pi\circ\widetilde{\De}$ is the diagonal morphism
$\De:G\rar{}G\times_k G$. In this case we define a morphism $q:\Ker
f\rar{}\qzp$ by
\[
\la(x,x)=q(x)+\widetilde{\De}(x) \qquad \forall\, x\in\Ker f.
\]

\begin{lem}\label{l:skewsym=>nondeg-quadr-form}
Assume that $k$ is algebraically closed, and that $E$ is
skewsymmetric. Then $q$ descends to a map $q:\pi_0(\Ker
f)\rar{}\qzp$ satisfying the following identities:
\begin{enumerate}[$(1)$]
\item $q(nx) = n^2\cdot q(x)$ for all $n\in\bZ$ and all
$x\in\pi_0(\Ker f)$; and
\item $B_f(x,y) = q(x+y)-q(x)-q(y)$ for all $x,y\in\pi_0(\Ker f)$.
\end{enumerate}
In particular, $q$ is a ``nondegenerate $\qzp$-valued quadratic
form'' on $\pi_0(\Ker f)$.
\end{lem}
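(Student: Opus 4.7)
My plan is to establish the four assertions of the lemma in sequence. First, the descent of $q$ to $\pi_0(\Ker f)$: the map $q:\Ker f\to\qzp$ is a morphism of $k$-schemes (measuring the difference of the two sections $\lambda|_{\mathrm{diag}(\Ker f)}$ and $\widetilde{\Delta}|_{\Ker f}$ of the $\qzp$-torsor $E|_{\Delta(\Ker f)}$); since $\qzp$ is discrete, $(\Ker f)^\circ$ is connected, and $q(0)=0$ (because $\lambda(0,0)$ and $\widetilde{\Delta}(0)$ both equal the identity element of the fiber $E|_{(0,0)}$), $q$ vanishes on $(\Ker f)^\circ$ and factors through $\pi_0(\Ker f)$.

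For identity (2), the key step is to compute $\lambda(x+y,x+y)$ in two ways. By definition, $\lambda(x+y,x+y)=q(x+y)+\widetilde{\Delta}(x+y)$; on the other hand, bimultiplicativity of $\lambda$ together with the compatibility of $+_1$ and $+_2$ and the substitutions $\lambda(x,x)=q(x)+\widetilde{\Delta}(x)$, $\lambda(y,y)=q(y)+\widetilde{\Delta}(y)$ give $\lambda(x+y,x+y)=A_\lambda(x,y)+q(x)+q(y)$, where
\[
A_\lambda(x,y):=\bigl(\widetilde{\Delta}(x)+_2\lambda(x,y)\bigr)+_1\bigl(\lambda(y,x)+_2\widetilde{\Delta}(y)\bigr).
\]
Identity (2) is thus equivalent to the auxiliary claim $A_\lambda(x,y)=\widetilde{\Delta}(x+y)+B_f(x,y)$. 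Both sides are morphisms $\Ker f\times\Ker f\to E$ projecting to $\Delta(x+y)$, and a direct calculation using that $\lambda(0,y)$ and $\lambda(y,0)$ are the $+_1$- and $+_2$-identities (for $y\in\Ker f$) shows they agree along $\{0\}\times\Ker f$ and $\Ker f\times\{0\}$. The difference $D(x,y):=A_\lambda(x,y)-\widetilde{\Delta}(x+y)\in\qzp$ is then checked, by a further application of the compatibility axioms to decompose $A_\lambda(x+x',y)$ in terms of $A_\lambda(x,y)$ and $A_\lambda(x',y)$, to be bi-additive after passage to $\pi_0(\Ker f)\times\pi_0(\Ker f)$; agreement with the bi-additive $B_f$ on the axes then forces $D=B_f$ globally.

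For identity (1), I would first deduce the special case $q(2x)=4q(x)$ directly: setting $y=x$ in the formula for $A_\lambda$, and observing that $Q(x):=\bigl(\widetilde{\Delta}(x)+_2\widetilde{\Delta}(x)\bigr)+_1\bigl(\widetilde{\Delta}(x)+_2\widetilde{\Delta}(x)\bigr)$ is a section of $E|_{\Delta(G)}$ over $2x$ vanishing at $x=0$, so that uniqueness of $\widetilde{\Delta}$ forces $Q(x)=\widetilde{\Delta}(2x)$; comparing the two expressions for $A_\lambda(x,x)$ yields $q(2x)=4q(x)$, equivalent to $B_f(x,x)=2q(x)$ by the specialization of (2) to $y=x$. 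The general identity $q(nx)=n^2q(x)$ follows by induction on $n\geq 0$ from (2) and bi-additivity of $B_f$, and the case $n<0$ follows from $q(-x)=q(x)$, a consequence of (2) with $y=-x$ combined with $B_f(x,-x)=-B_f(x,x)$. Finally, that $q$ is ``nondegenerate'' as a quadratic form on $\pi_0(\Ker f)$ amounts to nondegeneracy of its associated bilinear form, which is $B_f$ by (2), and which is nondegenerate by Proposition \ref{p:pairing-bi-ext-nondeg}.

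The main expected obstacle is verifying the auxiliary claim for (2): naïve direct applications of the bi-extension compatibility axiom to a $2\times 2$ grid produce only tautologies or the bi-additivity of $B_f$, so extracting the non-trivial quadratic content requires combining $\lambda$, $\rho$, and $\widetilde{\Delta}$ asymmetrically and invoking the uniqueness property of $\widetilde{\Delta}$ to pin down the discrepancy $B_f$ across distinct connected components of $\Ker f$.
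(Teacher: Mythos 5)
Your proof of part (1) is essentially the paper's argument in the special case $n=2$ (compare $\lambda(2x,2x)$ computed two ways, and identify $Q(x)=(\widetilde{\De}(x)+_2\widetilde{\De}(x))+_1(\widetilde{\De}(x)+_2\widetilde{\De}(x))$ with $\widetilde{\De}(2x)$ by the usual continuity-on-$G$-with-discrete-target argument); the paper simply does this for all $n$ at once via the identity $\widetilde{\De}(nx)=n*_1(n*_2\widetilde{\De}(x))$, but deriving the general $n$ from $n=2$ together with (2) is a perfectly sound alternative.

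The gap is in your proof of part (2), and it is a genuine one. You reduce (2), as the paper does, to the auxiliary claim $A_\lambda(x,y)=\widetilde{\De}(x+y)+B_f(x,y)$, equivalently to the vanishing of $D(x,y)=A_\lambda(x,y)-\widetilde{\De}(x+y)-B_f(x,y)$. But you then try to close the argument by showing that $D$ is bi-additive on $\pi_0(\Ker f)\times\pi_0(\Ker f)$ and agrees with $0$ on the two axes. This proves nothing: \emph{every} bi-additive map vanishes on the axes, so ``bi-additive and zero on the axes'' is no constraint at all, and does not force $D\equiv 0$. Your anticipated obstacle is real, and the resolution you sketch does not actually overcome it.

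The paper's way around this is not a bi-additivity argument but a connectedness argument of a different shape. After substituting $\lambda(x,y)=B_f(x,y)+\rho(x,y)$, the claim becomes the identity
\[
\widetilde{\De}(x+y) \,=\, \bigl(\widetilde{\De}(x)+_2\rho(x,y)\bigr)+_1\bigl(\lambda(y,x)+_2\widetilde{\De}(y)\bigr).
\]
The crucial observation is that both sides make sense not merely for $(x,y)\in\Ker f\times\Ker f$ but for $(x,y)\in G\times\Ker f$: indeed $\rho$ is a trivialization of $E$ over $G\times\Ker f$, $\lambda$ is a trivialization over $\Ker f\times G$, and $\widetilde{\De}$ is defined on all of $G$. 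Both sides project to $\De(x+y)$, so their difference is a $\qzp$-valued function on $G\times\Ker f$; it vanishes at $x=0$; and since $G$ is connected and $\qzp$ is discrete, for each fixed $y$ the difference is constant in $x$ and hence identically zero. This is the step your argument is missing, and it cannot be replaced by the axes/bi-additivity reasoning.
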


\begin{proof}{Proof}
(1) Given $n\in\bZ$, let $e\longmapsto n*_1e$ and $e\longmapsto
n*_2e$ denote the ``multiplication by $n$'' action on $E$ obtained
from the first and the second partial group laws, respectively.
Consider the identity $\widetilde{\De}(n\cdot
x)=n*_1(n*_2\widetilde{\De}(x))$ for $x\in G$. It holds for $x=0$ by
construction, and becomes true after we apply $\pi$ to both sides.
Hence this identity holds for all $x$, by continuity. On the other
hand, since $\la$ is bi-additive, we have $\la(n\cdot x,n\cdot
x)=n*_1(n*_2\la(x,x))$ for all $x\in\Ker f$, proving (1).

\mbr
 \noindent
(2) We take $x,y\in(\Ker f)(k)$. By construction, we have
\begin{equation}\label{e:sum-def}
\la(x+y,x+y)=q(x+y)+\widetilde{\De}(x+y).
\end{equation}
On the other hand, using the bi-additivity of $\la$, we find
\begin{equation}\label{e:sum-expand}
\begin{split}
\la(x+y,x+y) &= \bigl( \la(x,x) +_2 \la(x,y) \bigr) +_1 \bigl(
\la(y,x) +_2 \la(y,y) \bigr) \\
&= q(x) + q(y )+ \bigl( \widetilde{\De}(x) +_2 \la(x,y) \bigr) +_1
\bigl( \la(y,x) +_2 \widetilde{\De}(y) \bigr)
\end{split}
\end{equation}
Comparing \eqref{e:sum-def} and \eqref{e:sum-expand}, we see that
(2) reduces to verifying the identity
\[
\widetilde{\De}(x+y) = \bigl( \widetilde{\De}(x) +_2 \rho(x,y)
\bigr) +_1 \bigl( \la(y,x) +_2 \widetilde{\De}(y) \bigr).
\]
However, the last identity is meaningful for all $(x,y)\in
G(k)\times(\Ker f)(k)$. Moreover, it is satisfied by continuity,
because it holds whenever $x=0$, because $G$ is connected, and
because it becomes true after we apply $\pi$ to both sides.
\end{proof}

\begin{proof}{Proof of Lemma \ref{l:parity-change}}
If $E$ is skewsymmetric, the symmetry of $B_f$ follows from Lemma
\ref{l:skewsym=>nondeg-quadr-form}(2). Now assume that $E$ is
symmetric and use the notation introduced above. Let
$\widetilde{\tau}:E\rar{\simeq}E$ be the isomorphism of $k$-schemes
which induces an isomorphism of bi-extensions between $\tau^*(E)$
and $E$. In other words, $\pi\circ\widetilde{\tau}=\tau\circ\pi$,
and $\widetilde{\tau}$ interchanges the two partial group laws on
$E$. Uniqueness of trivializations implies that
$\la(x,y)=\widetilde{\tau}(\rho(y,x))$ for all $(x,y)\in(\Ker
f)(k)\times G(k)$. On the other hand, by continuity,
$\widetilde{\tau}$ is the identity above the diagonal in $G\times_k
G$. Thus $\la(x,x)=\rho(x,x)$ for all $x\in(\Ker f)(k)$, which means
that $B_f(x,x)=0$, i.e., $B_f$ is alternating.
\end{proof}

\subsection{Lagrangian subgroups}\label{aa:lagrangian}
Every finite dimensional vector space $V$ equipped with
an alternating bilinear form $\om$ admits a Lagrangian subspace,
i.e., a subspace $L\subset V$ which coincides with its annihilator
with respect to $\om$. In particular, the rank of
$\om$ is even.

\mbr

Let us study the geometric analogue of this
statement. Fix $G\in\cpuc_k$ and a skewsymmetric bi-extension $E$ of
$(G,G)$ by $\qzp$. Let $f:G\rar{}G^*$ denote the corresponding
morphism. The \emph{rank} of $E$ is defined to be $\rk E=\dim f(G)$.

\begin{defin}\label{d:lagrangian}
An \emph{almost Lagrangian} subgroup of $G$ with respect to $E$ is a
closed connected subgroup $L\subset G$ such that $L\subset
f^{-1}(\Ann(L))$, and $L$ has finite index in $f^{-1}(\Ann(L))$,
where $\Ann(L)=\Ker(G^*\onto L^*)\subset G^*$ is the annihilator of
$L$ in $G^*$. We say that $L$ is simply \emph{Lagrangian} if
$f^{-1}(\Ann(L))=L$.
\end{defin}

Note that if $G$ has an almost Lagrangian subgroup with respect to
$E$, then $\rk E$ must be even. However, $\rk E$ is not always even
in the geometric setup: for instance, there exist nontrivial
skewsymmetric bi-extensions of $(\gap,\gap)$ by $\qzp$. This is the
first obstruction to the existence of almost Lagrangian subgroups.
If the base field $k$ is algebraically closed, this is the only
obstruction. The non-algebraically closed case appears to be more
intricate. Part (a) of the following result was conjectured in
\cite{drinfeld-lectures}.

\begin{prop}\label{p:existence-lagr}
\begin{enumerate}[$($a$)$]
\item If $k$ is algebraically closed and $\rk E$ is even, $G$ has an almost Lagrangian subgroup with
respect to $E$.
\item Allow $k$ to be merely perfect, and suppose that $E$ is nondegenerate
in the strong\footnote{The ``weak'' nondegeneracy condition is that
$f$ is an isogeny.} sense, i.e., $f:G\rar{}G^*$ is an isomorphism.
Then $\dim G$ is even and every almost Lagrangian subgroup of
$G$ with respect to $E$ is Lagrangian.
\end{enumerate}
\end{prop}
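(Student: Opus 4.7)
The strategy is to reduce both parts to a single key lemma: if $k$ is algebraically closed and $f:G \to G^*$ is a skewsymmetric isogeny, then $\dim G$ is even and $G$ admits an almost Lagrangian subgroup with respect to $E$.

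To prove this lemma, I would induct on $\dim G$. The base case $\dim G = 0$ is trivial. For the inductive step, choose any closed connected $1$-dimensional subgroup $H \cong \gap$ of $G$; this exists because every group in $\cpuc_{\kbar}$ admits a composition series with successive quotients isomorphic to $\gap$. The central technical claim is that $H$ is automatically isotropic, i.e., $H \subset H^\perp := f^{-1}(\Ann H)$; equivalently, $E|_{H\times H}$, which is a skewsymmetric bi-extension of $(\gap,\gap)$ by $\qzp$, is trivial. Granted this, skewsymmetry of $E$ together with Corollary \ref{c:bi-ext-connected} implies that $E|_{H^\perp\times H^\perp}$ descends to a skewsymmetric bi-extension $\bar E$ on $H^\perp/H$. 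A dimension count shows that the associated morphism $\bar f : H^\perp/H \to (H^\perp/H)^*$ has kernel $(H + \Ker f)/H$, which is finite, so $\bar f$ is again an isogeny. By induction $\dim H^\perp/H = \dim G - 2$ is even (whence $\dim G$ is even) and $H^\perp/H$ contains an almost Lagrangian $\bar L$. Taking its preimage $L \subset H^\perp$ yields a closed connected subgroup with $L \subset L^\perp$ and $\dim L = \dim \bar L + 1 = \dim G / 2 = \dim L^\perp$; hence $L^\perp/L$ is finite and $L$ is almost Lagrangian in $G$.

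Part (a) follows from the lemma by the standard reduction to the isogeny case: replacing $G$ by $G/(\Ker f)^\circ$ produces a group whose associated morphism is an isogeny and whose dimension equals $\rk E$, which is even by hypothesis; almost Lagrangians pull back to $G$ since $(\Ker f)^\circ \subset L^\perp$ automatically. For part (b), the evenness of $\dim G$ follows from the lemma after base change to $\kbar$ (since $\rk E = \dim G$ when $f$ is an isomorphism). The remaining assertion that every almost Lagrangian $L$ is Lagrangian reduces to showing that $L^\perp = f^{-1}(\Ann L)$ is connected: by Theorem \ref{t:serre-duality}, Serre duality is an exact anti-autoequivalence of $\cpuc_k$, so dualizing the short exact sequence $0 \to L \to G \to G/L \to 0$ produces an exact sequence identifying $\Ann L$ with $(G/L)^*$, which lies in $\cpuc_k$ and so is connected; since $f$ is an isomorphism, $L^\perp$ is also connected, and a closed connected subgroup of finite index in a connected group must coincide with it, so $L = L^\perp$.

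The main obstacle is the key isotropy claim: every skewsymmetric bi-extension of $(\gap,\gap)$ by $\qzp$ is trivial. For $p > 2$ this is, by the remark following Definition \ref{d:symm-skew-symm}, equivalent to the vanishing of all antiself-dual morphisms $\gap \to \gap^*$, which can be extracted from an explicit description of $\gap^*$ via Artin-Schreier theory (or equivalently, from a Dieudonn\'e-theoretic computation). In characteristic $2$ one must exploit the full definition of skewsymmetry (triviality of $E$ on the diagonal), rather than merely the relation $h = -h^*$, but a similar direct analysis of $\gap^*$ should suffice.
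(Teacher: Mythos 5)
Your key technical claim --- that every skewsymmetric bi-extension of $(\gap,\gap)$ by $\qzp$ is trivial --- is false, and the paper states the opposite explicitly just before Proposition \ref{p:existence-lagr}: ``there exist nontrivial skewsymmetric bi-extensions of $(\gap,\gap)$ by $\qzp$. This is the first obstruction to the existence of almost Lagrangian subgroups.'' Concretely, identifying $\gap^*\cong\gap$ so that morphisms $\gap\rar{}\gap^*$ become elements of $R=k\{\tau,\tau^{-1}\}$ with the anti-involution $c^*=c$, $\tau^*=\tau^{-1}$, the skewsymmetric elements are precisely the sums of terms $\tau^j c - c\tau^{-j}$; already $\tau-\tau^{-1}$ is nonzero. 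So an arbitrary $1$-dimensional connected $H\subset G$ need \emph{not} be isotropic, and your inductive step collapses at its first move. (This is also why the hypothesis ``$\rk E$ is even'' in part (a) is not vacuous.)

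The paper's repair is its Lemma \ref{l:gap-2-skewsymm}: over algebraically closed $k$, any skewsymmetric $f:\gap^2\rar{}(\gap^2)^*$ admits a \emph{nonzero} $\al:\gap\rar{}\gap^2$ with $\al^*\circ f\circ\al=0$, whose image furnishes a $1$-dimensional isotropic subgroup. The proof is a genuine dimension count over $k$ inside the twisted Laurent polynomial ring $R$; nothing as soft as ``unwind Serre duality'' produces it, and the choice of isotropic line must depend on $f$. The rest of your framework (reduction to $p$-torsion, reduction to the isogeny case, descent of $E$ to $H^\perp/H$, the $\Ann(L)\cong(G/L)^*$ argument for connectedness of $L^\perp$ in part (b)) is sound and close in spirit to the paper. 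For the evenness of $\dim G$ in (b), what the paper actually shows --- and all it needs --- is the \emph{weaker} statement that $\gap$ has no \emph{nondegenerate} skewsymmetric bi-extension: skewsymmetric elements of $R$ are never units, since the units of $R$ are exactly $c\tau^i$ with $c\in k^\times$. Nontrivial degenerate ones do exist, so you cannot hope for the stronger vanishing you assert.
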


Before proving this proposition we need to understand Serre duality
more explicitly in a special case. It is well known that the objects
of $\cpuc_k$ that are annihilated by $p$ are isomorphic to direct
sums of copies of $\gap$. On the other hand, we have
\begin{equation}\label{e:end-gap}
\End_{\cpuc_k}(\gap) \cong R:=k\{\tau,\tau^{-1}\},
\end{equation}
the ring of twisted Laurent polynomials, determined by $\tau\cdot
c=c^p\cdot\tau$ for $c\in k$. The isomorphism \eqref{e:end-gap} is
obtained by letting elements of $k$ act on $\gap$ by dilations, and
letting $\tau$ act by $x\longmapsto x^p$.

\mbr

To describe the Serre duality functor on the subcategory of
$p$-torsion objects of $\cpuc_k$, we only need to explain how it
acts on $\gap$ and on endomorphisms of $\gap$. This was done in
\cite{intro}. One can identify $\gap^*$ with $\gap$ so that the
action of the Serre duality functor on endomorphisms of $\gap$
becomes the anti-involution $f\longmapsto f^*$ of the ring $R$
determined by $c^*=c$ for $c\in k$, and $\tau^*=\tau^{-1}$.

\mbr

Below we will prove the following purely algebraic result:

\begin{lem}\label{l:gap-2-skewsymm}
Let $k$ be algebraically closed, and let $f:\gap^2\rar{}(\gap^2)^*$
be a morphism of $k$-group schemes such that $f^*=-f$. Then there
exists a nonzero morphism $\al:\gap\rar{}\gap^2$ such that
$\al^*\circ f\circ \al=0$.
\end{lem}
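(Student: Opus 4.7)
The plan is to translate the lemma into a question about $2\times 2$ skew-Hermitian matrices over the twisted Laurent polynomial ring $R = k\{\tau,\tau^{-1}\}$ introduced just before the lemma. Under the identification $\End_{\cpuc_k}(\gap) \cong R$ with anti-involution $c^* = c$ for $c \in k$ and $\tau^* = \tau^{-1}$, one gets $\End(\gap^2) \cong M_2(R)$ with anti-involution $(m_{ij})^* = (m_{ji}^*)$. The morphism $f$ is encoded by a matrix
\[
M = \begin{pmatrix} a & b \\ -b^* & c \end{pmatrix} \in M_2(R), \qquad a^* = -a,\; c^* = -c,
\]
and a morphism $\al : \gap \to \gap^2$ is a column vector $v \in R^2$. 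The goal is to produce a nonzero $v \in R^2$ with $v^* M v = 0$.

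If $a = 0$ I take $v = (1,0)^{\top}$, and if $c = 0$ I take $v = (0,1)^{\top}$, so one may assume $a, c \neq 0$. Since $R$ is a two-sided Ore domain with skew field of fractions $D$, any nonzero isotropic $v \in D^2$ can be rescaled by a suitable nonzero element of $R$ (the left Ore condition lets me clear denominators) to yield a nonzero isotropic vector in $R^2$; so it suffices to find $v \in D^2 \setminus \{0\}$ with $v^* M v = 0$. Trying $v = (x, 1)^{\top}$ and substituting $x = -a^{-1}b + s$, the identity $(a^{-1})^* = (a^*)^{-1} = -a^{-1}$ gives, after a short computation,
\[
v^* M v = s^* a s + d, \qquad d := c + b^* a^{-1} b,
\]
where $d$ is skew since $d^* = -c - b^* a^{-1} b = -d$. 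If $d = 0$, take $s = 0$; otherwise the problem reduces to finding $s \in D$ with $s^* a s = -d$ for given nonzero skew $a, d \in D$.

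The main obstacle I anticipate is this last equation. Its solvability for arbitrary nonzero skew $a, d$ is equivalent to saying that for every nonzero skew $a \in D$ the map $s \mapsto s^* a s$ surjects $D^\times$ onto the set of nonzero skew elements of $D$; equivalently, that all nondegenerate rank-one skew-Hermitian forms over $D$ are isometric. This is the only point where the algebraic closure of $k$ should be essential. My preferred strategy is to work directly over $R$, exploiting that $R$ is a principal ideal domain on both sides: by applying the congruence action $M \mapsto P^* M P$ with $P \in GL_2(R)$ (which corresponds to a change of basis and preserves the isotropy condition), one should be able to bring $M$ to a normal form in which the $(1,1)$ or $(2,2)$ entry vanishes, at which point a standard basis vector furnishes the desired $v$. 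The algebraic closure of $k$ enters via extraction of $p$-th roots and by ensuring that the auxiliary polynomial equations in $k$ arising along the way always have solutions.
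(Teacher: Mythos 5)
Your setup is right and the reduction is clean: identifying $\End(\gap)$ with $R=k\{\tau,\tau^{-1}\}$, writing $f$ as a skew-Hermitian matrix over $R$, passing to the skew fraction field $D$, dispatching the cases $a=0$ or $c=0$, and then (after the substitution $x=-a^{-1}b+s$) reducing to the scalar equation $s^*as=-d$ for nonzero skew $a,d\in D$. The trouble is that this scalar equation \emph{is} the entire content of the lemma, and you don't prove it. Saying that it is equivalent to all rank-one nondegenerate skew-Hermitian forms over $D$ being isometric is a correct reformulation, but not an argument; over a noncommutative division ring with involution that classification is genuinely subtle (skew-Hermitian Witt groups need not be trivial in general), and nothing in your sketch rules out a nontrivial obstruction here. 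The final paragraph — applying the congruence action $M\mapsto P^*MP$ with $P\in GL_2(R)$ and invoking the two-sided PID structure to reach a normal form with a vanishing diagonal entry — is precisely what one would need, but it is stated as a hope rather than carried out, and it is not at all clear that such a normal form exists or that the row-reduction steps preserve skew-Hermitian shape. So there is a real gap at the decisive step.

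By contrast, the paper's proof of this lemma sidesteps the reduction to a rank-one problem entirely. It keeps the two-variable equation $F(x,y)=x^*ax-y^*b^*x+x^*by+y^*dy=0$, observes that $F$ lands in the skew elements of $R$, truncates to the $k$-vector spaces $R_N^2$ (dimension $4N+2$) and $R^{skew}_{2N+m}$ (dimension $2N+m$ or $2N+m+1$), massages $F$ into an honest polynomial map over $k$ by a Frobenius twist on the coordinates, and then applies the theorem on dimension of fibers: since the source has larger dimension than the target for $N\ge m$ and $k$ is algebraically closed, the fiber over $0$ is positive-dimensional, hence contains a nonzero point. This is where algebraic closure does the work, in a way that your sketch anticipates but does not realize. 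If you want to salvage your route, you would either have to actually prove the rank-one isometry statement over $D$ (which may well be as hard as the original lemma, or even require it), or abandon the reduction to a scalar equation and run a dimension count on $F$ directly as the paper does.
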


Let us explain why this lemma implies Proposition
\ref{p:existence-lagr}.

\mbr

For part (a) of the proposition, we use induction on $\dim G$. What
allows us to reduce $\dim G$ is the construction of subquotients of
$E$. Namely, if $H\subset G$ is a closed connected \emph{isotropic}
subgroup, i.e., such that $H\subset H^{\perp}:=f^{-1}(\Ann(H))$,
then $E$ induces a skewsymmetric bi-extension $E'$ of
$G':=(H^\perp)^\circ/H$ by $\qzp$, and if $L'$ is an almost
Lagrangian subgroup of $G'$ with respect to $E'$, then the preimage
of $L'$ in $H^\perp$ is an almost Lagrangian subgroup of $G$ with
respect to $E$. Furthermore, $\rk E'$ has the same parity as $\rk
E$, and $\dim G'<\dim G$ so long as $H$ is nontrivial.

\mbr

First we reduce to the case $p\cdot G=0$. If $p\cdot G\neq 0$, let
$n\geq 2$ be the smallest integer for which $p^n\cdot G=0$ (it
exists because $G$ is a commutative perfect unipotent group). Then
$p^{n-1}\cdot G$ is a nontrivial connected isotropic subgroup of $G$
with respect to $E$, so we are done by induction on $\dim G$, as
explained above.

\mbr

Now assume that $p\cdot G=0$, so that $G\cong\gap^d$ for some
$d\in\bN$. If $d=1$, then $E=0$ because $\rk E$ is even by
assumption, so $G$ is almost Lagrangian in itself. Otherwise, let
$G''\subset G$ be a closed subgroup isomorphic to $\gap^2$. By Lemma
\ref{l:gap-2-skewsymm}, $G''$ has a nontrivial connected isotropic
subgroup $H$ with respect to $E\bigl\lvert_{G''\times_k G''}$. Then
$H$ is also isotropic in $G$, and we are again done by induction.

\mbr

Next we prove part (b) of the proposition. The second statement is
obvious: if $L\subset G$ is any connected subgroup, then
$\Ann(L)\subset G^*$ is also connected, whence
$L^\perp=f^{-1}(\Ann(L))$ is connected because $f$ is an
isomorphism. Thus $L$ is Lagrangian if and only if it is almost
Lagrangian. For the first statement, use the base change from $k$ to
an algebraic closure of $k$ (this changes neither the nondegeneracy
property of $E$ nor $\dim G$). Now let us try to repeat the same
inductive argument as above to prove that $G$ has an almost
Lagrangian subgroup with respect to $E$, which will imply that $\dim
G$ is even. Note that if $H\subset G$ is a closed connected subgroup
which is isotropic with respect to $E$, then the induced
skewsymmetric bi-extension $E'$ of $H^\perp/H$ by $\qzp$ is also
nondegenerate. Thus the only place where we cannot repeat the same
argument as above is when $G=\gap$.

\mbr

However, we claim that $\gap$ cannot have nondegenerate
skewsymmetric bi-extensions by $\qzp$. Indeed, consider a morphism
$f:\gap\rar{}\gap^*$ defining a skewsymmetric bi-extension. We
identify $\gap^*$ with $\gap$ as explained above, so that $f$
becomes an element of the ring $R=k\{\tau,\tau^{-1}\}$. It is easy
to check that being skewsymmetric becomes the property that $f$ can
be written as a sum of elements of the form $\tau^j\cdot c -
c\cdot\tau^{-j}$, where $j\in\bN$ and $c\in k$. However, no such
element is invertible in $R$, since all units of $R$ are of the form
$c\cdot\tau^i$, for $c\in k^\times$ and $i\in\bZ$.

\mbr

Thus we have proved both parts of Proposition
\ref{p:existence-lagr}.

\begin{proof}{Proof of Lemma \ref{l:gap-2-skewsymm}}
Our argument uses dimension counting, which is why we need to assume
that $k$ is algebraically closed. Using the identification of
$\gap^*$ with $\gap$, we can represent the morphism $f$ by a
two-by-two matrix
\[
\matr{a}{b}{-b^*}{d}, \qquad \text{where } a,b,d\in R \text{ and }
a=-a^*,\ d=-d^*.
\]
An arbitrary morphism $\al:\gap\rar{}\gap^2$ can be represented by a
vector $(x,y)\in R^2$, and then the element $\al^*\circ
f\circ\al\in\End_{\cpuc_k}(\gap)$ equals
\begin{equation}\label{e:F}
F(x,y) := x^* a x - y^* b^* x + x^* b y + y^* d y.
\end{equation}
We must show that in this situation there exist $x,y\in R$, not both
zero, such that $F(x,y)=0$. Note that we have $F(x,y)^*=-F(x,y)$ for
all $x,y\in R$.

\mbr

In what follows we will view $R$ as a vector space over $k$ with
respect to the action of $k$ on $R$ by left multiplication. For each
$N\in\bN$, let $R_N$ be the subspace of $R$ spanned over $k$ by
$\bigl\{\tau^{-N},\tau^{-N+1},\dotsc,\tau^N\bigr\}$. It has
dimension $2N+1$.

\mbr

Let $R^{skew}_N$ denote the subset of $R_N$ consisting of the
elements $z$ satisfying $z^*=-z$. It is not a $k$-subspace of $R_N$.
However, we can identify it with a suitable $k$-vector space.
Namely, let $R^+_N$ be the subspace of $R$ spanned by
$\bigl\{\tau,\tau^2,\dotsc,\tau^N\bigr\}$ if $p>2$, and by
$\bigl\{1,\tau,\dotsc,\tau^N\bigr\}$ if $p=2$. There is a unique
additive bijection $\phi:R^+_N\rar{\simeq}R^{skew}_N$ given by
$c\tau^j\longmapsto c\tau^j-\tau^{-j}c$ for $1\leq j\leq N$, $c\in
k$, and $c\longmapsto c$ if $p=2$. We have $\dim R^+_N=N$ if $p>2$,
and $\dim R^+_N=N+1$ if $p=2$.

\mbr

Choose $m\in\bN$ such that $a,b,d\in R_m$. Then the map $F$ defined
by \eqref{e:F} takes $R^2_N$ to $R^{skew}_{2N+m}$. Hence
$F':=\phi^{-1}\circ\bigl(F\bigl\lvert_{R_N}\bigr)$ is a map
$R^2_N\rar{}R^+_{2N+m}$. This map is not quite polynomial with
respect to the obvious coordinates on the $k$-vector spaces $R^2_N$
and $R^+_{2N+m}$, because of the equation $\tau^{-1}\cdot
c=c^{1/p}\cdot\tau^{-1}$ for $c\in k$. However, for any $s\in\bN$,
we have a bijection $k^{4N+2}\rar{\simeq}R^2_N$ given by
\[
\bigl(x_{-N},\dotsc,x_N,y_{-N},\dotsc,y_N\bigr)\longmapsto \Bigl(
\sum_i x_i^{p^s}\cdot\tau^i, \sum_j y_j^{p^s}\cdot\tau^j \Bigr),
\]
and the resulting composition $k^{4N+2}\rar{}R^+_{2N+m}$ is given by
a polynomial map with coefficients in $k$ if $s$ is large enough.
Since $4N+2>\dim_k R^+_{2N+m}$ whenever $N\geq m$, and since $k$ is
assumed to be algebraically closed, the standard theorem of the
dimension of fibers of an algebraic map implies that the equation
$F(x,y)=0$ has a nonzero solution $(x,y)\in R^2_N$ for $N\geq m$,
which proves the lemma.
\end{proof}

\subsection{Noncommutative Serre duality}\label{aa:noncomm-Serre}
Let us once again fix a perfect field $k$ of characteristic $p>0$.
Let $G$ be a connected perfect unipotent group over $k$. This time
we do not assume that $G$ is commutative. The Serre dual of $G$ can
be defined, as a functor, in the same way as in
\S\ref{aa:Serre-duality}, by formula \eqref{e:serre-dual}, except
that the right hand side has to be interpreted as the group of
isomorphism classes of central extensions of $G\times_k S$ by the
discrete group scheme $\qzp$. The following result was conjectured
by Drinfeld in \cite{drinfeld-lectures}; the key idea of the proof
is also due to him.

\begin{prop}\label{p:serre-noncomm}
If $G$ is a connected perfect unipotent group over $k$, the
restriction of the functor $G^*$ to $\mathfrak{Perf}_k$ is
represented by an object of $\cpu_k$, which we also denote by $G^*$.
Moreover, the natural homomorphism $(G^{ab})^*\rar{}G^*$, induced by
the quotient map $G\rar{}G^{ab}=G/[G,G]$, identifies $(G^{ab})^*$
with $(G^*)^\circ$.
\end{prop}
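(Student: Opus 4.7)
The plan is to induct on $\dim G$, with the commutative case handled by Theorem~\ref{t:serre-duality}. The induction step rests on the exact sequence $1 \to [G,G] \to G \to G^{ab} \to 1$ and two natural transformations of functors on $\mathfrak{Perf}_k$: the pullback $\iota:(G^{ab})^* \to G^*$ of central extensions along $G\to G^{ab}$, and the restriction $\rho:G^* \to [G,G]^*$. By the inductive hypothesis $[G,G]^*$ lies in $\cpu_k$. First I would show exactness of $0 \to (G^{ab})^* \xrar{\iota} G^* \xrar{\rho} [G,G]^*$ at $G^*$. Injectivity of $\iota$ is clear: a scheme-theoretic section of a trivialized pullback factors through $G^{ab}$ by commutativity of $\widetilde{G^{ab}}$. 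For $\ker\rho \subset \mathrm{im}\,\iota$, given a trivializing section $s:[G,G]\to\tilde G$, conjugation by $\tilde g\in\tilde G$ sends $s(h)$ to $s(ghg^{-1})\cdot\lambda(g,h)$ for some morphism $\lambda:G\times[G,G]\to\qzp$ which is trivial because its source is connected and target discrete; hence $s([G,G])$ is normal in $\tilde G$ and the quotient $\tilde G/s([G,G])$ is a central extension of $G^{ab}$ by $\qzp$ pulling back to $\tilde G$. Moreover, $\rho$ factors through the $G^{ab}$-fixed closed subfunctor $([G,G]^*)^{G^{ab}}$, since conjugation by a chosen lift trivializes the action of $[G,G]$ on isomorphism classes of extensions.

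Next I would invoke the Hochschild--Serre spectral sequence for this extension with coefficients in $\qzp$, interpreted sheaf-theoretically on $\mathfrak{Perf}_k$. Since $H^1$ of any connected perfect unipotent group with discrete coefficients in $\qzp$ vanishes, the row $q=1$ of the $E_2$ page is zero, producing the short exact sequence
\[
0 \to (G^{ab})^* \to G^* \xrar{\rho} \ker\bigl(d_3:([G,G]^*)^{G^{ab}} \to H^3(G^{ab},\qzp)\bigr) \to 0.
\]
The subfunctor $([G,G]^*)^{G^{ab}}$ and its closed subgroup $\ker d_3$ lie in $\cpu_k$ by the inductive hypothesis, and extensions of $\cpu_k$-objects by $\cpu_k$-objects remain in $\cpu_k$; hence $G^*\in\cpu_k$.

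The final assertion $(G^*)^\circ = (G^{ab})^*$ reduces, via this exact sequence, to showing $\ker d_3$ is \'etale --- this is the main obstacle. My attack would be to describe $d_3$ as cup-product with the extension class of $1\to[G,G]\to G\to G^{ab}\to 1$ in $H^2(G^{ab},[G,G]^{ab})$ (which realizes the commutator pairing), and then exploit the perfect pairing of Theorem~\ref{t:serre-duality} between $[G,G]^{ab}$ and $([G,G]^{ab})^* = ([G,G]^*)^\circ$ (by induction) to force injectivity of $d_3$ on the neutral component of $([G,G]^*)^{G^{ab}}$. If this direct computation turns out to be unwieldy, I would fall back on a deformation-theoretic argument: any one-parameter family of central extensions of $G$ has, modulo the action of $\iota((G^{ab})^*)$, locally constant restriction to $[G,G]$, which one verifies using Proposition~\ref{p:ext-loc-sys} to translate extendability into a condition on commutator pairings whose solution locus is \'etale over the parameter.
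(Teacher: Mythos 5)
Your approach is genuinely different from the paper's, and the difference matters: where you reach for the Hochschild--Serre spectral sequence, the paper instead relies on Kamgarpour's theorem on the ``true \'etale commutator'' $[G,G]_{true}$ (Theorem~\ref{t:masoud}). That theorem produces a central extension $1\to\Pi\to[G,G]_{true}\to[G,G]\to 1$ with $\Pi$ a \emph{finite} unipotent $k$-group, such that every central extension of $G$ by $\qzp$ dies upon pullback to $[G,G]_{true}$, and the commutator $G\times G\to[G,G]$ lifts to $[G,G]_{true}$. This immediately gives the exact sequence $0\to(G^{ab})^*\to G^*\to\Hom(\Pi,\qzp)\to 0$ of functors on $\mathfrak{Perf}_k$, and since $\Hom(\Pi,\qzp)$ is finite, both assertions of the proposition follow at once. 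The finiteness of the cokernel is thus imported wholesale from Kamgarpour's result; it is not something the paper re-derives.

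This is exactly the step you flag as ``the main obstacle,'' and I agree it is the crux. Your two proposed fallbacks --- describing $d_3$ as cup-product with the extension class and exploiting the perfect pairing on $[G,G]^{ab}$, or a deformation-theoretic argument via Proposition~\ref{p:ext-loc-sys} --- are both plausible in spirit but neither is carried out, and it is far from clear either would succeed. To see why the finiteness of $\ker d_3$ is not cheap: in the example where $[G,G]$ is central of positive dimension (e.g.\ Heisenberg-type groups), the $G^{ab}$-action on $[G,G]^*$ is trivial and $([G,G]^*)^{G^{ab}}$ is positive-dimensional, so the finiteness must come from $d_3$ itself, i.e.\ from a quantitative statement about how much of $[G,G]^*$ is hit by restriction from $G^*$ --- which is essentially what Kamgarpour's theorem encodes. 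There is also a secondary gap: the existence, convergence, and representability properties of a Hochschild--Serre spectral sequence for central extensions of group schemes by $\qzp$ as functors on $\mathfrak{Perf}_k$ is asserted but not established, and $H^3(G^{ab},\qzp)$ in particular would need to be exhibited as at least a group ind-scheme before $\ker d_3$ can be promoted to an object of $\cpu_k$. The paper deliberately avoids this machinery. In short, your skeleton is correct (induction on $\dim G$, exactness of $0\to(G^{ab})^*\to G^*\to[G,G]^*$ at $G^*$, and the need to show finiteness of the image in $[G,G]^*$), but the proof is incomplete precisely at the point where the paper invokes the true commutator.
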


Let us first explain why the problem is nontrivial. Naively one
might expect that every central extension of $G$ by $\qzp$ restricts
to a trivial extension of $[G,G]$, so that $G^*=(G^{ab})^*$ as
functors. However, this is not so, as was already observed in
\cite{drinfeld-lectures}; see \cite{masoud} for more details.
Fortunately, as we will see below, this naive expectation only
``fails by a finite amount,'' which allows us to get a handle on
$G^*$.

\mbr

We use the following result of \cite{masoud} in a crucial way:
\begin{thm}[M.~Kamgarpour]\label{t:masoud}
There exists a $($unique$)$ central extension
\begin{equation}\label{e:true-comm}
1 \rar{} \Pi \rar{} [G,G]_{true} \rar{} [G,G] \rar{} 1,
\end{equation}
where $\Pi$ is a finite unipotent $k$-group\footnote{In other words,
a finite \'etale unipotent group scheme over $k$.}, characterized by
the properties that
\begin{itemize}
\item $[G,G]_{true}$ is connected,
\item every central
extension of $G$ by a finite unipotent $k$-group splits after pullback to
$[G,G]_{true}$, and
\item the commutator morphism $G\times_k G\rar{}G$
lifts to $[G,G]_{true}$.
\end{itemize}
Moreover, there exists a central extension
$\widetilde{G}\rar{\pi}G$ of $G$ by a finite unipotent $k$-group
such that \eqref{e:true-comm} is isomorphic to
$\pi^{-1}([G,G])^\circ\rar{}[G,G]$. Finally, the formation of
\eqref{e:true-comm} commutes with base change to algebraic
extensions of $k$.
\end{thm}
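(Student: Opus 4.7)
The strategy is to produce $[G,G]_{true}$ as the neutral connected component of the commutator subgroup inside a single, sufficiently large central extension $\pi:\widetilde{G}\to G$ of $G$ by a finite unipotent $k$-group. I would begin by observing that for any such $\pi$ with kernel $F$, the centrality of $F$ makes the commutator inside $\widetilde{G}$ depend only on images in $G$, yielding a morphism $c_{\widetilde{G}}:G\times_k G\to\widetilde{G}$ that lifts the commutator morphism of $G$ and factors through $[\widetilde{G},\widetilde{G}]$; in particular it factors through the connected subgroup $\pi^{-1}([G,G])^\circ$, which is itself a central extension of $[G,G]$ by a finite \'etale unipotent subgroup of $F$. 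Thus the existence of a commutator lift is automatic for any candidate of this form, and the real task is to select $\widetilde{G}$ so that the resulting cover $\pi^{-1}([G,G])^\circ$ enjoys the universal splitting property.

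To arrange universality I would consider the filtered system of all finite central extensions of $G$ by finite unipotent $k$-groups and show that the induced inverse system of connected covers $\pi^{-1}([G,G])^\circ\to[G,G]$ stabilizes at some finite cover. By d\'evissage on the kernel $F$, using that every finite unipotent $k$-group admits a filtration by central subgroups whose subquotients are commutative and killed by $p$, one reduces to the case $F=\bZ/p\bZ$. In that case an extension $\widetilde{G}\to G$ gives rise to a skew-symmetric bimultiplicative pairing $G\times_k G\to\bZ/p\bZ$, namely the commutator pairing, and the size of the new connected cover is controlled by this pairing modulo pairings that already come from extensions of $G^{ab}$. The main obstacle is to establish that the group of such ``exotic'' commutator pairings is finite; I would prove this by translating, via the Serre duality theory of \S\ref{aa:Serre-duality}, into the statement that a certain group of morphisms between perfect quasi-algebraic commutative unipotent groups is finite, which in turn follows from quasi-algebraicity of $G^{ab}$ together with the classical structure of $H^2$ of connected unipotent groups in positive characteristic.

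Given this stabilization, I define $[G,G]_{true}$ to be $\pi^{-1}([G,G])^\circ$ for any stabilizing $\widetilde{G}$, and $\Pi$ to be the kernel of the induced morphism $[G,G]_{true}\to[G,G]$. The universal splitting property is then verified as follows: for any other finite central extension $\widetilde{G}'\to G$, the fiber product $\widetilde{G}''=\widetilde{G}\times_G\widetilde{G}'$ is again a finite central extension dominating both; by stabilization $(\pi'')^{-1}([G,G])^\circ$ is canonically isomorphic to $[G,G]_{true}$, and the projection $\widetilde{G}''\to\widetilde{G}'$ restricted to this connected subgroup gives the desired splitting of the pullback of $\widetilde{G}'$ to $[G,G]_{true}$. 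Uniqueness of the pair $([G,G]_{true},\Pi)$ is formal from the universal property, and compatibility with base change to algebraic extensions of $k$ follows because both the system of finite central extensions and the operation of taking neutral connected components are compatible with such base change.
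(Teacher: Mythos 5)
The paper does not contain a proof of this theorem: it is stated as a result of M.~Kamgarpour and cited to \cite{masoud} (``Stacky abelianization of algebraic groups''), so there is no in-paper argument to compare against. I can only assess your proposal on its own terms.

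Your opening observation (that the commutator lift $G\times_k G\rar{}\widetilde{G}$ is automatic for any finite central extension $\pi:\widetilde{G}\rar{}G$, descends because the kernel is central, and factors through $\pi^{-1}([G,G])^\circ$) is correct, as is the formal part at the end: once you know the system of connected covers stabilizes, the fiber-product trick does give the universal splitting property, and uniqueness and compatibility with algebraic base change are then routine. The d\'evissage reducing to a kernel killed by $p$ is also fine modulo the usual care about Galois-stable filtrations.

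The gap is exactly at the point where the actual content of the theorem lives, namely the stabilization claim. You write that in the case $F=\bZ/p\bZ$ ``an extension $\widetilde{G}\rar{}G$ gives rise to a skew-symmetric bimultiplicative pairing $G\times_k G\rar{}\bZ/p\bZ$, namely the commutator pairing.'' This conflicts with your own first paragraph: the morphism $G\times_k G\rar{}\widetilde{G}$ obtained from the commutator of lifts takes values in $\pi^{-1}([G,G])$, not in the kernel $F$. It lands in $F$ only over pairs $(g_1,g_2)$ that commute in $G$, i.e., it gives a pairing $G\times_k G\rar{}F$ only when $[G,G]$ is trivial --- which is precisely the case in which the theorem is vacuous. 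Since your finiteness argument (``the size of the new connected cover is controlled by this pairing modulo pairings coming from $G^{ab}$, and is finite by a Serre-duality translation'') is built on this nonexistent pairing, it does not go through. Note also that the conclusion you are reaching for --- that the part of $G^*$ not coming from $(G^{ab})^*$ is finite --- is the content of Proposition \ref{p:serre-noncomm}, whose proof in the paper \emph{uses} the present theorem, so invoking anything along those lines here risks circularity. A correct proof has to control the image of $H^2(G,\bZ/p\bZ)\rar{}H^2([G,G],\bZ/p\bZ)$ by a different mechanism (this is what Kamgarpour's paper is for); as written, your argument does not supply one.
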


The group $[G,G]_{true}$, together with the homomorphism
$[G,G]_{true}\rar{}G$, is called the \emph{true $($\'etale$)$
commutator} \cite{drinfeld-lectures} of $G$, for the obvious reason.

\mbr

To prove Proposition \ref{p:serre-noncomm}, we use induction of
$\dim G$. If $\dim G=1$, then $G$ is commutative and we can apply
Theorem \ref{t:serre-duality}. Assume that $\dim G>1$ and the result
holds for all connected perfect unipotent groups $H$ over $k$ such
that $\dim H<\dim G$.

\mbr

If $S$ is any perfect scheme over $k$ and $A$ is an abstract
(discrete) abelian group, we will write $H^2(G\times_k S,A)$ for the
abelian group of isomorphism classes of central extensions of
$G\times_k S$ by $A$ in the category of group schemes over $S$.

\begin{lem}\label{l:pullback-true}
If $S$ is a perfect scheme over $k$, the natural homomorphism
\[
H^2(G\times_k S,\qzp) \rar{} H^2([G,G]_{true}\times_k S,\qzp)
\]
equals zero.
\end{lem}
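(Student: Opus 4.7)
The plan is to reduce the lemma to its absolute version, $S=\Spec k$, which is exactly Theorem \ref{t:masoud}. First I would reduce to finite coefficients: since $\qzp=\varinjlim_n\bZ/p^n\bZ$ and any central extension of the quasi-compact $S$-scheme $G\times_k S$ by $\qzp$ (as $S$-group schemes) factors through a central extension by some $\bZ/p^n\bZ$, it suffices to show that for each $n$ the natural map $H^2(G\times_k S,A)\to H^2([G,G]_{true}\times_k S,A)$ vanishes, where $A=\bZ/p^n\bZ$.

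Now let $\cE$ be a central extension of $G\times_k S$ by $A$ as $S$-group schemes. Since $A$ is central in $\cE$, the commutator in $\cE$ descends to a morphism of $S$-schemes $\widetilde{c}:(G\times G)\times_k S\to\cE$ whose composition with $\cE\to G\times_k S$ is the commutator morphism $\com$ of $G\times_k S$. By Theorem \ref{t:masoud}, $\com$ factors through a unique lift $c:G\times G\to[G,G]_{true}$, so $\widetilde{c}$ refines to a morphism $\widetilde{c}':(G\times G)\times_k S\to\iota^*\cE$ covering $c_S$, where $\iota:[G,G]_{true}\times_k S\to G\times_k S$ is the natural morphism and $\iota^*\cE$ denotes the pulled-back central extension. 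The desired splitting $\sigma_S:[G,G]_{true}\times_k S\to\iota^*\cE$ should be uniquely characterized by $\sigma_S\circ c_S=\widetilde{c}'$.

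Uniqueness is straightforward: any two splittings differ by a homomorphism $[G,G]_{true}\times_k S\to A$ of $S$-group schemes vanishing on the image of $c_S$, and because $[G,G]_{true}$ is generated as a group scheme by the image of $c$, any such homomorphism must be trivial. For existence, I would argue fpqc-locally on $S$: at every geometric point $s\in S$, the $\kappa(s)$-base change of Theorem \ref{t:masoud}(iii) (using the compatibility of the construction of $[G,G]_{true}$ with base change) yields a splitting of the fiber, which by uniqueness is intrinsically determined by $\widetilde{c}'|_s$ and therefore automatically compatible under further base change. Standard descent then assembles these canonical local splittings into a global section $\sigma_S$ of $S$-group schemes.

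The main obstacle will be making the gluing step precise, and in particular verifying that the resulting $\sigma_S$ is a homomorphism of group $S$-schemes (rather than merely a morphism of $S$-schemes). For this I would exploit the explicit description $[G,G]_{true}\cong\pi^{-1}([G,G])^\circ$ provided by the auxiliary finite central extension $\widetilde{G}\to G$ of Theorem \ref{t:masoud}: pulling $\cE$ back to $\widetilde{G}\times_k S$ and using that the splitting problem over $[G,G]_{true}\times_k S$ reduces to the triviality of a very specific restricted extension along $\pi^{-1}([G,G])^\circ\times_k S\hookrightarrow\widetilde{G}\times_k S$, one should be able to realize $\sigma_S$ canonically without any ad hoc gluing argument.
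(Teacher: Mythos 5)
The proposal takes a genuinely different route from the paper, and as written it has a gap at the decisive step.

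The paper proves this lemma as part of the induction in Proposition~\ref{p:serre-noncomm}. It reduces to $\bZ/p^n\bZ$ coefficients (using that $G$ and $[G,G]_{true}$ have bounded exponent), then uses the fact that, for affine $S$, the $H^2$ in finite coefficients is a filtered colimit over the perfect affine $k$-schemes $S'$ of quasi-finite type through which $S\to\Spec k$ factors. This reduces to $S$ of quasi-finite type. It then uses the inductive hypothesis that $[G,G]_{true}^*$ is \emph{representable}, so the obstruction becomes an honest morphism of schemes $S\to[G,G]_{true}^*$; Theorem~\ref{t:masoud} makes this morphism zero on $\overline{k}$-points, and a reduced scheme of quasi-finite type mapping to a separated scheme with all $\overline{k}$-points landing on the zero section must factor through it. You dispense with all of this and try to produce the splitting $\sigma_S$ directly from the unique lift of the commutator map.

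The gap is in your existence step. You assert that ``at every geometric point $s\in S$, the $\kappa(s)$-base change of Theorem~\ref{t:masoud} yields a splitting of the fiber, which by uniqueness \dots is automatically compatible under further base change,'' and then that ``standard descent'' assembles these into a global $\sigma_S$. Two things go wrong. First, Theorem~\ref{t:masoud} is only stated to commute with base change to \emph{algebraic} extensions of $k$, while $\kappa(s)$ for a general $s\in S$ is typically a huge transcendental extension; you therefore cannot invoke the theorem over $\kappa(s)$ without proving the stronger base-change statement. Second, and more fundamentally, having a unique splitting on each geometric fiber is not a descent datum: fppf/fpqc descent requires sections over an actual cover of $S$, not over the disjoint union of residue fields of its points. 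Uniqueness of the constrained splitting makes the ``moduli of splittings'' $P'\to S$ a monomorphism, and fiberwise existence makes it surjective on geometric points, but a surjective monomorphism of schemes need not be an isomorphism (consider the monomorphism $\{0\}\sqcup(\bA^1\setminus\{0\})\to\bA^1$). Converting pointwise splittings into a global one requires an extra ingredient, and that is precisely what the paper's two omitted devices supply: the quasi-finite-type reduction makes geometric pointwise vanishing detect actual vanishing, and the representability of $[G,G]_{true}^*$ (the inductive hypothesis) is what lets one reformulate the problem so that pointwise information suffices. Your final paragraph gestures at using the auxiliary finite cover $\widetilde G\to G$ to fix this, but no argument is supplied, and I do not see how it avoids the descent issue.

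Your reduction to finite coefficients and the uniqueness argument are fine (the uniqueness argument works: a homomorphism of $S$-group schemes $[G,G]_{true}\times_k S\to A$ into a finite discrete group, trivial along $\{1\}\times S$, is forced to vanish by the fiberwise connectedness of $[G,G]_{true}$). But the existence step, as written, does not close.
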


\begin{proof}{Proof}
We may assume that $S$ is affine. There exists $n\in\bN$ such that
$g^{p^n}=1$ for all $g\in G$ or $[G,G]_{true}$. Hence, for any $S$
as above, the natural homomorphisms
\[
H^2(G\times_k S,\bZ/p^n\bZ) \rar{} H^2(G\times_k S,\qzp)
\]
and
\[
H^2([G,G]_{true}\times_k S,\bZ/p^n\bZ) \rar{}
H^2([G,G]_{true}\times_k S,\qzp)
\]
are isomorphisms. On the other hand, it is easy to see that since
$S$ is affine,
\[
\limto H^2(G\times_k S',\bZ/p^n\bZ) \rar{\simeq} H^2(G\times_k
S,\bZ/p^n\bZ),
\]
where the inductive limit is taken over all morphisms $S\rar{}S'$,
where $S'$ is a perfect affine $k$-scheme of quasi-finite type
(\S\ref{aa:perfect-unipotent}) over $k$. Hence it suffices to prove
the lemma in the case where $S$ is of quasi-finite type over $k$.

\mbr

Now, by the induction assumption, the functor $[G,G]_{true}^*$ is
representable by an object of $\cpu_k$. Given an element of
$H^2(G\times_k S,\qzp)$, its pullback to $[G,G]_{true}\times_k S$
defines a morphism of $k$-schemes $S\rar{}[G,G]_{true}^*$. By
Theorem \ref{t:masoud}, the induced map
$S(\overline{k})\rar{}[G,G]_{true}^*(\overline{k})$ is identically
$0$, where $\overline{k}$ is an algebraic closure of $k$. Since $S$
is of quasi-finite type over $k$, this implies that
$S\rar{}[G,G]^*_{true}$ is constant.
\end{proof}

We can now complete the proof of Proposition \ref{p:serre-noncomm}.
Consider the sequence \eqref{e:true-comm} defined in Theorem
\ref{t:masoud}. In view of Lemma \ref{l:pullback-true}, we obtain an
exact sequence of functors from $\mathfrak{Perf}_k$ to the category
of abelian groups,
\[
0 \rar{} (G^{ab})^* \rar{} G^* \rar{} \Hom(\Pi,\qzp) \rar{} 0,
\]
where $\Hom(\Pi,\qzp)$ is viewed as a finite unipotent $k$-group in
the natural way\footnote{For instance, $\Pi$ is nothing but a finite
abelian $p$-group equipped with a continuous action of
$\Gal(\kbar/k)$, where $\kbar$ is an algebraic closure of $k$. Then
$\Hom(\Pi,\qzp)$ can be equipped with the contragredient action of
$\Gal(\kbar/k)$.}. Since $(G^{ab})^*$ is representable by a
connected commutative perfect unipotent group over $k$ by Theorem
\ref{t:serre-duality}, both statements of Proposition
\ref{p:serre-noncomm} follow.

\subsection{An auxiliary construction}\label{aa:construction-alt}
In this subsection we describe a construction used in the definition
of an admissible pair for a unipotent group in characteristic $p>0$
(see \S\ref{ss:def-admissible}). The construction is a geometric
counterpart of the following simple observation. If $\Ga$ is a
finite group, $N\subset\Ga$ is a normal subgroup, $\chi:N\rar{}\qzp$
is a homomorphism, which is invariant under the conjugation action
of $\Ga$, and $Z\subset\Ga$ is a subgroup such that $N\subset Z$ and
$[\Ga,Z]\subset N$, then $\chi$ induces a homomorphism
$\Ga/N\rar{}\Hom(Z/N,\qzp)$ given by $\ga\longmapsto\bigl(z\mapsto
\chi(\ga z\ga^{-1}z^{-1})\bigr)$.

\mbr

We fix a perfect field $k$ of characteristic $p>0$, let $U$ be a
(possibly disconnected) perfect unipotent group over $k$ and let
$N\subset U$ be a normal connected subgroup. By Proposition
\ref{p:serre-noncomm}, we can speak about the Serre dual,
$N^*\in\cpu_k$, of $N$, and since $N^*$ is defined by a universal
property (in the category of perfect $k$-schemes), it is clear that
$U$ acts on $N^*$ regularly by $k$-group scheme automorphisms.

\mbr

Let $\nu\in N^*(k)$ be a $U$-invariant element, and let $Z\subset U$
be a connected subgroup such that $N\subset Z$ and $[U,Z]\subset N$.
(Without loss of generality, one can take $Z$ to be the preimage in
$U$ of the neutral connected component of $Z(U/N)$.)

\mbr

\noindent \framebox[1.03\width]{We claim that $\nu$ defines a $k$-group scheme morphism
$\vp_\nu:U/N\rar{} (Z/N)^*$.}

\mbr

\noindent In the proof of this claim we will use the standard correspondence
between central extensions (or bi-extensions) of connected
(quasi-)algebraic groups by $\qzp$ and (bi)multiplicative
$\qzp$-torsors (see \cite{masoud} and Lemma
\ref{l:bi-ext=bi-mult-tors} above). Thus we will also
denote by $\nu$ the multiplicative $\qzp$-torsor on $N$ defined by
$\nu$.

\mbr

 First we will define a morphism of $k$-schemes $U\rar{} Z^*$.
By definition, this is the same as constructing a central extension
of $U\times_k Z$, viewed as a group scheme over $U$, by
$\bQ_p/\bZ_p$. We define a $\bQ_p/\bZ_p$-torsor on $U\times_k Z$ by
$\cE=c^*\nu$, where $c:U\times_k Z\rar{} N$,
$c(u,z)=[u,z]:=uzu^{-1}z^{-1}$. Now we apply the following result.

\begin{lem}\label{l:U-Z}
The restrictions of $\cE$ to $N\times_k Z$ and $U\times_k N$ are
trivial torsors. Moreover, let $\mu_U:U\times_k U\rar{} U$,
$\mu_Z:Z\times_k Z\rar{} Z$ denote the multiplication morphisms, and
let $p_1,p_2:U\times_k U\rar{} U$ and $q_1,q_2:Z\times_k Z\rar{} Z$
be the natural projections. Then
\begin{equation}\label{e:Z}
(\id_U\times\mu_Z)^*\cE\cong(\id_U\times q_1)^*\cE\tens(\id_U\times
q_2)^*\cE
\end{equation}
as $\bQ_p/\bZ_p$-torsors on $U\times_k Z\times_k Z$, and
\begin{equation}\label{e:U}
(\mu_U\times\id_Z)^*\cE\cong(p_1\times\id_Z)^*\cE\tens(p_2\times\id_Z)^*\cE
\end{equation}
as $\bQ_p/\bZ_p$-torsors on $U\times_k U\times_k Z$.
\end{lem}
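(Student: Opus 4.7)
The plan is to reduce both parts of the lemma to two structural properties of $\nu$, viewed as a multiplicative $\qzp$-torsor on $N$. First, multiplicativity gives a canonical isomorphism $\mu_N^*\nu \cong \nu \boxtimes \nu$, so that for any pair of morphisms $f, g : X \to N$ from a $k$-scheme $X$, one obtains a canonical isomorphism $(f \cdot g)^* \nu \cong f^* \nu \otimes g^* \nu$, where $f \cdot g$ denotes the pointwise product in $N$; in particular, taking $g = f^{-1}$ yields $\iota^* \nu \cong \nu^{-1}$, where $\iota : N \to N$ is inversion. Second, the $U$-invariance of $\nu$ supplies a canonical isomorphism $\ad^* \nu \cong \pr_N^* \nu$ on $U \times_k N$, where $\ad(u, n) = u n u^{-1}$; this translates into $(u n u^{-1})^* \nu \cong n^* \nu$ canonically, for any maps $u : X \to U$ and $n : X \to N$ (note that $u n u^{-1} \in N$ because $N$ is normal in $U$).

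With these tools in hand, the triviality claims become elementary calculations. On $U \times_k N$ the commutator morphism factors as
\[
c(u, n) = u n u^{-1} n^{-1} = \ad_u(n) \cdot n^{-1},
\]
so multiplicativity followed by $U$-invariance identifies $c^*\nu$ with $\pr_N^*\nu \otimes \pr_N^*\nu^{-1}$, which is canonically trivial. Symmetrically, on $N \times_k Z$ one writes
\[
c(n, z) = n z n^{-1} z^{-1} = n \cdot \ad_z(n)^{-1}
\]
(using $[U, Z] \subset N$ and the normality of $N$ to make sense of $\ad_z(n) \in N$), and the same two properties again yield triviality.

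For the bimultiplicativity equations \eqref{e:Z} and \eqref{e:U}, the plan is to combine multiplicativity and $U$-invariance with the classical commutator identities
\[
[u, z_1 z_2] = [u, z_1] \cdot \ad_{z_1}\!\bigl([u, z_2]\bigr), \qquad [u_1 u_2, z] = \ad_{u_1}\!\bigl([u_2, z]\bigr) \cdot [u_1, z],
\]
both verified by direct computation, and both of which involve factors lying in $N$ (since $[U, Z] \subset N$) and $\ad$-twists preserving $N$ (since $N$ is normal in $U$). Pulling back $\nu$, multiplicativity splits each product and $U$-invariance absorbs the $\ad$-twists; rearranging then yields the two claimed isomorphisms of $\qzp$-torsors on $U \times_k Z \times_k Z$ and $U \times_k U \times_k Z$, respectively.

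The main obstacle to turning this outline into a complete proof is not logical but bookkeeping: the lemma is invoked later to give $\cE$ the structure of a central extension in two different ways (over $U$ and over $Z$), which requires the isomorphisms produced above to be \emph{canonical}, not merely defined pointwise. This amounts to tracking rigidifications at the relevant identity sections throughout. Thanks to Corollary \ref{c:bi-ext-connected}, a rigidified trivialization of a $\qzp$-torsor over a connected $k$-scheme is unique once it exists, so every isomorphism constructed in the outline admits a unique rigidified lift and the various coherences required for the subsequent group-scheme structures come for free.
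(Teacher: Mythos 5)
Your proof is correct and follows essentially the same strategy as the paper: factor the commutator morphism $c$ through a suitable identity, then split the pullback $c^*\nu$ component-wise using the multiplicativity of $\nu$ and its $U$-invariance (the isomorphism $\al^*\nu\cong(\qzp)_U\boxtimes\nu$ on $U\times_k N$), applied once per factor. Your closing observation about canonicity via uniqueness of rigidified trivializations over a connected base is exactly the right point to make the constructed isomorphisms well-defined for the subsequent use in \S\ref{aa:construction-alt}.

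Where you actually improve on the paper's write-up: you use the standard two-term commutator identities $[u,z_1z_2]=[u,z_1]\cdot\ad_{z_1}[u,z_2]$ and $[u_1u_2,z]=\ad_{u_1}[u_2,z]\cdot[u_1,z]$, which give a uniform treatment of \eqref{e:Z} and \eqref{e:U}. By contrast, the identity the paper states for \eqref{e:Z} has an extraneous factor $c(u,z_2)$ (put $z_1=1$ to see it fail), which is then compensated by an incorrect claim that the $\al$-twisted third term pulls back trivially; the two slips cancel and the conclusion of \eqref{e:Z} still holds, but the intermediate step is wrong as printed. For \eqref{e:U} the paper instead uses a correct but more elaborate three-term identity that additionally appeals to the already-established triviality of $\cE|_{U\times_k N}$. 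Your version is the cleaner and more uniform of the two.
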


The proof of Lemma \ref{l:U-Z} is given below. Note that formula
\eqref{e:Z} implies that $\cE$ corresponds to a central extension of
$U\times_k Z$, viewed as a group scheme over $U$, by $\bQ_p/\bZ_p$,
and hence defines a morphism $U\rar{}Z^*$ of $k$-schemes. Formula
\eqref{e:U} implies that, moreover, this morphism is a group
homomorphism. Finally, the first sentence of the lemma means that
this homomorphism factors through a homomorphism
$\vp_\nu:U/N\rar{}(Z/N)^*$, as claimed.

\begin{rem}\label{r:varphi-skew-symmetric}
Note that $Z/N\in\cpuc_k$, because $[U,Z]\subset N$, and hence,
\emph{a fortiori}, $[Z,Z]\subset N$. The restriction of $\vp_\nu$ to
$Z/N$ is a bi-extension of $(Z/N,Z/N)$ by $\qzp$. In fact, this
extension is \emph{skewsymmetric} (Definition
\ref{d:symm-skew-symm}) by the very construction of $\vp_\nu$
(indeed, observe that the restriction of $c$ to the diagonal in
$Z\times_k Z$ is constant).
\end{rem}

\begin{proof}{Proof of Lemma \ref{l:U-Z}}
The following observation will be used several times in the proof
below. Let $\al:U\times_k N\rar{}N$ denote the conjugation action
map: $(u,n)\longmapsto unu^{-1}$. Then $\al^*\nu$ is a $\qzp$-torsor
over $U\times_k N$, and it is clear that if we view $U\times_k N$ as
a group scheme over $U$, then $\al^*\nu$ becomes a
\emph{multiplicative} $\qzp$-torsor, in the sense that
\[
(\id_U\times\mu_N)^*\al^*\nu\cong(\id_U\times
r_1)^*\al^*\nu\tens(\id_U\times r_2)^*\al^*\nu
\]
as $\qzp$-torsors on $U\times_k N\times_k N$, where $\mu_N:N\times_k
N\rar{}N$ is the multiplication morphism and $r_1,r_2:N\times_k
N\rar{}N$ are the two projections. Hence $\al^*\nu$ defines a
morphism of $k$-schemes $U\rar{}N^*$, which is nothing but the orbit
map for the $U$-action on $\nu\in N^*(k)$. By assumption, $\nu$ is
$U$-invariant, whence
\begin{equation}\label{e:inv}
\al^*\nu\cong(\qzp)_U\boxtimes\nu,
\end{equation}
where $(\qzp)_U$ denotes the trivial $\qzp$-torsor on $U$.

\mbr

 Let us prove that, in the notation of Lemma \ref{l:U-Z}, the
torsor $\cE\bigl\lvert_{N\times_k Z}$ is trivial. The following
composition clearly equals $c\bigl\lvert_{N\times_k Z}$:
\[
N\times_k Z \rar{\iota} N\times_k Z\times_k N \xrar{\ \
\id\times(\al\bigl\lvert_{Z\times_k N})\ \ } N\times_k N \xrar{\ \
\mu_N\ \ } N,
\]
where $\iota(n,z)=(n,z,n^{-1})$. Therefore
\begin{eqnarray*}
\cE\bigl\lvert_{N\times_k Z} &\cong& \bigl( c\bigl\lvert_{N\times_k
Z}
\bigr)^*\nu \\
&\cong& \iota^* \bigl[ \id\times(\al\bigl\lvert_{Z\times_k N})
\bigr]^* (\mu_N^*\nu) \\
&\cong& \iota^* \bigl[ \id\times(\al\bigl\lvert_{Z\times_k N})
\bigr]^* (\nu\boxtimes\nu) \\
&\cong& \iota^*\bigl(\nu\boxtimes (\qzp)_U \boxtimes\nu \bigr) \cong
(\qzp)_{N\times_k Z},
\end{eqnarray*}
as claimed, where the isomorphism before the last one uses
\eqref{e:inv}.

\mbr

 The triviality of $\cE\bigl\lvert_{U\times_k N}$ is proved by a
completely analogous argument.

\mbr

 Let us prove \eqref{e:Z}. It is straightforward to verify the
identity
\[
c(u,z_1 z_2) = c(u,z_1)\cdot c(u,z_2) \cdot \al(z_1,c(u,z_2)) \qquad
\forall\, u\in U,\ z_1,z_2\in Z.
\]
It translates into the commutativity of the following diagram:
\[
\xymatrix{
  U\times_k Z\times_k Z \ar[d]_{\id_U\times\mu_Z} \ar[rr]^\be & & N\times_k N \times_k N \ar[d]^{\mu_3} \\
  U\times_k Z \ar[rr]^c & & N
   }
\]
where $\mu_3(n_1,n_2,n_3)=n_1 n_2 n_3$ and
$\be(u,z_1,z_2)=\bigl( c(u,z_1), c(u,z_2), \al(z_1,c(u,z_2))
\bigr)$. Therefore, using \eqref{e:inv}, we find that
\begin{eqnarray*}
(\id_U \times\mu_Z)^*\cE &\cong& \be^*(\nu\boxtimes\nu\boxtimes\nu)
\\
&\cong& (\id_U\times q_1)^*c^*\nu \tens (\id_U\times q_2)^*c^*\nu
\tens (\qzp)_{U\times_k Z\times_k Z}
\\
&\cong& (\id_U\times q_1)^*\cE\tens(\id_U\times q_2)^*\cE,
\end{eqnarray*}
which proves \eqref{e:Z}.

\mbr

 Finally, the proof of \eqref{e:U} is very similar, so we omit
the details. The argument uses the easily verifiable identity
\[
c(u_1 u_2,z) = c(u_1,z)\cdot c(u_2,z) \cdot
c\bigl(zu_1z^{-1},c(u_2,z)^{-1}\bigr)^{-1}
\]
together with the multiplicativity property of $\nu$ (i.e.,
$\mu_3^*\nu\cong\nu\boxtimes\nu\boxtimes\nu$) and the fact that
$\cE\bigl\lvert_{U\times_k N}$ is trivial. This completes the proof
of Lemma \ref{l:U-Z}.
\end{proof}

\subsection{Lifting central extensions}\label{aa:lift-central-exts}
In this subsection we prove a result on lifting central extensions
of connected unipotent groups by $\bQ_p/\bZ_p$ that is essentially
equivalent to Proposition \ref{p:ext-loc-sys} used in the main body
of the text.

\begin{prop}\label{p:lifts}
Let $k$ be $($as usual$)$ a perfect field of characteristic $p>0$,
let $G$ be a connected unipotent group over $k$, let $H\subset G$ be
a connected subgroup such that $[G,G]\subset H$, and consider a
central extension
\begin{equation}\label{e:extension-H}
 0 \rar{} \bQ_p/\bZ_p \rar{} \Ht \rar{\pi} H \rar{} 0.
\end{equation}
Assume that the commutator morphism $\operatorname{com}:G\times
G\rar{} H$ lifts to a morphism $G\times G\rar{}\Ht$. Then
\eqref{e:extension-H} lifts to a central extension of $G$ by $\qzp$.
\end{prop}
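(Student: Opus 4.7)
My plan is to extract from the lifted commutator morphism an honest action of $G$ on $\widetilde{H}$ lifting the adjoint action on $H$, and then assemble $\widetilde{G}$ either via the Hochschild--Serre spectral sequence or by a direct contracted-product construction. The first move is a normalization: because $G\times G$ is connected and $\qzp$ is discrete, any two lifts of $\com$ to $\widetilde{H}$ differ by a locally constant $\qzp$-valued function, which must be constant. I therefore fix the unique lift $\widetilde{\com}$ satisfying $\widetilde{\com}(1,1)=1$. The same rigidity argument shows that $\widetilde{\com}$ automatically inherits every universal commutator identity, including
\[
\widetilde{\com}(g_1 g_2, h)=\widetilde{\com}(g_1, g_2 h g_2^{-1})\cdot\widetilde{\com}(g_2, h)
\qquad\text{and}\qquad
\widetilde{\com}(g, h_1 h_2)=\widetilde{\com}(g, h_1)\cdot\iota(h_1)\,\widetilde{\com}(g, h_2)\,\iota(h_1)^{-1},
\]
for any local section $\iota$ of $\widetilde{H}\to H$: both sides of each such identity are morphisms from a connected product of copies of $G$ and $H$ into $\widetilde{H}$ projecting to the same morphism into $H$ and agreeing at the identity, so they must coincide.

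Next I define $\widetilde{\Ad}\colon G\times\widetilde{H}\to\widetilde{H}$ by $\widetilde{\Ad}_g(\widetilde{h}):=\widetilde{\com}(g,\pi(\widetilde{h}))\cdot\widetilde{h}$, where $\pi:\widetilde{H}\to H$ is the projection. The identities above imply, in turn, that (i) each $\widetilde{\Ad}_g$ is a group automorphism of $\widetilde{H}$ lifting $\Ad_g:H\to H$; (ii) $g\mapsto\widetilde{\Ad}_g$ is an honest group homomorphism $G\to\Aut(\widetilde{H})$, thanks to the universal identity $[g_1 g_2,h]=[g_1,g_2 h g_2^{-1}]\cdot[g_2,h]$; (iii) for $h\in H$ the automorphism $\widetilde{\Ad}_h$ coincides with inner conjugation in $\widetilde{H}$ by any lift of $h$; and (iv) $\widetilde{\Ad}_g$ restricts to the identity on $\qzp\subset\widetilde{H}$, since $\widetilde{\com}(g,1)=1$ by the normalization. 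From these four properties the existence of $\widetilde{G}$ follows by the Hochschild--Serre spectral sequence for $1\to H\to G\to G/H\to 1$: the class $[\widetilde{H}]\in H^2(H,\qzp)$ is $G/H$-invariant by (i)--(iii), and the existence of the honest $G$-action $\widetilde{\Ad}$ (as opposed to a merely outer action) shows that the transgression $d_2^{0,2}[\widetilde{H}]\in H^2(G/H,\Hom(H,\qzp))$ vanishes. Hence $[\widetilde{H}]$ lies in the image of the restriction $H^2(G,\qzp)\to H^2(H,\qzp)^{G/H}$, which is exactly the assertion that $\widetilde{H}$ extends to a central extension $\widetilde{G}$ of $G$ by $\qzp$.

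The main obstacle, in my view, is the rigorous justification of the Hochschild--Serre argument in the category of perfect unipotent group schemes with coefficients in $\qzp$, since this requires interpreting $H^i(-,\qzp)$ as an appropriate $\Ext$-group and setting up the differentials of the spectral sequence carefully. An alternative route that entirely sidesteps spectral sequences is a direct construction of $\widetilde{G}$ as a contracted product $\widetilde{H}\times^{H}G$: at the level of $S$-points it is the quotient of $\widetilde{H}(S)\times G(S)$ by the equivalence $(\widetilde{h},g)\sim(\widetilde{h}\cdot\iota(h'),h^{\prime-1}g)$ for $h'\in H(S)$, endowed with the multiplication $(\widetilde{h}_1,g_1)\cdot(\widetilde{h}_2,g_2):=(\widetilde{h}_1\cdot\widetilde{\Ad}_{g_1}(\widetilde{h}_2),g_1 g_2)$. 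Well-definedness on equivalence classes, associativity, and the centrality of $\qzp$ in the quotient all follow from (i)--(iv); but because $\iota$ exists only as a local section of schemes, this construction must be realized via fppf descent, and verifying that the construction is independent of the choice of $\iota$ up to canonical isomorphism (which again uses (iv)) is where the main technical work lies.
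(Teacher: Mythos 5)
Your opening moves match the paper's: normalize the lift of $\operatorname{com}$ by requiring it to send $(1,1)\mapsto 1$, observe by rigidity (connectedness of $G$ plus discreteness of $\qzp$) that the universal commutator identities hold automatically, and define the lifted adjoint action $\widetilde{\Ad}_g(\tilde h)=\widetilde{\com}(g,\pi(\tilde h))\cdot\tilde h$. The paper encodes this same package of data by saying $\widetilde H\to G$ is a strictly stable crossed module. Up to that point your argument is correct.

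The gap is in what happens next. You claim that, once the class $[\widetilde H]\in H^2(H,\qzp)$ is $G/H$-invariant and $d_2^{0,2}[\widetilde H]$ vanishes, the class lies in the image of restriction from $H^2(G,\qzp)$ — ``which is exactly the assertion that $\widetilde H$ extends.'' This is false in general: in the Hochschild--Serre spectral sequence, surviving $d_2$ puts the class in $E_3^{0,2}$, but there is a further differential $d_3^{0,2}\colon E_3^{0,2}\to E_3^{3,0}$, and only if \emph{that} also vanishes does the class land in $E_\infty^{0,2}$. The $d_3$ obstruction lives in (a subquotient of) the degree-$3$ cohomology of $G/H$ with coefficients in $\qzp$, and your argument never touches it. The crossed-module structure that you have constructed kills $d_2$ but encodes no information beyond that. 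This is precisely where the paper's machinery enters: Breen's classification of strictly commutative Picard stacks converts the remaining obstruction into a single class in $\Ext^2(G/H,\qzp)$ (computed in the derived category of abelian sheaves), and Lemma~\ref{l:vanishing-ext-2} — which reduces to the Artin--Schreier sequence together with Breen's theorem that $\Ext^j(\bG_a,\bG_a)=0$ for $j\ge 1$ on the perfect site — shows this group vanishes. Without that vanishing theorem (or some replacement for it), the proof is incomplete.

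Your alternative route (the contracted product $\widetilde H\times^H G$) runs into the same obstruction at the level of explicit formulas. Checking well-definedness of your proposed multiplication on equivalence classes comes down to the identity $\widetilde{\Ad}_{g}(\iota(h'))=\iota(\Ad_g(h'))$ for a local section $\iota$ of $\pi$; both sides project to $\Ad_g(h')$ in $H$ and hence differ by an element of $\qzp$, but there is no reason for that element to be trivial. The resulting discrepancy is a $\qzp$-valued $2$-cocycle on $G/H$ whose class is the very obstruction you would need to kill, and the construction only produces a group scheme after you prove that class vanishes. In short, you have correctly identified that some technical work is needed to make either route precise, but the missing ingredient is not just foundational bookkeeping — it is a nontrivial vanishing theorem of the type provided by Lemma~\ref{l:vanishing-ext-2}.
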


\begin{rem}\label{r:perfect-irrelevant}
The reader may observe that in this result we departed from our
tradition of working with perfect unipotent groups. However, the
difference between algebraic and quasi-algebraic groups is
absolutely irrelevant in Proposition \ref{p:lifts} (which we could
have equally well stated for perfect unipotent groups). Indeed, if
$G$ is any algebraic group over $k$, the natural pullback map
$H^2(G,\qzp)\rar{}H^2(G_{per\!f},\qzp)$ is easily seen to be an
isomorphism, where, as in the proof of Proposition
\ref{p:serre-noncomm}, we write $H^2(G,\qzp)$ for the abelian group
of isomorphism classes of central extensions of $G$ by $\qzp$ in the
category of $k$-group schemes.
\end{rem}

\begin{proof}{Proof of Proposition \ref{p:lifts}}
It is clear that we can choose a lift $c:G\times G\rar{}\Ht$ of the
commutator morphism which satisfies $c(1,1)=1$. We assert that $c$
makes the homomorphism $\de:\Ht\rar{} G$ obtained by composing $\pi$
with the inclusion $H\hookrightarrow G$ a \emph{strictly stable
crossed module} in the terminology of \cite{breen}.

\mbr

 Let us briefly recall what this means. Define a morphism
$G\times\Ht\rar{}\Ht$ by $(g,h)\longmapsto {}^g h:=c(g,\de(h))\cdot
h$. This morphism is a regular left action of $G$ on $\Ht$ by
algebraic group automorphisms, satisfying
\[
\de({}^g h) = g\de(h)g^{-1} \quad\text{and}\quad {}^{\de(h)} h' =
hh'h^{-1} \quad\forall\, g\in G,\ h,h'\in \Ht.
\]
(This is a slight abuse of notation since we should not be using
``elements'' of $G$ and $\Ht$, but it is easy to rewrite the
identities above purely in terms of morphisms of schemes.) In
addition, $c$ satisfies $c(g,g)=1$, $c(g,h)c(h,g)=1$, and a few
other conditions (coming from the axioms for a braided monoidal
category), all of which are carefully formulated in \cite{breen} and
in \cite{masoud}.

\mbr

 All the equations for the morphism $c$ mentioned in the
previous paragraph are automatically satisfied in our situation
because $G$ is connected (and therefore geometrically integral),
$\bQ_p/\bZ_p$ is discrete, and $c(1,1)=1$ by assumption. The full
proof is left as a simple exercise for the reader. The details can
also be found in \cite{masoud}.

\mbr

 According to \cite{breen}, $c$ induces the structure of a
\emph{strictly commutative Picard stack} on the quotient stack
$G/\Ht$; see \cite{breen} or \cite[Exp. XVIII, \S1.4]{sga4} for the
definition of a strictly commutative Picard stack. According to
Proposition 1.4.15 in \emph{loc.~cit.}, if $\cA$ and $\cB$ are
sheaves of abelian groups on any site, then strictly commutative
Picard stacks $\cP$ with $\pi_0(\cP)=\cA$ and $\pi_1(\cP)=\cB$ are
classified up to equivalence by the group $\Ext^2(\cA,\cB)$. In our
situation, $\pi_0(G/\Ht)=G/H$ and $\pi_1(G/\Ht)=\bQ_p/\bZ_p$.

\mbr

 We claim that $\Ext^2(G/H,\bQ_p/\bZ_p)=0$. Indeed, $G/H$ is a
connected commutative unipotent group over $k$, so since $k$ is
perfect, $G/H$ has a filtration by connected subgroups with all the
successive subquotients isomorphic to $\bG_a$. By induction on
$\dim(G/H)$, we are reduced to the following lemma, which is proved
in \S\ref{aa:proof-l:vanishing-ext-2} below.

\begin{lem}\label{l:vanishing-ext-2}
If $k$ is any perfect field of characteristic $p>0$, then
\[\Ext^r(\bG_a,\bQ_p/\bZ_p)=0 \qquad\text{for all } r\geq 2,\] where $\bQ_p/\bZ_p$
is viewed as a discrete group scheme over $k$ and the group $\Ext^2$
is computed in the category of sheaves of abelian groups on the fppf
site of $\Spec k$.
\end{lem}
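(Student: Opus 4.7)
The plan is to reduce, through a sequence of d\'evissages, the desired vanishing to a known computation of $\Ext^\ast(\bG_a,\bG_a)$ due to Breen. First, since $\bQ_p/\bZ_p=\varinjlim_n\bZ/p^n\bZ$ and $\bG_a$ is representable by a $k$-scheme of finite presentation, the functor $\Ext^r(\bG_a,-)$ commutes with filtered colimits of abelian fppf sheaves; hence it suffices to prove $\Ext^r(\bG_a,\bZ/p^n\bZ)=0$ for every $n\geq 1$ and every $r\geq 2$. Induction on $n$, using the short exact sequences $0\rar{}\bZ/p^{n-1}\bZ\rar{}\bZ/p^n\bZ\rar{}\bZ/p\bZ\rar{}0$, then further reduces the problem to the case $n=1$.

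The case of $\bZ/p\bZ$ is handled via the Artin-Schreier sequence $0\rar{}\bZ/p\bZ\rar{}\bG_a\xrar{F-1}\bG_a\rar{}0$, which is exact on the \'etale (and hence fppf) site of $\Spec k$. Applying $R\Hom(\bG_a,-)$ produces the long exact sequence
\begin{equation*}
\Ext^{r-1}(\bG_a,\bG_a)\xrar{F-1}\Ext^{r-1}(\bG_a,\bG_a)\rar{}\Ext^r(\bG_a,\bZ/p\bZ)\rar{}\Ext^r(\bG_a,\bG_a)\xrar{F-1}\Ext^r(\bG_a,\bG_a),
\end{equation*}
so the required vanishing follows once one establishes (i) $\Ext^r(\bG_a,\bG_a)=0$ for $r\geq 2$, and (ii) the surjectivity of $F-1$ on $\Ext^1(\bG_a,\bG_a)$.

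Both of these claims I would extract from Breen's explicit computation of $\Ext^\ast_{\mathrm{fppf}}(\bG_a,\bG_a)$ in characteristic $p$ (\emph{Extensions du groupe additif}, Publ. Math. IHES \textbf{48} (1978)): part (i) is the central vanishing statement of \emph{loc.\,cit.}, while part (ii) is to be read off from Breen's description of $\Ext^1(\bG_a,\bG_a)$ in terms of the Witt-type symmetric $2$-cocycles $\omega_n(x,y)=\frac{1}{p}\bigl((x+y)^{p^n}-x^{p^n}-y^{p^n}\bigr)$ modulo coboundaries, with $F$ acting by $c\mapsto c^p$. The principal technical obstacle is step (ii): one cannot naively hope that $F$ is nilpotent on $\Ext^1(\bG_a,\bG_a)$, since the Frobenius twist of the Witt extension (i.e., the class of $\omega_1^p$) is again a nontrivial class, so the surjectivity of $F-1$ must be extracted from the full structural description of $\Ext^1(\bG_a,\bG_a)$ as a module over the twisted polynomial ring $\End(\bG_a)=k\{F\}$. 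It is precisely at this step that the hypothesis that $k$ is perfect enters: extracting $p$-th roots in $k$ is what allows one to invert the relevant twisted operators and produce preimages under $F-1$.
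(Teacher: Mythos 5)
Your strategy goes astray at step (i). The claim that $\Ext^r_{\mathrm{fppf}}(\bG_a,\bG_a)=0$ for $r\geq 2$ over a perfect field of characteristic $p$ is \emph{false}, and the Breen paper you cite (Publ.~Math.~IHES \textbf{48} (1978)) in fact proves the opposite: on the fppf site the groups $\Ext^i(\bG_a,\bG_a)$ are nonzero in arbitrarily high degrees, with a rich structure reflecting the cohomology of Eilenberg--MacLane spaces and the mod $p$ Steenrod algebra. With this vanishing unavailable, the long exact sequence from Artin--Schreier does not allow you to conclude anything about $\Ext^r(\bG_a,\bZ/p\bZ)$ for $r\geq 2$, so the argument collapses at its central step.

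The vanishing you want \emph{does} hold, but only after changing the site. Breen's later paper \cite{breen-exts} (the one the present paper actually cites, Lecture Notes in Math.~\textbf{868}, 1981) proves $\Ext^j_{\sA(p)}(\bG_a,\bG_a)=0$ for all $j\geq 1$, where $\sA(p)$ is the category of sheaves of $\bF_p$-vector spaces on the \emph{perfect} site of $\Spec k$. The higher fppf $\Ext$ classes are killed precisely because, on the perfect site, Frobenius is invertible; this is where the perfectness hypothesis really enters, not (as you speculate) through surjectivity of $F-1$ on $\Ext^1$. The paper's proof therefore begins by establishing a quasi-isomorphism
\[
R\Hom_{\sA(p)}(\bG_a,\bZ/p\bZ)\;\rar{\simeq}\; R\Hom_{\sA}(\bG_a,\bQ_p/\bZ_p),
\]
obtained (following Milne, \emph{Arithmetic Duality Theorems}, \S III.0) by taking the $p$-torsion subcomplex of an injective resolution of $\bQ_p/\bZ_p$ in fppf abelian sheaves and observing that its restriction to the perfect site is an injective resolution of $\bZ/p\bZ$ in $\sA(p)$; since $\bG_a$ is $p$-torsion, mapping out of $\bG_a$ sees only the $p$-torsion part. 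This single step accomplishes both your d\'evissage from $\bQ_p/\bZ_p$ to $\bZ/p\bZ$ \emph{and} the change of site, after which the Artin--Schreier sequence together with Breen's perfect-site vanishing finishes the proof exactly as in your sketch, with no need to analyze $F-1$ on $\Ext^1$.

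Two secondary remarks. First, even setting aside the false input, your opening reduction (that $\Ext^r(\bG_a,-)$ commutes with filtered colimits of fppf abelian sheaves) is not automatic for the big fppf site and would need a genuine justification; the Milne-style injective-resolution argument avoids this. Second, your uncertainty about step (ii) was a correct instinct that something was off, but the trouble lies one step earlier: the issue is not making $F-1$ surjective on $\Ext^1$ but the non-vanishing of the higher fppf $\Ext$ groups themselves.
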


We see that the Picard stack $G/\Ht$ is equivalent to the ``trivial
one'', defined as the product $(G/H)\times
(\bQ_p/\bZ_p-\underline{\operatorname{tors}})$, where $G/H$ is the
discrete (i.e., having no nontrivial morphisms) strictly commutative
Picard stack defined by the commutative algebraic group $G/H$ and
$\bQ_p/\bZ_p-\underline{\operatorname{tors}}$ is the Picard stack of
$\bQ_p/\bZ_p$-torsors. This implies that the $1$-morphism
$H\rar{}\bQ_p/\bZ_p-\underline{\operatorname{tors}}$ of gr-stacks
(cf.~\cite{breen} or \cite{masoud}) obtained from the central
extension \eqref{e:extension-H} extends to a $1$-morphism
$G\rar{}\bQ_p/\bZ_p-\underline{\operatorname{tors}}$, which in turn
implies that $\chi$ lifts to a central extension of $G$ by
$\bQ_p/\bZ_p$.
\end{proof}

\subsection{Proof of Lemma
\ref{l:vanishing-ext-2}}\label{aa:proof-l:vanishing-ext-2} The
argument presented below is based on an idea borrowed from \S{}III.0
of \cite{milne}, and uses a result of L.~Breen \cite{breen-exts}.
Breen also has independently suggested another proof of Lemma
\ref{l:vanishing-ext-2} (in private communication).

\mbr

The \emph{perfect site}, $k_{pf}$, of $\Spec k$ is defined as the
category of perfect $k$-schemes of quasi-finite type (see
\S\ref{aa:perfect-unipotent}) over $k$, with the Grothendieck
topology for which the covering families are the surjective families
of \'etale morphisms.

\mbr

 For brevity, we will introduce the following (non-standard)
notation. Let us write $\sA$ for the category of sheaves of abelian
groups on the fppf site $k_{fppf}$ of $\Spec k$, and let us write
$\sA(p)$ for the category of sheaves of $\bF_p$-vector spaces on the
site $k_{pf}$. We claim that there is a natural (quasi-)isomorphism
\begin{equation}\label{e:RHom-quasi-isom}
R\Hom_{\sA(p)}(\bG_a,\bZ/p\bZ) \rar{\simeq}
R\Hom_{\sA}(\bG_a,\bQ_p/\bZ_p).
\end{equation}
Indeed, following the proof of Lemma III.0.13(a) of \cite{milne},
let us choose an injective resolution $\bQ_p/\bZ_p\rar{}\sI^\bullet$
in the category $\sA$. Injective abelian sheaves are divisible, so
if we let $\sI^j_p$ be the kernel of the multiplication by $p$ map
$\sI^j\rar{}\sI^j$ for every $j\geq 0$, then the complex
$\sI^\bullet_p$ is a resolution of $\bZ/p\bZ$, which restricts to an
\emph{injective} resolution of $\bZ/p\bZ$ in the category $\sA(p)$.
Moreover, since any morphism $\bG_a\rar{}\sI^j$ automatically
factors through $\sI^j_p$ (because $\bG_a$ is annihilated by $p$),
we obtain \eqref{e:RHom-quasi-isom}.

\mbr

 To complete the proof of Lemma \ref{l:vanishing-ext-2} we use
the Artin-Schreier sequence
\begin{equation}\label{e:A-S}
0 \rar{} \bZ/p\bZ \rar{} \bG_a \xrar{\ \ x\longmapsto x^p-x\ \ }
\bG_a \rar{} 0
\end{equation}
In \cite{breen-exts} it is proved that
$\Ext^j_{\sA(p)}(\bG_a,\bG_a)=0$ for all $j\geq 1$. Applying the
functor $\Hom_{\sA(p)}(\bG_a,-)$ to \eqref{e:A-S} and using the
associated long exact sequence of the $\Ext$ groups, we find that
$\Ext^r_{\sA(p)}(\bG_a,\bZ/p\bZ)=0$ for all $r\geq 2$. By
\eqref{e:RHom-quasi-isom}, we are done.

\section*{Appendix B: Proof of Theorem \ref{t:strong-higman}}

\setcounter{section}{2}

\setcounter{subsection}{0}

\setcounter{thm}{0}

In this appendix we present a proof of Theorem \ref{t:strong-higman},  which is due to V.~Drinfeld.

\subsection{Setup}\label{ss:facts}
We fix an easy unipotent group $G$ over $\bF_q$ and an irreducible representation $\rho$ of $G(\bF_q)$ over $\ql$. Let us list a few facts, which were established in the course of proving Theorem \ref{t:dim-reps-easy} in the main body of the text.

\begin{enumerate}[(1)]
\item There exists an admissible pair $(H,\cL)$ for $G$ (where $H$ is a connected subgroup of $G$ defined over $\bF_q$ and $\cL$ is a multiplicative $\ql$-local system on $H$) such that the restriction of $\rho$ to $H(\bF_q)$ contains $t_\cL$ as a direct summand (Theorem \ref{t:adm-pair-compatible}).
 \mbr
\item Let $G'$ denote the normalizer of $(H,\cL)$ in $G$. Then $G'$ is connected and the dimension of $G'/H$ is even (\S\ref{sss:dimensions-step-1}). Also, $G'/H$ is commutative, which results from the connectedness of $G'$ and the definition of an admissible pair.
 \mbr
\item Let $\vp_\cL:(G'/H)_{perf}\rar{}(G'/H)_{perf}^*$ denote the homomorphism induced by as in \S\ref{aa:construction-alt}. Then $\vp_\cL$ is an isomorphism (see \S\ref{sss:dimensions-step-1}).
 \mbr
\item The skewsymmetric bi-additive map
$B_{\cL} : (G'/H)(\bF_q) \times (G'/H)(\bF_q) \rar{} \ql^\times$
induced by $\vp_\cL$ is nondegenerate (see \S\ref{sss:dimensions-step-2}).
 \mbr
\item Let $L\subset (G'/H)(\bF_q)$ be any Lagrangian subgroup with
respect to $B_\cL$, and let $\widetilde{L}$ denote the preimage of
$L$ in $G'(\bF_q)$. Then $t_\cL:H(\bF_q)\rar{}\ql^\times$ can be
extended to a homomorphism $\chi:\widetilde{L}\rar{}\ql^\times$, and
for any such extension, we have
$\rho\cong\Ind_{\widetilde{L}}^{G(\bF_q)}\chi$ (see
\S\ref{sss:dimensions-step-2}).
\end{enumerate}

\subsection{Existence of Lagrangian subgroups} Next we will formulate a result that implies Theorem
\ref{t:strong-higman} in view of the facts we listed above. Let
$A$ be a perfect connected commutative unipotent group over $\bF_q$,
and let $\vp:A\rar{}A^*$ denote an isomorphism that induces a
skewsymmetric bi-extension (Definition \ref{d:symm-skew-symm}) of $(A,A)$
by $\qzp$ (in particular, $\vp^*=-\vp$). Let $B_\vp:A(\bF_q)\times
A(\bF_q)\rar{}\ql^\times$ denote the skewsymmetric bi-additive map
induced by $\vp$; it is nondegenerate by
Proposition \ref{p:Serre-Pontryagin}.

\begin{prop}\label{p:auxiliary}
With the notation above, there is a Lagrangian subgroup
$L\subset A(\bF_q)$ with respect to $B_\vp$ such that
$L=\underline{L}(\bF_q)$ for some connected subgroup
$\underline{L}\subset A$.
\end{prop}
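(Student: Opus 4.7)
The plan is induction on $\dim A$. The base case $\dim A = 0$ is trivial (take $\underline{L} = 0$), so suppose $\dim A > 0$. It suffices to produce a nontrivial connected subgroup $\underline{H} \subset A$ defined over $\bF_q$ which is geometrically isotropic, i.e., $\underline{H} \subset \underline{H}^\perp := \vp^{-1}(\Ann(\underline{H}))$. Indeed, $\Ann(\underline{H}) = (A/\underline{H})^*$ is connected by exactness of Serre duality (Theorem \ref{t:serre-duality}), so $\underline{H}^\perp$ is a connected subgroup of $A$ defined over $\bF_q$, and the quotient $\underline{H}^\perp/\underline{H}$ inherits a nondegenerate skewsymmetric bi-extension by $\qzp$. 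Applying the inductive hypothesis to $\underline{H}^\perp/\underline{H}$ yields a connected Lagrangian $\underline{L}' \subset \underline{H}^\perp/\underline{H}$ over $\bF_q$; its preimage is a connected subgroup $\underline{L} \subset A$ over $\bF_q$ of dimension $\dim(A)/2$ satisfying $\underline{L} = \underline{L}^\perp$. By Lang's theorem $|\underline{L}(\bF_q)| = q^{\dim A/2} = |A(\bF_q)|^{1/2}$, while the inclusion $\underline{L} \subset \underline{L}^\perp$ forces $B_\vp$ to vanish identically on $\underline{L}(\bF_q) \times \underline{L}(\bF_q)$; these two facts together show that $L := \underline{L}(\bF_q)$ is Lagrangian in $A(\bF_q)$ with respect to $B_\vp$.

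To produce $\underline{H}$ when $pA \neq 0$, I would mimic the first reduction in the proof of Proposition \ref{p:existence-lagr}(a): let $n \geq 2$ be the smallest integer with $p^n A = 0$, and take $\underline{H} := p^{n-1} A$, the image of the $\bF_q$-morphism $[p^{n-1}] : A \to A$. This subgroup is nontrivial, connected, and defined over $\bF_q$. Its geometric isotropy follows from bi-additivity: the bi-extension value at $(p^{n-1}u, p^{n-1}v)$ equals $p^{2(n-1)}$ times its value at $(u,v)$, and the latter lies in the $p^n$-torsion of $\qzp$ since $A$ is annihilated by $p^n$, hence is killed by $p^{2(n-1)}$ as $n \geq 2$.

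The remaining case $pA = 0$ is the main obstacle. Here $A \cong \gap^d$ (over $\bF_q$, since $\gap$ admits no nontrivial forms over $\bF_q$) with $d$ even by Proposition \ref{p:existence-lagr}(b); over $\bF$ Lemma \ref{l:gap-2-skewsymm} produces a nontrivial connected isotropic subgroup, and the task is to descend it to $\bF_q$. The delicate point is that, by the argument in the proof of Proposition \ref{p:existence-lagr}(b) showing $\gap$ carries no nondegenerate skewsymmetric bi-extension, the restriction of $\vp$ to any $\gap$-subgroup of $A$ fails to be an isomorphism, but its kernel can be finite \'etale; one therefore cannot refine a generic $\gap$-subgroup to an isotropic one merely by passing to a neutral component. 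The strategy I propose is to apply Lang's theorem to the quasi-algebraic $\bF_q$-scheme $X$ parametrizing connected isotropic $\gap$-subgroups of $A$: this scheme carries a natural action of the neutral component $\Aut(A,\vp)^\circ$ of the quasi-algebraic group of automorphisms of $A$ preserving $\vp$, and Lemma \ref{l:gap-2-skewsymm} guarantees $X(\bF) \neq \emptyset$. Endowing $X$ with the right scheme structure of quasi-finite type and verifying the geometric connectedness (of $X$ itself or of a single $\Aut(A,\vp)^\circ$-orbit) needed to conclude $X(\bF_q) \neq \emptyset$ via Lang's theorem is the technical heart of the argument.
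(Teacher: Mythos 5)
Your reduction to the case $p\cdot A = 0$ via $\underline{H} = p^{n-1}A$ is exactly what the paper does, and the inductive mechanism (lift a Lagrangian in $(\underline{H}^\perp/\underline{H})(\bF_q)$ to a Lagrangian in $A(\bF_q)$ using connectedness of $\underline{H}^\perp$ and Lang's theorem) is correct. But the base case $p\cdot A = 0$, which you yourself flag as ``the technical heart,'' is where your proposal stops short of a proof. The scheme-of-isotropic-subgroups idea would require knowing that the parameter scheme $X$ is of quasi-finite type, that it (or an orbit) is geometrically connected, and that $\Aut(A,\vp)^\circ$ acts transitively on a component containing an $\bF$-point. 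None of these are established, and the transitivity in particular is far from clear: this is not a symplectic vector space, and there is no Witt-type theorem available.

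The paper avoids the geometric descent problem entirely with a much more elementary observation (Lemma \ref{l:many-subgroups}): when $p\cdot A=0$, so $A\cong\bG_a^n$ over $\bF_q$, \emph{every} subgroup $L\subset A(\bF_q)$ whose order is a power of $q$ is automatically of the form $\underline{L}(\bF_q)$ for some connected $\underline{L}\subset A$. The proof rests on the surjectivity of the natural map $\End(\bG_a)\rar{}\End_{\bF_p}(\bF_q)$, which in turn is the classical isomorphism $\bF_q\#\Gal(\bF_q/\bF_p)\cong\End_{\bF_p}(\bF_q)$ (Lemma \ref{l:Galois}, a Wedderburn-theory exercise). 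Given this, one writes $L = f(A'(\bF_q))$ for a suitable $\bF_p$-linear $f$ and a coordinate subgroup $A'$, lifts $f$ to an algebraic endomorphism of $A$, and counts points to force $L = f(A')(\bF_q)$. With that lemma in hand, one need only observe that $B_\vp$ is a nondegenerate alternating form on the finite $\bF_p$-vector space $A(\bF_q)$, hence has a Lagrangian of order $|A(\bF_q)|^{1/2}=q^{\dim A/2}$, which is a $q$-power because $\dim A$ is even; the lemma then says this Lagrangian is geometric for free. In short: rather than descending an isotropic subgroup from $\bF$ to $\bF_q$, the paper shows that in the $p$-torsion case there is nothing to descend -- the geometric structure is already determined by cardinality.
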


The proof of the proposition will be given below. It is clear that
Theorem \ref{t:strong-higman} follows from the proposition,
because the latter implies that the Lagrangian subgroup $L$
mentioned in \S\ref{ss:facts}(5) can be chosen to have the form
$\underline{L}(\bF_q)$ for a connected subgroup
$\underline{L}\subset G'/H$. Letting $P$ be the preimage of
$\underline{L}$ in $G'$, we have $\widetilde{L}=P(\bF_q)$, and the
proof of the theorem is complete.

\subsection{Strategy of the proof} The proof of Proposition \ref{p:auxiliary} has two steps. First
we will reduce it to the case where $A$ is annihilated by $p$. In
this case, a stronger statement holds:

\begin{lem}\label{l:many-subgroups}
Let $A$ be a connected commutative unipotent group\footnote{We can
take $A$ to be either an algebraic group in the usual sense, or a
perfect algebraic group. The distinction between the two classes of
groups is irrelevant here.} over $\bF_q$ such that $p\cdot A=0$, and
let $L\subset A(\bF_q)$ be \emph{any} subgroup whose order is a
power of $q$. Then there exists a connected subgroup
$\underline{L}\subset A$ such that $L=\underline{L}(\bF_q)$.
\end{lem}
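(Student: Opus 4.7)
My plan is by strong induction on $|L|$; the base case $|L|=1$ is trivial (take $\underline L=0$). For the inductive step with $|L|=q^k>1$, the argument reduces entirely to finding a smooth connected $1$-dimensional $\bF_q$-subgroup $H\subset A$ with $\{0\}\neq H(\bF_q)\subset L$. Given such $H$, since $\bF_q$ is perfect and every smooth connected $1$-dimensional commutative unipotent group killed by $p$ over a perfect field is isomorphic to $\bG_a$, we have $|H(\bF_q)|=q$. Lang's theorem applied to the connected group $H$ identifies $\bar A(\bF_q)=A(\bF_q)/H(\bF_q)$ for $\bar A:=A/H$, so the image $\bar L$ of $L$ in $\bar A(\bF_q)$ has order $q^{k-1}$. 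Applying the inductive hypothesis to $(\bar A,\bar L)$ produces a connected $\bF_q$-subgroup $\underline{\bar L}\subset\bar A$ with $\underline{\bar L}(\bF_q)=\bar L$, and its preimage $\underline L\subset A$ is then connected with $\underline L(\bF_q)=L$ (the preimage of $\bar L$ in $A(\bF_q)$ equals $L$ exactly because $H(\bF_q)\subset L$).

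To produce $H$, observe that $L\subset A(\bF_q)$ is an $\bF_p$-vector subspace of $\bF_p$-dimension at least $r$, where $q=p^r$, so we may choose an $\bF_p$-linear embedding $\iota:\bF_q\hookrightarrow L$. Since over the perfect field $\bF_q$ every smooth connected commutative unipotent group killed by $p$ is a vector group, we may identify $A\cong\bG_a^n$; then $\Hom_{\bF_q\text{-gp}}(\bG_a,A)=(\bF_q\{F\})^n$ and the evaluation-on-$\bF_q$-points map factors through the identification $(\bF_q\{F\}/(F^r-1))^n\cong\End_{\bF_p}(\bF_q)^n=\Hom_{\bF_p}(\bF_q,A(\bF_q))$, hence is surjective. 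Lift $\iota$ to some $j\in(\bF_q\{F\})^n$ with $j|_{\bF_q}=\iota$; since $\iota\neq 0$ forces $j\neq 0$, the morphism $j:\bG_a\to A$ is non-zero, and its scheme-theoretic image $H:=j(\bG_a)$ (with the reduced induced structure) is an irreducible reduced $1$-dimensional closed $\bF_q$-subgroup of $A$, hence a smooth connected $1$-dimensional $\bF_q$-subgroup.

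The principal technical obstacle is ensuring $H(\bF_q)\subset L$, rather than merely $j(\bF_q)=\iota(\bF_q)\subset L$. The surjection $j:\bG_a\twoheadrightarrow H$ has finite kernel $K\subset\bG_a$, and $j(\bF_q)=\bG_a(\bF_q)/K(\bF_q)$ is a strict subset of $H(\bF_q)$ whenever $K(\bF_q)\neq\{0\}$; thus we need $K$ to be purely infinitesimal, i.e., we need the component polynomials of $j$ to share no non-zero common root over $\bar{\bF}_q$. My plan is to arrange this by perturbing $j$ within its coset modulo $(F^r-1)\cdot(\bF_q\{F\})^n$---such perturbations preserve $j|_{\bF_q}=\iota$---so as to eliminate any residual common root. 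Within this coset (which has finite codimension in $(\bF_q\{F\})^n$), the condition of purely infinitesimal kernel is a generic (Zariski-open) condition on the coefficients, so a suitable $j$ exists. Making this genericity/perturbation argument fully rigorous within the twisted polynomial ring $\bF_q\{F\}$, and in particular controlling the common-roots locus, is where I expect the main technical work to lie.
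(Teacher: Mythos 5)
Your inductive framework is sound, and its engine is the same one the paper's proof uses: the surjectivity of $\End(\bG_a)\rar{}\End_{\bF_p}(\bF_q)$, which comes from Lemma~\ref{l:Galois} via the smash-product identification $\bF_q\{F\}/(F^r-1)\cong\End_{\bF_p}(\bF_q)$. The paper applies this tool once and for all: it realizes the $\bF_p$-linear map carrying $A'(\bF_q)$ onto $L$ as an endomorphism $f\in\End(A)$, where $A'\cong\bG_a^k$ is a coordinate subgroup, sets $\underline L=f(A')$, and finishes with the count $\lvert f(A')(\bF_q)\rvert\leq q^{\dim f(A')}\leq q^k=\lvert L\rvert$, which upgrades the inclusion $L\subset f(A')(\bF_q)$ to equality. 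Your version peels off one dimension at a time and recurses; this is longer but entirely legitimate as a route.

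However, the ``principal technical obstacle'' you flag in the last paragraph is not an obstacle, and the perturbation program you sketch should simply be dropped. You have already established that $H\cong\bG_a$, hence $\lvert H(\bF_q)\rvert=q$; and $j(\bG_a(\bF_q))=\iota(\bF_q)$ is a subgroup of $H(\bF_q)$ of order exactly $q$ because $\iota$ is injective. A $q$-element subgroup of a $q$-element group is the whole group, so $H(\bF_q)=\iota(\bF_q)\subset L$ for \emph{any} lift $j$ of $\iota$ --- there is no need for $\ker j$ to be purely infinitesimal, nor even for $(\ker j)(\bF_q)$ to vanish a priori (though it does, since $(\ker j)(\bF_q)=\ker(j|_{\bF_q})=\ker\iota=0$). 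This is precisely the counting trick that closes the paper's one-shot argument, and you had all the pieces in hand two sentences before you raised the worry. As a side remark, the stronger condition you were aiming for is genuinely too strong: already for $n=1$, requiring $j$ to have no nonzero $\bar{\bF}_q$-root forces $j=uF^m$ with $u\in\bF_q^\times$, which constrains $\iota$ modulo $F^r-1$; so the ``generic Zariski-open'' heuristic is not correct and would not have rescued the argument. Delete the perturbation discussion, replace it with the one-line count, and your proof is complete.
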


Note that when $p\cdot A=0$, Lemma \ref{l:many-subgroups} implies
Proposition \ref{p:auxiliary}, because in the setting of the
proposition, the order of any Lagrangian subgroup $L\subset
A(\bF_q)$ with respect to $B_\vp$ equals $A(\bF_q)^{1/2}=q^{(\dim
A)/2}$, which is a power of $q$ because $\dim A$ is even by
Proposition \ref{p:existence-lagr}(b). The lemma is proved in
\S\ref{ss:proof-l:many-subgroups} below.

\subsection{Reduction of Proposition \ref{p:auxiliary} to the case
$p\cdot A=0$} Let us suppose that $A$ is not annihilated by $p$, and
let $n\geq 2$ be the smallest integer such that $p^n\cdot A=0$. Let
$A_0=p^{n-1}\cdot A$. Then $A_0$ is a nontrivial connected subgroup
of $A$, and is isotropic with respect to the skewsymmetric
bi-extension induced by $\vp$, because $p^{n-1}\cdot p^{n-1}\geq
p^n$. Let $A_0^\perp$ be the orthogonal complement to $A_0$ in $A$,
i.e.,
\[
A_0^\perp = \vp^{-1} \bigl( \Ker ( A^* \rar{} A_0^* ) \bigr) \subset
A.
\]
Then $A_0\subset A_0^\perp$, and $A_1:=A_0^\perp/A_0$ is also
connected. Moreover, $\vp$ induces a strongly nondegenerate
skewsymmetric bi-extension of
$(A_1,A_1)$ by $\qzp$. Since $\dim A_1<\dim A$, we may assume that
Proposition \ref{p:auxiliary} holds for this bi-extension. Since
$A_0$ is a connected algebraic subgroup of $A$, the reduction to the
case $p\cdot A=0$ is complete.

\subsection{Proof of Lemma
\ref{l:many-subgroups}}\label{ss:proof-l:many-subgroups} Throughout
the proof, $q$ is assumed to be fixed, and $\bG_a$ denotes the
additive group over the field $\bF_q$. Let $W=A(\bF_q)$; it is
canonically an $\bF_p$-vector space. Since $A$ is annihilated by
$p$, it is isomorphic to a direct sum of copies of $\bG_a$, whence
we may assume that $A=\bG_a^n$. Let $L\subset W$ be any subgroup of
order $q^k$, where $0\leq k\leq n$. Let $A'\subset A$ be the direct
sum of the first $k$ copies of $\bG_a$. Then $A'(\bF_q)$ and $L$ are
$\bF_p$-subspaces of $W$ and have the same dimension over $\bF_p$.
Hence there exists an $\bF_p$-linear map $f:W\rar{}W$ such that
$L=f(A'(\bF_q))$.

\mbr

The ring $\End(\bG_a)$ of endomorphisms of $\bG_a$ as an algebraic
group over $\bF_q$ contains all the elements of $\bF_q$ (acting by
dilations), as well as the Frobenius map $x\mapsto x^p$. It follows
from Lemma \ref{l:Galois} below that the natural ring homomorphism
$\End(\bG_a)\rar{}\End_{\bF_p}(\bF_q)$ is surjective. This easily
implies that the natural homomorphism $\End(A)\rar{}\End_{\bF_p}(W)$
is surjective as well. In particular, the linear map $f$ in the
previous paragraph is induced by an endomorphism of $A$, which, by
abuse of notation, we will also denote by $f:A\rar{}A$. Then $f(A')$
is a connected subgroup of $A$. Moreover, $L=f(A'(\bF_q))\subset
f(A')(\bF_q)$, while $\dim f(A')\leq \dim A'=k$, which implies that
$\abs{f(A')(\bF_q)}\leq q^k=\abs{L}$. Thus $L=f(A')(\bF_q)$. \qed

\subsection{An auxiliary result} Let us recall a construction. If $R$ is a ring and $\Ga$ is a group acting
on $R$ by ring automorphisms, the smash product $R\#\Ga$ is defined
to be the ring whose elements are formal sums
$\sum_{\ga\in\Ga}a_\ga\ga$, where $a_\ga\in R$ and all but finitely
many $a_\ga$ are $0$; the addition in $R\#\Ga$ is defined in the
obvious way; and the multiplication is determined by
$(a_1\ga_1)\cdot(a_2\ga_2)=(a_1\cdot\ga_1(a_2))\cdot(\ga_1\ga_2)$.

\begin{lem}\label{l:Galois}
Let $K\subset L$ be a finite Galois extension of fields, and let
$\Ga=\Gal(L/K)$. The natural homomorphism\footnote{Induced by the
action of $L$ on itself by left multiplication, and by the
tautological action of $\Ga$ on $L$; we denote by $\End_K(L)$ the
algebra of endomorphisms of $L$ as a vector space over $K$.}
$L\#\Ga\rar{}\End_K(L)$ is an isomorphism of $K$-algebras.
\end{lem}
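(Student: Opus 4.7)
The plan is to verify that the natural map $\Phi:L\#\Ga\rar{}\End_K(L)$ is well-defined as a $K$-algebra homomorphism, then show it is injective, and finally conclude from a dimension count that it must be an isomorphism.

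First I would check that $\Phi$ respects the multiplication. An element $a\ga\in L\#\Ga$ acts on $x\in L$ by $(a\ga)(x)=a\cdot\ga(x)$. Then $(a_1\ga_1)(a_2\ga_2)$ acts on $x$ as $a_1\ga_1(a_2\ga_2(x))=a_1\ga_1(a_2)\cdot(\ga_1\ga_2)(x)$, which matches the smash product multiplication. The map is clearly $K$-linear, since elements of $K$ are fixed by every $\ga\in\Ga$.

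Next I would handle injectivity using Dedekind's (or Artin's) linear independence of characters. Suppose $\sum_{\ga\in\Ga}a_\ga\ga$ acts as zero in $\End_K(L)$, i.e.\ $\sum_{\ga\in\Ga}a_\ga\cdot\ga(x)=0$ for every $x\in L$. The distinct elements of $\Ga$ give distinct group homomorphisms $L^\times\rar{}L^\times$, hence are linearly independent over $L$ by the standard Dedekind--Artin theorem. Therefore $a_\ga=0$ for every $\ga\in\Ga$, proving that $\Phi$ is injective.

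Finally, a dimension count closes the argument. We have $\dim_K(L\#\Ga)=|\Ga|\cdot[L:K]=[L:K]^2$ because $\{\ga\}_{\ga\in\Ga}$ is a basis of $L\#\Ga$ over $L$, while $\dim_K\End_K(L)=[L:K]^2$. Since $\Phi$ is an injective $K$-linear map between $K$-vector spaces of equal finite dimension, it is an isomorphism. The only real substantive input is the linear independence of characters, which is the one point where the Galois (rather than merely inseparable) hypothesis is essential.
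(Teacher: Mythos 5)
Your proof is correct, but it proceeds by a different route than the paper's. You argue directly: the map is a well-defined $K$-algebra homomorphism, it is injective by Dedekind--Artin linear independence of characters (the distinct $\ga\in\Ga$ viewed as characters $L^\times\to L^\times$ are $L$-linearly independent), and it is then surjective by the dimension count $\dim_K(L\#\Ga) = |\Ga|\cdot[L:K] = [L:K]^2 = \dim_K\End_K(L)$. The paper instead invokes the structure theory of rings: it sketches that $L\#\Ga$ is a \emph{simple} ring, notes that $L$ is a simple faithful module over it with endomorphism ring $\End_{L\#\Ga}(L) = L^\Ga = K$, and then applies Wedderburn theory (the double centralizer theorem) to identify $L\#\Ga$ with $\End_K(L)$. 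Your argument is more elementary and self-contained, trading abstract ring theory for the classical independence-of-characters input; the paper's argument is shorter to state given the Wedderburn machinery but hides the same key fact (the Galois hypothesis enters in proving simplicity of $L\#\Ga$, which amounts to the same linear independence). Both proofs also tacitly use $|\Ga|=[L:K]$, which is fine since $L/K$ is assumed Galois.
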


\begin{proof}{Proof}
One can easily show that $L\#\Ga$ is a simple ring\footnote{Let
$I\subset L\#\Ga$ be a nonzero two-sided ideal, let
$\sum_{\ga\in\Ga}a_\ga\ga$ be a nonzero element in $I$ such that the
number of nonzero coefficients $a_\ga$ is as small as possible,
etc.}. Furthermore, $L$ is simple as a module over itself, and
hence, \emph{a fortiori}, as a module over $L\#\Ga$. Since
$\End_L(L)=L$, it follows that $\End_{L\#\Ga}(L)=L^\Ga=K$. Using
Wedderburn theory, we conclude that $L\#\Ga\rar{}\End_K(L)$ is an
isomorphism.
\end{proof}



\end{document}